\documentclass[10pt, oneside]{amsart}   	% 
\usepackage{geometry,amsmath,amssymb,verbatim,tikz, ytableau,enumitem}
\usepackage{tikz-3dplot}
\usepackage[utf8]{inputenc}
\usepackage{caption,subcaption,blkarray}
%%%
\usepackage[colorlinks=true,hyperindex, linkcolor=blue, pagebackref=false, citecolor=cyan]{hyperref}

\usepackage[utf8]{inputenc}

\allowdisplaybreaks

\newtheorem{thm}{Theorem}[section]
\newtheorem{lemma}[thm]{Lemma}
\newtheorem{prop}[thm]{Proposition}
\newtheorem{cor}[thm]{Corollary}
\newtheorem{conj}[thm]{Conjecture}
\newtheorem{question}[thm]{Question}
\newtheorem{definition}[thm]{Definition}

\theoremstyle{definition}
\newtheorem{example}[thm]{Example}
\theoremstyle{remark}
\newtheorem{remark}[thm]{Remark}
\theoremstyle{remark}

\def\QQ{{\mathbb{Q}}}
\def\RR{{\mathbb{R}}}
\def\ZZ{{\mathbb{Z}}}
\def\NN{{\mathbb{N}}}

\def\FF{{\mathbb{F}}}
\def\specht{{\mathbb{S}}}

\def\II{{\mathbf{I}}}
\def\AA{{\mathbb{A}}}

\def\kk{{\mathbf{k}}}

\def\HHH{{\mathcal{H}}}
\def\FFF{{\mathcal{F}}}
\def\SSS{{\mathcal{S}}}

\def\symm{{\mathfrak{S}}}
\def\symmB{B}%{{\mathfrak{S}}}
\def\gr{{\mathrm{gr}}}

\def\ch{{\mathrm{ch}}}
\def\spn{{\mathrm{span}}}
\def\supp{{\mathrm{supp}}}

\def\one{{\mathbf{1}}}

\def\BBB{{\mathcal{B}}}

\def\SSS{{\mathcal{S}}}
\def\PPP{{\mathcal{P}}}

\def\cone{\mathrm{cone}}

\def\Cross{\lozenge}%{\mathrm{Cross}}
\def\Aff{\mathrm{Aff}}
\def\Spec{\mathrm{Spec}}

\def\opp{\mathrm{opp}}

\def\Ehr{i}
\def\intEhr{\overline{i}}
\def\Eseries{\mathrm{E}}
\def\intEseries{\overline{\mathrm{E}}}
\def\Hilb{\mathrm{Hilb}}
\def\conv{\mathrm{conv}}
\def\pyr{\mathrm{Pyr}}

\def\spn{\mathrm{span}}
\def\sm{\mathrm{sm}}

\def\Ver{\mathrm{Ver}}
\def\Segre{\mathrm{Segre}}
\def\spn{\mathrm{span}}

\def\Hom{\mathrm{Hom}}
\def\init{\mathrm{in}}
\def\vol{\mathrm{vol}}
\def\trunc{\mathrm{trunc}}

\def\Zpoints{\mathcal{Z}}

\def\Div{\mathbb{D}}
\def\Cl{\mathrm{Rep}}

\def\aa{\mathbf{a}}
\def\bb{\mathbf{b}}

\def\dd{\mathbf{d}}
\def\pp{\mathbf{p}}
\def\qq{\mathbf{q}}

\def\vv{\mathbf{v}}
\def\xx{\mathbf{x}}
\def\yy{\mathbf{y}}
\def\zz{\mathbf{z}}
\def\ee{\mathbf{e}}
\def\origin{\mathbf{0}}

\newcommand{\interior}[1]{\mathrm{int}(#1)}
\newcommand{\boundary}[1]{\mathrm{Bd({#1})}}

\title{Harmonics and graded Ehrhart theory}

\author{Victor Reiner}
\address{School of Mathematics \\ University of Minnesota \\ Minneapolis, MN, 55455}
\email{reiner@umn.edu}
\author{Brendon Rhoades}
\address{Department of Mathematics \\ UCSD \\ La Jolla, CA, 92093}
\email{bprhoades@ucsd.edu}

%\thanks{} Currently have us thanking grants in an Acknowledgements section

%\subjclass{05 something?}

\begin{document}

\begin{abstract}
The Ehrhart polynomial and Ehrhart series count lattice points in integer dilations of a lattice polytope.  We introduce and study a $q$-deformation of the Ehrhart series, based on the notions of harmonic spaces and Macaulay's inverse systems for coordinate rings of finite point configurations.  We conjecture that this $q$-Ehrhart series is a rational function, and introduce and study a bigraded algebra whose Hilbert series matches the $q$-Ehrhart series.  Defining this algebra requires a
new result on Macaulay inverse systems for
Minkowski sums of point configurations.

\end{abstract}

\maketitle

%%%%%%%%%%%%%%%%%%%%%%%%%%%%%
%\setcounter{tocdepth}{1}
%\tableofcontents
%%%%%%%%%%%%%%%%%%%%%%%%%%%%%

%%%%%%%%%%%%%%%%%%%%%%%%%%%%%%%%%%%%%%%%%%%
\section{Introduction}
\label{sec:Introduction}

A {\em lattice polytope} $P$ in $\RR^n$ is the convex hull of a finite set of points in $\ZZ^n$. With their deep connections to commutative algebra and toric geometry, lattice polytopes are a central object in algebraic combinatorics. Our goal 
is to introduce in a canonical way an extra $q$-parameter into two important algebro-combinatorial objects associated to
a lattice polytope $P$: the Ehrhart polynomials/series and the affine semigroup ring associated to $P$.

%%%%%%
\subsection{Classical Ehrhart theory and a $q$-analogous conjecture}

The classical story to be generalized is one of lattice point enumeration. Given a $d$-dimensional lattice polytope $P \subset \RR^n$ with boundary $\boundary{P}$ and {\it (relative) interior} $\interior{P}=P \setminus \boundary{P}$, for $m \geq 0$, one has its $m$-fold {\em dilate}   %\begin{equation}    \label{eq:intro-dilate}    
$mP := \{ m p \,:\, p \in P \} \subseteq \RR^n$, along with its interior $\interior{mP}$.
%\end{equation}
Define these integer point enumerators $\Ehr_P(m), \intEhr_P(m)$, and their generating functions
$\Eseries_P(t), \intEseries_P(t)$ in $\ZZ[[t]]$:
\begin{align}
    \label{eq:intro-ehrhart-count}
    \Ehr_P(m) &:= \# (\ZZ^n \cap mP) \text{ for }m \geq 0, 
     &\Eseries_P(t):=\sum_{m=0}^\infty \Ehr_P(m) t^m,\\
    \label{eq:intro-interior-ehrhart-count}
    \intEhr_P(m) &:= \# (\ZZ^n \cap \interior{mP})
    \text{ for }m \geq 1, 
     &\intEseries_P(t):=\sum_{m=1}^\infty \intEhr_P(m) t^m.
\end{align}
We summarize here some of the results from the original work of  Ehrhart \cite{ehrhart1962polyedres}, as well as Macdonald \cite{macdonald1971polynomials} and Stanley \cite{stanley1975combinatorial, stanley1980decompositions};  see also expositions and surveys in Braun \cite{Braun}, Beck and Robins \cite{BeckRobins}, Beck and Sanyal \cite{BeckSanyal},
Stanley \cite[Ch.~1]{Stanley-CCA} \cite[\S4.6]{Stanley-EC1}.

\medskip
\noindent
{\bf Classical Ehrhart Theorem.} 
{\it Let $P$ be a $d$-dimensional lattice polytope.
\begin{itemize}
    \item[(i)] Both $\Ehr_P(m)$ and $\intEhr_P(m)$ are
    polynomial functions of $m$ of degree $d$. Both generating functions $\Eseries_P(t)$ and $\intEseries_P(t)$ are rational functions which can be written over the denominator $(1-t)^{d+1}$,
    and whose numerator polynomials lie in $\ZZ[t]$, with numerator of degree at most $d$ in the case of $\Eseries_P(t)$ and degree exactly $d+1$
    in the case of $\intEseries_P(t)$.
    \item[(ii)]  The integer coefficients $\{h^*_i\}_{i=0}^d$ defined uniquely by
$$
\Eseries_P(t) = \frac{\sum_{i=0}^d h^*_i t^i}{(1-t)^{d+1}}
$$
are nonnegative, with $h^*_0=1$ and  $\sum_{i=0}^d h^*_i=v$ the normalized $d$-volume $v:=\vol_d(P)$.
\item[(iii)] For $P$ a lattice $d$-simplex
with vertices $\vv^{(1)},\ldots,\vv^{(d+1)}$, one can interpret the  $\{h^*_i\}_{i=0}^d$ as
$$
h^*_i=\# \{ (x_0,\zz) \in \Pi \cap \ZZ^{n+1}: x_{0}=i\},
$$
where $\Pi  \subseteq \RR^{n+1}=\RR^1 \times \RR^n$
is this semi-open parallelepiped 
spanned by
$\{(1,\vv^{(j)})\}_{j=1,\ldots,d+1}$: 
\begin{equation}
\label{eq: semi-open-parallelepiped}
\Pi:=\left\{ \sum_{j=1}^{d+1} c_j \cdot (1,\vv^{(j)}): 0 \leq c_i < 1\right\}.
\end{equation}
\item[(iv)] (Ehrhart-Macdonald reciprocity) The series $\Eseries_P(t), \intEseries_P(t)$ determine each other uniquely by
$$
\intEseries_P(t) = (-1)^{d+1} \Eseries_P(t^{-1}).
$$

\end{itemize}
}
\medskip

Our goal is to introduce a natural $q$-parameter into this picture\footnote{Note that Chapoton \cite{Chapoton} introduced a {\it different}, less canonical $q$-analogue, discussed in Remark~\ref{rmk:Chapoton-remark} below.}. Let $\kk$ be a field and let $\Zpoints \subseteq \kk^n$ be a finite locus of points in affine $n$-space over $\kk$. The {\em point-orbit method} (also known as the {\em orbit harmonics method}) is a machine which produces from $\Zpoints$ a graded quotient $R(\Zpoints)$ of the polynomial ring $S := \kk[x_1, \dots, x_n]$. 
Let $\II(\Zpoints) \subseteq S$ be the vanishing ideal of $\Zpoints$ given by
\begin{equation}
\label{eq:intro-vanishing-ideal}
\II(\Zpoints) := \{ f(\xx) \in S \,:\, f(\zz) = 0 \text{ for all $\zz \in \Zpoints$} \}.
\end{equation}
Since $\Zpoints$ is finite, by Lagrange interpolation one has an identification of the space $\kk[\Zpoints]$ of functions $\Zpoints \rightarrow \kk$ and the quotient ring $S/\II(\Zpoints)$. 
We let 
\begin{equation}
\label{eq:intro-point-orbit-ring}
R(\Zpoints) := S/\gr \, \II(\Zpoints)
\end{equation}
where $\gr \, \II(\Zpoints) \subseteq S$ is the {\em associated graded ideal} of $\II(\Zpoints)$, generated by the top degree homogeneous components $\tau(f)$ of nonzero polynomials $f \in \II(\Zpoints)$. The ideal $\gr \, \II(\Zpoints)$ is homogeneous, so the quotient ring $R(\Zpoints)$ is graded. Geometrically, the construction $S/\II(\Zpoints) \leadsto R(\Zpoints)$ is a flat deformation of the reduced subscheme $\Zpoints \subseteq \kk^n$ deforming linearly to a zero dimensional subscheme of degree $|\Zpoints|$ supported at the origin, as illustrated by  the picture below.

\begin{center}
 \begin{tikzpicture}[scale = 0.2]
\draw (-4,0) -- (4,0);
\draw (-2,-3.46) -- (2,3.46);
\draw (-2,3.46) -- (2,-3.46);

 \fontsize{5pt}{5pt} \selectfont
\node at (0,2) {$\bullet$};
\node at (0,-2) {$\bullet$};

\node at (-1.73,1) {$\bullet$};
\node at (-1.73,-1) {$\bullet$};
\node at (1.73,-1) {$\bullet$};
\node at (1.73,1) {$\bullet$};

\draw[thick, ->] (6,0) -- (8,0);

\draw (10,0) -- (18,0);
\draw (12,-3.46) -- (16,3.46);
\draw (12,3.46) -- (16,-3.46);

\draw (14,0) circle (15pt);
\draw(14,0) circle (25pt);
\node at (14,0) {$\bullet$};

 \end{tikzpicture}
\end{center}

The construction $\Zpoints \leadsto R(\Zpoints)$ is a canonical way to put a graded structure on the finite locus $\Zpoints$.
One has an isomorphism of $\kk$-vector spaces 
\begin{equation}
    \label{eq:orbit-harmonics-isomorphism}
    \kk[\Zpoints] = S/\II(\Zpoints) \cong R(\Zpoints).
\end{equation}
In particular, introducing a grading variable $q$, one finds that the {\it Hilbert series} 
$$
\Hilb(R(\Zpoints),q):=\sum_{d=0}^\infty  \dim_\kk R(\Zpoints)_d \cdot q^d
$$
is a polynomial in $q$ with nonnegative integer coefficients giving a geometrically motivated and canonical {\it $q$-analogue} of the cardinality of $\Zpoints$, that is,
\begin{equation}
\label{eq:hilb-is-a-q-analogue}
\left[ \Hilb(R(\Zpoints),q)\right]_{q=1}=
\dim_\kk R(\Zpoints) = \dim_\kk \kk[\Zpoints]=\# \Zpoints.
\end{equation}
Furthermore, 
when $\Zpoints$ is stable under the action of a finite subgroup $G \subseteq GL_n(\kk)$ (as suggested by the previous picture where $G \cong \symm_3$ and the transpositions in $\symm_3$ act by reflections in the three lines), the vector space isomorphism \eqref{eq:orbit-harmonics-isomorphism} is also a {\it Brauer isomorphism} of $\kk G$-modules, that is, $\kk[\Zpoints]$ and $R(\Zpoints)$ share the same composition multiplicities for all simple $\kk G$-modules.  In particular, when $|G|$ is invertible in $\kk^\times$, such as when $\kk$ characteristic zero, then \eqref{eq:orbit-harmonics-isomorphism} is a $\kk G$-module isomorphism.

However, the deformation $S/\II(\Zpoints) \leadsto R(\Zpoints)$ does not have perfect memory: since $R(\Zpoints)$ is not reduced for $|\Zpoints| > 1$, the rings $S/\II(\Zpoints)$ and $R(\Zpoints)$ are almost never isomorphic. 
Despite this amnesia, we will exhibit an unexpected multiplicative structure on direct sums of point-orbit rings (or rather their associated {\em harmonic spaces}) coming from  dilates of a lattice polytope.

Introduced by Kostant \cite{Kostant} in his study of coinvariant rings of finite reflection groups, the rings $R(\Zpoints)$ have proven to be an enormously fruitful source of graded quotients of $S$. The rings $R(\Zpoints)$ present the cohomology of Springer fibers \cite{garsia1992certain}, have ties to Coxeter-Catalan theory \cite{armstrong2015parking}, give generalized coinvariant rings with ties to Macdonald-theoretic delta operators \cite{haglund2018ordered, griffin2021ordered}, give graded modules over (0-)Hecke algebras \cite{huang2018ordered, huang2019hall}, prove cyclic sieving results \cite{oh2022cyclic}, and encode the Viennot shadow line construction of the Schensted correspondence \cite{rhoades2023increasing, liu2024viennot}. 
The rings $R(\Zpoints)$ have also been used to understand \cite{reineke2023zonotopal} Donaldson-Thomas invariants of symmetric quivers when $\Zpoints$ consists of lattice points in certain zonotopes.
In all of these examples, it is crucial to choose a point locus $\Zpoints \subseteq \kk^n$ with strategic organization.

Here we will take $\kk=\RR$ and consider the case where $\Zpoints = P \cap \ZZ^n$ consists of the integer points of an arbitrary lattice polytope $P \subseteq \RR^n$, together with its dilates.  Define two {\em $q$-Ehrhart series} for $P$, analogous to those in \eqref{eq:intro-ehrhart-count}, \eqref{eq:intro-interior-ehrhart-count}
\begin{align}
\label{eq:intro-q-ehrhart-series}
    \Eseries_P(t,q) &:= \sum_{m \, \geq \, 0} \Ehr_P(m;q) \cdot t^m, 
    \qquad \text{ where }\Ehr_P(m;q):=\Hilb(R(\ZZ^n \cap mP),q)\\
    \label{eq:intro-q-ehrhart-interior-series}
    \intEseries_P(t,q) &:= \sum_{m \, \geq \, 1} \bar{i}_P(m;q) 
    \cdot t^m, \qquad \text{ where }\bar{i}_P(m;q):=\Hilb(R(\ZZ^n \cap \interior{mP}),q).
\end{align}
Thus $\Eseries_P(t,q),\intEseries_P(t,q)$ lie in $\ZZ[q][[t]]$, and \eqref{eq:hilb-is-a-q-analogue} implies they specialize to $\Eseries_P(t), \intEseries_P(t)$ when $q \to 1$. The following is our main conjecture, on the form of $\Eseries_P(t,q), \intEseries_P(t,q)$.

\begin{conj}
\label{conj:intro-omnibus}
    Let $P$ be a $d$-dimensional lattice polytope in $\RR^n$. Then both of the series \eqref{eq:intro-q-ehrhart-series},\eqref{eq:intro-q-ehrhart-interior-series}
   lie in $\QQ(t,q)$, and are expressible as rational functions  
    $$
    \Eseries_P(t,q)=\frac{N_P(t,q)}{D_P(t,q)}
    \quad \text{ and } \quad 
        \intEseries_P(t,q)=\frac{\overline{N}_P(t,q)}{D_P(t,q)},
    $$
    over the same denominator of the form
    $D_P(t,q)=\prod_{i=1}^\nu (1-q^{a_i} t^{b_i})$, necessarily with $\nu \geq d+1$. Furthermore, there exists such an expression with all of these properties:
    \begin{itemize} 
    \item [(i)] the numerators $N_P(t,q), 
        \overline{N}_P(t,q)$ lie in $\ZZ[t,q]$,
    \item[(ii)] if $P$ is a lattice simplex, and $\nu=d+1$, then both numerators $N_P(t,q), \overline{N}_P(t,q)$ have nonnegative
coefficients as polynomials in $t,q$.
\end{itemize}
Lastly, one has this {\bf $q$-analogue of Ehrhart-Macdonald reciprocity}:
\begin{itemize}
\item[(iii)] the two series $\Eseries_P(t,q), \intEseries_P(t,q)$
determine each other via
$$
q^d  \cdot \intEseries_P(t,q) = (-1)^{d+1} \Eseries_P(t^{-1},q^{-1}).
$$
\end{itemize}
\end{conj}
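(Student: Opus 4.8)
Since the statement is a conjecture, what follows is a program rather than a complete proof. The plan is to realize $\Eseries_P(t,q)$ as the bigraded Hilbert series of a single finitely generated $\kk$-algebra and to extract (i)--(iii) from its structure. Concretely, I would build a bigraded algebra $A_P = \bigoplus_{m \geq 0}\bigoplus_{j \geq 0}(A_P)_{m,j}$ together with, for each $m$, a graded vector space isomorphism $\bigoplus_j(A_P)_{m,j} \cong R(\ZZ^n \cap mP)$ identifying $j$ with the polynomial grading, so that
$$
\Hilb(A_P;q,t) \;=\; \sum_{m \geq 0}\Hilb\!\left(R(\ZZ^n \cap mP),q\right)t^m \;=\; \Eseries_P(t,q).
$$
Once $A_P$ is known to be finitely generated over $\kk$, presenting it as a quotient of a polynomial ring on homogeneous generators of bidegrees $(b_i,a_i)$, $i=1,\dots,\nu$, the Hilbert--Serre theorem yields $\Eseries_P(t,q) \in \QQ(t,q)$ with numerator in $\ZZ[t,q]$ over the denominator $\prod_{i=1}^\nu(1-q^{a_i}t^{b_i})$, which already gives (i). Setting $q=1$ recovers $\Eseries_P(t,1) = \Eseries_P(t)$, whose pole at $t=1$ has order $d+1$ by the classical Ehrhart theorem; hence $A_P$ has Krull dimension $d+1$ and cannot be generated by fewer than $d+1$ elements, giving $\nu \geq d+1$.

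The heart of the matter --- and the step I expect to be the main obstacle --- is constructing the multiplication $(A_P)_{m,\bullet}\otimes(A_P)_{m',\bullet}\to(A_P)_{m+m',\bullet}$ and proving finite generation. I would work through harmonic spaces: $R(\Zpoints)$ is isomorphic as a graded vector space to the Macaulay inverse system $V_\Zpoints \subseteq \RR[\xx]$ of polynomials killed by $\gr\,\II(\Zpoints)$ under differentiation, and an inclusion $\Zpoints \subseteq \Ypoints$ of configurations induces $V_\Zpoints \subseteq V_\Ypoints$. The new ingredient flagged in the abstract is a statement about inverse systems of Minkowski sums: since exponential generating functions satisfy $e^{(z+y)\cdot\xx}=e^{z\cdot\xx}e^{y\cdot\xx}$, there should be a natural degree-additive map sending a product $g\cdot h$ with $g \in V_\Zpoints$, $h \in V_\Ypoints$ into $V_{\Zpoints+\Ypoints}$. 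Applying this with $\Zpoints=\ZZ^n\cap mP$, $\Ypoints=\ZZ^n\cap m'P$ and the inclusion $(\ZZ^n\cap mP)+(\ZZ^n\cap m'P)\subseteq\ZZ^n\cap(m+m')P$, which gives $V_{(\ZZ^n\cap mP)+(\ZZ^n\cap m'P)}\subseteq V_{\ZZ^n\cap(m+m')P}$, supplies the product on $A_P=\bigoplus_m V_{\ZZ^n\cap mP}$, and the identifications with $R(\cdot)$ give the Hilbert series above. Finite generation is immediate when $P$ has the integer decomposition property, where $\ZZ^n\cap mP$ is the $m$-fold Minkowski sum of $\ZZ^n\cap P$ and $A_P$ is generated in $m$-degree $1$; in general one transports a finite generating set of the graded semigroup $\{(m,p):p\in\ZZ^n\cap mP\}$ through the harmonic-space map, and the delicate point is verifying that these elements generate $A_P$ as an algebra, since products of harmonic polynomials need not be harmonic and must be corrected by terms of lower polynomial degree.

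For (ii), take $P$ a lattice $d$-simplex with vertices $\vv^{(1)},\dots,\vv^{(d+1)}$ and an expression with $\nu=d+1$. I would prove that $A_P$ is free as a module over the polynomial subalgebra $\RR[\theta_1,\dots,\theta_{d+1}]$, where $\theta_j\in(A_P)_{1,a_j}$ is a chosen harmonic representative of the vertex $(1,\vv^{(j)})$, with a bihomogeneous module basis indexed by the lattice points $(x_0,\zz)$ of the half-open parallelepiped $\Pi$ from \eqref{eq: semi-open-parallelepiped}, the point $(x_0,\zz)$ contributing the monomial $q^{e(x_0,\zz)}t^{x_0}$ with $e(x_0,\zz)$ the polynomial degree of its basis element. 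This produces $N_P(t,q)=\sum_{(x_0,\zz)\in\Pi\cap\ZZ^{n+1}}q^{e(x_0,\zz)}t^{x_0}\in\NN[t,q]$, an exact $q$-refinement of part (iii) of the classical Ehrhart theorem, with $\overline{N}_P$ obtained identically from the interior of $\Pi$. The freeness would follow from the same flat-degeneration argument that establishes it at $q=1$, checking the degeneration respects the polynomial grading; the Minkowski-sum inverse-systems result enters again to identify $A_P$ with an honest quotient presented by the simplex data.

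Finally, for (iii) I would seek a graded duality between $V_{\ZZ^n\cap mP}$ and $V_{\ZZ^n\cap\interior{mP}}$ --- a perfect pairing reversing polynomial degree up to a global shift by $d$ --- assembling over $m$ into the identity $q^d\,\intEseries_P(t,q)=(-1)^{d+1}\Eseries_P(t^{-1},q^{-1})$; at $q=1$ this must collapse to classical Ehrhart--Macdonald reciprocity, a useful consistency check. For $P$ a simplex it follows from the involution $(x_0,\zz)\mapsto\sum_j(1,\vv^{(j)})-(x_0,\zz)$ on $\Pi\cap\ZZ^{n+1}$, once one checks it carries $x_0\mapsto(d+1)-x_0$ and $e\mapsto d-e$ in the basis of (ii). The general case is the subtlest part of the program: the usual reduction of reciprocity to simplices via half-open triangulations and additivity of $h^*$-polynomials does not obviously survive the $q$-deformation, because $R(\cdot)$, hence the harmonic space, does not commute with disjoint unions of point configurations, so $\Hilb(R(\ZZ^n\cap mP),q)$ is not literally the sum of the contributions of the pieces. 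Bridging this gap --- either by proving the harmonic-space duality directly for arbitrary $P$, or by quantifying the failure of orbit harmonics to be additive under unions --- is where I expect the real difficulty to lie.
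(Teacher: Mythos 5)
Your program is, in outline, the paper's own: the statement you address is Conjecture~\ref{conj:intro-omnibus}, which the paper leaves open, and its proposed route is exactly the one you sketch — Theorem~\ref{thm:minkowski-closure} (the Minkowski-sum closure $V_\Zpoints \cdot V_{\Zpoints'} \subseteq V_{\Zpoints+\Zpoints'}$, proved via the exponential basis of $\II(\Zpoints)^\perp$ and a degeneration $\epsilon \to 0$) is used in Section~\ref{sec:Harmonic} to build the bigraded harmonic algebra $\HHH_P$ with $\Hilb(\HHH_P,t,q)=\Eseries_P(t,q)$, and Conjecture~\ref{conj:harmonic-algebra-omnibus} (finite generation, Cohen--Macaulayness, $\overline{\HHH}_P \cong \Omega\HHH_P$) is shown to imply rationality, the denominator bound $\nu \geq d+1$, (i), (ii) and (iii) by the Hilbert--Serre / free-resolution argument you describe. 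Like the paper, your proposal stops at the same open point (Noetherianity and Cohen--Macaulayness of $\HHH_P$), so it is a program rather than a proof; but several of its concrete intermediate steps are refuted by the paper's own computations.

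First, you claim finite generation is ``immediate'' for IDP polytopes because $\HHH_P$ would be generated in $t$-degree one, and that in general one can transport a finite generating set of the cone semigroup through the harmonic-space map. This fails: the containment $V_\Zpoints \cdot V_{\Zpoints'} \subseteq V_{\Zpoints+\Zpoints'}$ can be proper even when $\Zpoints+\Zpoints' = \ZZ^n \cap (m+m')P$, so $\HHH_P$ is not a semigroup ring and inherits no generation bound from $A_P$. Section~\ref{sec:lack-of-singly-graded-isomorphism} gives an IDP triangle with $(\HHH_P)_1 \cdot (\HHH_P)_1 \subsetneq (\HHH_P)_2$ (see \eqref{eq:this-triangle-has-harmonic-algebra-not-1-generated}), and the triangle of Section~\ref{sec:interesting-triangle-in-depth} needs an algebra generator in $t$-degree $3 > d$, whereas $A_P$ is always generated in $t$-degrees at most $d$. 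Second, for simplices you posit an h.s.o.p.\ $\theta_1,\dots,\theta_{d+1}$ of bidegrees $(1,a_j)$ attached to the vertices, with free module basis indexed by $\Pi \cap \ZZ^{n+1}$: the parallelepiped formula for the numerator is established in the paper only for antiblocking polytopes (Proposition~\ref{prop:naive-weighted-Ehrhart-series-are-simpler} via Corollary~\ref{cor:harmonic-equal-naive-Ehrhart-for-antiblockers}), while for $P=\conv\{(0,0),(1,2),(2,1)\}$ the denominator is $(1-t)(1-q^2t)(1-q^3t^2)$ — one parameter of $t$-degree two, and, as the paper stresses, no natural bijection between the $\theta_i$ and the vertices; moreover Remark~\ref{rmk: annoying-triangle} and Question~\ref{ex: Kurylenko-Reeve-tetrahedra} indicate that some simplices may not admit $\nu=d+1$ at all, which is precisely why (ii) carries that hypothesis. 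Third, your reciprocity mechanism — a perfect degree-reversing pairing between $V_{\ZZ^n\cap mP}$ and $V_{\ZZ^n\cap \interior{mP}}$ for each fixed $m$ — would force $\Ehr_P(m)=\intEhr_P(m)$, which is already false classically; the substitution $t \mapsto t^{-1}$ mixes $t$-degrees, and the intended mechanism is the bigraded canonical-module identification $\overline{\HHH}_P \cong \Omega\HHH_P$ for a Cohen--Macaulay $\HHH_P$, not an $m$-by-$m$ duality. Finally, note the paper's cautionary footnote that an $\NN^2$-graded Noetherian algebra need not possess an $\NN^2$-homogeneous system of parameters, so even granting Conjecture~\ref{conj:harmonic-algebra-omnibus}(i)--(ii), the existence of an expression with $\nu=d+1$ is an additional subtlety your program elides.
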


\begin{remark}
\label{rmk: poles-and-degrees-in-t}
Conjecture~\ref{conj:intro-omnibus} requires $\nu \geq d+1$ since $\left[ \Eseries_P(q,t) \right]_{q=1}=\Eseries_P(t)$,
and part (i) of the Classical Ehrhart Theorem implies
$\Eseries_P(t)$
has a pole at $t=1$ of order $d+1$.
Similarly, letting $b:=\sum_{i=1}^\nu b_i$
be the $t$-degree of the denominator $D_P(t,q)$,
the numerator polynomials $N_P(t,q)$ and
$\overline{N}_P(t,q)$ would necessarily have $t$-degrees
at most $b-1$ and exactly $b$, respectively.

Regarding the fact that Conjecture~\ref{conj:intro-omnibus}(ii) has two hypotheses,  note that Example~\ref{ex: Kurylenko-Reeve-tetrahedra}
below suggests the existence of lattice simplices $P$
with $\nu > d+1$.
\end{remark}

\begin{comment}
% This is the old version of the conjecture, before Kurylenko showed us his guess for the Reeve tetrahedron E_P(t,q) that would contradict some parts.

\begin{conj}
\label{conj:intro-omnibus}
    Let $P$ be a $d$-dimensional lattice polytope in $\RR^n$, having $V$ vertices. Then both of the series \eqref{eq:intro-q-ehrhart-series},\eqref{eq:intro-q-ehrhart-interior-series}
   lie in $\QQ(t,q)$, and are expressible as rational functions  
    $$
    \Eseries_P(t,q)=\frac{N_P(t,q)}{D_P(t,q)}
    \quad \text{ and } \quad 
        \intEseries_P(t,q)=\frac{\overline{N}_P(t,q)}{D_P(t,q)},
    $$
    over the same denominator of the form
    $D(t,q)=\prod_{i=1}^\nu (1-q^{a_i} t^{b_i})$. Furthermore, there exists such an expression with all of these properties:
    \begin{itemize}
        \item [(i)] the number of denominator factors $\nu$ lies in the range $\dim(P)+1 \leq \nu \leq V$, 
        \item [(ii)] the numerators $N_P(t,q), 
        \overline{N}_P(t,q)$ lie in $\ZZ[t,q]$,
        \item[(iii)] $N_p(t,q), \overline{N}_p(t,q)$, have $t$-degrees at most $b-1$ and exactly $b$, respectively, where $b:=\sum_{i=1}^\nu b_i$, 
          \item[(iv)] the denominator factors $1-q^{a_i} t^{b_i}$ all have $b_i \leq \#(P \cap \ZZ^n)-1$,
    \item[(v)] for $P$ a simplex, $N_P(t,q), \overline{N}_P(t,q)$ have nonnegative
coefficients as polynomials in $t,q$.
\end{itemize}
And lastly one has a {\bf $q$-analogue of Ehrhart-Macdonald reciprocity}:
\begin{itemize}
\item[(vi)] the two series $\Eseries_P(t,q), \intEseries_P(t,q)$
determine each other via
$$
q^d  \cdot \intEseries_P(t,q) = (-1)^{d+1} \Eseries_P(t^{-1},q^{-1}).
$$
\end{itemize}
\end{conj}
\end{comment}

\vskip.1in
\noindent
{\bf Example.}
    Consider a $1$-dimensional lattice polytope  $P \subset \RR^1$ of volume $v \geq 1$, that is, a line segment $P=[a,a+v]$ for some integer $a$.  Thus for $m \geq 0$, one has $mP=[ma,m(a+v)]$ and $$
    \ZZ^1 \cap mP=\{ma,ma+1,ma+2,\ldots,m(a+v)\}.
    $$
    Therefore one can calculate 
    \begin{align}
    \notag
    \II(\ZZ^1 \cap mP)&=( \,\, (x-ma)(x-ma-1) \cdots (x-ma-mv)\,\, ) \subset S=\RR[x],\\
    \notag
    \gr \, \II(\ZZ^1 \cap mP)&=(x^{mv+1}), \text{ so that } 
    R(\ZZ^1 \cap mP)=\RR[x]/(x^{mv+1}),\\
    \notag
    \Hilb(R(\ZZ^1 \cap mP);q)&=1+q+q^2+\cdots +q^{mv}=(1-q^{mv+1})/(1-q),\\
    \notag
    \Eseries_P(t,q)&=\sum_{m \geq 0} t^m \cdot (1-q^{mv+1})/(1-q)
    =\frac{1}{1-q} \sum_{m \geq 0} t^m(1-q^{mv+1})\\
     &=\frac{1}{1-q}\left( \frac{1}{1-t} - \frac{q}{1-tq^v} \right)
    = \frac{1+t(q+q^2+\cdots+q^{v-1})}{(1-t)(1-tq^v)}
     \label{eq:line-segment-Eseries}
     = \frac{1+t \cdot q [v-1]_q}{(1-t)(1-tq^v)},
    \end{align}
where we are using here a standard $q$-analogue of the positive integer $m$
$$
[m]_q:=1+q+q^2+\cdots+q^{m-1}.
$$
Note that $\Ehr_P(m)=mv+1$,
and hence, as expected, one has
$$
\Eseries_P(t)
=\sum_{m=0}^\infty (mv+1)t^m
=\frac{vt}{(1-t)^2}+\frac{1}{1-t}
=\frac{1+(v-1)t}{(1-t)^2}
=\left[\Eseries_P(t,q)\right]_{q=1}.
$$
Since $mP$ has interior lattice points $$
\ZZ^1 \cap \interior{mP}=\{ma+1,ma+2,\ldots,m(a+v)-1\},
$$
a similar calculation shows that
\begin{align}
\notag\intEseries_P(t,q)
    &= \sum_{m \geq 1} t^m \cdot (1-q^{mv-1})/(1-q)\\ 
    &= \frac{1}{1-q}\left( \frac{t}{1-t} - \frac{q^{v-1} t}{1-tq^v} \right)     =\frac{t(1+q+q^2\cdots+q^{v-2})+t^2q^{v-1}}{(1-t)(1-tq^v)}
\label{eq:line-segment-interior-Eseries}  =\frac{t[v-1]_q +t^2q^{v-1}}{(1-t)(1-tq^v)},
\end{align}
with 
$\left[ \intEseries_P(t,q) \right]_{q=1}= 
\frac{(v-1)t+t^2}{(1-t)^2}= \intEseries_P(t)$,
 as expected. 
Furthermore, the line segment $P$ is a lattice simplex,
and one can check that \eqref{eq:line-segment-Eseries}, \eqref{eq:line-segment-interior-Eseries}  are consistent with all parts (i),(ii),(iii) of Conjecture~\ref{conj:intro-omnibus}.

%%%%%%
\subsection{The affine semigroup ring and the harmonic algebra}
For lattice $d$-polytopes $P \subset \RR^n$,
work of Stanley (see, e.g., \cite{Stanley-CCA}) explains the Classical Ehrhart Theorem by reinterpreting
$\Eseries_P(t)$
as the {\it Hilbert series} of a certain commutative graded algebra associated to $P$,
reviewed here.

One embeds $P$ in $\RR^{n+1}$ as $\{1\} \times P \subseteq \RR^1 \times \RR^n = \RR^{n+1}$. The {\em cone} over $P$ is  
$$
\cone(P) := \RR_{\geq 0} \cdot (\{1\} \times P).
$$
It is an affine polyhedral $(d+1)$-dimensional cone in $\RR^{n+1}$, and its lattice points $\ZZ^{n+1} \cap \cone(P)$ form a semigroup under addition. The {\em affine semigroup ring} of $P$ over a field $\kk$ is
\begin{equation}
    \label{eq:intro-affine-semigroup}
    A_P := \kk[\ZZ^{n+1} \cap \cone(P)].
\end{equation}
The first coordinate $x_0$ on $\RR^1 \times \RR^n =\RR^{n+1}$ endows $A_P$ with the structure of a graded algebra, 
having Krull dimension $d+1$.  Inside $A_P$, one has the (homogeneous) {\it interior ideal} $\overline{A}_P$, the $\kk$-span of all monomials corresponding to the interior lattice points $\interior{\cone(P)} \cap \ZZ^{n+1}$.
Then one has
\begin{align*}
\Eseries_P(t)&=\Hilb(A_P,t),\\
\intEseries_P(t)&=\Hilb(\overline{A}_P,t).
\end{align*}
Although $A_P$ is not always generated in degree one, there is always a tower of integral ring extensions
$$
\kk[\Theta]:=\kk[\theta_1,\ldots,\theta_{d+1}]
\subset B_P \subset A_P
$$
where $B_P$ is the subalgebra of $A_P$ generated by the monomials corresponding to the vertices of $P$, and $\Theta:=(\theta_1,\ldots,\theta_{d+1})$
is any choice of a linear (degree one) system of parameters for $B_P$; their existence is guaranteed by Noether's Normalization Lemma \cite{Noether}.  Most of the assertions (i),(ii),(iii),(iv) from the Classical Ehrhart Theorem are then explained by these four facts, respectively:
\begin{itemize}
    \item[(i)] $A_P$ is a finitely-generated $\kk[\Theta]$-module,
    and hence so is the ideal $\overline{A}_P$.
    \item[(ii)] $A_P$ is even a {\it free} $\kk[\Theta]$-module, as Hochster \cite{Hochster} showed $A_P$ is a {\it Cohen-Macaulay} ring.
\item[(iii)] When $P$ is a simplex, so that $\kk[\Theta]=B_P$, then $A_P$ has a $\kk[\Theta]$-basis given by monomials corresponding to the lattice points in the semi-open parallelepiped  \eqref{eq: semi-open-parallelepiped}.
\item[(iv)]
Work of Danilov \cite{Danilov} and Stanley \cite{Stanley-Diophantine} independently showed that $\overline{A}_P$ is isomorphic to the {\it canonical module} $\Omega A_P$ of the Cohen-Macaulay ring $A_P$.
\end{itemize}

This might inspire one to approach Conjecture~\ref{conj:intro-omnibus} by introducing a bigraded deformation of $A_P$. Since $A_P=\bigoplus_{m \geq 0} (A_P)_m$
where $(A_P)_m$ comes from the $x_{0}=m$ slice of $\cone(P)$
\begin{equation}
    \label{eq:intro-strata}
    \cone(P) \cap ( \{m\} \times \RR^n) =\{m\} \times  mP,
\end{equation}
one might look for such a bigraded object by defining a graded multiplication on the direct sum
\begin{equation}
    \label{eq:intro-direct-sum}
    \bigoplus_{m  \geq  0} R(mP \cap \ZZ^n).
\end{equation}
That is, one would like a way to `multiply' elements $f \in R(mP \cap \ZZ^n)$ and $f' \in R(m'P \cap \ZZ^n)$ to produce a new element $f \star f' \in R((m +m')P \cap \ZZ^n)$ in a fashion which respects the polynomial degrees of $f$ and $f'$. Geometrically, this corresponds to deforming the lattice points in $mP \times \{m\} \subseteq \cone(P)$ linearly onto the coning axis and defining a multiplication between fat point loci. The relevant picture is shown below when $P = [-1,1] \times [-1,1]$ is a square in $\RR^2$.
\begin{center}
    \tdplotsetmaincoords{70}{110}
    \begin{tikzpicture}[scale = 0.35]
        \tdplotsetrotatedcoords{90}{15}{-10}

        \draw[fill = gray!30, tdplot_rotated_coords] (1,1,4) -- (1,-1,4) -- (-1,-1,4) -- (-1,1,4) -- (1,1,4);
        \draw[fill = gray!30, tdplot_rotated_coords] (2,2,8) -- (2,-2,8) -- (-2,-2,8) -- (-2,2,8) -- (2,2,8);
         \draw[fill = gray!30, tdplot_rotated_coords] (3,3,12) -- (3,-3,12) -- (-3,-3,12) -- (-3,3,12) -- (3,3,12);
    
        \node[tdplot_rotated_coords] at (0,0,0) {$\bullet$};

        \node[tdplot_rotated_coords] at (0,0,4) {$\bullet$};
        \node[tdplot_rotated_coords] at (1,0,4) {$\bullet$};
        \node[tdplot_rotated_coords] at (1,1,4) {$\bullet$};
        \node[tdplot_rotated_coords] at (0,1,4) {$\bullet$};
        \node[tdplot_rotated_coords] at (-1,0,4) {$\bullet$};
        \node[tdplot_rotated_coords] at (0,-1,4) {$\bullet$};
        \node[tdplot_rotated_coords] at (1,-1,4) {$\bullet$};
        \node[tdplot_rotated_coords] at (-1,1,4) {$\bullet$};
        \node[tdplot_rotated_coords] at (-1,-1,4) {$\bullet$};

        \node[tdplot_rotated_coords] at (0,0,8) {$\bullet$};
        \node[tdplot_rotated_coords] at (1,0,8) {$\bullet$};
        \node[tdplot_rotated_coords] at (2,0,8) {$\bullet$};
        \node[tdplot_rotated_coords] at (0,1,8) {$\bullet$};
        \node[tdplot_rotated_coords] at (0,2,8) {$\bullet$};
        \node[tdplot_rotated_coords] at (1,1,8) {$\bullet$};
        \node[tdplot_rotated_coords] at (1,2,8) {$\bullet$};
        \node[tdplot_rotated_coords] at (2,1,8) {$\bullet$};
        \node[tdplot_rotated_coords] at (2,2,8) {$\bullet$};
        \node[tdplot_rotated_coords] at (-1,0,8) {$\bullet$};
        \node[tdplot_rotated_coords] at (-2,0,8) {$\bullet$};
        \node[tdplot_rotated_coords] at (0,-1,8) {$\bullet$};
        \node[tdplot_rotated_coords] at (0,-2,8) {$\bullet$};
        \node[tdplot_rotated_coords] at (-1,-1,8) {$\bullet$};
        \node[tdplot_rotated_coords] at (-2,-2,8) {$\bullet$};
        \node[tdplot_rotated_coords] at (-1,-2,8) {$\bullet$};
        \node[tdplot_rotated_coords] at (-2,-1,8) {$\bullet$};
        \node[tdplot_rotated_coords] at (1,-1,8) {$\bullet$};
        \node[tdplot_rotated_coords] at (2,-1,8) {$\bullet$};
        \node[tdplot_rotated_coords] at (1,-2,8) {$\bullet$};
        \node[tdplot_rotated_coords] at (2,-2,8) {$\bullet$};
        \node[tdplot_rotated_coords] at (-2,2,8) {$\bullet$};
        \node[tdplot_rotated_coords] at (-2,1,8) {$\bullet$};
        \node[tdplot_rotated_coords] at (-1,2,8) {$\bullet$};
        \node[tdplot_rotated_coords] at (-1,1,8) {$\bullet$};

        \node[tdplot_rotated_coords] at (-3,-3,12) {$\bullet$};
        \node[tdplot_rotated_coords] at (3,-3,12) {$\bullet$};
        \node[tdplot_rotated_coords] at (-3,3,12) {$\bullet$};
        \node[tdplot_rotated_coords] at (3,3,12) {$\bullet$};
        \node[tdplot_rotated_coords] at (2,2,12) {$\bullet$};
        \node[tdplot_rotated_coords] at (-2,2,12) {$\bullet$};
        \node[tdplot_rotated_coords] at (2,-2,12) {$\bullet$};
        \node[tdplot_rotated_coords] at (-2,-2,12) {$\bullet$};
        \node[tdplot_rotated_coords] at (-1,-1,12) {$\bullet$};
        \node[tdplot_rotated_coords] at (1,-1,12) {$\bullet$};
        \node[tdplot_rotated_coords] at (-1,1,12) {$\bullet$};
        \node[tdplot_rotated_coords] at (1,1,12) {$\bullet$};
        \node[tdplot_rotated_coords] at (0,0,12) {$\bullet$};
        \node[tdplot_rotated_coords] at (1,2,12) {$\bullet$};
        \node[tdplot_rotated_coords] at (1,3,12) {$\bullet$};
        \node[tdplot_rotated_coords] at (2,1,12) {$\bullet$};
        \node[tdplot_rotated_coords] at (3,1,12) {$\bullet$};
        \node[tdplot_rotated_coords] at (2,3,12) {$\bullet$};
        \node[tdplot_rotated_coords] at (3,2,12) {$\bullet$};
       \node[tdplot_rotated_coords] at (-1,2,12) {$\bullet$};
        \node[tdplot_rotated_coords] at (-1,3,12) {$\bullet$};
        \node[tdplot_rotated_coords] at (-2,1,12) {$\bullet$};
        \node[tdplot_rotated_coords] at (-3,1,12) {$\bullet$};
        \node[tdplot_rotated_coords] at (-2,3,12) {$\bullet$};
        \node[tdplot_rotated_coords] at (-3,2,12) {$\bullet$};
               \node[tdplot_rotated_coords] at (1,-2,12) {$\bullet$};
        \node[tdplot_rotated_coords] at (1,-3,12) {$\bullet$};
        \node[tdplot_rotated_coords] at (2,-1,12) {$\bullet$};
        \node[tdplot_rotated_coords] at (3,-1,12) {$\bullet$};
        \node[tdplot_rotated_coords] at (2,-3,12) {$\bullet$};
        \node[tdplot_rotated_coords] at (3,-2,12) {$\bullet$};
               \node[tdplot_rotated_coords] at (-1,-2,12) {$\bullet$};
        \node[tdplot_rotated_coords] at (-1,-3,12) {$\bullet$};
        \node[tdplot_rotated_coords] at (-2,-1,12) {$\bullet$};
        \node[tdplot_rotated_coords] at (-3,-1,12) {$\bullet$};
        \node[tdplot_rotated_coords] at (-2,-3,12) {$\bullet$};
        \node[tdplot_rotated_coords] at (-3,-2,12) {$\bullet$};
        \node[tdplot_rotated_coords] at (0,3,12) {$\bullet$};
        \node[tdplot_rotated_coords] at (0,2,12) {$\bullet$};
        \node[tdplot_rotated_coords] at (0,1,12) {$\bullet$};
        \node[tdplot_rotated_coords] at (0,-3,12) {$\bullet$};
        \node[tdplot_rotated_coords] at (0,-2,12) {$\bullet$};
        \node[tdplot_rotated_coords] at (0,-1,12) {$\bullet$};
        \node[tdplot_rotated_coords] at (1,0,12) {$\bullet$};
        \node[tdplot_rotated_coords] at (2,0,12) {$\bullet$};
        \node[tdplot_rotated_coords] at (3,0,12) {$\bullet$};
        \node[tdplot_rotated_coords] at (-1,0,12) {$\bullet$};
        \node[tdplot_rotated_coords] at (-2,0,12) {$\bullet$};
        \node[tdplot_rotated_coords] at (-3,0,12) {$\bullet$};

        \draw[very thick, tdplot_rotated_coords] (0,0,0) -- (0,0,15);
        \draw[dashed, tdplot_rotated_coords] (0,0,0) -- (3.75,3.75,15);
        \draw[dashed, tdplot_rotated_coords] (0,0,0) -- (-3.75,3.75,15);
        \draw[dashed, tdplot_rotated_coords] (0,0,0) -- (3.75,-3.75,15);
        \draw[dashed, tdplot_rotated_coords] (0,0,0) -- (-3.75,-3.75,15);

        \draw[very thick, tdplot_rotated_coords] (15,15,2.5) -- (15,15,15.5);

        \node[tdplot_rotated_coords] at (15,15,2.5) {$\bullet$};
        \node[tdplot_rotated_coords] at (15,15,6.5) {$\bullet$};
        \node[tdplot_rotated_coords] at (15,15,10.5) {$\bullet$};
        \node[tdplot_rotated_coords] at (15,15,14.5) {$\bullet$};
        \draw[tdplot_rotated_coords] (15,15,6.5) circle (8pt);
        \draw[tdplot_rotated_coords] (15,15,6.5) circle (11pt); 
        \draw[tdplot_rotated_coords] (15,15,10.5) circle (8pt);
        \draw[tdplot_rotated_coords] (15,15,10.5) circle (11pt);  
        \draw[tdplot_rotated_coords] (15,15,14.5) circle (8pt);
        \draw[tdplot_rotated_coords] (15,15,14.5) circle (11pt);  

        \draw[->, very thick, tdplot_rotated_coords] (6,6,8) -- (12,12,9);
    \end{tikzpicture}
\end{center}   
The na\"ive approach of looking at the ring $R(\cone(P) \cap \ZZ^{n+1})$ has two problems: 
\begin{itemize}
    \item the transformation $S/\II(\Zpoints) \leadsto R(\Zpoints)$ deforms a locus $\Zpoints \subseteq \kk^{n+1}$ to the origin, but we want to deform to the coning axis, and
    \item if $P$ is full-dimensional, then $\cone(P) \cap \ZZ^{n+1}$ is Zariski-dense and $\II(\cone(P) \cap \ZZ^{n+1}) = 0$. Consequently, the deformation $S/\II(\Zpoints) \leadsto R(\Zpoints)$ for $\Zpoints = P \cap \ZZ^{n+1}$ remembers only the affine span of $P$, not $P$ itself.
\end{itemize}

We achieve the task of defining a multiplication on the direct sum \eqref{eq:intro-direct-sum} in Section~\ref{sec:Harmonic} by replacing each summand $R(mP \cap \ZZ^n)$ with its {\em harmonic space}  (or {\em Macaulay inverse system}).
The resulting bigraded ring $\HHH_P$ is the {\em harmonic algebra} attached to $P$. The bigraded Hilbert series of $\HHH_P$ equals the $q$-Ehrhart series $\Eseries_P(t,q)$. Although not isomorphic to $A_P$ in general, we conjecture  that $\HHH_P$ enjoys analogous algebraic properties (finite generation, Cohen-Macaulayness, identification of the canonical module) that would explain much of Conjecture~\ref{conj:intro-omnibus}.

In spite of all of these properties remaining conjectural, we are able to
show that the $q$-Ehrhart series $\Eseries_P(t,q)$ and harmonic algebra $\HHH_P$ behave in a predictable fashion when performing three well-studied operations on lattice polytopes $P$: 
\begin{itemize}
\item {\it dilation} by an positive integer factor $d$, sending $P$ to $dP$,
\item {\it Cartesian product}, sending $P \subset \RR^n$ and $Q \subset \RR^m$ to $P \times Q \subset \RR^{n+m}$,
\item {\it free join},
sending $P,Q$ to  
$P * Q \subseteq \RR^{1+n+m}$ defined by
\begin{equation}
    P*Q := \{ (t,t\pp,(1-t)\qq) \,:\, 0 \leq t \leq 1, \, \pp \in P, \, \qq \in Q \}.
\end{equation}
\end{itemize}
The next result is proven in Section~\ref{sec: dilations-products-joins}, and relates the above operations to the constructions of
{\it Veronese subalgebras, Segre products} and {\it graded tensor products} for  harmonic algebras $\HHH_P, \HHH_Q$.

\begin{thm}
    \label{thm:intro-three-constructions-on-series}
    Let $P, Q$ be lattice polytopes.
    \begin{itemize}
        \item[(i)] For positive integers $d$, the dilation $dP$ has $\Eseries_{dP}(t,q)$ given by
        $$
        \Eseries_{dP}(t,q)=\sum_{m=0}^\infty \Ehr_P(dm;q) t^m. 
        %\left( = \frac{1}{d} \sum_{j=0}^{d-1} \Eseries_P(q,\zeta^j t) \quad \text{where }\zeta:=e^{\frac{2\pi i}{d}}.\right)
        $$   
        \item[(ii)] The Cartesian product $P\times Q$ has $\Eseries_{P \times Q}(t,q)$ given by the Hadamard product of series
        $$
        \Eseries_{P \times Q}(t,q)=
        \sum_{m=0}^\infty \Ehr_P(m;q) \cdot  \Ehr_Q(m;q) \cdot t^{m}.
        $$
        \item[(iii)] The free join $P * Q$ has $\Eseries_{P * Q}(t,q)$ given by
        $$
        \Eseries_{P * Q}(t,q)=
        \frac{1-t}{1-qt} \cdot 
        \Eseries_P(t,q) \cdot \Eseries_Q(t,q).
        $$
    \end{itemize}
\end{thm}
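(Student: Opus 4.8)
The plan is to prove all three identities by computing the orbit-harmonics ring $R(\Zpoints)$, or at least its Hilbert series, for the relevant lattice-point configuration $\Zpoints$ in terms of the data of $P$ and $Q$; on the level of harmonic algebras these computations identify $\HHH_{dP}$, $\HHH_{P\times Q}$ and $\HHH_{P*Q}$ with a Veronese subalgebra, a Segre product and a twisted graded tensor product, but for the stated series identities only the vector-space bookkeeping is needed. Part (i) is immediate: since $m(dP)=(dm)P$ we get $\ZZ^n\cap m(dP)=\ZZ^n\cap(dm)P$, hence $\Ehr_{dP}(m;q)=\Ehr_P(dm;q)$, and summing against $t^m$ gives the claim. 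For part (ii) the one real input is that orbit harmonics turns Cartesian products into tensor products: if $\Zpoints\subseteq\kk^n$ and $\Ypoints\subseteq\kk^{n'}$ are finite, then, working in $\kk[x_1,\dots,x_n,y_1,\dots,y_{n'}]$, one has $\gr\,\II(\Zpoints\times\Ypoints)=\gr\,\II(\Zpoints)+\gr\,\II(\Ypoints)$ and hence $R(\Zpoints\times\Ypoints)\cong R(\Zpoints)\otimes_\kk R(\Ypoints)$. The inclusion $\supseteq$ holds because $\II(\Zpoints),\II(\Ypoints)\subseteq\II(\Zpoints\times\Ypoints)$ and passage to associated graded ideals preserves inclusions; equality follows from a dimension count, since $R(\Zpoints)\otimes_\kk R(\Ypoints)$ has dimension $|\Zpoints|\,|\Ypoints|=|\Zpoints\times\Ypoints|=\dim_\kk R(\Zpoints\times\Ypoints)$ and surjects onto $R(\Zpoints\times\Ypoints)$. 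Applying this with $\Zpoints=\ZZ^n\cap mP$, $\Ypoints=\ZZ^{n'}\cap mQ$ (using $m(P\times Q)=mP\times mQ$) gives $\Ehr_{P\times Q}(m;q)=\Ehr_P(m;q)\Ehr_Q(m;q)$, and summing against $t^m$ produces the Hadamard product.

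Part (iii) is where the work lies. First I would record the shape of the configuration: writing $s=mt$, one has $\ZZ^{1+n+n'}\cap m(P*Q)=\bigsqcup_{s=0}^{m}\{s\}\times(\ZZ^n\cap sP)\times(\ZZ^{n'}\cap(m-s)Q)$, a disjoint union of ``layers'', the $s$-th being a Cartesian product sitting in the affine hyperplane $\{x_0=s\}$. Because $R(\Zpoints)$ is invariant (up to graded isomorphism) under lattice translations and $GL(\ZZ)$-coordinate changes — each such map induces a graded algebra automorphism of $S$ commuting with taking top-degree parts — I may translate so that $\origin\in P$ and $\origin\in Q$; then the configurations $A_a:=\ZZ^n\cap aP$ are nested, $A_0\subseteq A_1\subseteq\cdots$, and likewise the $B_b:=\ZZ^{n'}\cap bQ$, so the associated graded ideals $\gr\,\II(A_a)$ (resp.\ $\gr\,\II(B_b)$) are decreasing in $a$ (resp.\ $b$), since $\gr$ reverses inclusions of ideals. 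As a warm-up — the instance $Q=\{\origin\}$ already gives the pyramid $P*\{\origin\}$ — I would first treat an arbitrary \emph{increasing layered configuration} $\Zpoints=\bigsqcup_{s=0}^m\{s\}\times W_s$ with $W_0\subseteq\cdots\subseteq W_m$: one shows $\gr\,\II(\Zpoints)=(x_0^{m+1})+\sum_{s=0}^{m}x_0^{\,s}\,\gr\,\II(W_{m-s})$, the inclusion $\supseteq$ coming from the explicit ideal elements $\big(\prod_{j=m-s+1}^{m}(x_0-j)\big)\,f(\xx)$ with $f\in\II(W_{m-s})$ (these kill precisely the layers that $f$ does not already kill), and the reverse inclusion from a dimension count using the anti-nesting of the $\gr\,\II(W_s)$; decomposing the quotient along powers of $x_0$ then gives $\Hilb(R(\Zpoints),q)=\sum_{s=0}^m q^{\,m-s}\Hilb(R(W_s),q)$, whence $\Eseries_{P*\{\origin\}}(t,q)=\tfrac{1}{1-qt}\Eseries_P(t,q)$, the $Q=\{\origin\}$ case of the formula.

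For general $Q$ the layers $W_s=A_s\times B_{m-s}$ are no longer nested, so instead I would exploit the ``bi-layered'' structure coming from the identification $\cone(P*Q)\cong\cone(P)\times\cone(Q)$ of lattice cones: the coordinate $x_0$ records the $\cone(P)$-grading and indexes the nested family $A_\bullet$, while the complementary grading indexes the nested family $B_\bullet$. One writes down a candidate for $\gr\,\II(\ZZ^{1+n+n'}\cap m(P*Q))$, namely the ideal $\tilde J$ generated by $x_0^{m+1}$, by $x_0^{\,m-a}\,\gr\,\II(A_a)$ and $x_0^{\,m-b}\,\gr\,\II(B_b)$ for $0\le a,b\le m$, and by the ``cross'' generators $x_0^{\max(0,\,m-1-a-b)}\,\gr\,\II(A_a)\,\gr\,\II(B_b)$; each such generator is realized as the top-degree part of an explicit element $\big(\prod_{j}(x_0-j)\big)f_A(\xx)f_B(\yy)$ with $f_A\in\II(A_a)$, $f_B\in\II(B_b)$, the product running over exactly the layer-indices on which $f_Af_B$ does not already vanish. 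The containment $\gr\,\II\supseteq\tilde J$ is then formal, and one finishes by computing $\Hilb(S/\tilde J,q)$ through the decomposition of $S/\tilde J$ along powers of $x_0$, the coefficient of $x_0^{\,k}$ being $\kk[\xx,\yy]$ modulo $\gr\,\II(A_{m-k})+\gr\,\II(B_{m-k})+\sum_{a+b\ge m-1-k}\gr\,\II(A_a)\,\gr\,\II(B_b)$.

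The step I expect to be the main obstacle is this last Hilbert-series computation: one must show that the mixed quotient just displayed has Hilbert series $\sum_{a+b=l}\Ehr_P(a;q)\Ehr_Q(b;q)-\sum_{a+b=l-1}\Ehr_P(a;q)\Ehr_Q(b;q)$ with $l=m-k$, for then summing $q^k$ times this over $k$ collapses exactly to $[t^m]\tfrac{1-t}{1-qt}\Eseries_P(t,q)\Eseries_Q(t,q)$, yielding both $\tilde J=\gr\,\II(\ZZ^{1+n+n'}\cap m(P*Q))$ and the claimed identity. Controlling that quotient forces one to understand the products $\gr\,\II(A_a)\,\gr\,\II(B_b)$ — equivalently the products $V_{A_a}\cdot V_{B_b}$ of the harmonic (Macaulay inverse system) spaces — and to compare them with the configurations $A_{a'}\times B_{b'}$; this is precisely the kind of statement supplied by the new result on Macaulay inverse systems for Minkowski sums of point configurations announced in the abstract. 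Granting that input, the remaining bookkeeping is routine and all of (i)--(iii) follow.
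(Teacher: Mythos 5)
Parts (i) and (ii) of your proposal are correct and are essentially the paper's own arguments: (i) is immediate from $m(dP)=(dm)P$, and (ii) rests on exactly the lemma the paper proves, namely that for finite loci in disjoint variable sets $\gr \, \II(\Zpoints\times\Zpoints')=\gr \, \II(\Zpoints)+\gr \, \II(\Zpoints')$ (extended to the big polynomial ring), with the easy containment coming from exhibiting top-degree forms and equality by the dimension count $\#\Zpoints\cdot\#\Zpoints'=\#(\Zpoints\times\Zpoints')$; this yields $\Ehr_{P\times Q}(m;q)=\Ehr_P(m;q)\cdot\Ehr_Q(m;q)$, hence the Hadamard product (the paper packages these as the Veronese and Segre statements, as you anticipate).

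Part (iii), however, has a genuine gap, and you have named it yourself. Your skeleton --- the slice decomposition of $\ZZ^{1+n+n'}\cap m(P*Q)$, translating so that $\origin\in P$ and $\origin\in Q$ to make the dilates nest, a candidate ideal $\tilde J$ generated by $x_0^{m+1}$ together with powers of $x_0$ times $\gr\,\II$ of slice data, and the nested warm-up case --- is sound and close in spirit to the paper's proof. But exhibiting generators only proves the containment $\tilde J\subseteq\gr\,\II(\ZZ^{1+n+n'}\cap m(P*Q))$, which bounds $\dim_\RR S/\tilde J$ from \emph{below} by the number of lattice points; what you need is the reverse bound, i.e.\ the exact Hilbert series of the mixed slice quotients $\kk[\xx,\xx']$ modulo $\gr\,\II(A_l)+\gr\,\II(B_l)+\sum_{a+b=l-1}\gr\,\II(A_a)\,\gr\,\II(B_b)$ (writing $A_a=\ZZ^n\cap aP$, $B_b=\ZZ^{n'}\cap bQ$), and this you leave unproved. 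The tool you invoke to close it, Theorem~\ref{thm:minkowski-closure}, does not supply it: that theorem is a one-directional containment $V_\Zpoints\cdot V_{\Zpoints'}\subseteq V_{\Zpoints+\Zpoints'}$ about Minkowski sums, it plays no role in the paper's proof of the join formula, and for loci in disjoint variable sets it only reproduces the containment already implicit in part (ii); it says nothing about the exact codimension of sums of products of associated graded ideals. The paper finishes on the dual side instead: it writes down the explicit space $V_m=\bigoplus_{s=0}^m y_0^{m-s}\otimes\bigl(\sum_{\ell+\ell'=s}V_{\ZZ^n\cap\ell P}\otimes V_{\ZZ^{n'}\cap\ell' Q}\bigr)$, checks it is annihilated by the candidate ideal under the $\odot$-action, and performs all the counting via the purely linear-algebraic Lemma~\ref{lem:nested-spaces-lemma} on Hilbert series of sums of tensor products of two nested filtrations; that lemma gives both $\dim_\RR V_m=\sum_{r+r'=m}\Ehr_P(r)\cdot\Ehr_Q(r')$ and the matching of $\Hilb(V_m,q)$ with the coefficient of $t^m$ in $\frac{1-t}{1-qt}\Eseries_P(t,q)\Eseries_Q(t,q)$. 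In short, the ``main obstacle'' you flag is real, and the missing ingredient is not the Minkowski-sum theorem but precisely this two-flag counting statement (or an equivalent primal-side computation of your mixed quotients), which your proposal neither states nor proves.
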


A further pleasant feature of the harmonic algebra arises when one considers Stanley's {\it two poset polytopes} \cite{stanley1986two} associated to a finite poset: its {\it order polytope} and its {\it chain polytope}.  Although these two lattice polytopes look very different, Stanley showed that they share the same Ehrhart series.  It will turn out (see Section~\ref{sec: chain-order-polytope} below) that all of our conjectures hold for both of these families of polytopes, that they share the {\it same $q$-Ehrhart series}, and even share the {\it same harmonic algebras}.  This is in contrast to the fact that their affine semigroup rings are generally {\it not} isomorphic.

%%%%%%
\subsection{Harmonic spaces and Minkowski addition}
The fact that the harmonic algebra $\HHH_P$ is closed under multiplication is not obvious, and rests upon a surprising new property of harmonic spaces for arbitrary finite loci $\Zpoints, \Zpoints' \subseteq \kk^n$. Define their {\em Minkowski sum} to be the finite point locus
\begin{equation}
    \label{eq:intro-mikowski-sum-definition}
    \Zpoints + \Zpoints' := \{ \zz + \zz' \,:\, \zz \in \Zpoints, \, \zz' \in \Zpoints' \}.
\end{equation}
The point-orbit rings $R(\Zpoints), R(\Zpoints'),$ and $R(\Zpoints + \Zpoints')$ are graded quotients of $S = \kk[x_1, \dots, x_n]$. When $\kk$ has characteristic zero, the partial differentiation action of $S$ on itself gives rise to a $\kk$-linear perfect pairing $\langle -, - \rangle$ on each homogeneous component $S_d$ of $S$, and we may replace 
$R(\Zpoints), R(\Zpoints), R(\Zpoints + \Zpoints')$
by their {\it harmonic spaces}
\begin{equation}
    \label{eq:intro-minkowski-harmonics}
    V_\Zpoints := \gr \, \II(\Zpoints)^\perp, \quad \quad
    V_{\Zpoints'} := \gr \, \II(\Zpoints')^\perp, \quad \quad
    V_{\Zpoints + \Zpoints'} := \gr \, \II(\Zpoints + \Zpoints')^\perp.
\end{equation}
If $I \subseteq S$ is a homogeneous ideal, the harmonic space $I^\perp \subseteq S$ is a graded subspace with the same Hilbert series as $S/I$. Since elements of $I^\perp$ are honest polynomials $f$, whereas elements of $S/I$ are cosets $f + I$, working in $I^\perp$  avoids  coset-related issues which arise in proving, e.g., linear independence results. On the other hand, unlike the graded ring $S/I$, the subspace $I^\perp$ has the defect of not being closed under multiplication.  Nevertheless, in Section~\ref{sec:Minkowski} we prove the following.

\begin{thm}
\label{thm:minkowski-closure}
For any pair of finite point loci $\Zpoints, \Zpoints'$ in $\kk^n$ over any field $\kk$, one has 
$$
V_\Zpoints \cdot V_{\Zpoints'} 
    \subseteq V_{\Zpoints + \Zpoints'}.
$$
\end{thm}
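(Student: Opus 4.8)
\medskip
\noindent
\textbf{Proof proposal.}
The plan is to transport the statement, via apolarity, to an explicit model of the Macaulay inverse system of $\II(\Zpoints)$, inside which forming the Minkowski sum $\Zpoints + \Zpoints'$ becomes ordinary multiplication. For $\zz \in \kk^n$, let $\ee_\zz$ be the ``exponential'' inverse-system generator at $\zz$: over a field of characteristic zero this is $\ee_\zz = \sum_{k \geq 0}\tfrac{1}{k!}(z_1 x_1 + \cdots + z_n x_n)^k$, an element of the completion $\widehat{S}$ on which $S$ acts by differentiation; over an arbitrary field one instead works in the completed divided-power algebra $\widehat{\Gamma}$ on the variables dual to $x_1,\dots,x_n$, with $S$ acting by contraction, where $\ee_\zz = \sum_{\aa} \zz^{\aa}\, y^{[\aa]}$ (this keeps $\langle-,-\rangle$ perfect in every characteristic, which is what makes sense of $V_\Zpoints$ for a general $\kk$). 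Two properties of $\ee_\zz$ do all the work. First, $\langle f, \ee_\zz\rangle = f(\zz)$ for every $f \in S$; hence $\II(\Zpoints)$ is exactly the apolar annihilator of $M_\Zpoints := \spn_\kk\{\ee_\zz : \zz \in \Zpoints\}$, and $M_\Zpoints$ is a submodule for the $S$-action since $x_i \circ \ee_\zz = z_i\,\ee_\zz$. Second, the exponential law $\ee_\zz \cdot \ee_{\zz'} = \ee_{\zz + \zz'}$ holds: it is obvious for $\exp$, and in the divided-power model it is the componentwise binomial theorem, using $y^{[\aa]} y^{[\bb]} = \binom{\aa+\bb}{\aa}\, y^{[\aa+\bb]}$. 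In particular $M_\Zpoints \cdot M_{\Zpoints'} \subseteq M_{\Zpoints + \Zpoints'}$.

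\medskip
\noindent
\textbf{Step 1: a leading-form description of $V_\Zpoints$.}
I would first prove the lemma that $V_\Zpoints$ is the span of the lowest-degree nonzero homogeneous components $\lfloor g \rfloor$ of the elements $g \in M_\Zpoints$. For ``$\supseteq$'' one checks orthogonality to every generator $\tau(f)$ of $\gr\,\II(\Zpoints)$: the pairing $\langle \lfloor g\rfloor, \tau(f)\rangle$ vanishes for free unless $\deg\lfloor g\rfloor = \deg\tau(f)$, and in that case, because $g$ is supported in degrees $\geq \deg\lfloor g\rfloor$ while $f$ is supported in degrees $\leq \deg\tau(f)$, the full pairing $\langle g, f\rangle$ collapses to the single term $\langle \lfloor g\rfloor, \tau(f)\rangle$ — and $\langle g,f\rangle = 0$ since $f$ annihilates $M_\Zpoints$. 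The reverse inclusion is forced by a dimension count: the leading-form span has dimension $\dim M_\Zpoints = \#\Zpoints$ (distinct points of $\kk^n$ can be separated by a polynomial, so the $\ee_\zz$ are linearly independent), while $\dim V_\Zpoints = \dim R(\Zpoints) = \dim\kk[\Zpoints] = \#\Zpoints$.

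\medskip
\noindent
\textbf{Step 2: leading forms of products, and assembly.}
Now take homogeneous $v \in V_\Zpoints$ and $v' \in V_{\Zpoints'}$. By Step 1 and bilinearity it suffices to treat $v = \lfloor g\rfloor$ and $v' = \lfloor g'\rfloor$ with $g \in M_\Zpoints$ and $g' \in M_{\Zpoints'}$. Set $h := g\,g'$, which lies in $M_{\Zpoints + \Zpoints'}$ by the exponential law. Writing $g = v + (\text{terms of strictly higher degree})$, and similarly for $g'$, gives $h = v v' + (\text{terms of strictly higher degree})$. If $v v' \neq 0$, then $\lfloor h\rfloor = v v'$, so $v v' = \lfloor h\rfloor \in V_{\Zpoints+\Zpoints'}$ by Step 1; and if $v v' = 0$, then $v v' \in V_{\Zpoints+\Zpoints'}$ trivially. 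Either way $v v' \in V_{\Zpoints+\Zpoints'}$, and since $V_\Zpoints\cdot V_{\Zpoints'}$ is spanned by such products, this proves $V_\Zpoints\cdot V_{\Zpoints'}\subseteq V_{\Zpoints+\Zpoints'}$.

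\medskip
\noindent
\textbf{Expected main obstacle.}
The substantive step is Step 1: setting up the inverse-system model correctly and arranging the two \emph{opposite} degree filtrations — ``from the top'' on the associated-graded ideal $\gr\,\II(\Zpoints)$, and ``from the bottom'' (order of vanishing at the origin) on $M_\Zpoints$ — so that apolarity exchanges one for the orthogonal complement of the other, uniformly over all fields. One must be disciplined about which ambient algebra the harmonic space sits in; in positive characteristic $\widehat{\Gamma}$ is \emph{not} a domain, so one cannot argue that leading forms of nonzero elements multiply to something nonzero. The plan avoids this: the only fact used about multiplication is $\lfloor h\rfloor = v v'$ \emph{when $v v' \neq 0$}, while the degenerate case $v v' = 0$ is absorbed harmlessly since $0 \in V_{\Zpoints+\Zpoints'}$. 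By contrast, the algebraic inputs — the evaluation property, $S$-stability of $M_\Zpoints$, and the exponential law — are all routine.
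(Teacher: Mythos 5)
Your proposal is correct, and it takes a genuinely different route from the paper. You share with the paper the starting observations (the exponential elements $\ee_\zz$ in the completed divided power algebra, the evaluation property $\langle f,\ee_\zz\rangle=f(\zz)$, and the exponential law $\ee_\zz\cdot\ee_{\zz'}=\ee_{\zz+\zz'}$, which are the paper's Lemma on exponential bases and its inhomogeneous Minkowski identity), but where the paper then transfers the inhomogeneous statement to the graded one by a deformation/limit argument --- rescaled loci $\epsilon\Zpoints$, truncations $\pi_N$ to finite-dimensional pieces, continuity of the perp map on Grassmannians over a metric topological field (or, in its second proof, working over $R=\kk[\epsilon]$ with an $\epsilon$-saturation lemma and specialization $\epsilon\to 0$) --- you instead prove directly the cleaner duality lemma that $V_\Zpoints=\gr\,\II(\Zpoints)^\perp$ equals the span of the lowest-degree homogeneous components of elements of $\spn_\kk\{\ee_\zz:\zz\in\Zpoints\}$, after which the theorem is a one-line consequence of the exponential law, with the positive-characteristic degeneracy $vv'=0$ absorbed trivially. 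Your Step 1 argument is sound: the degree-diagonal pairing collapses $\langle f,g\rangle$ to $\langle\tau(f),\lfloor g\rfloor\rangle$ exactly when the top degree of $f$ meets the bottom degree of $g$, and the two dimension counts close the gap. Two routine points you should state explicitly to make it airtight: first, orthogonality to the elements $\tau(f)$, $f\in\II(\Zpoints)\setminus\{0\}$, suffices for membership in $(\gr\,\II(\Zpoints))^\perp$ because this set spans $\gr\,\II(\Zpoints)$ as a $\kk$-vector space (for homogeneous $h$ one has $h\,\tau(f)=\tau(hf)$ with $hf\in\II(\Zpoints)$); second, the span of lowest-degree forms of a finite-dimensional subspace $M$ of the completion has dimension $\dim_\kk M$ (pass to the associated graded of the order filtration, which terminates since nonzero power series have finite order). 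What each approach buys: yours is shorter, purely algebraic, uniform over every field without passing to $\kk((\epsilon))$, and isolates a reusable statement (perp exchanges top-degree associated graded of the ideal with bottom-degree initial forms of the inverse system), a principle that indeed appears in the inverse-systems literature; the paper's deformation argument is heavier but matches the geometric picture of flat degeneration that motivates the harmonic algebra, and its intermediate lemmas (strong limits of the spaces $\pi_N(\II(\epsilon\Zpoints)^\perp)$) carry that narrative.
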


\noindent
This containment $V_\Zpoints \cdot V_{\Zpoints'} 
    \subseteq V_{\Zpoints + \Zpoints'}$
may be interpreted as saying that the rings $R(\Zpoints)$  `remember' the structure of Minkowski sums via multiplication of their harmonic spaces. As
the deformation $S/\II(\Zpoints) \leadsto R(\Zpoints)$ does not respect ring structure, we find Theorem~\ref{thm:minkowski-closure} a bit unexpected\footnote{It is reminiscent of another  unexpected fact, about standard monomials for $\II(\Zpoints), \II(\Zpoints'), \II(\Zpoints+\Zpoints')$ with respect to a
chosen monomial ordering on $S=\RR[\xx]$, observed by
F. Gundlach \cite{Gundlach}; see Remark \ref{rmk:first-Gundlach-remark} below.}.

Note Theorem~\ref{thm:minkowski-closure}
is stated for finite point loci inside $\kk^n$ where $\kk$ is {\it any} field, not just $\kk=\RR$ or a field of characteristic zero. 
 This requires defining harmonic spaces $V_\Zpoints$ over an arbitrary field $\kk$, which occurs already in the theory of {\it Macaulay's inverse systems} over all fields $\kk$ discussed, e.g., in Geramita \cite{Geramita}, and reviewed in Section~\ref{sec:Minkowski} below.  These more general harmonic spaces $V_{\Zpoints}$ are defined not inside a polynomial ring over $\kk$, but rather in the {\em divided power algebra} over $\kk$.  When $\kk$ has characteristic zero, 
 these two rings are the same, and the definitions of $V_\Zpoints$ coincide.

The remainder of the paper is structured as follows.  

Section~\ref{sec:Background} reviews
commutative algebra of associated graded ideals and rings, Gr\"obner bases, harmonic spaces and Macaulay's inverse systems (first in
characteristic zero, and then over all fields).  It then briefly reviews some aspects of groups acting on rings and representation theory.

Section~\ref{sec:Ehrhart} defines the $q$-Ehrhart series $\Eseries_P(t,q)$, reviews Conjecture~\ref{conj:intro-omnibus}, and then examines several families of examples.  It also
incorporates symmetries of $P$ in an equivariant $q$-Ehrhart series $\Eseries^G_P(t,q)$, and computes some
highly symmetric examples, such as simplices
and cross-polytopes.

Section~\ref{sec:Minkowski} develops the proof of Theorem~\ref{thm:minkowski-closure} on Minkowski sums, working over arbitrary fields.

Section~\ref{sec:Harmonic} then uses Theorem~\ref{thm:minkowski-closure} to define
the harmonic algebra $\HHH_P$, and states
Conjecture~\ref{conj:harmonic-algebra-omnibus},
explaining its connection to Conjecture~\ref{conj:intro-omnibus}.
It also studies the examples of order polytopes and chain polytopes of finite posets,
mentioned earlier.

Section~\ref{sec: dilations-products-joins}
examines the behavior of $\Eseries_P(t,q)$
and $\HHH_P$ under the lattice polytope operations of dilation, Cartesian product, and joins.  In particular, it proves Theorem~\ref{thm:intro-three-constructions-on-series}.

%%%%%%%%%%%%%%%%%%%%%%%%%%%%%%%
\section*{Acknowledgements}
The authors thank Ben Braun, Winfried Bruns, Sarah Faridi, Takayuki Hibi, Katharina Jochemko, Martina Juhnke-Kubitzke, Sophie Rehberg and Raman Sanyal for helpful conversations.  They thank Ian Cavey for help in streamlining the proof of Theorem~\ref{thm:minkowski-closure}, and thank Christian Haase for pointing them to Balletti's database \cite{Balletti}.  They are especially grateful to Vadym Kurylenko for his computations appearing in Remark~\ref{rmk: annoying-triangle}, equation~\eqref{eq:Vadym-Reeve-tetrahedron-counterexample} and {\tt ExtraData.pdf}. Authors partially supported by NSF grants DMS-1745638 and DMS-2246846, respectively.

%%%%%%%%%%%%%%%%%%%%%%%%%%%%%%%
\section{Background}
\label{sec:Background}

%%%%%%
\subsection{Commutative algebra} Let $\kk$ be a field, let $n \geq 0$, and let $\xx = (x_1, \dots, x_n)$ be a list of $n$ variables. We write $S := \kk[\xx] = \kk[x_1, \dots, x_n]$ for the polynomial ring in $x_1, \dots, x_n$ over $\kk$ with its standard grading induced by $\deg(x_i) = 1$ for all $i$.

We will consider various graded $\kk$-subspaces and quotients of $S$, as well as other rings. If $V = \bigoplus_{i \geq 0} V_i$ is a graded $\kk$-vector space with each piece $V_i$ finite-dimensional and $q$ is variable, the {\em Hilbert series} of $V$ is the formal power series
\begin{equation}
    \label{eq:background-hilbert}
    \Hilb(V,q) := \sum_{i  \geq  0} \dim_\kk(V_i) \cdot q^i.
\end{equation}
More generally, if $V = \bigoplus_{i,j \geq 0} V_{i,j}$ is a bigraded vector space, the {\em bigraded Hilbert series} is
\begin{equation}
    \label{eq:background-bigraded-hilbert}
    \Hilb(V,t,q) := \sum_{i,j  \geq  0} \dim_\kk(V_{i,j}) \cdot t^i q^j.
\end{equation}

Given $f \in S$ a nonzero polynomial, write $\tau(f) \in S$ for the top degree homogeneous component of $f$. That is, if $f = f_d + \cdots + f_1 + f_0$ with $f_i$ homogeneous of degree $i$ and $f_d \neq 0$, we have $\tau(f) = f_d$. 
If $I \subseteq S$ is an ideal, the {\em associated graded ideal} $\gr \, I \subseteq S$ is given by
\begin{equation}
    \label{eq:background-associated-graded}
    \gr \, I := ( \tau(f) \,:\, f \in I \setminus \{0\} ) \subseteq S.
\end{equation}
The ideal $\gr \, I \subseteq S$ is homogeneous by construction, so that $S / \gr \, I$ is a graded ring.
In fact, we wish to explain why it is isomorphic to the {\it associated graded ring} 
\begin{equation}
\label{eq:associated-graded-ring}
\gr_\FFF(S/I)=\bigoplus_{d=0}^\infty F_d/F_{d-1} = F_0 \oplus F_1/F_0 \oplus F_2/F_1 \oplus \cdots
\end{equation}
for the ascending filtration $\FFF=\{F_d\}_{d=0,1,2,\ldots}$ on $S/I$
\begin{equation}
\label{eq:ascending-filtration-on-S/I}
(\kk=)F_0 \subseteq F_1 \subseteq F_2 \subseteq \cdots \subseteq S/I
\end{equation}
where $F_d$ is the image of the polynomials $S_{\leq d}:=S_0\oplus S_1 \oplus \cdots \oplus S_d$ of degree at most $d$ under the surjection $S \twoheadrightarrow S/I$. 
Note that this filtration satisfies
$F_i \cdot F_j \subseteq F_{i+j}$, so that
the graded multiplication $F_i/F_{i-1} \times F_j/F_{j-1} \rightarrow F_{i+j}/F_{i+j-1}$
in $\gr_\FFF(S/I)$ is well-defined.

\begin{prop}
    \label{prop:graded-quotient-is-a-gr}
    Define a $\kk$-algebra map 
    $\varphi: S=\kk[\xx] \longrightarrow \gr_\FFF(S/I)$
sending $x_i \mapsto \bar{x}_i$ in $F_1/F_0$.
\begin{itemize}
    \item[(i)] The map $\varphi$ is surjective, with kernel $\gr\,I$, inducing an $\NN$-graded $\kk$-algebra isomorphism
$$
S/\gr\, I \cong \gr_\FFF(S/I).
$$
\item[(ii)] Consequently, any homogeneous polynomials $\{f_j\}_{j \in J}$
whose images $\{f_j +\gr\, I\}_{j \in J}$
give a $\kk$-basis of $S/\gr\, I$
will also have their images $\{f_j + I\}_{j \in J}$
giving a $\kk$-basis of $S/I$.

\item[(iii)]
In particular, whenever $S/I$ is Artinian, that is, $\dim_\kk S/I$ is {\it finite}, one can view {\it $\Hilb(S/\gr\, I,q)$ as a $q$-analogue of $\dim_\kk(S/I)$}
in this sense:
\begin{equation*}
\left[ \Hilb(S/\gr\, I,q) \right]_{q=1}
=\dim_\kk (S/\gr\, I) = \dim_\kk(S/I).
\end{equation*}
\end{itemize}
\end{prop}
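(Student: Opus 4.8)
The plan is to establish (i) by directly computing the image and kernel of $\varphi$, and then to deduce (ii) and (iii) as formal consequences of the standard dictionary relating a filtered vector space to its associated graded. First I would note that $\varphi$ is well-defined simply because $S=\kk[\xx]$ is the free commutative $\kk$-algebra on $x_1,\dots,x_n$, so the assignment $x_i\mapsto\bar x_i\in F_1/F_0$ extends uniquely to a $\kk$-algebra map; and since the multiplication on $\gr_\FFF(S/I)$ carries $F_i/F_{i-1}\times F_j/F_{j-1}$ into $F_{i+j}/F_{i+j-1}$, the map $\varphi$ sends $S_d$ into the degree-$d$ piece $F_d/F_{d-1}$, taking a degree-$d$ monomial $x^\aa$ to the class modulo $F_{d-1}$ of the image of $x^\aa$ in $S/I$. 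Surjectivity is then immediate: by definition $F_d$ is the image of $S_{\le d}$, so $F_d/F_{d-1}$ is spanned by the classes of images of degree-$d$ monomials, each of which lies in $\varphi(S_d)$.

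Next I would compute $\ker\varphi=\gr\,I$ by two inclusions, in each case reducing to homogeneous elements since both ideals are homogeneous. For $\gr\,I\subseteq\ker\varphi$: if $h\in I\setminus\{0\}$ has degree $d$ and top component $\tau(h)=h_d$, then $h_d\equiv -(h_{d-1}+\cdots+h_0)\pmod I$, so the image of $h_d$ in $S/I$ lies in $F_{d-1}$; hence $\varphi(h_d)=0$, and as $\ker\varphi$ is an ideal containing every such $\tau(h)$ we get $\gr\,I\subseteq\ker\varphi$. Conversely, if $f$ is homogeneous of degree $d$ with $\varphi(f)=0$, then the image of $f$ in $S/I$ lies in $F_{d-1}$, i.e.\ $f-g\in I$ for some $g\in S_{\le d-1}$; if $f\ne 0$ then $\tau(f-g)=f$, so $f\in\gr\,I$. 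This proves the graded $\kk$-algebra isomorphism $S/\gr\,I\cong\gr_\FFF(S/I)$ of part (i), under which $f_j+\gr\,I\mapsto\varphi(f_j)$.

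For (ii), the isomorphism of (i) turns a homogeneous $\kk$-basis $\{f_j+\gr\,I\}_{j\in J}$ of $S/\gr\,I$ into a $\kk$-basis $\{\varphi(f_j)\}_{j\in J}$ of $\bigoplus_d F_d/F_{d-1}$; grouping by degree, for each $d$ the set $\{\varphi(f_j):\deg f_j=d\}$ is a basis of $F_d/F_{d-1}$. I would then run the usual exhaustion argument: since $\bigcup_d F_d=S/I$, induction on degree shows $\{f_j+I\}_{j\in J}$ spans $S/I$, and if a finite combination $\sum_j c_j(f_j+I)$ vanished, passing to the top degree $D$ occurring would force $\sum_{\deg f_j=D}c_j\varphi(f_j)=0$ in $F_D/F_{D-1}$, hence those $c_j$ vanish, and descending induction gives linear independence. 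Part (iii) is then immediate: when $S/I$ is Artinian the filtration stabilizes, so $\dim_\kk S/\gr\,I=\sum_d\dim_\kk F_d/F_{d-1}=\dim_\kk S/I$, and setting $q=1$ in the Hilbert series yields the stated equality.

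I do not expect a genuine obstacle: the argument is routine filtered--graded bookkeeping. The only point that needs a little care is the degree indexing in (ii) --- one must insist that the $f_j$ be taken homogeneous, so that the image $f_j+I$ lies in the filtration level $F_{\deg f_j}$ and its class there is exactly $\varphi(f_j)$; otherwise the passage between a basis of $S/\gr\,I$ and a basis of $S/I$ breaks down.
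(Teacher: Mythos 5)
Your proposal is correct and follows essentially the same route as the paper: surjectivity from the generators $x_i$, the two inclusions $\gr\,I\subseteq\ker\varphi$ and $\ker\varphi\subseteq\gr\,I$ via the top-degree-component argument on homogeneous elements, and then (ii) by the standard induction on filtration degree, with (iii) as an immediate dimension count. The only difference is cosmetic organization of the basis argument in (ii), and your closing caveat about requiring the $f_j$ to be homogeneous matches the paper's hypotheses.
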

\begin{proof}
For (i), the surjectivity of $\varphi$ holds because $S$ is generated by $x_1,\ldots,x_n$.  Hence one has $S_{\leq d}=\spn_\kk\{x_{i_1} \cdots x_{i_\ell}\}_{\ell \leq d}$,
and therefore $F_d=\spn_\kk\{\bar{x}_{i_1} \cdots \bar{x}_{i_\ell}\}_{\ell \leq d}$.

To show $\ker(\varphi) \supseteq \gr\,I$,
we check
$\tau(f) \in \ker(\varphi)$ for $f \in I$.  If
$f=\sum_{i=0}^d f_i$ and $f_d\neq 0$, then
$$
\varphi(\tau(f))=\overline{f_d(\xx)}
\equiv -\sum_{i=0}^{d-1} \overline{f_i(\xx)} \,\, \bmod{I},
$$
so that $\varphi(\tau(f)) =0$ in
$F_d/F_{d-1}$.

To prove $\ker(\varphi) \subseteq \gr\, I$,
it suffices to show every
{\it homogeneous} $f$ in $\ker(\varphi)$ lies
in $\gr \, I$. If $\deg(f)=d$, then $f \in \ker(\varphi)$ implies $\bar{f} \in F_{d-1} \bmod{I}$, say $f=f'+f''$ with
$f' \in S_{\leq d-1}, f'' \in I$.  But then
$f''=f-f'$ has $\tau(f'')=f$, so $f \in \gr\, I$.

For (ii), it suffices to check that for each $m \geq 0$, the set $\{f_j+I: j \in J, \deg(f_j) \leq m\}$ is a $\kk$-basis for $F_m=\mathrm{im}(S_{\leq m} \hookrightarrow S \twoheadrightarrow S/I)$.  However, this follows by induction on $m$, since our hypotheses imply that 
$\{f_j+F_{m-1}: j \in J, \deg(f_j)= m\}$ is a $\kk$-basis for $F_m/F_{m-1}$.

Assertion (iii) then follows immediately from (ii).
\end{proof}

We have another consequence in the case $\dim_\kk S/I = d < \infty$:  the quotient $S/I$
will be determined by the intersection $I_{\leq d-1}:=I \cap S_{\leq d-1}$ with 
the first summand in the $\kk$-vector space direct sum decomposition
\begin{equation}
\label{eq:polynomials-split-at-degree-d}
S=S_{\leq d-1} \oplus S_{\geq d}
\quad 
\text{ where }S_{\geq d}:=\bigoplus_{m=d}^\infty S_m=
(x_1,\ldots,x_n)^{d}.
\end{equation}

\begin{lemma}
\label{lem:technical-Artinian-GB-lemma}
Assume the ideal $I \subseteq S=\kk[\xx]$ has $d:=\dim_\kk S/I$ finite.  
\begin{itemize}
    \item[(i)] The inclusion $S_{\leq m}/I_{\leq m} \hookrightarrow S/I$ is an isomorphism for all $m \geq d-1$.
    \item[(ii)] The graded ring $S/\gr\, I$ vanishes in degrees $m \geq d$, that is, $(S/\gr\, I)_{\geq d}=0$.
\end{itemize}
\end{lemma}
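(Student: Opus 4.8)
The plan is to analyze the ascending filtration $F_0 \subseteq F_1 \subseteq \cdots \subseteq S/I$ from \eqref{eq:ascending-filtration-on-S/I}, where $F_m$ denotes the image of $S_{\leq m}$ under $S \twoheadrightarrow S/I$, and to show it reaches all of $S/I$ by the $(d-1)$-st step. Both parts of the lemma then follow quickly. For part (i), note that $S_{\leq m}/I_{\leq m} \hookrightarrow S/I$ is injective for free, since $I_{\leq m} = I \cap S_{\leq m}$; surjectivity for a given $m$ is exactly the assertion $F_m = S/I$.

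The first real step is a stabilization lemma: if $F_m = F_{m+1}$, then $F_{m'} = F_m$ for all $m' \geq m$. By induction this reduces to showing $F_{m+2} = F_{m+1}$, and since $S_{\leq m+2}$ is spanned by monomials, it suffices to push a degree-$(m+2)$ monomial $\mu = x_i \nu$ (with $\deg \nu = m+1$) into $F_{m+1}$: choosing $g \in S_{\leq m}$ with $\nu - g \in I$ (possible as $F_{m+1} = F_m$), one has $\mu - x_i g = x_i(\nu - g) \in I$ with $x_i g \in S_{\leq m+1}$, so $\mu \equiv x_i g$ modulo $I$ lies in $F_{m+1}$.

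Next I would run the bookkeeping. Since $S = \bigcup_m S_{\leq m}$, the chain $(F_m)$ exhausts the $d$-dimensional space $S/I$, hence stabilizes; by the stabilization lemma it increases strictly up to some index $m_0$ and is then constant, equal to $S/I$. Excluding the trivial case $I = S$ (where $d = 0$ and both statements are immediate), we have $\dim F_0 = 1$ and each strict inclusion bumps the dimension by at least one, so $d = \dim F_{m_0} \geq m_0 + 1$, giving $m_0 \leq d-1$ and therefore $F_m = S/I$ for all $m \geq d-1$. This is part (i).

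Part (ii) is then a one-line consequence, obtainable two ways. Directly: for nonzero homogeneous $f \in S_m$ with $m \geq d$, part (i) applied in degree $m-1 \geq d-1$ gives $g \in S_{\leq m-1}$ with $f - g \in I$; since $\deg g < \deg f$ we get $\tau(f-g) = f \in \gr\, I$, so $(S/\gr\, I)_m = 0$. Alternatively, one invokes the isomorphism $S/\gr\, I \cong \gr_\FFF(S/I)$ of Proposition~\ref{prop:graded-quotient-is-a-gr}, under which $(S/\gr\, I)_m \cong F_m/F_{m-1}$, and this vanishes for $m \geq d$ precisely because $F_{d-1} = S/I$. I expect no serious obstacle; the only step carrying genuine content is the stabilization lemma, and the one subtlety is being careful enough with the dimension count to extract the sharp threshold $m_0 \leq d-1$ rather than a vague ``stabilizes eventually''.
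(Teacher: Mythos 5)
Your proof is correct and is essentially the paper's argument: your stabilization lemma for the filtration $F_m$ is just the filtration-level version of the paper's observation that $S/\gr\,I$, being generated in degree one, has its nonzero graded components in an initial segment, and the dimension count $d\geq m_0+1$ is the same bookkeeping via $\dim_\kk S_{\leq m}/I_{\leq m}=\dim_\kk F_m=\dim_\kk(S/\gr\,I)_{\leq m}$. The only difference is cosmetic: the paper establishes (ii) first and deduces (i) from it, whereas you prove (i) first and get (ii) as a corollary (your second alternative for (ii) is exactly the paper's framing).
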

\begin{proof}
 Recall that
$
S/\gr \, I \cong \gr_\FFF(S/I)=\bigoplus_{m=0}^\infty F_m/F_{m-1}
$
for the filtration $\FFF=\{F_m\}_{m=0,1,2,\ldots}$ on $S/I$
from \eqref{eq:ascending-filtration-on-S/I},
in which $F_m$ is the image of the composite
$S_{\leq m} \hookrightarrow S \twoheadrightarrow S/I$.
The composite has kernel $I_{\leq m}:=I \cap S_{\leq m}$,
so $S_{\leq m}/I_{\leq m} \cong  F_m$, and hence
\begin{equation}
\label{eq:inhomogeneous-ideal-cumulative-hilb}
\dim_\kk S_{\leq m}/I_{\leq m} 
= \dim_\kk  F_m = \dim_\kk (S/\gr \, I)_{\leq m}. 
\end{equation}
Note that since $S/\gr \, I$ is a graded $\kk$-algebra generated
in degree one, its nonzero graded components form an initial segment of degrees.  Since $\dim_\kk S/\gr \, I=\dim_\kk S/I=d$, one concludes that $\dim_\kk(S/\gr \, I)_{\leq m}=d$ for all $m \geq d-1$, and consequently,
$\dim_\kk(S/\gr \, I)_m=0$ for all $m \geq d$, proving assertion (ii).
For assertion (i), note \eqref{eq:inhomogeneous-ideal-cumulative-hilb} also shows
that $\dim_\kk S_{\leq m}/I_{\leq m}=d$ for all $m \geq d-1$, and therefore the inclusion $S_{\leq m}/I_{\leq m}
\hookrightarrow S/I$ must be an isomorphism.
\end{proof}

If $I \subseteq S$ is an ideal with generating set $I = (f_1, \dots, f_s)$, we have $(\tau(f_1), \dots, \tau(f_s)) \subseteq \gr \, I$, but this containment is strict in general. A finite generating set for $\gr \, I$ may be computed using graded term orderings and Gr\"obner theory as follows; see Cox, Little, O'Shea \cite{CoxLittleOShea} for more background.

\begin{definition}\rm
A total order $\preceq$ on the monomials of $S$ is a {\em term order} if 
$1 \preceq m$ for all monomials $m$, and whenever $m_1 \preceq m_2$ one also has $m_1 m_3 \preceq m_2 m_3$ for all monomials $m_1, m_2, m_3$.
\end{definition}

For $\prec$ a term order and $f \in S \setminus \{0\}$, write $\init_\prec(f)$ for the $\prec$-largest monomial appearing in $f$.

\begin{example}
The {\em lexicographical} term order is defined by $x_1^{a_1} \cdots x_n^{a_n} <_{lex} x_1^{b_1} \cdots x_n^{b_n}$ if there exists $i$ such that $a_1 = b_1, \dots, a_{i-1} = b_{i-1},$ and $a_i < b_i$.  The {\em graded lex} term order is defined by
\begin{center}
    $x^a <_{grlex} x^b$ if 
    $\deg x^a < \deg x^b$ or 
    ($\deg x^a = \deg x^b$ and $x^a <_{lex} x^b$)
\end{center}
where $a = (a_1, \dots, a_n), b = (b_1, \dots, b_n) \in \ZZ_{\geq 0}^n$. 
\end{example}

A term order $\preceq$ is {\em graded} if $m \prec m'$ whenever $\deg m < \deg m'$.  Equivalently, $\preceq$ is graded if and only if one has for all $f \in S \setminus \{0\}$ that
\begin{equation}
\label{eq:graded-term-order-init-relation}
\init_\prec(f) = \init_\prec(\tau(f)).
\end{equation}
\begin{example}
Lexicographic order $<_{lex}$ is {\it not} graded for $n \geq 2$, but $<_{grlex}$ is always graded. 
\end{example}

Let $I \subseteq S$ be an ideal and let $\prec$ be a term order. The {\em initial ideal} of $I$ is the monomial ideal
\begin{equation}
    \label{eq:background-initial-ideal}
    \init_\prec(I) := ( \init_\prec(f) \,:\, f \in I \setminus\{0\} ) \subseteq S.
\end{equation}

\begin{definition} \rm
A finite subset $G \subseteq I$ is a {\em Gr\"obner basis} of $I$ if $\init_\prec(I) = (\init_\prec(g) \,:\, g \in G)$. Equivalently, for
every $f$ in $I$ there exists some $g$ in $G$
with $\init_\prec(g)$ dividing $\init_\prec(f)$.
\end{definition}

One can show that a Gr\"obner basis $G$
for $I$ always generates $I$ as an ideal. We also have the following useful $\kk$-basis
for $S/I$.  Say that monomial $m$ in $S$ is a {\em standard monomial} of $I$ (with respect to $\prec$) if $m \notin \init_\prec(I)$. Equivalently, this means that $\init_\prec(g)$ does not divide $m$ for all $g \in G$, where $G$ is a Gr\"obner basis of $I$ with respect to $\prec$. Then the set 
\begin{equation}
    \label{eq:background-standard-monomial-basis}\
    \{m+I \,:\, \text{$m$ a standard monomial of $I$} \}
\end{equation}
is a $\kk$-basis of the quotient ring $S/I$.  It is uniquely determined by the term order $\prec$, and called the {\em standard monomial basis} of $S/I$. 
%The following {\it division algorithm} with respect to $G$ and $\prec$ expresses any $f$ in $S$ uniquely in the $\kk$-span of standard monomials modulo $I$, by iterating the following rewriting step:  if $f$ contains any non-standard monomial $m$ with nonzero coefficient $c$, say $f=cm+\hat{f}$, then find some $g=\init_\prec(g)+\hat{g}$ in $G$ with $\init_\prec(g)$ dividing $m$, and replace $f$ as follows: \begin{equation} \label{eq:division-algorithm-rewriting-step} f \rightsquigarrow f-\frac{cm}{\init_\prec(g)}\cdot g=\hat{f} - \frac{cm}{\init_\prec(g)} \cdot \hat{g}\left( \equiv f \bmod{I} \right).\end{equation}
The following can then be proven easily using \eqref{eq:graded-term-order-init-relation}.

\begin{prop}
\label{prop:graded-term-orders-and-gr}
Fix a 
graded term order $\prec$ on $S$.
Then for any ideal $I \subset S$, 
a Gr\"obner basis $G$ for $I$
(with respect to $\prec$) gives rise to a 
Gr\"obner basis with respect to $\prec$
$$
\tau(G):=(\tau(g) \,:\, g \in G)
$$
for the homogeneous ideal
$\gr \, I$.  Consequently, $\tau(G)$ also generates $\gr \, I$ as an ideal:
$$
\gr \, I=(\tau(g): g \in G)
$$
Furthermore, $I, \gr \, I$ share the same set of standard monomials $\BBB$ with respect to $\prec$, 
which descend to $\kk$-bases $\{m+I\}_{m \in \BBB}$ and $\{m+\gr \, I\}_{m \in \BBB}$
for $S/I$ and $S/\gr \, I$, respectively.
\end{prop}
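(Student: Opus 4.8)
The plan is to deduce all four assertions from the single identity
$$\init_\prec(\gr \, I) = \init_\prec(I),$$
which holds for every graded term order $\prec$. The only property of $\prec$ I would invoke is \eqref{eq:graded-term-order-init-relation}, namely $\init_\prec(f) = \init_\prec(\tau(f))$ for all nonzero $f \in S$: a graded order cannot distinguish a polynomial from its top homogeneous component at the level of initial monomials.

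To prove the identity, the inclusion $\init_\prec(I) \subseteq \init_\prec(\gr \, I)$ is immediate, since for nonzero $f \in I$ the element $\tau(f)$ lies in $\gr \, I$ by definition and $\init_\prec(f) = \init_\prec(\tau(f))$. For the reverse inclusion I would first reduce to homogeneous elements: $\gr \, I$ is a homogeneous ideal, so for nonzero $h \in \gr \, I$ the top component $\tau(h)$ again lies in $\gr \, I$, and $\init_\prec(h) = \init_\prec(\tau(h))$; hence it suffices to show $\init_\prec(h) \in \init_\prec(I)$ for $h \in \gr \, I$ homogeneous of some degree $d$. Expanding $h$ in terms of the generators $\tau(f)$ of $\gr \, I$ and extracting the degree-$d$ part, I may assume $h = \sum_i q_i \, \tau(f_i)$ with each $q_i$ homogeneous, $f_i \in I$, and $\deg(q_i) + \deg(\tau(f_i)) = d$. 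The key move is to lift this expression: set $\tilde h := \sum_i q_i f_i \in I$. Since $\deg f_i = \deg \tau(f_i)$, each product $q_i f_i$ has degree $d$ with degree-$d$ component $q_i \tau(f_i)$, so $\tilde h$ has degree at most $d$ and its degree-$d$ component is $\sum_i q_i \tau(f_i) = h \neq 0$; therefore $\tau(\tilde h) = h$. Consequently $\init_\prec(h) = \init_\prec(\tau(\tilde h)) = \init_\prec(\tilde h) \in \init_\prec(I)$, which finishes the identity.

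Granting $\init_\prec(\gr \, I) = \init_\prec(I)$, the remaining claims are formal. For each $g \in G$ one has $\init_\prec(\tau(g)) = \init_\prec(g)$, hence $(\init_\prec(\tau(g)) : g \in G) = (\init_\prec(g) : g \in G) = \init_\prec(I) = \init_\prec(\gr \, I)$, so $\tau(G) = \{\tau(g) : g \in G\}$ is a Gr\"obner basis of $\gr \, I$; since any Gr\"obner basis generates its ideal, $\gr \, I = (\tau(g) : g \in G)$. The standard monomials of $I$ and of $\gr \, I$ with respect to $\prec$ are by definition the monomials lying outside $\init_\prec(I)$ and outside $\init_\prec(\gr \, I)$ respectively, so they form one common set $\BBB$; and applying the general fact that the standard monomials descend to a $\kk$-basis of the quotient (the statement preceding \eqref{eq:background-standard-monomial-basis}) to both $I$ and $\gr \, I$ produces the two asserted bases $\{m + I\}_{m \in \BBB}$ and $\{m + \gr \, I\}_{m \in \BBB}$.

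I expect the only genuine content to be the reverse inclusion $\init_\prec(\gr \, I) \subseteq \init_\prec(I)$, and within it the single observation that the lifted polynomial $\tilde h$ retains $h$ as its top homogeneous component — this is exactly where gradedness of $\prec$ enters, through $\deg f_i = \deg \tau(f_i)$. Everything downstream is routine bookkeeping with initial ideals and the quoted standard-monomial basis theorem.
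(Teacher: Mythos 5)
Your proof is correct, and it follows exactly the route the paper intends: the paper states the proposition without proof, remarking only that it "can be proven easily using \eqref{eq:graded-term-order-init-relation}", and your argument makes that precise by establishing the key identity $\init_\prec(\gr \, I) = \init_\prec(I)$ (with the lifting step $\tilde h = \sum_i q_i f_i$, $\tau(\tilde h) = h$ supplying the only nontrivial inclusion) and then reading off the Gr\"obner basis, generation, and standard-monomial claims formally.
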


\begin{remark}
    \label{rmk:first-Gundlach-remark}
When $I=\II(\Zpoints)$ is the 
vanishing ideal in $S=\kk[\xx]$ for a finite point set $\Zpoints \subseteq \kk^n$, Gundlach
\cite[Lem.~4]{Gundlach} gives a very interesting alternate characterization of the $\prec$-standard monomials for $\II(\Zpoints)$.
Let $\kk[\Zpoints]$ denote the $\kk$-vector space of 
all functions $f:\Zpoints \rightarrow \kk$, with pointwise addition and $\kk$-scaling.  Endow $\kk^\Zpoints$ with a nondegenerate $\kk$-bilinear form
$(-,-): \kk[\Zpoints] \times \kk[\Zpoints] \rightarrow \kk$ given by
$$
(f_1,f_2):=\sum_{\zz \in \Zpoints} f_1(z) f_2(z).
$$
Let $U^\perp$ denote perp with respect to $(-,-)$ for $\kk$-subspaces $U \subseteq \kk^\Zpoints$.
Nondegeneracy of $(-,-)$ implies 
$U_1 \subsetneq U_2 \Leftrightarrow U_1^\perp \supsetneq U_2^\perp$.
By multivariate Lagrange interpolation, the map $S \rightarrow \kk[\Zpoints]$
restricting polynomials $f(\xx)$ to functions $\bar{f}$ 
on $\Zpoints$ is surjective. Since its kernel is $\II(\Zpoints)$, it gives a $\kk$-vector space isomorphism $S/\II(\Zpoints) \rightarrow \kk[\Zpoints]$.
This implies, that for any $f \in \kk[\Zpoints] \setminus \{0\}$, there must exist some
monomials $m$ in $S$ for which $(f,\bar{m}) \neq 0$.
Consequently, {\it having fixed the monomial order $\prec$}, for each $f \in \kk[\Zpoints] \setminus \{0\}$, there will be a $\prec$-smallest such monomial associated to $f$,  since $\prec$ is a well-ordering:
$$
\sm_\prec(f):=\min_\prec\{\text{ monomials }m\in S:(f,\bar{m}) \neq 0 \in \kk \,\,\}.
$$

\begin{prop}
  \label{prop:Gundlach-prop} 
  \cite[Lem.4]{Gundlach}
  For any finite subset $\Zpoints \subset \kk^n$,
  and for any choice of monomial order $\prec$ on $S=\kk[\xx]$, one has this equality of sets: 
  $$
  \{ \prec\text{-standard monomials for }\II(\Zpoints)\} = \{\sm_\prec(f): f \in \kk[\Zpoints] \setminus \{0\}\}.
  $$
\end{prop}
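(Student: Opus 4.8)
The plan is to pin down \emph{both} sides of the claimed equality by a single intermediate condition on a monomial $m$. Throughout I would use the $\kk$-linear identification $S/\II(\Zpoints) \xrightarrow{\ \sim\ } \kk[\Zpoints]$, $f \mapsto \bar f$, coming from multivariate Lagrange interpolation, and for each monomial $m$ of $S$ I would introduce the finite-dimensional subspace
$$
U_{\prec m} := \spn_\kk \{\, \bar{m'} \ :\ m'\ \text{a monomial of } S,\ m' \prec m \,\} \ \subseteq\ \kk[\Zpoints].
$$
The target is the chain of equivalences: \emph{$m$ is a $\prec$-standard monomial of $\II(\Zpoints)$} $\iff$ \emph{$\bar m \notin U_{\prec m}$} $\iff$ \emph{$m = \sm_\prec(f)$ for some nonzero $f \in \kk[\Zpoints]$}; the proposition is immediate from this, and each link is short.

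First I would establish the left equivalence by contraposition in both directions. If $m$ is not standard then $m \in \init_\prec(\II(\Zpoints))$; since that is a monomial ideal, some generator $\init_\prec(f_0)$ with $f_0 \in \II(\Zpoints)$ divides $m$, so $g := (m/\init_\prec(f_0))\,f_0 \in \II(\Zpoints)$ has $\init_\prec(g) = m$, and after scaling its leading coefficient to $1$ we may write $g = m - \sum_{m' \prec m} c_{m'} m'$. Passing to $\kk[\Zpoints]$, where $\bar g = 0$, exhibits $\bar m \in U_{\prec m}$. Conversely, any relation $\bar m = \sum_{m' \prec m} c_{m'} \bar{m'}$ produces the nonzero $h := m - \sum_{m' \prec m} c_{m'} m' \in \II(\Zpoints)$ with $\init_\prec(h) = m$, so $m \in \init_\prec(\II(\Zpoints))$ and $m$ is not standard. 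Then I would establish the right equivalence. If $m = \sm_\prec(f)$ then by definition $(f,\bar{m'}) = 0$ for all monomials $m' \prec m$, i.e.\ $f \in U_{\prec m}^\perp$, while $(f,\bar m) \neq 0$; this is incompatible with $\bar m \in U_{\prec m}$, giving one direction. Conversely, suppose $\bar m \notin U_{\prec m}$, so $U_{\prec m} \subsetneq U_{\prec m} + \kk\bar m$. Nondegeneracy of $(-,-)$ — through the perp-reversal $U_1 \subsetneq U_2 \Leftrightarrow U_1^\perp \supsetneq U_2^\perp$ recorded just before the statement — gives $U_{\prec m}^\perp \supsetneq (U_{\prec m} + \kk\bar m)^\perp$, so I may choose $f \in U_{\prec m}^\perp \setminus (U_{\prec m} + \kk\bar m)^\perp$. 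Using $(U_{\prec m}+\kk\bar m)^\perp = U_{\prec m}^\perp \cap (\kk\bar m)^\perp$, such an $f$ is nonzero, annihilates every $\bar{m'}$ with $m' \prec m$, and satisfies $(f,\bar m) \neq 0$; hence $\sm_\prec(f) = m$.

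The argument is quick once the right pivot $U_{\prec m}$ is isolated, so the ``hard part'' is conceptual rather than technical: recognizing that membership in the standard-monomial set and being realized as some $\sm_\prec(f)$ are each governed by the \emph{same} non-membership statement $\bar m \notin U_{\prec m}$. The only input beyond the definitions is the nondegeneracy of the form, used in exactly one direction of the second equivalence — no Gr\"obner division algorithm or Cohen--Macaulay fact is needed. As an alternative to that direction one could run a global dimension count, tracking how $\dim U_{\prec m}^\perp$ decreases (by $0$ or $1$) as $m$ runs through all monomials in increasing $\prec$-order, from $\#\Zpoints$ down to $0$, with a strict drop exactly at the monomials of the form $\sm_\prec(f)$, and then combine this with the left equivalence and $\#\{\text{standard monomials}\} = \#\Zpoints$; I would prefer the direct perp argument, since it avoids the mild transfinite bookkeeping required by the order type of $\prec$.
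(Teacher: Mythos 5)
Your proof is correct and follows essentially the same route as the paper's: you reformulate $\prec$-standardness of $m$ as $\bar m \notin \spn_\kk\{\bar m' : m' \prec m\}$ and then pass to the statement about $\sm_\prec$ via perp-reversal for the nondegenerate form, which is exactly the paper's chain of equivalences. The only difference is presentational (two named equivalences around the pivot $U_{\prec m}$ instead of one long chain), so no changes are needed.
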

\begin{proof}
For any monomial $m$ in $S$ one has the following:
\begin{align*}
m \text{ is }\prec\text{-standard for }\II(\Zpoints)
 & \quad \Leftrightarrow \quad
    \not\exists \,\,  g \in \II(\Zpoints) \text{ with }
    \init_\prec(g) \text{ dividing } m\\
 & \quad \Leftrightarrow \quad
    \not\exists \,\, \hat{g} \in \II(\Zpoints) \text{ with }
    \init_\prec(\hat{g})=m\\
     & \quad \Leftrightarrow \quad
    \not\exists \,\,  \hat{g} \in \II(\Zpoints) \text{ of the form }\hat{g}=m+\sum_{m': m' \prec m} c_{m'} m'\text{ with }c_{m'} \in \kk\\
    & \quad \Leftrightarrow \quad
    \bar{m} \not\in \spn_\kk\{\bar{m}': m' \prec m\} \text{ inside } S/\II(\Zpoints) =\kk[\Zpoints].\\
     & \quad \Leftrightarrow \quad
    \spn_\kk\{\bar{m}': m' \prec m\} \subsetneq 
    \spn_\kk\left( \{\bar{m}\} \cup \{\bar{m}': m' \prec m\}\right)  \\
    & \quad \Leftrightarrow \quad
    \spn_\kk\{\bar{m}': m' \prec m\}^\perp \supsetneq 
    \spn_\kk\left( \{\bar{m}\} \cup \{\bar{m}': m' \prec m\}\right)^\perp  \\
    & \quad \Leftrightarrow \quad
    \exists f \in \spn_\kk\{\bar{m}': m' \prec m\}^\perp \setminus 
    \spn_\kk\left( \{\bar{m}\} \cup \{\bar{m}': m' \prec m\}\right)^\perp  \\
    & \quad \Leftrightarrow \quad
    \exists f \in \kk[\Zpoints] \setminus \{0\}
    \text{ with }\sm(f)=m.\qedhere
\end{align*}
\end{proof}

\end{remark}

%%%%%%
\subsection{Homogeneous harmonics in characteristic zero}
\label{sec:harmonics-char-zero}

A good reference for much of this material is Geramita \cite[\S 2]{Geramita}.
Let $\kk$ be a field of characteristic zero.  We wish to set up two polynomial rings over $\kk$, one of which acts on the other by partial derivatives.  Let $\kk^n$ and its $\kk$-dual $(\kk^n)^*$ have dual ordered $\kk$-bases $(y_1,\ldots,y_n)$ and $(x_1,\ldots,x_n)$
with respect to the usual $\kk$-bilinear pairing of functionals and vectors 
\begin{equation}
\label{eq:functional-vector-pairing}
\langle -,-\rangle: (\kk^n)^* \times 
\kk^n \rightarrow \kk
\end{equation}
so that $\langle x_i, y_j\rangle=\delta_{ij}$.
If one considers the polynomial algebras
\begin{align*}
S&:=\kk[\xx]=\kk[x_1,\ldots,x_n],\\
\Div&:=\kk[\yy]=\kk[y_1,\ldots,y_n]
\end{align*}
then one can extend this to a $\Div$-valued pairing 
\begin{equation}
\label{eq:apolarity-pairing}
\odot: S \times \Div \rightarrow \Div
\end{equation}
by requiring that each $x_i$ act on $\Div$ as a derivation. That is,
$x_i$ acts as $\frac{\partial}{\partial y_i}: \Div \rightarrow \Div$, and for polynomials $f(\xx), g(\yy)$, one has
$$
f(\xx) \odot g(\yy):=f\left(\frac{\partial}{\partial y_1},\ldots,\frac{\partial}{\partial y_n}\right) g(\yy).
$$
In this way, one obtains an $S$-module structure on $\Div$.  This $S$-module structure on $\Div$ is degree-lowering for the usual gradings on $S,\Div$ in which $\deg(x_i)=\deg(y_j)=1$, in the sense that it restricts to a map
$
\odot: S_m \times \Div_{m'} \rightarrow \Div_{m'-m}.
$
This lets one extend the pairing
$\langle -,- \rangle$ from \eqref{eq:functional-vector-pairing} to a $\kk$-bilinear pairing
$\langle -, - \rangle: S \times \Div \rightarrow \kk$ defined by
\begin{equation}
    \label{eq:background-pairing}
    \langle f(\xx), g(\yy) \rangle :=
    \text{the constant term of $f \odot g$}
\end{equation}
Employing an exponential notation for monomials $\xx^\aa:=x_1^{a_1} \cdots x_n^{a_n}$
in $S$, where $\aa=(a_1,\ldots,a_n)$ lies in $\{0,1,2,\ldots\}^n$, and 
similarly for monomials $\yy^\bb$ in $\Div$,
one can check that 
$$
\xx^\aa \odot \yy^\bb
=\begin{cases}
\prod_{i=1}^n \frac{b_i!}{(b_i-a_i)!}
\cdot \yy^{\bb-\aa} &\text{ if }a_i \leq b_i \text{ for }i=1,\ldots,n,\\
0& \text{ otherwise.}
\end{cases}
$$
One sees that $\langle -,-\rangle$ pairs orthonormally the $\kk$-dual bases
$
\{ \xx^\aa \}
\text{ and }
\left\{ \frac{\yy^\aa}{a_1! \cdots a_n!}\right\},
$
and hence restricts to a 
perfect $\kk$-linear pairing on the (finite-dimensional!) spaces 
\begin{align*}
\langle -,-\rangle: &S_m \times \Div_m \longrightarrow \kk\\
\langle -,-\rangle: &S_{\leq m} \times \Div_{\leq m} \longrightarrow \kk.
\end{align*}
Note that $S_m$ and $\Div_{m'}$ are perpendicular with respect to the pairing $\langle -,- \rangle$
whenever $m \neq m'$.

\begin{definition} \rm 
\label{def:harmonic-space}
For any homogeneous ideal $I \subseteq S=\kk[\xx]$, the {\it harmonic space} (or {\it Macaulay inverse system}) $I^\perp \subseteq \Div=\kk[\yy]$ is the graded vector space
\begin{align}
    \label{eq:background-harmonic-space}
    I^\perp &:= \{ g(\yy) \in \Div\, :\, \langle f(\xx), g(\yy) \rangle = 0 \text{ for all $f \in I$} \}\\
    \label{eq:background-harmonic-space-reformulated}
    &=\{ g(\yy) \in \Div\, :\, f(\xx) \odot g(\yy) = 0 \text{ for all $f \in I$} \}.
\end{align}
\end{definition}
The equality of the two sets on the right in
\eqref{eq:background-harmonic-space}, 
\eqref{eq:background-harmonic-space-reformulated}
is justified as follows.  If $f \odot g=0$ then $\langle f,g \rangle=0$, showing the set 
from 
\eqref{eq:background-harmonic-space-reformulated}
is contained in the set from \eqref{eq:background-harmonic-space}.  For the reverse inclusion, note
that if $f(\xx) \odot g(\yy) \neq 0$ for some $f(\xx)$ in $I$,
say $f(\xx) \odot g(\yy)  =\sum_\aa c_\aa \yy^\aa$ with some $c_\aa \neq 0$, then the ideal $I$ contains $\xx^\aa f(\xx)$ with 
$\langle \xx^\aa f(\xx), g(\yy)\rangle = c_\aa \prod_i a_i! \neq 0$.

Note that in each degree $m$, the perfect pairing $\langle -,-\rangle: S_m \times \Div_m \rightarrow \kk$ gives a $\kk$-vector space isomorphism $\Div_m \rightarrow S_m^*$ 
sending $g(\yy) \mapsto \langle -,g\rangle$.
This induces a $\kk$-vector space isomorphism 
\begin{equation}
\label{eq:perp-contragredient-to-quotient}
I^\perp_m \rightarrow (S_m/I_m)^*,
\end{equation}
showing that $\dim_\kk I^\perp = \dim_\kk (S_m/I_m)^*$.  Hence as graded $\kk$-vector spaces one has
\begin{equation}
\label{eq:perp-and-quotient-share-Hilbert-series}
\Hilb(I^\perp,q)=\Hilb(S/I,q).
\end{equation}

%Hoping we won't need this next bit so much ...
\begin{comment}
Standard results of linear algebra yield a graded direct sum decomposition
\begin{equation}
    \label{eq:background-harmonic-direct-sum}
    S = I \oplus \overline{I^\perp}
\end{equation}
where $\overline{I^\perp} = \{ \bar{g} \,:\, g \in I^\perp \}$. Consequently, the composite $\overline{I^\perp} \hookrightarrow S \twoheadrightarrow S/I$ of inclusion followed by projection is an isomorphism of graded vector spaces. The graded vector space $I^\perp$ is the {\em harmonic space associated to $I$}; it has the same Hilbert series as $S/I$.
\end{comment}

%%%%%%%%%
\subsection{Homogeneous harmonics for all fields: divided powers}
\label{sec:harmonics-over-all-fields}

In order to define harmonic spaces over arbitrary fields, as needed in Theorem~\ref{thm:minkowski-closure}, we will need to replace $\Div=\kk[\yy]$ with a {\it divided power algebra} over a field $\kk$.  We therefore review divided power algebras here; a reader who is content with seeing 
Theorem~\ref{thm:minkowski-closure} stated and/or proven only
in characteristic zero can mostly skip this section. Useful references for this material are Eisenbud \cite[A2.4]{Eisenbud} and Geramita \cite[\S 9]{Geramita}.

\begin{definition} \rm
Let $\kk$ be any field. For $n=0,1,2,\ldots$, let $\yy = (y_1, \dots, y_n)$ be a list of $n$ variables, thought of as the $\kk$-basis for $\kk^n$.
Then the {\em divided power algebra} of rank $n$ over $\kk$ is defined as a $\kk$-vector space $\Div_\kk(\yy)$ with ``monomial" $\kk$-basis given by the symbols $\yy^{(\aa)}:=y_1^{(a_1)} \cdots y_n^{(a_n)}$ for $\aa=(a_1, \dots, a_n) \in \{0,1,2,\ldots\}^n$.
One can then define a multiplication on $\Div_\kk(\yy)$ which is $\kk$-bilinear and determined on monomials by the rule
\begin{equation}
\label{eq:divided-power-multiplication}
   \yy^{(\aa)} \cdot \yy^{(\bb)}=(y_1^{(a_1)} \cdots y_n^{(a_n)}) \cdot (y_1^{(b_1)} \cdots y_n^{(b_n)}) := {a_1 + b_1 \choose a_1,b_1} \cdots {a_n + b_n \choose a_n,b_n} y_1^{(a_1 + b_1)} \cdots y_n^{(a_n + b_n)}
\end{equation}
where the binomial coefficients ${a_i + b_i \choose a_i, b_i}$ are regarded as elements of $\kk$ in the natural way;
some will vanish when $\kk$ has positive characteristic.
This makes $\Div_\kk(\yy)$ an associative, commutative $\kk$-algebra with unit $1 = y_1^{(0)} \cdots y_n^{(0)}$.  It has a grading
$\Div_\kk(\yy) = \bigoplus_{m \geq 0} \Div_\kk(\yy)_m$ where $\Div_\kk(\yy)_m$ has $\kk$-basis $\{\yy^{(\aa)} \,:\, a_1 + \cdots + a_n = m \}$.
\end{definition}

Roughly speaking, the symbol $y_i^{(d)} \in \Div_\kk(\yy)$ plays the role of $`` y_i^d/d! "$, even when $d! = 0$ in $\kk$. When $\kk$ has characteristic zero, the
 $\kk$-vector space isomorphism $\Div_\kk(\yy) \cong \kk[\yy]$ given by
$$
\begin{array}{rcl}
\Div_\kk(\yy) &\longrightarrow& \kk[\yy]\\
\yy^{(\aa)}=y_1^{(a_1)} y_2^{(a_2)} \cdots  y_n^{(a_n)} &\longmapsto& 
\frac{ y_1^{a_1} y_2^{a_2 }\cdots y_n^{a_n}}
{a_1! a_2! \cdots a_n!}
\end{array}
$$
is a ring isomorphism.  In fact, with conventions $y_i^{(0)}:=1$ and $y_i^{(1)}:=y_i$, the set map  $y_i \mapsto y_i^{(d)}$ for $i=1,2,\ldots,n$ extends to what is called a {\it system of divided powers} on $\Div_\kk(\yy)$:  a collection of maps for $d=0,1,2,\ldots$
$$
\begin{array}{rcl}
\Div_\kk(\yy)_m &\longrightarrow &\Div_\kk(\yy)_{dm}\\
a &\longmapsto &a^{(d)}
\end{array}
$$
satisfying these axioms (modeled on properties of the maps $a \mapsto \frac{a^d}{d!}$ that exist whenever $\kk \supseteq \QQ$):
\begin{align}
    a^{(0)}&=1,
    a^{(1)}=z\\
\label{eq:divided-law-of-exponents}
a^{(d)} a^{(e)} &= \binom{d+e}{d,e} a^{(d+e)}\\
    \left( a^{(d)} \right)^{(e)} 
    &= \frac{1}{e!} \binom{de}{d,d,\ldots,d} a^{(de)}\\
    (a b)^{(d)}&=d! \cdot a^{(d)} b^{(d)} 
    %= a^d b^{(d)} = a^{(d)} b^d
    \\
\label{eq:beginner-binomial-thm}    (a+b)^{(d)}&=\sum_{\substack{(d_1,d_2):\\d_1+d_2=d}} a^{(d_1)} b^{(d_2)}
\end{align}
For example, the reader might wish to check that
iterating \eqref{eq:divided-law-of-exponents} implies
$a^d=(a^{(1)})^d=d! \cdot a^{(d)}$, so that one
has no choice but to define $a^{(d)}=a^d/d!$ 
whenever $d! \in \kk^\times$.
Similarly, if one iterates \eqref{eq:beginner-binomial-thm},
which  Eisenbud \cite[A2.4]{Eisenbud} calls the ``beginner's binomial theorem", one obtains the 
``beginner's multinomial theorem":
\begin{equation}
\label{eq:beginner-multinomial-thm} 
(a_1+\cdots+a_m)^{(d)}=\sum_{\substack{(d_1,\ldots,d_m):\\ d_1+\cdots+d_m=d}} a_1^{(d_1)} \cdots a_m^{(d_m)}.  
\end{equation}
It is also not hard to check that one has a graded $\kk$-algebra isomorphism
\begin{equation}
    \Div_\kk(\yy) \cong \Div_\kk(y_1) \otimes_\kk \cdots \otimes_\kk \Div_\kk(y_n).
\end{equation}

For the sake of defining harmonics and inverse systems, let $S: = \kk[\xx] = \kk[x_1, \dots, x_n]$ as before. The algebra $\Div_\kk(\yy)$ attains a unique $S$-module structure $\odot: S \times \Div_\kk(\yy) \rightarrow \Div_\kk(\yy)$ by having $x_i$ act on $\Div_\kk(\yy)$ as a derivation, extending the rule
\begin{equation}
    x_i \odot y_j^{(k)} = \begin{cases}
        y_i^{(k-1)} & \text{ if }i = j \text{ and } k \geq 1, \\
        0 & \text{otherwise.}
    \end{cases}
\end{equation}
This gives rise to a $\kk$-bilinear pairing $\langle - , - \rangle: S \times \Div_\kk(\yy) \rightarrow R$ given by 
\begin{equation}
    \langle f, g \rangle := \text{ the constant term of $f \odot g$}
\end{equation}
under which
\begin{equation}
\label{eq:divided-pairing}
    \langle \xx^\aa, \yy^{(\bb)} \rangle = \langle x_1^{a_1} \cdots x_n^{a_n}, y_1^{(b_1)} \cdots y_n^{(b_n)} \rangle = \begin{cases} 1 & \text{if $a_i = b_i$ for all $i$,} \\ 0 & \text{otherwise.}
    \end{cases}
\end{equation}
In particular, 
the $\kk$-bilinear pairing $\langle - , - \rangle: S \times \Div_\kk(\yy) \rightarrow \kk$
generalizes the one from \eqref{eq:background-pairing} when $\kk \supseteq \QQ$.  It again leads to perfect $\kk$-bilinear pairings on these finite-dimensional spaces:
\begin{align*}
\langle -,-\rangle: &S_m \times \Div_\kk(\yy)_m \longrightarrow \kk\\
\langle -,-\rangle: &S_{\leq m} \times \Div_\kk(\yy)_{\leq m} \longrightarrow \kk.
\end{align*}

This leads to the following
generalization of Definition~\ref{def:harmonic-space}.

\begin{definition} \rm 
For $\kk$ any field and any homogeneous ideal $I \subseteq S=\kk[\xx]$, the {\it harmonic space} (or {\it Macaulay inverse system}) $I^\perp \subseteq \Div_\kk(\yy)$ is the graded $\kk$-vector space
\begin{align*}
I^\perp &:= \{ g(\yy) \in \Div_\kk(\yy) \, :\, \langle f(\xx), g(\yy) \rangle = 0 \text{ for all $f \in I$} \}\\
    &=\{ g(\yy) \in \Div_\kk(\yy) \, :\, f(\xx) \odot g(\yy) = 0 \text{ for all $f \in I$} \}.
\end{align*}
\end{definition}

Note that, as before, the perfect pairing $\langle -,-\rangle: S_m \times \Div_\kk(\yy)_m \rightarrow \kk$ gives a $\kk$-linear isomorphism $\Div_\kk(\yy)_m \rightarrow S_m^*$ 
sending $g(\yy) \mapsto \langle -,g\rangle$.
As in \eqref{eq:perp-contragredient-to-quotient},
this induces a $\kk$-linear isomorphism 
\begin{equation}
\label{eq:perp-contragredient-to-quotient-divided}
I^\perp_m \rightarrow (S_m/I_m)^*,
\end{equation}
showing $\dim_\kk I^\perp = \dim_\kk (S_m/I_m)^*$.  Hence one has this generalization of \eqref{eq:perp-and-quotient-share-Hilbert-series}:
\begin{equation}
\label{eq:perp-and-quotient-share-Hilbert-series-divided}
\Hilb(I^\perp,q)=\Hilb(S/I,q).
\end{equation}

%%%%%%
\subsection{Symmetry}
The natural action of $GL_n(\kk)$ on $\kk^n$ and a given basis $y_1,\ldots,y_n$ induces a left-action on the polynomials $\Div = \kk[y_1, \dots, y_n]$ by linear substitutions.  The contragredient action on $(\kk^n)^*$ precomposes functionals with $h^{-1}$,
that is, sending $h: f \mapsto f\circ h^{-1}$,
thereby acting on $x_1,\ldots,x_n$, as well as on the polynomials $S=\kk[x_1,\ldots,x_n]$.  Explicitly, 
if $h$ in $GL_n(\kk)$ acts in the basis $y_1,\ldots,y_n$ via the matrix $A$ in $\kk^{n \times n}$, then 
\begin{align*}
h&: g(\yy)\longmapsto g(A\yy),\\
h&: f(\xx) \longmapsto f((A^{-1})^t\xx).
\end{align*}
Note that the pairing
$\langle -,-\rangle: (\kk^n)^* \times 
\kk^n \rightarrow \kk
$
between functionals and vectors in 
\eqref{eq:functional-vector-pairing}
satisfies a certain {\it invariance} with respect to
these $GL_n(\kk)$-actions: for any linear functional $f$
in $(\kk^n)^*$, vector $y$ in $\kk^n$, and $h$ in $GL_n(k)$, one has
$$
\langle h(f),h(y) \rangle
=(f \circ h^{-1})(h(y)) =f(y)
=\langle f,y \rangle.
$$
Consequently, the pairings $\odot$ and $\langle -,- \rangle$ are similarly
invariant with respect to the $GL_n(\kk)$-action:
\begin{align}
h(f(\xx)) \odot h(g(\yy))&= f(\xx) \odot g(\yy),\\
\label{eq:invariance-of-perfect-pairings}
\langle h(f(\xx)), h(g(\yy)) \rangle&= 
\langle f(\xx), g(\yy) \rangle.
\end{align}

\begin{remark}
For arbitrary fields $\kk$ when one replaces the
polynomial algebra $\Div=\kk[\yy]$ with
the divided power algebra $\Div_\kk(\yy)$,
it is still true the $GL_n(\kk)$-action on 
$\kk^n=\Div_\kk(\yy)_1$ extends to an action via
graded $\kk$-algebra automorphisms on all of $\Div_\kk(\yy)$.  This fact is more apparent when one constructs multiplication in $\Div_\kk(\yy)$ as the {\it (graded) dual} of the coalgebra structure
$\Delta:S \rightarrow S \otimes S$ on $S=\kk[\xx]$ in which  $\Delta(x_i)=1 \otimes x_i + x_i \otimes 1$, that is, each $x \in S_1=(\kk^n)^*$ is {\it primitive}.  See
Akin, Buchsbaum and Weyman \cite[\S I.4]{AkinBuchsbaumWeyman}, Eisenbud \cite[A2.4]{Eisenbud}, Geramita \cite[\S 9]{Geramita} for more on this alternate construction of
$\Div_\kk(\yy)$.
\end{remark}

A consequence of the $GL_n(\kk)$-{\it invariance} \eqref{eq:invariance-of-perfect-pairings} is that the isomorphisms \eqref{eq:perp-contragredient-to-quotient},
\eqref{eq:perp-contragredient-to-quotient-divided} become $GL_n(\kk)$-{\it equivariant}. This shows that for each $m$, one has isomorphisms $I^\perp_m \cong (S/I)^*$ as $GL_n(\kk)$-representations.
The same holds for the action of any subgroup $G \subseteq GL_n(\kk)$ on $S, \Div$ by restriction.

%%%%%%
\subsection{Representation theory}
We will be interested in polytopes and 
point loci in $\RR^n$ with symmetry,
and  wish to keep track
of the representations of their symmetry groups on the various $\kk$-vector spaces that we construct.
We review one way to do such bookkeeping,
using the language of $\kk[G]$-modules and
representation rings.

\begin{definition} \rm
For a field $\kk$ and finite group $G$, define its {\it representation ring} $\Cl_\kk(G)$ as follows. 
\begin{itemize}
    \item As a $\ZZ$-module,
$\Cl_\kk(G)$ is the quotient of free $\ZZ$-module
with $\ZZ$-basis elements $[V]$ indexed by all
isomorphism classes finite-dimensional $\kk[G]$-modules $V$, in which one mods out by the submodule
$\ZZ$-spanned by all relations
\begin{equation}
\label{eq:direct-sum-relation-in-rep-ring}
[V \oplus V']-([V]+[V']).
\end{equation}
\item As a $\ZZ$-algebra, its multiplication is induced by the rule 
$$
[V] \cdot [V']:=[V \otimes V']. $$
\item The operation $V \mapsto V^*$ of
taking the contragredient $\kk[G]$-module
leads to a $\ZZ$-automorphism and involution on $(-)^*: \Cl_\kk(G)\rightarrow \Cl_\kk(G)$
$$
[V]^*:=[V^*].
$$
\end{itemize}
\end{definition}
Whenever $\#G$ lies in $\kk^\times$, Maschke's Theorem
asserts that $\kk[G]$-modules are completely reducible, which shows that $\Cl_\kk(G)$ is a free $\ZZ$-module on the $\ZZ$-basis $[V_1],[V_2],\ldots,[V_N]$
where $V_1,\ldots,V_N$ are the non-isomorphic {\it simple/irreducible} $\kk[G]$-modules.

For any field $\kk$, the map sending $[V]$ to its {\it character} $\ch(V): G \rightarrow \kk$ defined by
$$
\ch(V)(g):=\mathrm{trace}(g:V \rightarrow V),
$$
becomes an algebra map from $\Cl_\kk(G)$ into 
the {\it ring of class functions} $\{f: G \rightarrow \kk \}$, that is, functions which are constant on $G$-conjugacy classes. The ring of class functions is given pointwise addition, multiplication, and the involution $f \mapsto f^*$ defined by $f^*(g)=f(g^{-1})$. Whenever $\kk$ has characteristic zero, this algebra map is injective, and in particular, two $\kk[G]$-modules $V,V'$ are isomorphic (that is, $[V]=[V']$) if and only if they have the same character
$\ch(V)=\ch(V')$.

%\begin{remark} \rm
%On the other hand, for any field $\kk$, if one
%replaces the relations \eqref{eq:direct-sum-relation-in-rep-ring} with the relations
%$[V'']-([V]+[V'])$ for all short exact sequences of
%$\kk[G]$-modules $0 \rightarrow V \rightarrow V'' \rightarrow V \rightarrow 0$, then the definition of $\Cl_\kk(G)$ would become that of the {\it Grothendieck ring} of $\kk[G]$-modules.  The Jordan-H\"older Theorem shows that this is again a free $\ZZ$-module with $\ZZ$-basis given by the classes
%$[V_1],\ldots,[V_N]$ of the simple $\kk[G]$-modules.
%Two $\kk[G]$-modules $V,V'$ have the same image $[V]=[V']$ in this ring if and only if they have the same composition factor multiplicities
%$[V:V_i]=[V':V_i]$ for $i=1,2,\ldots,N$, in which case $V, V'$ are called {\it Brauer-isomorphic}.
%\end{remark}

More generally, for graded $\kk[G]$-modules
$V = \bigoplus_{m \geq 0} V_m$, with each $V_m$ a finite-dimensional $\kk[G]$-module,
we will track the representation with a power series in $\Cl_\kk(G)[[q]]$:
\begin{equation}
    \label{eq:background-graded-character}
    [V]_q := \sum_{m \, \geq \, 0} [V_m] \cdot q^i.
\end{equation}
In particular, when $I \subseteq S$ is a homogeneous ideal which is stable under the action of a finite subroup $G$ of $GL_n(\kk)$, both the quotient $S/I$ and the harmonic space $I^\perp$ inherit the structure of graded $\kk[G]$-modules. Then \eqref{eq:perp-contragredient-to-quotient-divided}
implies a graded 
$\kk[G]$-module isomorphism
$I^\perp \cong (S/I)^*$,
and hence $[I^\perp]_q=[S/I]^*_q$ in $\Cl_\kk(G)[[q]]$.

On the other hand, we will also consider
potentially {\it inhomogeneous} ideals
$I \subseteq S$ that are stable under the action of a finite subgroup $G \subseteq GL_n(\kk)$,
e.g., $I = \II(\Zpoints)$ where $\Zpoints \subseteq \kk^n$ is a $G$-stable locus.

\begin{prop}
\label{prop:gr-is-Brauer-isomorphic}
Assume $\# G$ lies in $\kk^\times$.
For any ideal $I \subseteq S=\kk[\xx]$ which is stable under
a finite subgroup $G$ of $GL_n(\kk)$, 
if $\dim_\kk S/I$ finite, then one has a 
$\kk[G]$-module
isomorphism $S/I \cong S/\gr\, I$. 
\end{prop}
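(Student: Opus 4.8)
The plan is to upgrade the vector-space isomorphism $S/I \cong S/\gr\, I$ of Proposition~\ref{prop:graded-quotient-is-a-gr} to a $\kk[G]$-module isomorphism by splitting the defining filtration one step at a time, using Maschke's theorem. First I would observe that $G$, acting on $S=\kk[\xx]$ by the contragredient linear substitutions on $x_1,\dots,x_n$, acts by \emph{graded} $\kk$-algebra automorphisms, hence preserves each subspace $S_{\leq d}$. Since $I$ is $G$-stable, the image $F_d$ of $S_{\leq d}$ under $S \twoheadrightarrow S/I$ is a $\kk[G]$-submodule, so the ascending filtration $\FFF=\{F_d\}$ of \eqref{eq:ascending-filtration-on-S/I} is a filtration of $S/I$ by $\kk[G]$-submodules, and $\gr_\FFF(S/I)=\bigoplus_d F_d/F_{d-1}$ is a graded $\kk[G]$-module. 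Moreover the graded algebra isomorphism $S/\gr\, I \cong \gr_\FFF(S/I)$ of Proposition~\ref{prop:graded-quotient-is-a-gr}(i) is $G$-equivariant: the algebra map $\varphi$ inducing it is determined by $\varphi|_{S_1}$, which is the composite $S_1 \hookrightarrow S \twoheadrightarrow S/I \twoheadrightarrow F_1/F_0$ of $G$-equivariant maps, and $G$ acts by algebra automorphisms on both source and target; in particular $\gr\, I = \ker\varphi$ is $G$-stable and $S/\gr\, I \cong \gr_\FFF(S/I)$ as $\kk[G]$-modules. (That $\gr\, I$ is $G$-stable is also immediate from the fact that the leading-form operator $\tau$ commutes with the degree-preserving $G$-action.)

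Next I would invoke finiteness together with Maschke. Since $\dim_\kk S/I<\infty$, the filtration $\FFF$ is exhaustive and hence stabilizes: $F_N=S/I$ for all large $N$ (concretely, $N=\dim_\kk S/I-1$ by Lemma~\ref{lem:technical-Artinian-GB-lemma}(i)). Now the hypothesis $\#G\in\kk^\times$ lets Maschke's theorem apply, so every short exact sequence of finite-dimensional $\kk[G]$-modules splits; in particular, for each $d$ the sequence $0\to F_{d-1}\to F_d\to F_d/F_{d-1}\to 0$ splits, yielding a $\kk[G]$-isomorphism $F_d\cong F_{d-1}\oplus (F_d/F_{d-1})$. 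Inducting on $d$ gives $F_d\cong\bigoplus_{i=0}^{d}F_i/F_{i-1}$ as $\kk[G]$-modules, and setting $d=N$ produces
\[
S/I \;=\; F_N \;\cong\; \bigoplus_{i=0}^{N} F_i/F_{i-1} \;=\; \gr_\FFF(S/I) \;\cong\; S/\gr\, I
\]
as $\kk[G]$-modules, which is the assertion.

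The conceptual content is just "filtered object in a semisimple category is isomorphic to its associated graded", so I do not expect a genuine obstacle. The only point needing a little care is the $G$-equivariance of the isomorphism in Proposition~\ref{prop:graded-quotient-is-a-gr}(i), which is the one place where I want the defining map $\varphi$ to be recognized as an algebra map generated in degree one so that its degree-one $G$-equivariance propagates to all degrees. One should also keep in mind that the splittings must be of $\kk[G]$-modules — this is exactly where $\#G\in\kk^\times$ is used — since in positive characteristic dividing $\#G$ the analogous conclusion can fail.
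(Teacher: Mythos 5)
Your proof is correct and follows essentially the same route as the paper: establish $G$-equivariance of the isomorphism $S/\gr\, I \cong \gr_\FFF(S/I)$ and $G$-stability of the filtration $\FFF$, then use complete reducibility (Maschke, via $\#G \in \kk^\times$) to conclude that the finitely filtered $\kk[G]$-module $S/I$ is isomorphic to its associated graded. You merely spell out the splitting-and-induction step that the paper compresses into the phrase ``complete reducibility then shows $S/I \cong S/\gr\, I$.''
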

\begin{proof}
Recall \eqref{eq:associated-graded-ring}
gave an isomorphism
$S/\gr\, I \cong \gr_\FFF(S/I)=\bigoplus_{d=0}^M F^d/F^{d-1}$,
where the sum on the right is finite here due to the finiteness assumption on $\dim_\kk S/I$.
This isomorphism is easily seen to be $G$-equivariant.
Since the action of $G$ preserves degree, the filtration $\FFF$ of $S/I$ is $G$-stable, and its filtration factors match the $\kk[G]$-module structures on the
graded components of $S/\gr\, I$. Complete reducibility of $\kk[G]$-modules then shows $S/I \cong S/\gr\, I$.
\end{proof}

This proof also shows, for any field $\kk$, the $\kk[G]$-modules $S/\gr \, I$ and $S/I$
are Brauer-isomorphic.

\section{The $q$-Ehrhart series and Conjecture~\ref{conj:intro-omnibus}}
\label{sec:Ehrhart}

Throughout this section, we take $\kk=\RR$ and consider finite point loci
$\Zpoints \subset \RR^n$.  The locus $\Zpoints$ has two associated ideals inside 
$
S=\RR[\xx]=\RR[x_1,\ldots,x_n]:
$ 
the (inhomogeneous) {\it vanishing ideal} 
$$
\II(\Zpoints) = \{ f(\xx) \in S \,:\, f(\zz) = 0 \text{ for all $\zz \in \Zpoints$} \}
$$
and its {\it associated graded ideal} 
$$
\gr \, \II(\Zpoints):=(\tau(f): f \in \II(\Zpoints) ).
$$
Within the polynomial ring $\Div=\RR[\yy]=\RR[y_1,\ldots,y_n]$
in the dual variables $\yy$, the
{\it harmonic space} $\left( \gr \, \II(\Zpoints) \right)^\perp$ of $\gr \, \II(\Zpoints)$ will play a crucial role in our work. To reduce notational clutter, we write
 \begin{equation}
     \label{eq:v-harmonic-notation}
     V_\Zpoints := \left( \gr \, \II(\Zpoints) \right)^\perp
 \end{equation}
 for this harmonic space. This notation emphasizes the role of $V_\Zpoints$ as a graded subspace of $\Div$ which is almost never closed under multiplication.
 On the other hand, \eqref{eq:perp-and-quotient-share-Hilbert-series} shows that, as a graded vector space, it has the same Hilbert series (actually a polynomial here) as the quotient ring $R(\Zpoints) := S/\gr \, \II(\Zpoints)$ that was defined in \eqref{eq:intro-point-orbit-ring}
 $$
 \Hilb(V_\Zpoints,q)=\Hilb(R(\Zpoints),q).
 $$
 Note Proposition~\ref{prop:graded-quotient-is-a-gr}(iii)  shows  this Hilbert series is a $q$-analogue of the cardinality $\#\Zpoints$, that is,
 $$
 \left[ \Hilb(V_\Zpoints,q) \right]_{q=1}=\left[\Hilb(R(\Zpoints),q)\right]=\dim_\kk(S/\II(\Zpoints)=\#\Zpoints.
 $$
 
 Furthermore, when $G$ is a finite subgroup of $GL_n(\RR)$
 that preserves $\Zpoints$ setwise, it acts via ring
 automorphisms and (graded) $\RR[G]$-modules
 on all of the objects under consideration:  
 $$
 S,
 \,\,
 \Div, 
 \,\,
 \II(\Zpoints),
 \,\, 
 \gr\,  \II(\Zpoints),
 \,\,
 S/\II(\Zpoints), 
 \,\,
 R(\Zpoints),
 \,\,
 V_\Zpoints.
 $$
 Because finite-dimensional $\RR[G]$-modules $V$
 are all self-contragredient ($V^* \cong V$), one can check that \eqref{eq:perp-contragredient-to-quotient}
 implies graded $\RR[G]$-module isomorphisms
 \begin{equation}
 \label{eq:graded-rep-isos-for-point-loci}
 V_\Zpoints \cong R(\Zpoints)
 \end{equation}
 and then Proposition~\ref{prop:gr-is-Brauer-isomorphic} implies a further ungraded $\kk[G]$-module isomorphisms
 \begin{equation}
 \label{eq:ungraded-rep-isos-for-point-loci}
 V_\Zpoints \cong R(\Zpoints) \cong S/\II(\Zpoints),
 \end{equation}
 which are all three isomorphic to the $\RR[G]$-permutation module on the points $\Zpoints$.

%%%%%%
\subsection{Definition of $q$-Ehrhart series and the conjecture} 

As in the introduction, a {\it lattice polytope} $P \subset \RR^n$ is the convex hull of a finite set of
points in the lattice $\ZZ^n$.  
For each integer $m = 0,1,2,\ldots$ one obtains a finite point locus $\ZZ^n \cap mP$. 
For each  $m \geq 1$, one has
the interior point locus $\ZZ^n \cap \interior{mP}$, where $\interior{P} = P \setminus \boundary{P}$ is the (relative) interior where one removes the union $\boundary{P}$ of all boundary faces of $P$.

\begin{definition} \rm
 \label{def:q-Ehrhart} 
 For a lattice polytope $P \subset \RR^n$, define two {\em $q$-Ehrhart series} in $\ZZ[q][[t]]$:
    \begin{align}
    \Eseries_P(t,q)
    &:= \sum_{m=0}^\infty \Ehr_P(m;q) \cdot t^m,\\ 
    \notag
    &\text{ where }
    \Ehr_P(m;q):=\Hilb(V_{\ZZ^n \cap mP}, q) =  \Hilb(R(\ZZ^n \cap mP), q),
    \\
    \intEseries_P(t,q)
    &:= \sum_{m=1}^\infty \bar{i}_P(m;q) \cdot t^m\\ 
    \notag
    &\quad \text{ where }\bar{i}_P(m;q)=\Hilb(V_{\ZZ^n \cap \interior{mP}}, q) = \Hilb(R(\ZZ^n \cap \interior{mP}), q).
    \end{align}
\end{definition}
\noindent
Note the series $\Eseries_P(t,q),\intEseries_P(t,q)$ reduce to the classical Ehrhart series 
$\Eseries_P(t), \intEseries_P(t)$ at $q \to 1$.

\begin{example}
\label{ex:repeat-from-intro}
Recall the Example in the Introduction looked at the general $1$-dimensional lattice polytope $P=[a,a+v] \subset \RR^1$
with $a,v \in \ZZ$ and volume $v \geq 1$,
finding that
\begin{align}
\label{eq:one-dimensional-Eseries-repeat}
\Eseries_P(t,q)&
     = \frac{1+tq [v-1]_q}{(1-t)(1-tq^v)},\\
\intEseries_P(t,q)
    &=\frac{t[v-1]_q +t^2q^{v-1}}{(1-t)(1-tq^v)},
\end{align}
where $ [m]_q:=1+q+q^2+\cdots+q^{m-1}$.
\end{example}

Note that Example~\ref{ex:repeat-from-intro} shows that
both $\Eseries_P(t,q),\intEseries_P(t,q)$ for lattice polytopes $P \subset \RR^1$ depend only upon a single parameter, which one could take either to be the volume $v$, or the number of lattice points $\#\ZZ^1 \cap P=v+1$, or the $h^*$-vector entry $h^*_1=v-1$.   This illustrates a certain affine-lattice invariance
that one might expect, similar to classical Ehrhart theory.  Recall that the group of {\it affine transformations} $\Aff(\RR^n)$ of $\RR^n$ is a semidirect product $\Aff(\RR^n)=GL_n(\RR) \ltimes \RR^n$, where $GL_n(\RR)$ is the subgroup fixing the origin, and $\RR^n$ is the translation subgroup.  This restricts to a semidirect product decomposition
$\Aff(\ZZ^n)=GL_n(\ZZ) \ltimes \ZZ^n$.

\begin{prop}
\label{prop:affine-invariance}
For any $g \in \Aff(\ZZ^n)$, one has
$\Eseries_{gP}(t,q)=\Eseries_P(t,q)$ and 
$\intEseries_{gP}(t,q)=\intEseries_P(t,q)$.
\end{prop}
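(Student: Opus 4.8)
The plan is first to extract a coordinate-free statement and then do some elementary dilation bookkeeping. The coordinate-free statement I want is: for \emph{any} finite point locus $\Zpoints \subset \RR^n$ and \emph{any} invertible affine map $g' \in \Aff(\RR^n)$, the point-orbit rings $R(\Zpoints)$ and $R(g'\Zpoints)$ are isomorphic as graded $\RR$-vector spaces, so that $\Hilb(R(\Zpoints),q) = \Hilb(R(g'\Zpoints),q)$. Granting this, here is how I would finish. Using $\Aff(\ZZ^n) = GL_n(\ZZ) \ltimes \ZZ^n$, write $g \in \Aff(\ZZ^n)$ as $g\colon \xx \mapsto h\xx + \vv$ with $h \in GL_n(\ZZ)$ and $\vv \in \ZZ^n$. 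Since $h$ is linear one has, for every $m \geq 0$, the identity $m(gP) = g_m(mP)$ where $g_m\colon \xx \mapsto h\xx + m\vv$ again lies in $\Aff(\ZZ^n)$; and since $g_m$ restricts to a bijection $\ZZ^n \to \ZZ^n$ and, being an affine isomorphism of $\RR^n$, carries $mP$ together with its relative interior onto $m(gP)$ and $\interior{m(gP)}$ respectively, I get
\[
\ZZ^n \cap m(gP) = g_m(\ZZ^n \cap mP), \qquad \ZZ^n \cap \interior{m(gP)} = g_m(\ZZ^n \cap \interior{mP}).
\]
Feeding this, coefficient by coefficient in $t$, into Definition~\ref{def:q-Ehrhart} and using the coordinate-free statement yields $\Eseries_{gP}(t,q) = \Eseries_P(t,q)$ and $\intEseries_{gP}(t,q) = \intEseries_P(t,q)$. (Restricting to $\Aff(\ZZ^n)$ rather than $\Aff(\RR^n)$ is needed only so that $gP$ is again a lattice polytope, hence has a $q$-Ehrhart series at all.)

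To prove the coordinate-free statement I would factor $g' = t_\vv \circ h$ into a linear map $h \in GL_n(\RR)$ followed by a translation $t_\vv\colon \xx \mapsto \xx + \vv$, and handle the two pieces separately. For $h$: a polynomial $f$ vanishes on $h\Zpoints$ exactly when $f \circ h$ vanishes on $\Zpoints$, so $\II(h\Zpoints)$ is the image of $\II(\Zpoints)$ under the substitution $f(\xx) \mapsto f(h^{-1}\xx)$, which is a \emph{graded} (degree-preserving) algebra automorphism of $S = \RR[\xx]$; a graded automorphism commutes with $f \mapsto \tau(f)$ and hence with the formation of $\gr$, so it descends to a graded ring isomorphism $R(\Zpoints) \xrightarrow{\ \sim\ } R(h\Zpoints)$. (This is the same mechanism underlying \eqref{eq:graded-rep-isos-for-point-loci}--\eqref{eq:ungraded-rep-isos-for-point-loci}, now applied to an $h$ that is not assumed to stabilize $\Zpoints$.) For $t_\vv$: similarly $\II(\Zpoints + \vv)$ is the image of $\II(\Zpoints)$ under $f(\xx) \mapsto f(\xx - \vv)$, which is \emph{not} graded; but translating the variables changes a nonzero polynomial only in degrees strictly below its top degree, so $\tau(f(\xx-\vv)) = \tau(f)$ for every $f \neq 0$, whence $\gr\,\II(\Zpoints+\vv) = \gr\,\II(\Zpoints)$ exactly, and $R(\Zpoints+\vv) = R(\Zpoints)$ on the nose. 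Composing the two cases gives the statement.

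The main obstacle --- really the only non-formal point --- is the translation step: one must notice that $\gr$ is unaffected by translating the coordinate system even though a translation is not a graded operation, which is exactly what lets the two halves $GL_n(\ZZ)$ and $\ZZ^n$ of $\Aff(\ZZ^n)$ combine cleanly. Everything else (the identity $m(gP) = g_m(mP)$, invariance of lattice points and relative interiors under $\Aff(\ZZ^n)$, and gradedness of linear substitutions) is routine bookkeeping, so I do not anticipate further difficulty.
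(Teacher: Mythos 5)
Your proposal is correct and follows essentially the same route as the paper: reduce to the invariance of $\Hilb(R(\Zpoints),q)$ under arbitrary affine maps of $\RR^n$ (noting that the $m$-th dilate transforms by the affine map with translation vector scaled by $m$), and then observe that the linear part acts as a graded automorphism of $S$ while translation leaves every top-degree component $\tau(f)$, hence $\gr\,\II(\Zpoints)$, unchanged. The only cosmetic difference is that you factor the affine map as translation composed with a linear map and treat the two pieces separately, whereas the paper performs the same computation in a single step via $\tau\bigl(f(h_0\xx+v)\bigr)=\tau(f)(h_0\xx)$.
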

\begin{proof}
    We give the argument for $\Eseries_{gP}(t,q)$; the argument for $\intEseries_{gP}(t,q)$ is
    similar.  
    
    Note that for each $m=0,1,2,\ldots$, the point locus $\ZZ^n \cap m \cdot gP$ is an affine transformation of the locus
    $\ZZ^n \cap mP$, namely by an affine transformation $h$ whose translation vector $m \cdot v$ is scaled by $m$ from the translation vector $v$ of $g$. 
    It therefore suffices to check for finite point loci $\Zpoints \subseteq \RR^n$ and for any $h \in \Aff(\RR^n)$, that one has    
    \begin{equation}
    \label{eq:Hilb-of-R(Z)-is-affine-invariant}
        \Hilb(R(h\Zpoints),q)=\Hilb(R(\Zpoints),q). 
    \end{equation}
    Note that $h \in \Aff(\RR^n)$ acting on $\xx$ variables by $h\xx=h_0\xx+v$ for some $h_0 \in GL_n(\RR), v \in \RR^n$ has
    $$
    \tau(h(f))(\xx)=\tau(f(h_0\xx+v))=\tau(f)(h_0\xx).
    $$
    Therefore $\gr\, \II(h \Zpoints)= h_0 (\gr \,\II(\Zpoints))$.  Since $h_0\in GL_n(\RR)$ acts via a graded $\RR$-algebra automorphism on $S=\RR[\xx]$, this means
    it induces a graded $\RR$-algebra isomorphism $R(\Zpoints) \cong R(h \Zpoints)$ implying \eqref{eq:Hilb-of-R(Z)-is-affine-invariant}:
    $$
    R(\Zpoints)=S/\gr \, \II(\Zpoints) \overset{h_0}{\longrightarrow}
    S/h_0(\gr\, \II(\Zpoints))= S/\gr\, \II(h\Zpoints) =R(h \Zpoints).\qedhere
    $$
\end{proof}

Before discussing more examples, recall the main conjecture from the
Introduction.

\vskip.1in
\noindent
{\bf Conjecture~\ref{conj:intro-omnibus}.}
{\it
    Let $P$ be a $d$-dimensional lattice polytope in $\RR^n$. Then both of the series \eqref{eq:intro-q-ehrhart-series},\eqref{eq:intro-q-ehrhart-interior-series}
   lie in $\QQ(t,q)$, and are expressible as rational functions  
    $$
    \Eseries_P(t,q)=\frac{N_P(t,q)}{D_P(t,q)}
    \quad \text{ and } \quad 
        \intEseries_P(t,q)=\frac{\overline{N}_P(t,q)}{D_P(t,q)},
    $$
    over the same denominator of the form
    $D_P(t,q)=\prod_{i=1}^\nu (1-q^{a_i} t^{b_i})$, necessarily with $\nu \geq d+1$. Furthermore, there exists such an expression with all of these properties:
    \begin{itemize} 
    \item [(i)] The numerators $N_P(t,q), 
        \overline{N}_P(t,q)$ lie in $\ZZ[t,q]$.
    \item[(ii)] If $P$ is a lattice simplex, and $\nu=d+1$, then both numerators $N_P(t,q), \overline{N}_P(t,q)$ have nonnegative
coefficients as polynomials in $t,q$.
\item[(iii)] The two series $\Eseries_P(t,q), \intEseries_P(t,q)$
determine each other via
$$
q^d  \cdot \intEseries_P(t,q) = (-1)^{d+1} \Eseries_P(t^{-1},q^{-1}).
$$
\end{itemize}
}

%%%%%
\subsection{Examples: lattice polygons}

\begin{figure}
$$
\begin{array}{|c|c|c|c|c|}\hline
\text{normalized}&\text{vertices} &h_P^*(t)=& &\Aff(\ZZ^2)\text{-equivalent}\\
 \text{area of }P& \text{ of }P &1+h^*_1 t+h^*_2 t^2 &\Eseries_P(t,q)&\text{to antiblocking?}\\
 \hline\hline
1&(0,0),(1,0),(0,1)
\begin{tikzpicture}[scale = 0.3]
        \draw[fill = black!05] (0,0) -- (0,1) -- (1,0) -- (0,0);
        \draw[step=1.0,black,thin] (0,-0) grid (1,1);    
        \node at (0,0) {$\bullet$};
        \node at (0,1) {$\bullet$};
        \node at (1,0) {$\bullet$};
    \end{tikzpicture}
& 1& \frac{1}{ (1-t)(1-qt)^2 }&\text{Yes}\\ \hline
%%%%%%%%%%%%%%%%%%%%%%%%%%%%%%%%%%%%%%%%%%%%%
2&(0,0),(1,0),(1,2)
\begin{tikzpicture}[scale = 0.3]
        \draw[fill = black!05] (0,0) -- (1,0) -- (1,2) -- (0,0);
        \draw[step=1.0,black,thin] (0,0) grid (1,2);    
        \node at (0,0) {$\bullet$};
        \node at (1,0) {$\bullet$};
        \node at (1,1) {$\bullet$};
        \node at (1,2) {$\bullet$};
    \end{tikzpicture}
& 1+t& \frac{1+qt}{ (1-t)(1-qt)(1-q^2t) }& \text{Yes}\\ 
2&(0,0),(1,0),(0,1),(1,1)
\begin{tikzpicture}[scale = 0.3]
        \draw[fill = black!05] (0,0) -- (1,0) -- (1,1) -- (0,1) -- (0,0);
        \draw[step=1.0,black,thin] (0,0) grid (1,1);    
        \node at (0,0) {$\bullet$};
        \node at (1,0) {$\bullet$};
        \node at (0,1) {$\bullet$};
        \node at (1,1) {$\bullet$};
    \end{tikzpicture}
&
1 + t&
\frac{1 + qt}{(1- t)(1-qt)(1-q^2 t)} &\text{Yes}\\ \hline
%%%%%%%%%%%%%%%%%%%%%%%%%%%%%%%%%%%%%%%%%%%%
3&(0,0),(1,0),(1,3)
\begin{tikzpicture}[scale = 0.3]
        \draw[fill = black!05] (0,0) -- (1,0) -- (1,3) -- (0,0);
        \draw[step=1.0,black,thin] (0,0) grid (1,3);    
        \node at (0,0) {$\bullet$};
        \node at (1,0) {$\bullet$};
        \node at (1,1) {$\bullet$};
        \node at (1,2) {$\bullet$};
        \node at (1,3) {$\bullet$};
    \end{tikzpicture}
&1 + 2t&
\frac{1 +qt +q^2t}{(1 - t)(1 - qt)(1 - q^3t)} &\text{Yes} \\ 
3&(0,0),(1,0),(2,3)
\begin{tikzpicture}[scale = 0.3]
        \draw[fill = black!05] (0,0) -- (1,0) -- (2,3) -- (0,0);
        \draw[step=1.0,black,thin] (0,0) grid (2,3);    
        \node at (0,0) {$\bullet$};
        \node at (1,0) {$\bullet$};
        \node at (1,1) {$\bullet$};
        \node at (2,3) {$\bullet$};
    \end{tikzpicture}
&1 + t + t^2&
\frac{(1 + qt)(1 + qt + q^2t^2)}{(1 - t)(1 - q^2t)(1 - q^3t^2)} &\text{No}\\
3&(0,0),(1,0),(0,1),(-2,1)
\begin{tikzpicture}[scale = 0.3]
        \draw[fill = black!05] (0,0) -- (1,0) -- (0,1) -- (-2,1) -- (0,0);
        \draw[step=1.0,black,thin] (-2,0) grid (1,1);    
        \node at (0,0) {$\bullet$};
        \node at (1,0) {$\bullet$};
        \node at (0,1) {$\bullet$};
        \node at (-2,1) {$\bullet$};
        \node at (-1,1) {$\bullet$};
    \end{tikzpicture}
&1 + 2t &
\frac{1 + qt - q^2t^2 - q^3t^2}{(1 - t)(1 - qt)(1 - q^2t)^2}&\text{Yes}\\\hline
\end{array}
$$
\caption{$\Eseries_P(t,q)$ for lattice polygons $P$ of area at most $3$, up to $\Aff(\ZZ^2)$.}
\label{fig:q-Ehrhart-examples}
\end{figure}
%%%%%%%%%%%%%%%%%%%%%%%%%%%%%%%%%%%%%%%%%%%%%

%%%%%%%%%%%%%%%%%%%%%%%%%%%%%%%%%%%%%%%%%%%%%
\begin{figure}
$$
\begin{array}{|c|c|c|c|c|}\hline
\text{normalized}&\text{vertices} &h_P^*(t)=& &\Aff(\ZZ^2)\text{-equivalent}\\
 \text{area of }P& \text{ of }P &1+h^*_1 t+h^*_2 t^2 &\Eseries_P(t,q)&\text{to antiblocking?}\\
 \hline\hline

4&(0,0),(1,0),(1,4)
\begin{tikzpicture}[scale = 0.3]
        \draw[fill = black!05] (0,0) -- (1,0) -- (1,4) -- (0,0);
        \draw[step=1.0,black,thin] (0,0) grid (1,4);    
        \node at (0,0) {$\bullet$};
        \node at (1,0) {$\bullet$};
        \node at (1,4) {$\bullet$};
        \node at (1,1) {$\bullet$};
        \node at (1,2) {$\bullet$};
        \node at (1,3) {$\bullet$};
    \end{tikzpicture}
& 1 + 3t&
\frac{1 + t(q + q^2 + q^3)}{(1 - t)(1 - qt)(1 - q^4t)}&\text{Yes}\\
4&(0,0),(1,0),(3,4) 
\begin{tikzpicture}[scale = 0.3]
        \draw[fill = black!05] (0,0) -- (1,0) -- (3,4) -- (0,0);
        \draw[step=1.0,black,thin] (0,0) grid (3,4);    
        \node at (0,0) {$\bullet$};
        \node at (1,0) {$\bullet$};
        \node at (1,1) {$\bullet$};
        \node at (2,2) {$\bullet$};
        \node at (3,4) {$\bullet$};
    \end{tikzpicture}
& (1 + t)^2&
\frac{(1 + qt)^2}{(1 - t)(1 - q^2t)^2}&\text{No} \\
4&(0,0),(2,0),(0,2)
\begin{tikzpicture}[scale = 0.3]
        \draw[fill = black!05] (0,0) -- (2,0) -- (0,2) -- (0,0);
        \draw[step=1.0,black,thin] (0,0) grid (2,2);    
        \node at (0,0) {$\bullet$};
        \node at (1,0) {$\bullet$};
        \node at (2,0) {$\bullet$};
        \node at (0,1) {$\bullet$};
        \node at (0,2) {$\bullet$};
        \node at (1,1) {$\bullet$};
    \end{tikzpicture}& 1 + 3t&
\frac{1 + 2q t + q^2t}{(1 - t)(1 - q^2t)^2}
&\text{Yes} \\
4&(0,0),(1,0),(0,1),(-3,1)
\begin{tikzpicture}[scale = 0.3]
        \draw[fill = black!05] (0,0) -- (1,0) -- (0,1) -- (-3,1) -- (0,0);
        \draw[step=1.0,black,thin] (-3,0) grid (1,1);    
        \node at (0,0) {$\bullet$};
        \node at (1,0) {$\bullet$};
        \node at (0,1) {$\bullet$};
        \node at (-1,1) {$\bullet$};
        \node at (-2,1) {$\bullet$};
        \node at (-3,1) {$\bullet$};
    \end{tikzpicture}&
1 + 3t&
\frac{1 + qt + q^2t - q^2t^2 - q^3t^2 - q^4t^2}{(1 - t)(1 - qt)(1 - q^2t)(1 - q^3 t)} &\text{Yes}\\
4&(0,0),(1,0),(0,2),(1,2)
\begin{tikzpicture}[scale = 0.3]
        \draw[fill = black!05] (0,0) -- (1,0) -- (1,2) -- (0,2) -- (0,0);
        \draw[step=1.0,black,thin] (0,0) grid (1,2);    
        \node at (0,0) {$\bullet$};
        \node at (1,0) {$\bullet$};
        \node at (1,2) {$\bullet$};
        \node at (0,2) {$\bullet$};
        \node at (0,1) {$\bullet$};
        \node at (1,1) {$\bullet$};
\end{tikzpicture}
    & 1 + 3t &
\frac{1 + qt + q^2t - q^2t^2 - q^3t^2 - q^4t^2}
{(1 - t)(1 - qt)(1 - q^2t)(1 -q^3t)} &\text{Yes} \\
4&(0,0),(2,0),(0,1),(1,-1)
\begin{tikzpicture}[scale = 0.3]
        \draw[fill = black!05] (0,0) -- (1,-1) -- (2,0) -- (0,1) -- (0,0);
        \draw[step=1.0,black,thin] (0,-1) grid (2,1);    
        \node at (0,0) {$\bullet$};
        \node at (2,0) {$\bullet$};
        \node at (0,1) {$\bullet$};
        \node at (1,0) {$\bullet$};
        \node at (1,-1) {$\bullet$};
\end{tikzpicture}& (1 + t)^2&
\frac{(1 + qt)^2}{(1 - t)(1 - q^2t)^2}&\text{No} \\
4&(0,0),(1,0),(1,2),(2,2)
\begin{tikzpicture}[scale = 0.3]
        \draw[fill = black!05] (0,0) -- (1,0) -- (2,2) -- (1,2) -- (0,0);
        \draw[step=1.0,black,thin] (0,0) grid (2,2);    
        \node at (0,0) {$\bullet$};
        \node at (1,0) {$\bullet$};
        \node at (1,1) {$\bullet$};
        \node at (1,2) {$\bullet$};
        \node at (2,2) {$\bullet$};
\end{tikzpicture}
 & (1 + t)^2&
\frac{(1 + qt)^2}{(1 - t)(1 - q^2t)^2)}&\text{No} \\\hline
\end{array}
$$
\caption{$\Eseries_P(t,q)$ for lattice polygons $P$ of area $4$, up to $\Aff(\ZZ^2)$.}
\label{fig:area-4-polygons}
\end{figure}
%%%%%%%%%%%%%%%%%%%%%%%%%%%%%%%%%%%%%%%%%%%%%%%%%

%%%%%%%%%%%%%%%%%%%%%%%%%%%%%%%%%%%%%%%%%%%%%%%%%
\begin{figure}
$$
\begin{array}{|c|c|c|c|}\hline
\text{vertices} &h_P^*(t)=& &\Aff(\ZZ^2)\text{-equivalent}\\
 \text{ of }P &1+h^*_1 t+h^*_2 t^2 &\Eseries_P(t,q)&\text{to antiblocking?}\\
 \hline\hline
%%%%%%%%%%%%%%%%%%%%%%%%%%%%%%%%%%%%%%%%%%%%%
(0,0),(1,0),(1,5)
\begin{tikzpicture}[scale = 0.3]
        \draw[fill = black!05] (0,0) -- (1,0) -- (1,5) --  (0,0);
        \draw[step=1.0,black,thin] (0,0) grid (1,5);    
        \node at (0,0) {$\bullet$};
        \node at (1,0) {$\bullet$};
        \node at (1,5) {$\bullet$};
        \node at (1,1) {$\bullet$};
        \node at (1,2) {$\bullet$};
        \node at (1,3) {$\bullet$};
        \node at (1,4) {$\bullet$};
\end{tikzpicture}
& 1 + 4t &
\frac{1 + t(q + q^2 + q^3 + q^4)}{(1 - t)(1 - qt) (1 - q^5t)}
&\text{Yes} \\
(0,0),(1,0),(2,5) 
\begin{tikzpicture}[scale = 0.3]
        \draw[fill = black!05] (0,0) -- (1,0) -- (2,5) --  (0,0);
        \draw[step=1.0,black,thin] (0,0) grid (2,5);    
        \node at (0,0) {$\bullet$};
        \node at (1,0) {$\bullet$};
        \node at (1,1) {$\bullet$};
        \node at (1,2) {$\bullet$};
        \node at (2,5) {$\bullet$};
\end{tikzpicture}
& 1 + 2t + 2t^2 &
\frac{(1 + qt)(1 + t(q + q^2) + t^2(q^3 + q^4))}{(1 - t)(1 - q^2t)(1 - q^5t^2)}&\text{No} \\
(0,0),(1,0),(0,1),(-4,1)
\begin{tikzpicture}[scale = 0.3]
        \draw[fill = black!05] (0,0) -- (1,0) -- (0,1) --  (-4,1) -- (0,0);
        \draw[step=1.0,black,thin] (-4,0) grid (1,1);    
        \node at (0,0) {$\bullet$};
        \node at (1,0) {$\bullet$};
        \node at (0,1) {$\bullet$};
        \node at (-4,1) {$\bullet$};
        \node at (-3,1) {$\bullet$};
        \node at (-2,1) {$\bullet$};
        \node at (-1,1) {$\bullet$};
\end{tikzpicture}
& 1 + 4t&
\frac{1 + qt + q^2t + q^3t - q^2t^2 - q^3t^2 - q^4t^2 - q^5t^2}{
(1 - t)(1 - qt)(1 - q^2t)(1 - q^4t)} &\text{Yes}\\
(0,0),(2,0),(0,1),(-3,1)
\begin{tikzpicture}[scale = 0.3]
        \draw[fill = black!05] (0,0) -- (2,0) -- (0,1) --  (-3,1) -- (0,0);
        \draw[step=1.0,black,thin] (-3,0) grid (2,1);    
        \node at (0,0) {$\bullet$};
        \node at (1,0) {$\bullet$};
        \node at (2,0) {$\bullet$};
        \node at (0,1) {$\bullet$};
        \node at (-1,1) {$\bullet$};
        \node at (-2,1) {$\bullet$};
        \node at (-3,1) {$\bullet$};
\end{tikzpicture}
& 1 + 4t &
\frac{1 + qt + 2q^2t - q^2t^2 - q^3t^2 - q^4t^2 - q^5t^2}{(1 - t)(1 - qt)(1 -q^3t)^2}&\text{Yes} \\
(0,0),(1,0),(2,3),(2,1)
\begin{tikzpicture}[scale = 0.3]
        \draw[fill = black!05] (0,0) -- (1,0) -- (2,1) --  (2,3) -- (0,0);
        \draw[step=1.0,black,thin] (0,0) grid (2,3);    
        \node at (0,0) {$\bullet$};
        \node at (1,0) {$\bullet$};
        \node at (1,1) {$\bullet$};
        \node at (2,2) {$\bullet$};
        \node at (2,3) {$\bullet$};
        \node at (2,1) {$\bullet$};
\end{tikzpicture}
& 1 + 3t + t^2&
\frac{1 +2qt + 2q^2t + 2q^3t^2 + 2q^4t^2 + q^5t^3}{(1 - t)(1 - q^2t)(1 - q^5 t^2)}&\text{No}\\
(0,0),(1,0),(1,2),(2,2),(0,-1)
\begin{tikzpicture}[scale = 0.3]
        \draw[fill = black!05] (0,-1) -- (1,0) -- (2,2) --  (1,2) -- (0,0) -- (0,-1);
        \draw[step=1.0,black,thin] (0,-1) grid (2,2);    
        \node at (0,0) {$\bullet$};
        \node at (1,0) {$\bullet$};
        \node at (1,1) {$\bullet$};
        \node at (1,2) {$\bullet$};
        \node at (2,2) {$\bullet$};
        \node at (0,-1) {$\bullet$};
\end{tikzpicture}
& 1 + 3t + t^2 &
\frac{1 + 2qt + 2q^2t + 2q^3t^2 + 2q^4t^2 + q^5t^3}{
(1 - t)(1 - q^2t)(1 - q^5t^2)}&\text{No}\\\hline
\end{array}
$$
\caption{$\Eseries_P(t,q)$ for lattice polygons of area 5, up to $\Aff(\ZZ^2)$.}
\label{fig:area-5-polygons}
\end{figure}

Having computed $\Eseries_P(t,q)$ for all one-dimensional lattice polytopes (line segments) in the Introduction and Example~\ref{ex:repeat-from-intro}, one might wish to see data for lattice {\it polygons}.  Proposition~\ref{prop:affine-invariance} allows one to consider them only up to the action of
$\Aff(\ZZ^2)$.  If one bounds the normalized volume of a $d$-dimensional lattice polytope $P$ in $\RR^d$, a well-known result of 
Lagarias and Ziegler \cite{LagariasZiegler} shows
that there are only finitely many such $P$ up to the action of $\Aff(\ZZ^d)$.  Work of Balletti \cite{Balletti} gives an algorithm to list them, including an online database at \url{https://github.com/gabrieleballetti/small-lattice-polytopes} listing equivalence classes of lattice polytopes up to dimension $6$ of relatively small normalized volumes.  In particular, it includes lattice triangles up to normalized area $1000$.

Using this data, Figures~\ref{fig:q-Ehrhart-examples}, \ref{fig:area-4-polygons}, \ref{fig:area-5-polygons} present the $q$-Ehrhart series $\Eseries_P(t,q)$
of $\Aff(\ZZ^2)$-equivalence classes of lattice polygons $P$ of normalized volume at most $5$.  These were first guessed using  {\tt Macaulay2} to compute
$\Ehr_P(m;q):=\Hilb(R(\ZZ^2 \cap mP),q)$ for $m=0,1,2,\ldots$ up to some reasonably large values of $m$.
The guesses were then proven correct via some extra computation
in {\tt Macaulay2} that uses our results
on the {\it harmonic algebras} $\HHH_P$ in Section~\ref{sec:Harmonic}; see Remark~\ref{rmk:algorithm-for-q-Ehrhart-series} below for an
explanation.
 
Note the last column of the tables, indicating whether the
polygon $P$ is $\Aff(\ZZ^2)$-equivalent to one in the tamer subclass of {\it antiblocking} lattice polytopes,
discussed in Section~\ref{sec:antiblocking}. 

With the kind assistance of V.~Kurylenko, we also have computed {\it guesses} for $\Eseries_P(t,q)$ for all lattice polygons $P$ of area $6,7,8$, as well as a selection of lattice tetrahedra.  These can be 
found tabulated in the supplementary data file {\tt ExtraData.pdf} in the {\tt arXiv} version of this paper.

All the data in Figures~\ref{fig:q-Ehrhart-examples}, \ref{fig:area-4-polygons}, \ref{fig:area-5-polygons} (and {\tt ExtraData.pdf}) is consistent with Conjecture~\ref{conj:intro-omnibus}.  However, we close this subsection with remarks on 
some cautionary features.

\begin{remark}
The specialization
$\left[ \Eseries_P(t,q) \right]_{q=1}=\Eseries_P(t)$ sometimes has interesting numerator and denominator cancellations when $q \rightarrow 1$.
\end{remark}

\begin{remark}
Note that, for fixed $d$, the classical Ehrhart series $\Eseries_P(t) \subset \QQ(t)$ for a lattice $d$-polytope can be expressed as an affine-linear function of the $d$ real parameters $(h^*_1,\ldots,h^*_d)$:
$$
\Eseries_P(t)=\frac{1}{(1-t)^{d+1}}+
h^*_1 \cdot \frac{t}{(1-t)^{d+1}} +\cdots+h^*_d \cdot \frac{t^d}{(1-t)^{d+1}}.
$$
One has no such affine-linear formula for $\Eseries_P(t,q)$, already at $d=1$, since the
formulas \eqref{eq:one-dimensional-Eseries-repeat}
$$
\Eseries_P(t,q)
     = \frac{1+tq [v-1]_q}{(1-t)(1-tq^v)}
$$
give affine-linearly independent functions of $h^*_1=v-1$
where $v=\vol_1(P)$.  On the other hand this formula does express $\Eseries_P(t,q)$ for $d=1$ as a function (which is {\it not} affine-linear) of the one real parameter $h_1^*$.  When $d=2$, there can be no such function of the two parameters $(h_1^*,h_2^*)$, since one can have two lattice polygons $P,P'$ with the same classical series $\Eseries_P(t)=\Eseries_{P'}(t)$
but different $q$-Ehrhart series
$\Eseries_P(t,q) \neq \Eseries_{P'}(t,q)$; this happens
for two area $3$ lattice polygons in Figure~\ref{fig:q-Ehrhart-examples}. 
\end{remark}

\begin{remark}
\label{rmk: annoying-triangle}
Within {\tt ExtraData.pdf}, one finds a
guess for $\Eseries_P(t,q)$ for
a particular lattice triangle 
$
P=\conv\{ (0, 0), (1, 0), (3, 7) \}
$
that we found difficult to compute.  Eventually, this guess was kindly computed for us by
V. Kurylenko, using data up
through $t$-degree $26$:
$$
\Eseries_P(t,q)\overset{?}{=}\frac{N_P(t,q)}{(1-t)(1-q^8 t^3)(1-q^{21} t^8)}
$$
with a numerator polynomial $N_P(t,q)$ in $\ZZ[t,q]$ having nonnegative coefficients, and $t$-degree $11$.
%$$N_P(t,q)=(q^{27} t^{11} + 2q^{26} t^{11} + 3q^{25}  t^{10} + 4q^{24} t^{10} + 2q^{23}  t^{10} + 3q^{23} t^9 +6q^{22} * t^9 + 7*q^21 * t^9 + 8*q^20 * t^8 + 8*q^19 * t^8 + 4*q^18 * t^8 + 4*q^18 * t^7 +8*q^17 * t^7 + 8*q^16 * t^7 + q^15 * t^7 + 7*q^15 * t^6 + 8*q^14 * t^6 + 6*q^13 * t^6 +2*q^13 * t^5 + 8*q^12 * t^5 + 8*q^11 * t^5 + 3*q^10 * t^5 + 5*q^10 * t^4 + 8*q^9 * t^4 +8*q^8 * t^4 + 8*q^7 * t^3 + 7*q^6 * t^3 + 3*q^5 * t^3 + 3*q^5 * t^2 + 5*q^4 * t^2 + 4*q^3 * t^2 +3*q^2 * t + 2*q * t + 1.$$
We found the $t$-power $t^8$ in its denominator surprisingly large compared to other examples\footnote{This example
is one reason that we revised our
main Conjectures~\ref{conj:intro-omnibus},
\ref{conj:harmonic-algebra-omnibus} from {\tt arXiv} version 1 of this paper.}.
\end{remark}

\subsection{Examples: a few Reeve tetrahedra}
An important family of examples in
Ehrhart theory are {\it Reeve's tetrahedra} \cite[Example~3.22]{BeckRobins},
defined by 
$$
T_v=\conv\{(0,0,0),(1,0,0),(0,1,0),(1,1,v)\}\quad\text{ for }v=1,2,\ldots.
$$
The parameter $v$ is their 
normalized volume,
and they have these Ehrhart
polynomial and series:
\begin{align*}
\label{Reeve-tetrahedra-classical-data}
\Ehr_{T_v}(m)&=1+\left(2-\frac{v}{6}\right)m+m^2+\frac{v}{6}m^3\\
\Eseries_{T_v}(t)&=\frac{1+(v-1)t^2}{(1-t)^4}.
\end{align*}
Thus $T_v$ has $(h^*_0,h_1^*,h_2^*)=(1,0,v-1)$, and for $v\geq 2$, gives examples where the $h^*$-vector has internal zeroes.  This can only happen for lattice polytope $P$ lacking the following property.

\begin{definition} \rm
\label{def:IDP-property}
Say that a lattice polytope $P$ has the
 {\it integer decomposition property (IDP)}
 if 
\begin{equation}
\label{eq:IDP}
\underbrace{(\ZZ^n \cap P)+ \cdots + (\ZZ^n \cap P)}_{m \text{ times}} = \ZZ^n \cap mP
\quad \text{ for all }m \geq 1.
\end{equation}
\end{definition}
See Braun \cite{Braun} and Cox, Haase, Hibi and Higashitani \cite{cox2012integer} for more on the IDP. 
All lattice polygons have the IDP.  The smallest non-IDP lattice polytope is the Reeve tetrahedron $T_2$
shown here:
\begin{center}
    \tdplotsetmaincoords{70}{110}
    \begin{tikzpicture}[scale = 0.8]
        \tdplotsetrotatedcoords{-10}{0}{0}

        \fill [gray!20,tdplot_rotated_coords] (1,0,0) -- (0,1,0) -- (1,1,2) -- (1,0,0);

        \draw[ultra thin, tdplot_rotated_coords] (0,0,0) -- (0,1,0) -- (1,1,0) -- (1,0,0) -- (0,0,0);
        \draw[ultra thin, tdplot_rotated_coords] (0,0,1) -- (0,1,1) -- (1,1,1) -- (1,0,1) -- (0,0,1);
        \draw[ultra thin, tdplot_rotated_coords] (0,0,2) -- (0,1,2) -- (1,1,2) -- (1,0,2) -- (0,0,2);
        \draw[ultra thin, tdplot_rotated_coords] (0,0,0) -- (0,0,2);
        \draw[ultra thin, tdplot_rotated_coords] (1,0,0) -- (1,0,2);
        \draw[ultra thin, tdplot_rotated_coords] (0,1,0) -- (0,1,2);
        \draw[ultra thin, tdplot_rotated_coords] (1,1,0) -- (1,1,2);

        \draw[ultra thick, tdplot_rotated_coords,dotted] (0,0,0) -- (1,0,0);
        \draw[ultra thick, tdplot_rotated_coords,dotted] (0,0,0) -- (0,1,0);
        \draw[ultra thick, tdplot_rotated_coords,dotted] (0,0,0) -- (1,1,2);
        \draw[very thick, tdplot_rotated_coords] (1,0,0) -- (0,1,0);
        \draw[very thick, tdplot_rotated_coords] (1,0,0) -- (1,1,2);
        \draw[very thick, tdplot_rotated_coords] (0,1,0) -- (1,1,2);
    
        \node[tdplot_rotated_coords] at (0,0,0) {$\bullet$};

        \node[tdplot_rotated_coords] at (1,0,0) {$\bullet$};
        \node[tdplot_rotated_coords] at (0,1,0) {$\bullet$};
        \node[tdplot_rotated_coords] at (1,1,2) {$\bullet$};

    \end{tikzpicture}
\end{center} 
It is not IDP since
$(1,1,1) \in \ZZ^3 \cap 2P \setminus ((\ZZ^3 \cap P) + (\ZZ^3 \cap P))$. 

We have either computations or guesses for
the $q$-Ehrhart series $\Eseries_{T_v}(t,q)$
for $v=1,2,3$:
\begin{itemize}
    \item $T_1$ is a unimodular tetrahedron,
    $\Aff(\ZZ^3)$-equivalent to $\pyr(\Delta^2)$ from Section~\ref{sec:standard-simplices} below, so 
    $$
    \Eseries_{T_1}(t,q)=\frac{1}{(1-t)(1-tq)^3}.
    $$
 \item For $T_2$, using the same methods as for Figures~\ref{fig:q-Ehrhart-examples}, \ref{fig:area-4-polygons}, \ref{fig:area-5-polygons}, one can compute 
\begin{equation}
\label{eq:first-non-IDP-Reeve-q-Ehrhart}
     \Eseries_{T_2}(t,q)=\frac{(1 + qt)(1 + q^2t^2)(1 + qt + q^2t^2)}{(1 - t)(1 - qt)(1 - q^3t^2)(1 - q^4 t^3)}.
\end{equation}
 \item V.~Kurylenko computed this {\it guess} for the $q$-Ehrhart series for $T_3$, correct up to $t$-degree $16$:
\begin{equation}
\label{eq:Vadym-Reeve-tetrahedron-counterexample}
\Eseries_{T_3}(t,q)
\overset{?}{=}\frac{(1 + q t)(1-q^5 t^4)(1 + qt + 2q^2t^2 + q^3 t^2 + 2q^4 t^3 + q^5 t^4 + q^6 t^4)}
{(1-t)(1-qt)(1-q^3 t^2)(1-q^5 t^3)(1-q^6 t^4)}.
 \end{equation}
\end{itemize}
Note that, in this guess for $\Eseries_{T_3}(t,q)$, there are $5$ denominator factors, 
but $\dim(T_3)+1=4<5$.  This is the first
instance where this occurs
for a lattice simplex\footnote{This example
is another reason that we revised our
Conjectures~\ref{conj:intro-omnibus},
\ref{conj:harmonic-algebra-omnibus} from
{\tt arXiv} version 1 of this paper.},
raising the following question.

\begin{question} \rm
\label{ex: Kurylenko-Reeve-tetrahedra}
For which lattice $d$-simplices $P$
can one express 
$$
\Eseries_P(q,t)=\frac{N_P(t,q)}{\prod_{i=1}^\nu(1-t^{b_i}q^{a_i})}
$$
with $\nu=d+1$ denominator factors,
and with $N_P(t,q)$ in $\NN[t,q]$?
\begin{itemize}
\item[(a)]
Does this occur for all lattice {\it triangles}?  
\item[(b)]
Does it occur more generally for {\it all lattice simplices with the IDP property}?
\end{itemize}
\end{question}

%%%%
\subsection{Examples:  antiblocking polytopes}
\label{sec:antiblocking}

For certain polytopes $P$ introduced by Fulkerson \cite{Fulkerson1971, Fulkerson1972} in the context of
combinatorial optimization, both $q$-Ehrhart series $\Eseries_P(t,q),\intEseries_P(t,q)$ have a simpler description as classical weighted lattice point enumerators, avoiding harmonic spaces.  This will allow us
to verify Conjecture~\ref{conj:intro-omnibus} for such polytopes.

In this section, 
abbreviate the nonnegative reals by $\RR_{\geq 0}:=\{z \in \RR: z \geq 0\}$, the nonnegative
orthant by $\RR_{\geq 0}^n$,
and the nonnegative integers $\NN:=\{0,1,2,\ldots\}$.
Define the componentwise partial order on $\RR_{\geq 0}^n$ via $\zz \leq \zz'$ if $z_i \leq z_i'$ for all $i$.

\begin{definition}\rm
\label{def:antiblocking}
Say that a convex polytope $P$ is {\it antiblocking} (or {\it of antiblocking type}) if $P \subset \RR_{\geq 0}^n$, and $P$ forms a (lower) order ideal in
the componentwise order on $\RR_{\geq 0}^n$, that is, whenever $\origin \leq \zz \leq \zz'$ with $\zz' \in P$ then also $\zz \in P$. 
%Equivalently, $P$ is a convex polytope that can be defined by a system of inequalities  $P=\{\zz \in \RR^n: \zz \geq \origin, A\zz \leq \bb\}$ with the matrix $A$
%and vector $\bb$ both having all nonnegative real entries.
\end{definition}

Here is an example of an antiblocking polygon $P$ inside the orthant $\RR_{\geq 0}^2$.
\begin{center}
    \begin{tikzpicture}[scale = 0.4]
        \draw[fill = black!10] (0,0) -- (0,4) -- (3,3) --  (7,0) -- (0,0);
        \draw[step=1.0,black,thin] (-0.5,-0.5) grid (7.5,4.5);    
        \node at (0,0) {$\bullet$};
        \node at (0,1) {$\bullet$};
        \node at (0,2) {$\bullet$};
        \node at (0,3) {$\bullet$};
        \node at (0,4) {$\bullet$};
        \node at (1,0) {$\bullet$};
        \node at (1,1) {$\bullet$};
        \node at (1,2) {$\bullet$};
        \node at (1,3) {$\bullet$};
        \node at (2,0) {$\bullet$};
        \node at (2,1) {$\bullet$};
        \node at (2,2) {$\bullet$};
        \node at (2,3) {$\bullet$};
        \node at (3,0) {$\bullet$};
        \node at (3,1) {$\bullet$};
        \node at (3,2) {$\bullet$};
        \node at (3,3) {$\bullet$};
        \node at (4,0) {$\bullet$};
        \node at (4,1) {$\bullet$};
        \node at (4,2) {$\bullet$};
        \node at (5,0) {$\bullet$};
        \node at (5,1) {$\bullet$};
        \node at (6,0) {$\bullet$};
        \node at (7,0) {$\bullet$};
    \end{tikzpicture}
\end{center}

Intersecting antiblocking polytopes with $\ZZ^n$ gives
rise to point loci $\Zpoints$ with a restrictive property that vastly simplifies the ring $R(\Zpoints)$ and harmonic space
$V_\Zpoints$.

\begin{definition} \rm
\label{def:shifted-point-locus}
Call a subset $\Zpoints \subset \RR^n$ {\it shifted} if $\Zpoints \subset \NN^n$ and $\Zpoints$ forms a (lower) order ideal in the componentwise order on $\NN^n$, that is, whenever  $\origin \leq \zz \leq \zz'$ and $\zz' \in \Zpoints$ then also $\zz \in \Zpoints$.
Equivalently, $\Zpoints \subset \NN^n$ is shifted if and only if the $\kk$-vector space
%\begin{equation}
%\label{eq:ideal-complementary-to-shifted-points}
$I:=\spn_\kk\{ \xx^\aa: \aa \in \NN^n \setminus \Zpoints\}$
%\end{equation}
forms a {\it monomial ideal} inside $S:=\kk[\xx]$.
\end{definition}

\begin{lemma}
    \label{lem:lower-harmonic}
    For any finite shifted point locus $\Zpoints \subset \NN^n$, 
    \begin{itemize}
    \item[(i)] the ideal $\gr\,\II(\Zpoints) \subset S=\RR[\xx]$ is monomial, with $\RR$-basis $\{ \xx^\aa : \aa \in \NN^n \setminus \Zpoints\}$,
    \item[(ii)] the quotient ring $R(\Zpoints)=S/\gr\,\II(\Zpoints)$ of $S=\RR[\xx]$
    has $\RR$-basis $\{\bar{\xx}^\zz: \zz \in \Zpoints\}$,
    \item[(iii)] 
    the harmonic space $V_\Zpoints \subseteq S$ has $\RR$-basis 
    $\{ \yy^\zz: \zz \in \Zpoints\}$ inside $\Div=\RR[\yy]$.
    \end{itemize}
\end{lemma}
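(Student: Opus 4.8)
The plan is to prove part (i) directly, identifying $\gr\,\II(\Zpoints)$ with the monomial ideal $M := \spn_\RR\{\xx^\aa : \aa \in \NN^n \setminus \Zpoints\}$ featured in Definition~\ref{def:shifted-point-locus}; parts (ii) and (iii) then follow with essentially no extra work. First I would record that $M$ genuinely is an ideal: since $\Zpoints$ is a lower order ideal of $\NN^n$, its complement $\NN^n \setminus \Zpoints$ is an up-set, so the monomial span $M$ is stable under multiplication by $S$, and the quotient $S/M$ visibly has $\RR$-basis $\{\bar{\xx}^\zz : \zz \in \Zpoints\}$; in particular $\dim_\RR S/M = \#\Zpoints$.

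The crux is the inclusion $M \subseteq \gr\,\II(\Zpoints)$. For this it suffices to produce, for each $\aa \in \NN^n \setminus \Zpoints$, a polynomial $f_\aa \in \II(\Zpoints)$ whose top-degree form $\tau(f_\aa)$ is exactly $\xx^\aa$. I would take the separable product
\[
f_\aa \ := \ \prod_{i=1}^n \ \prod_{j=0}^{a_i - 1} (x_i - j),
\]
whose top-degree form is $\prod_i x_i^{a_i} = \xx^\aa$. To check $f_\aa \in \II(\Zpoints)$, fix $\zz \in \Zpoints$: because $\aa \notin \Zpoints$ while $\Zpoints$ is downward closed, we cannot have $\aa \le \zz$ coordinatewise, so there is an index $i$ with $z_i < a_i$, i.e. $z_i \in \{0,1,\dots,a_i-1\}$; then the factor $(x_i - z_i)$ appears among the factors of $f_\aa$, forcing $f_\aa(\zz) = 0$. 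This is the only step that is not pure bookkeeping, and it is where the down-set hypothesis on $\Zpoints$ enters; I expect the ``difficulty'' to consist solely of thinking to write down $f_\aa$.

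Given $M \subseteq \gr\,\II(\Zpoints)$, I would close out (i) by a dimension count. Lagrange interpolation gives $\dim_\RR S/\II(\Zpoints) = \#\Zpoints$, so Proposition~\ref{prop:graded-quotient-is-a-gr}(iii) yields $\dim_\RR S/\gr\,\II(\Zpoints) = \#\Zpoints = \dim_\RR S/M$; combined with the inclusion, this forces $\gr\,\II(\Zpoints) = M$, which is monomial with $\RR$-basis $\{\xx^\aa : \aa \in \NN^n \setminus \Zpoints\}$. Part (ii) is then immediate, since $R(\Zpoints) = S/\gr\,\II(\Zpoints) = S/M$ has $\RR$-basis $\{\bar{\xx}^\zz : \zz \in \Zpoints\}$. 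For (iii), recall from Section~\ref{sec:harmonics-char-zero} that over $\kk=\RR$ the pairing satisfies $\langle \xx^\aa, \yy^\bb \rangle = \delta_{\aa,\bb}\, a_1! \cdots a_n!$; hence $g = \sum_\bb c_\bb \yy^\bb$ lies in $V_\Zpoints = M^\perp$ exactly when $c_\aa = 0$ for all $\aa$ with $\xx^\aa \in M$, i.e. for all $\aa \notin \Zpoints$. Thus $V_\Zpoints = \spn_\RR\{\yy^\zz : \zz \in \Zpoints\}$, as claimed.

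An alternative to the explicit $f_\aa$ would route the middle steps through Gr\"obner theory, showing that $\{\xx^\zz : \zz \in \Zpoints\}$ is precisely the set of $\prec$-standard monomials of $\II(\Zpoints)$ for a graded term order $\prec$ and then invoking Proposition~\ref{prop:graded-term-orders-and-gr}; but the separable-product argument above is cleaner and self-contained, so that is the route I would actually write up.
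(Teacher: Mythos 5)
Your proof is correct and follows essentially the same route as the paper: the same auxiliary polynomial $f_\aa(\xx)=\prod_{i=1}^n x_i(x_i-1)\cdots(x_i-(a_i-1))$ with $\tau(f_\aa)=\xx^\aa$, using shiftedness to verify $f_\aa\in\II(\Zpoints)$, followed by the same dimension count to force equality with the monomial ideal. The only difference is that you spell out how (ii) and (iii) follow from (i) (via the diagonal pairing on monomials), which the paper leaves as an immediate consequence.
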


\begin{proof}
Since $\Zpoints$ is shifted, the $\RR$-vector
space $I:=\spn_\RR\{\xx^\aa: \aa \in \NN^n \setminus \Zpoints\}$  is a monomial ideal.  
Note that all assertions in the lemma would follow from showing (i), that is, $I=\gr \, \II(\Zpoints)$.

To see this, we first show the inclusion $I \subseteq \gr \, \II(\Zpoints)$. Since $\Zpoints$ is shifted, we claim that for any $\aa \in \NN^n \setminus \Zpoints$, the polynomial 
$$
f_\aa(\xx) := \prod_{i=1}^n x_i (x_i - 1) \cdots (x_i - (a_i - 1))
$$
    lies in the vanishing ideal $\II(\Zpoints)$ for $\Zpoints$.  To see this claim, assume not, so there exists some point $\zz\in \Zpoints$ having $f(\zz) \neq 0$.
    But then for each $i$ one must have $z_i \in \NN \setminus \{0,1,2,\ldots,a_i-1\}$, implying $\zz \geq \aa$, and contradicting $\aa \not\in \Zpoints$.
    Hence $\gr \,\II(\Zpoints)$ contains 
    $
    \tau(f_\aa) = \prod_{i=1}^n x_i^{a_i}=\xx^\aa,
    $
    showing $I \subseteq \gr \, \II(\Zpoints)$.
    
    To show the opposite inclusion $I \supseteq \gr \, \II(\Zpoints)$, note that the surjection
    $S/I \twoheadrightarrow S/\gr \, \II(\Zpoints)$ must be an isomorphism by dimension-counting, since
     $\dim_\RR S/I = \#\Zpoints=\dim_\RR S/\gr \, \II(\Zpoints)$.
\end{proof}

If one defines the following $q$-weight enumerator for 
finite point loci $\Zpoints \subset \NN^n$
\begin{equation}
\label{eq:naive-locus-weight-enumerator}
w_\Zpoints(q):=\sum_{\zz \in \Zpoints} q^{|\zz|} \qquad \text{ where }|\zz|:=z_1+\cdots+z_n,
\end{equation}
then the next corollary is immediate from Lemma~\ref{lem:lower-harmonic}

\begin{cor}
\label{cor:hilb-is-easier-when-shifted}
    For finite shifted point loci $\Zpoints \subset \NN^n$, one has
$$
\Hilb(R(\Zpoints),q)=\Hilb(V_\Zpoints,q)=w_\Zpoints(q).
$$
\end{cor}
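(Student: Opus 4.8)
The plan is to read off both Hilbert series directly from the explicit monomial bases furnished by Lemma~\ref{lem:lower-harmonic}. First I would recall that $R(\Zpoints) = S/\gr\,\II(\Zpoints)$ carries the standard grading inherited from $S = \RR[\xx]$, in which the monomial $\xx^\zz = x_1^{z_1}\cdots x_n^{z_n}$ is homogeneous of degree $|\zz| = z_1 + \cdots + z_n$. By Lemma~\ref{lem:lower-harmonic}(ii), the cosets $\{\bar{\xx}^\zz : \zz \in \Zpoints\}$ form an $\RR$-basis of $R(\Zpoints)$, and since each $\bar{\xx}^\zz$ is homogeneous of degree $|\zz|$, the degree-$d$ component $R(\Zpoints)_d$ has dimension $\#\{\zz \in \Zpoints : |\zz| = d\}$. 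Summing $q^d$ against these dimensions yields exactly $\sum_{\zz \in \Zpoints} q^{|\zz|}$, which is $w_\Zpoints(q)$ by the definition \eqref{eq:naive-locus-weight-enumerator}.

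For the harmonic space I would argue identically using Lemma~\ref{lem:lower-harmonic}(iii): the space $V_\Zpoints \subseteq \Div = \RR[\yy]$ has the homogeneous $\RR$-basis $\{\yy^\zz : \zz \in \Zpoints\}$, with $\yy^\zz$ of degree $|\zz|$, so $\Hilb(V_\Zpoints,q) = w_\Zpoints(q)$ as well. Alternatively, one can sidestep part (iii) altogether and simply invoke the general identity \eqref{eq:perp-and-quotient-share-Hilbert-series}, applied to the homogeneous ideal $I = \gr\,\II(\Zpoints)$, to conclude $\Hilb(V_\Zpoints,q) = \Hilb(R(\Zpoints),q)$ and then quote the first part of the computation.

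There is essentially no obstacle here: all the substance resides in Lemma~\ref{lem:lower-harmonic}, whose proof has already identified $\gr\,\II(\Zpoints)$ as the monomial ideal spanned by $\{\xx^\aa : \aa \in \NN^n \setminus \Zpoints\}$ and pinned down the corresponding bases of $R(\Zpoints)$ and $V_\Zpoints$. The only point that needs a word is that the claimed basis elements are homogeneous of the stated degree, which is immediate from the standard gradings on $S$ and $\Div$. Consequently the corollary follows by a one-line degree-counting argument once Lemma~\ref{lem:lower-harmonic} is in hand.
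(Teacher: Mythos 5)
Your proposal is correct and is exactly the paper's argument: the paper states the corollary as immediate from Lemma~\ref{lem:lower-harmonic}, and you have simply spelled out the degree count from the monomial bases in parts (ii) and (iii) (or, equivalently, invoked \eqref{eq:perp-and-quotient-share-Hilbert-series}). Nothing further is needed.
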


We next explain how antiblocking polytopes $P$ give rise to shifted point loci.  
For $0 \leq d \leq n$, let 
$$
(\one_d,\origin_{n-d}):=(\underbrace{1,1,\ldots,1}_{d\text{ times}},
\underbrace{0,0,\ldots,0}_{n-d\text{ times}}).
$$

\begin{prop}
\label{prop:antiblocking-polytopes-yield-shifted-loci}
    Let $P$ be an antiblocking $d$-polytope in $\RR^n$.
    \begin{itemize}
        \item[(i)] The finite point locus $\ZZ^n \cap P \subset \NN^n$ is shifted, and the same holds for
        $\ZZ^n \cap mP$ for all $m \in \NN$.
        \item[(ii)] There exists a re-indexing of the coordinates in $\RR^n$ such that the translated set
        $$
        \ZZ^n \cap \interior{P} - (\one_d,\origin_{n-d}):=
        \{ \zz - (\one_d,\origin_{n-d}): \zz \in \ZZ^n \cap \interior{P}\}
        $$
        is shifted, and the same holds for the set 
        $\ZZ^n \cap \interior{mP} - (\one_d,\origin_{n-d})$
        for all $m \in \NN$.
    \end{itemize}  
\end{prop}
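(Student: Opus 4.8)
The plan is to deduce everything from one structural fact about relative interiors of antiblocking polytopes, handling all dilates uniformly. For part (i), I would first observe that a dilate $mP$ of an antiblocking polytope is again antiblocking: it lies in $\RR_{\geq 0}^n$, and if $\origin \leq \zz \leq \zz'$ with $\zz' \in mP$ then $\origin \leq \zz/m \leq \zz'/m \in P$, so $\zz/m \in P$ and hence $\zz \in mP$. It then suffices to note that $\ZZ^n \cap Q$ is shifted for any antiblocking polytope $Q$: an integer point of $Q \subseteq \RR_{\geq 0}^n$ automatically lies in $\NN^n$, and the order-ideal property of $Q$ restricts to $\ZZ^n \cap Q$. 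Taking $Q = mP$ proves (i).

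For part (ii) I would proceed in four steps. \textbf{(1) The re-indexing.} Call a coordinate $i$ \emph{active} if some point of $P$ has positive $i$th coordinate. If $i$ is active, picking $\pp \in P$ with $p_i > 0$ forces $p_i\ee_i \leq \pp$, hence $p_i \ee_i \in P$; together with $\origin \in P$ this puts $\ee_i \in \spn_\RR(P)$. Since $P$ is contained in the coordinate subspace spanned by the active $\ee_i$, comparing dimensions shows there are exactly $d$ active coordinates. Re-indexing these to be $1, \dots, d$ gives $P = P' \times \{\origin_{n-d}\}$ for a full-dimensional antiblocking polytope $P' \subseteq \RR_{\geq 0}^d$, with $\interior{P} = \interior{P'} \times \{\origin_{n-d}\}$ (the relative interior of $P$ being its interior inside its affine hull). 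This same re-indexing works for every dilate at once, since dilation does not change which coordinates are active and $(mP)' = mP'$. \textbf{(2) Interior lattice points exceed $\one_d$.} Monotonicity of the interior operator applied to $P' \subseteq \RR_{\geq 0}^d$ gives $\interior{P'} \subseteq \RR_{>0}^d$, so every lattice point of $\interior{P'}$ has all coordinates $\geq 1$, and subtracting $\one_d$ lands in $\NN^d$.

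\textbf{(3) The crux: the relative interior is stable under scaling down.} I would show that if $\zz \in \interior{P'}$ and $\origin < \yy \leq \zz$ coordinatewise, then $\yy \in \interior{P'}$. Write $\yy = D\zz$ where $D$ is the diagonal matrix with entries $y_i/z_i \in (0,1]$. A small perturbation $\yy + \uu$ equals $D(\zz + D^{-1}\uu)$, and for $\uu$ small enough $\zz + D^{-1}\uu \in P'$; since $D$ scales each nonnegative coordinate downward and $P'$ is an order ideal, $D$ maps $P'$ into $P'$, so $\yy + \uu \in P'$. Granting this, $\{\zz - \one_d : \zz \in \ZZ^d \cap \interior{P'}\}$ is an order ideal in $\NN^d$: if $\origin \leq \mathbf{w} \leq \zz - \one_d$ with $\zz \in \ZZ^d \cap \interior{P'}$, then $\origin < \mathbf{w} + \one_d \leq \zz$, so $\mathbf{w} + \one_d \in \ZZ^d \cap \interior{P'}$. \textbf{(4) Pad and dilate.} Finally $\ZZ^n \cap \interior{P} - (\one_d, \origin_{n-d})$ equals $(\ZZ^d \cap \interior{P'} - \one_d) \times \{\origin_{n-d}\}$, and adjoining zero coordinates to a shifted subset of $\NN^d$ yields a shifted subset of $\NN^n$; rerunning (2)--(3) with $P'$ replaced by $mP'$ covers all $m$. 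The hard part is Step (3): the tempting claim that the relative interior of an antiblocking polytope is itself downward closed is \emph{false}, since it is not closed toward the coordinate walls, and the argument survives only because the relative interior \emph{is} closed under the coordinatewise scaling maps with entries in $(0,1]$ — precisely the property needed for the shifted set to close up correctly after translation by $\one_d$. The rest is routine bookkeeping with coordinate subspaces and dilations.
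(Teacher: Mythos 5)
Your proof is correct, and part (ii) takes a genuinely different route from the paper's. Part (i) and the reduction to the full-dimensional case are essentially the same (the paper identifies the $d$ ``active'' coordinates by finding a single point of full support via convexity of segments, while you use the order-ideal property to put scaled basis vectors $p_i\ee_i$ into $P$; both give $\spn_\RR(P)=\spn\{\ee_i: i \text{ active}\}$). The divergence is in how the interior claim is proved: the paper invokes the Fulkerson-style inequality description $P=\{\zz\geq\origin,\ A\zz\leq\bb\}$ with $A,\bb\geq 0$, notes that for full-dimensional $P$ the relative interior is cut out by the strict inequalities, and then reads off that $\ZZ^d\cap\interior{P}-\one_d=\{\zz\geq\origin,\ A\zz<\bb-A\one_d\}$ is shifted because $A\geq 0$ makes $\zz\mapsto A\zz$ monotone. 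You instead work directly from the definition of antiblocking: the diagonal scaling $D=\mathrm{diag}(y_i/z_i)$ with entries in $(0,1]$ maps $P'$ into $P'$ by the order-ideal property, and a perturbation argument ($\yy+\uu=D(\zz+D^{-1}\uu)$) shows that $\origin<\yy\leq\zz$ with $\zz\in\interior{P'}$ forces $\yy\in\interior{P'}$, which is exactly the downward-closure statement needed after translating by $\one_d$. Your version is more self-contained, since it does not rely on the existence of a nonnegative inequality description (a standard fact the paper uses without proof) nor on the identification of the interior with the locus of strict inequalities; the paper's version is shorter once those facts are granted. Your closing remark correctly isolates the point that the relative interior itself is not downward closed, and that the usable substitute is stability under coordinatewise scaling by factors in $(0,1]$ applied to strictly positive points.
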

\begin{proof}
First note that whenever $P$ is antiblocking, then so is $mP$. Thus
one only needs to check the first part of both assertions (i),(ii) for $P$.  

Assertion (i) for $P$ is straightforward from the definitions of antiblocking and shiftedness.  

To prove assertion (ii) for $P$, consider 
the {\it support sets} 
$
\supp(\zz):=\{i=1,2,\ldots,n: z_i \neq 0\}
$
for $\zz \in P$.  Since $P$ is convex and lies in $\RR_{\geq 0}$, whenever $\zz,\zz' \in P$, the points $\zz''$ in the interior of the line segment between them have $\supp(\zz'')=\supp(\zz) \cup \supp(\zz')$.  Consequently, there exists $\zz \in P$ with 
$
\supp(\zz)=\bigcup_{\zz' \in P}\supp(\zz').
$
Reindexing coordinates so that $\supp(\zz)=\{1,2,\ldots,e\}$, one concludes that $P$ contains
the $e$-dimensional parallelepiped of vectors componentwise between $\origin$ and $\zz$,
and also $P$ is contained in the $e$-dimensional coordinate subspace $\RR^e \times \{ \origin_{n-d} \}$.  Since $P$ is $d$-dimensional, this forces $e=d$,
and the last $n-d$ coordinates vanish on all points of $P$.

Therefore in the rest of the proof that 
$\ZZ^n \cap \interior{P} - (\one_d,\origin_{n-d})$
is shifted, it suffices to project away the last $n-d$ coordinates which all vanish on $P$, and assume $d=n$.
As a full $d$-dimensional antiblocking polytope $P \subset \RR^d$, starting with any inequality description 
$$P=\{\zz \in \RR^d: \zz \geq \origin, A\zz \leq \bb\}$$ 
where $A,\bb$ have all entries in $\RR_{\geq 0}$, one can describe $\interior{P}$ as
$$
\interior{P}=\{\zz \in \RR^d: \zz > \origin, A\zz <\bb\}. 
$$
Then the points of $\ZZ^d \cap \interior{P}$
have this description:
$$
\ZZ^d \cap \interior{P}= \{\zz \in \ZZ^d: \zz \geq \one_d, A\zz <\bb\}.
$$
Hence the points of $\ZZ^d \cap \interior{P}-\one_d$ have this
description, which defines a shifted subset of $\NN^n$:
$$
\ZZ^d \cap \interior{P}-\one_d= \{\zz \in \ZZ^d: \zz \geq \origin_d, A\zz <\bb - A\one_d\}.\qedhere
$$
\end{proof}

Proposition~\ref{prop:antiblocking-polytopes-yield-shifted-loci} lets us re-cast the $q$-Ehrhart series
for an antiblocking polytope in terms of the following simpler and more classical weight enumerators, involving $w_\Zpoints(q)$
from \eqref{eq:naive-locus-weight-enumerator}.

\begin{definition} \rm
\label{def:naive-q-enumerator}
    For any lattice polytope $P \subset \RR^m$, define
    two $q$-weight enumerator series
$$
        W_P(t,q):=\sum_{m=0}^\infty w_{\ZZ^n \cap mP}(q) \cdot t^m, \quad \text{ and } \quad
        \overline{W}_P(t,q):=\sum_{m=0}^\infty w_{\ZZ^n \cap \interior{mP}}(q) \cdot t^m.
$$  
\end{definition}

Combining Proposition~\ref{prop:antiblocking-polytopes-yield-shifted-loci}, Corollary~\ref{cor:hilb-is-easier-when-shifted}, and Proposition~\ref{prop:affine-invariance}  yields this consequence.
\begin{cor}
 \label{cor:harmonic-equal-naive-Ehrhart-for-antiblockers}
    For any $d$-dimensional lattice polytope $P \subset \RR^n$ which is antiblocking, one has
    \begin{align}
       \Eseries_P(t,q)&=W_P(t,q),\\
        q^d \cdot \intEseries_P(t,q)&= \overline{W}_P(t,q).
    \end{align}
\end{cor}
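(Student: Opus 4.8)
The plan is to prove both identities termwise in $t$, reducing each coefficient to Corollary~\ref{cor:hilb-is-easier-when-shifted} by way of Proposition~\ref{prop:antiblocking-polytopes-yield-shifted-loci}. The first identity is essentially immediate: fix $m \geq 0$; by Proposition~\ref{prop:antiblocking-polytopes-yield-shifted-loci}(i) the locus $\ZZ^n \cap mP$ is shifted, so Corollary~\ref{cor:hilb-is-easier-when-shifted} gives $\Ehr_P(m;q) = \Hilb(R(\ZZ^n \cap mP),q) = w_{\ZZ^n \cap mP}(q)$, and summing against $t^m$ turns the definition of $\Eseries_P(t,q)$ into that of $W_P(t,q)$.

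For the second identity, fix $m \geq 1$. The locus $\ZZ^n \cap \interior{mP}$ is generally not shifted, but Proposition~\ref{prop:antiblocking-polytopes-yield-shifted-loci}(ii) provides a relabeling $\sigma$ of the $n$ coordinates (a permutation matrix in $GL_n(\ZZ)$, the same for every $m$) after which $\Zpoints'_m := \bigl(\ZZ^n \cap \interior{mP}\bigr) - (\one_d,\origin_{n-d})$ is a finite shifted subset of $\NN^n$. Since the map $\zz \mapsto \sigma(\zz) - (\one_d,\origin_{n-d})$ is an affine transformation of $\RR^n$ carrying $\ZZ^n \cap \interior{mP}$ onto $\Zpoints'_m$, equation~\eqref{eq:Hilb-of-R(Z)-is-affine-invariant} from the proof of Proposition~\ref{prop:affine-invariance} gives $\Hilb(R(\ZZ^n \cap \interior{mP}),q) = \Hilb(R(\Zpoints'_m),q)$, which by Corollary~\ref{cor:hilb-is-easier-when-shifted} equals $w_{\Zpoints'_m}(q)$. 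The inequality description of $\ZZ^d \cap \interior{mP}$ recorded in the proof of Proposition~\ref{prop:antiblocking-polytopes-yield-shifted-loci}(ii) shows that, in the relabeled coordinates, every point of $\ZZ^n \cap \interior{mP}$ has its first $d$ coordinates $\geq 1$ and its remaining $n-d$ coordinates $0$; hence subtracting $(\one_d,\origin_{n-d})$ keeps the point in $\NN^n$ and decreases its coordinate sum by exactly $d$. Therefore $w_{\Zpoints'_m}(q) = q^{-d}\,w_{\ZZ^n \cap \interior{mP}}(q)$, i.e. $q^d \cdot \bar i_P(m;q) = w_{\ZZ^n \cap \interior{mP}}(q)$, and summing against $t^m$ over $m \geq 1$ yields $q^d \cdot \intEseries_P(t,q) = \overline{W}_P(t,q)$, the $m=0$ terms of both sides vanishing under the standing convention $\interior{0P} = \emptyset$.

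I do not expect a real obstacle: all the geometric content has already been isolated in Proposition~\ref{prop:antiblocking-polytopes-yield-shifted-loci}. The two points that need care are (a) invoking affine invariance at the level of point loci, via equation~\eqref{eq:Hilb-of-R(Z)-is-affine-invariant}, rather than at the level of polytopes, since we are translating a configuration by a lattice vector that is \emph{not} a dilate of a translation of $P$; and (b) keeping track of the uniform shift of the coordinate-sum statistic by $d$, which is exactly what injects the corrective factor $q^d$ into the interior identity (and, downstream, into the conjectural reciprocity of Conjecture~\ref{conj:intro-omnibus}(iii)).
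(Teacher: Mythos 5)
Your proof is correct and follows essentially the same route as the paper, which proves the corollary by combining Proposition~\ref{prop:antiblocking-polytopes-yield-shifted-loci}, Corollary~\ref{cor:hilb-is-easier-when-shifted}, and Proposition~\ref{prop:affine-invariance} exactly as you do. You merely spell out the bookkeeping the paper leaves implicit, namely that the affine invariance is applied at the level of point loci and that the translation by $(\one_d,\origin_{n-d})$ lowers the coordinate-sum statistic by exactly $d$, which is the source of the factor $q^d$.
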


Classical results now apply to the weight enumerators $W_P(t,q), \overline{W}_P(t,q)$, showing that they have rational expressions with predictable properties, for {\it any} lattice polytopes $P$.

\begin{prop}
\label{prop:naive-weighted-Ehrhart-series-are-simpler}
    Let $P \subset \RR^n$ be any $d$-dimensional lattice polytope, with vertices $\vv^{(1)},\vv^{(2)},\ldots,\vv^{(\nu)}$.
    \begin{itemize}
        \item[(i)] Both $W_P(t,q), \overline{W}_P(t,q)$ have rational expressions with the same denominator 
$$
        W_P(t,q)=\frac{N_P(t,q)}{\prod_{i=1}^\nu
        (1-tq^{|\vv^{(i)}|})}
        \quad \text{ and } \quad
        \overline{W}_P(t,q)=\frac{\overline{N}_P(t,q)}{\prod_{i=1}^\nu
        (1-tq^{|\vv^{(i)}|})}
$$
and numerators $N_P(t,q),\overline{N}_P(t,q) \in \ZZ[t,q]]$.
        \item[(ii)] For a lattice $d$-simplex $P$
        with vertices $\{ \vv^{(i)}\}_{i=1,2,\ldots,d+1}$, 
        one has these numerators in $\NN[t,q]$
        $$
        N_P(t,q)=\sum_{\zz \in \ZZ^{n+1} \cap \Pi} t^{z_{n+1}} q^{z_1+\cdots+z_n} 
        \quad \text{ and } \quad
        \overline{N}_P(t,q)=\sum_{\zz \in \ZZ^{n+1} \cap \Pi^\opp} t^{z_{n+1}} q^{z_1+\cdots+z_n},
        $$
        where $\Pi$ is the semi-open parallelepiped from \eqref{eq: semi-open-parallelepiped}, and $\Pi^{\opp}$ its ``opposite", defined by
        $$    
        \Pi:=\left\{ \sum_{j=1}^{d+1} c_j \cdot (1,\vv^{(j)}): 0 \leq c_i < 1\right\} 
        \quad \text{ and } \quad 
        \Pi^{\opp}:=\left\{ \sum_{j=1}^{d+1} c_j \cdot (1,\vv^{(j)}): 0 < c_i \leq 1\right\}.
        $$
        \item[(iii)] The two series $W_P(t,q), \overline{W}_P(t,q)$ determine each other via
        $$
        \overline{W}_P(t,q) = (-1)^{d+1} W_P(t^{-1},q^{-1}).
        $$
    \end{itemize}
\end{prop}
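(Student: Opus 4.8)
The plan is to recognize $W_P(t,q)$ and $\overline{W}_P(t,q)$ as Hilbert series of the affine semigroup ring $A_P$ and its interior ideal $\overline{A}_P$, but with respect to the \emph{bigrading} in which a lattice point $\zz$ in the $x_0 = m$ slice $\{m\} \times mP$ of $\cone(P)$ is given bidegree $(m, |\zz|) = (m, z_1 + \cdots + z_n)$. That is, for $\chi = (x_0, z_1, \ldots, z_n) \in \ZZ^{n+1} \cap \cone(P)$, assign the monomial bidegree $(x_0, z_1 + \cdots + z_n)$, tracked by $t^{x_0} q^{z_1 + \cdots + z_n}$. Since the $x_0 = m$ slice of $\cone(P)$ is exactly $\{m\} \times mP$, summing the bigraded Hilbert series over slices recovers $W_P(t,q) = \sum_m \big(\sum_{\zz \in \ZZ^n \cap mP} q^{|\zz|}\big) t^m = \Hilb(A_P, t, q)$, and similarly $\overline{W}_P(t,q) = \Hilb(\overline{A}_P, t, q)$. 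The key point is that this bigrading is \emph{finer} than the usual $x_0$-grading but still makes $A_P$ a finitely generated bigraded module over the polynomial subring generated by the vertex monomials $(1, \vv^{(i)})$, each of which has bidegree $(1, |\vv^{(i)}|)$.

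For part (i): the monomials $\{(1,\vv^{(i)})\}_{i=1}^{\nu}$ corresponding to the vertices of $P$ generate a subalgebra $B_P \subseteq A_P$ over which $A_P$ is a finite module (this is the integral extension recalled in the introduction, and Gordan's lemma guarantees finite generation of $A_P$); $\overline{A}_P$ is an ideal inside $A_P$, hence also a finite $B_P$-module. Now $B_P$ is a quotient of the polynomial ring $\kk[y_1,\ldots,y_\nu]$ with $y_i$ in bidegree $(1,|\vv^{(i)}|)$; a finitely generated bigraded module over $\kk[y_1,\ldots,y_\nu]$ has Hilbert series of the form (numerator in $\ZZ[t,q]$) divided by $\prod_{i=1}^{\nu}(1 - t q^{|\vv^{(i)}|})$, by the standard argument of tensoring a finite bigraded free resolution against the Hilbert series of the polynomial ring. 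This gives the common denominator and integrality of the numerators. For part (ii): when $P$ is a simplex, $B_P = \kk[\Theta]$ is itself a polynomial ring on the $d+1$ vertex monomials (they are algebraically independent), and the classical fact recalled as item (iii) after the Classical Ehrhart Theorem — that $A_P$ is a free $\kk[\Theta]$-module with basis the monomials $\chi$ for $\chi$ running over $\ZZ^{n+1} \cap \Pi$ — is homogeneous for our finer bigrading. Hence $W_P(t,q) = \big(\sum_{\zz \in \ZZ^{n+1}\cap\Pi} t^{z_{n+1}} q^{z_1 + \cdots + z_n}\big) \big/ \prod_{i=1}^{d+1}(1 - t q^{|\vv^{(i)}|})$ after matching up the coordinate conventions (the cone variable is $x_0$, written $z_{n+1}$ in the statement), giving a manifestly nonnegative numerator; the interior ideal $\overline{A}_P$ is the free module on the monomials $\chi$ for $\chi \in \ZZ^{n+1} \cap \Pi^{\opp}$, by the analogous classical statement, yielding $\overline{N}_P(t,q)$.

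For part (iii): this is the bigraded refinement of Ehrhart--Macdonald reciprocity, which follows from Stanley's reciprocity for the Hilbert series of a Cohen--Macaulay affine semigroup ring and its canonical module. Concretely, one can either invoke the Danilov--Stanley identification $\overline{A}_P \cong \Omega A_P$ together with Stanley's monotonicity/reciprocity theorem, carefully tracking the second grading, or argue directly at the level of the simplex case (where both sides are explicit products/sums as in part (ii)) and then reduce the general polytope to simplices by a half-open lattice triangulation: triangulate $P$ into lattice simplices $P = \bigcup \sigma_j$, use inclusion--exclusion over the half-open decomposition to write $W_P$ (resp. $\overline{W}_P$) as an alternating sum of the $W_{\sigma_j}$ (resp. $\overline{W}_{\sigma_j}$) over faces, and apply the simplex reciprocity term-by-term; the signs $(-1)^{d+1}$ match because each maximal simplex contributes dimension $d$ and the lower-dimensional correction terms cancel in pairs under $\chi \leftrightarrow$ its opposite. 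The main obstacle is part (iii): making the combinatorial bookkeeping of the half-open triangulation compatible with \emph{both} gradings simultaneously — in particular checking that the substitution $(t,q) \mapsto (t^{-1}, q^{-1})$ interacts correctly with the semi-open parallelepipeds $\Pi$ and $\Pi^{\opp}$ on each simplex, since $|\vv^{(i)}|$ appears in the exponents — is where care is needed; alternatively one sidesteps this entirely by citing a bigraded version of Stanley's reciprocity for $\ZZ^2$-graded Cohen--Macaulay rings, which is available in the literature.
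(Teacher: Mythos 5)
Parts (i) and (ii) of your proposal are essentially correct, and they take a slightly different route from the paper: you re-derive the rationality and the simplex numerators from the bigraded commutative algebra of the semigroup ring $A_P$ (finiteness of $A_P$ and $\overline{A}_P$ as bigraded modules over the subalgebra generated by the vertex monomials, Hilbert's syzygy theorem for (i); freeness over $\kk[\Theta]$ with the parallelepiped monomial bases $\ZZ^{n+1}\cap\Pi$ and $\ZZ^{n+1}\cap\Pi^{\opp}$ for (ii)). The paper instead proves all three parts in one stroke by observing that they are the specializations $x_0\mapsto t,\ x_1=\cdots=x_n\mapsto q$ of Stanley's \emph{multivariate} results for the cone $\cone(P)$: rationality of $\sum_{\zz\in\ZZ^{n+1}\cap\cone(P)}\xx^{\zz}$ with denominator $\prod_{i=1}^{\nu}\bigl(1-\xx^{(1,\vv^{(i)})}\bigr)$ \cite[Thm.~4.5.11]{Stanley-EC1}, the parallelepiped numerators for simplicial cones \cite[Cor.~4.5.8]{Stanley-EC1}, and Stanley's reciprocity theorem \cite[Thm.~4.5.14]{Stanley-EC1}. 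Your facts for (i)--(ii) are the same classical facts packaged algebraically, so this difference is one of presentation rather than substance.

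The genuine gap is part (iii): you do not complete any of the arguments you sketch, and the fallback you ultimately lean on --- ``citing a bigraded version of Stanley's reciprocity for $\ZZ^2$-graded Cohen--Macaulay rings'' --- is not sufficient as stated. For a general $\NN^2$-graded Cohen--Macaulay algebra, canonical-module reciprocity only gives $\Hilb(\Omega A,t,q)=(-1)^{\dim A}\,t^{A}q^{B}\,\Hilb(A,t^{-1},q^{-1})$ up to an \emph{undetermined} monomial shift $t^{A}q^{B}$ (the paper makes exactly this point when relating Conjecture~\ref{conj:harmonic-algebra-omnibus}(iii) to Conjecture~\ref{conj:intro-omnibus}(iii)), so it cannot by itself yield the shift-free identity $\overline{W}_P(t,q)=(-1)^{d+1}W_P(t^{-1},q^{-1})$; to kill the shift you would have to pin down the Danilov--Stanley isomorphism $\Omega A_P\cong\overline{A}_P$ in its fine $\ZZ^{n+1}$-grading, which is precisely the ``careful tracking'' you defer. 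The clean repair, and the paper's actual proof, is to work with the fine multigrading throughout: Stanley's reciprocity for rational cones is already the statement $\sum_{\zz\in\ZZ^{n+1}\cap\cone(P)}\xx^{-\zz}=(-1)^{d+1}\sum_{\zz\in\ZZ^{n+1}\cap\interior{\cone(P)}}\xx^{\zz}$ as an identity of rational functions, and substituting $x_0\mapsto t$, $x_i\mapsto q$ (legitimate since the specialized denominator factors $1-tq^{|\vv^{(i)}|}$ are nonzero) gives (iii) immediately. Once you see all three parts as such specializations, the half-open triangulation bookkeeping and the worry about how $(t,q)\mapsto(t^{-1},q^{-1})$ interacts with $\Pi$ versus $\Pi^{\opp}$ simply never arise.
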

\begin{proof}
These are all bivariate specializations of results of Stanley on
multivariable lattice point enumerators for rational polyhedral cones, implicit in \cite{Stanley-reciprocity}, and more explicit in \cite[\S4.5]{Stanley-EC1}.  One must apply them to
$\cone(P) \subset \RR^{n+1}$ which is defined to
be the $(d+1)$-dimensional cone nonnegatively spanned by the vectors 
$\{(1,\vv^{(j)})\}_{j=1,\ldots,\nu}$. In variable set $\xx=(x_0,x_1,\ldots,x_n)$, Stanley considers
$$
E_{\cone(P)}(\xx):=\sum_{\zz \in \cone(P)} \xx^\zz
\quad \text{ and }\quad 
\overline{E}_{\cone(P)}(\xx):=\sum_{\zz \in \interior{\cone(P)}} \xx^\zz.
$$
He proves \cite[Thm. 4.5.11]{Stanley-EC1}
that one has rational expressions of the form
\begin{equation}
\label{eq:Stanley-multivariate-denominators}
E_{\cone(P)}(\xx)=\frac{N_P(\xx)}{\prod_{i=1}^\nu
        (1-\xx^{\vv^{(i)}})}
\quad \text{ and }\quad 
\overline{E}_{\cone(P)}(\xx)=
\frac{\overline{N}_P(\xx)}{\prod_{i=1}^\nu
        (1-\xx^{\vv^{(i)}})}
\end{equation}
for some polynomials $N_P(\xx),\overline{N}_P(\xx)$ in $\ZZ[\xx]$.
He also proves \cite[Cor. 4.5.8]{Stanley-EC1} that a $d$-simplex $P$ has
\begin{equation}
\label{eq:Stanley-simplex-formulas}
N_P(\xx)=\sum_{\zz \in \ZZ^{n+1} \cap \Pi} \xx^\zz
\quad \text{ and } \quad
\overline{N}_P(\xx)=\sum_{\zz \in \ZZ^{n+1} \cap \Pi^{\opp}} \xx^\zz.
\end{equation}
And he shows in 
\cite[Thm. 4.5.14]{Stanley-EC1}
(often called {\it Stanley's Reciprocity Theorem}) that
\begin{equation}
\label{eq:Stanley-reciprocity}
E_P(x_0^{-1},x_1^{-1},\ldots,x_n^{-1})
=(-1)^{d+1} \overline{E}_P(x_0,x_1,\ldots,x_n).
\end{equation}
Assertions (i),(ii),(iii) are the
specializations at $x_0=t,x_1=\cdots=x_n=q$ of \eqref{eq:Stanley-multivariate-denominators},\eqref{eq:Stanley-simplex-formulas},\eqref{eq:Stanley-reciprocity}.
\end{proof}

\begin{cor}
    Conjecture~\ref{conj:intro-omnibus} holds for all lattice polytopes $P$ which are antiblocking.
\end{cor}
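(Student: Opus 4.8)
The plan is to reduce the entire statement to Corollary~\ref{cor:harmonic-equal-naive-Ehrhart-for-antiblockers} and Proposition~\ref{prop:naive-weighted-Ehrhart-series-are-simpler}, so that harmonic spaces never enter. Let $P\subset\RR^n$ be an antiblocking $d$-polytope with vertices $\vv^{(1)},\ldots,\vv^{(\nu)}$. First I would apply Corollary~\ref{cor:harmonic-equal-naive-Ehrhart-for-antiblockers} to replace the $q$-Ehrhart series by weight enumerators, $\Eseries_P(t,q)=W_P(t,q)$ and $q^d\cdot\intEseries_P(t,q)=\overline W_P(t,q)$, and then Proposition~\ref{prop:naive-weighted-Ehrhart-series-are-simpler}(i) to write both $W_P$ and $\overline W_P$ over the common denominator
$$
D_P(t,q):=\prod_{i=1}^\nu\bigl(1-tq^{|\vv^{(i)}|}\bigr),
$$
with numerators $N_P(t,q),\overline{N}_P(t,q)\in\ZZ[t,q]$. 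This $D_P$ has exactly the shape required in Conjecture~\ref{conj:intro-omnibus}, with $b_i=1$ and $a_i=|\vv^{(i)}|$; moreover $\nu$ is the number of vertices of $P$, so $\nu\ge d+1$, with equality precisely when $P$ is a simplex. Then $\Eseries_P(t,q)=N_P(t,q)/D_P(t,q)$ is already in the desired form.

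The one point needing care is expressing $\intEseries_P$ over the same denominator $D_P$. From $q^d\cdot\intEseries_P(t,q)=\overline{N}_P(t,q)/D_P(t,q)$ one has $\intEseries_P(t,q)=q^{-d}\,\overline{N}_P(t,q)/D_P(t,q)$, so I need $q^d\mid\overline{N}_P(t,q)$ in $\ZZ[t,q]$. This holds because $1/D_P(t,q)\in\ZZ[q][[t]]$ and $\intEseries_P(t,q)\in\ZZ[q][[t]]$ (it is a sum of honest Hilbert series), so $q^{-d}\overline{N}_P=\intEseries_P\cdot D_P$ lies in $\ZZ[q][[t]]$; being simultaneously a Laurent polynomial in $q$, each of its (finitely many) $t$-coefficients then lies in $\ZZ[q,q^{-1}]\cap\ZZ[q]=\ZZ[q]$, so $\overline{N}'_P:=q^{-d}\overline{N}_P\in\ZZ[t,q]$. (Alternatively, the proof of Proposition~\ref{prop:antiblocking-polytopes-yield-shifted-loci}(ii) shows that after reindexing coordinates every lattice point $\zz$ of $\interior{mP}$ satisfies $|\zz|\ge d$, so every coefficient of $\overline W_P(t,q)$ already lies in $q^d\ZZ[q]$.) Hence $\intEseries_P(t,q)=\overline{N}'_P(t,q)/D_P(t,q)$ over the same $D_P$ with $\overline{N}'_P\in\ZZ[t,q]$, which establishes Conjecture~\ref{conj:intro-omnibus}(i) for $P$.

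For part (ii), when $P$ is a lattice simplex we automatically have $\nu=d+1$, and Proposition~\ref{prop:naive-weighted-Ehrhart-series-are-simpler}(ii) gives $N_P(t,q),\overline{N}_P(t,q)\in\NN[t,q]$ via the parallelepiped formulas; dividing $\overline{N}_P$ by the monomial $q^d$ it is divisible by preserves nonnegativity, so $\overline{N}'_P\in\NN[t,q]$ as well, and both numerators have nonnegative coefficients as required. For part (iii), I would chain $q^d\cdot\intEseries_P(t,q)=\overline W_P(t,q)$, Proposition~\ref{prop:naive-weighted-Ehrhart-series-are-simpler}(iii) in the form $\overline W_P(t,q)=(-1)^{d+1}W_P(t^{-1},q^{-1})$, and $W_P(t,q)=\Eseries_P(t,q)$, yielding exactly $q^d\cdot\intEseries_P(t,q)=(-1)^{d+1}\Eseries_P(t^{-1},q^{-1})$. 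Every step is a direct quotation of an already-established result, so the only genuine obstacle is the $q^d$-divisibility bookkeeping in the second paragraph, and that is mild.
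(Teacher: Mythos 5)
Your proposal is correct and follows the same route as the paper: the paper's proof is exactly the one-line combination of Corollary~\ref{cor:harmonic-equal-naive-Ehrhart-for-antiblockers} and Proposition~\ref{prop:naive-weighted-Ehrhart-series-are-simpler}, with denominator $D_P(t,q)=\prod_{i=1}^\nu(1-tq^{|\vv^{(i)}|})$. Your extra paragraph verifying $q^d\mid\overline{N}_P(t,q)$ (so that $\intEseries_P$ genuinely has a numerator in $\ZZ[t,q]$ over the same denominator) is a valid and worthwhile piece of bookkeeping that the paper leaves implicit, but it does not change the argument.
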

\begin{proof}
Combining
Corollary~\ref{cor:harmonic-equal-naive-Ehrhart-for-antiblockers}
and Proposition~\ref{prop:naive-weighted-Ehrhart-series-are-simpler} shows that an antiblocking polytope $P$ with vertices $\{\vv^{(1)},\ldots,\vv^{(\nu)}\}$ satisfies
Conjecture~\ref{conj:intro-omnibus}
%when one specializes all assertions (i)-(vi) in the conjecture
with denominator 
$D_P(t,q)=\prod_{i=1}^\nu (1-t q^{|\vv^{(i)}|})$.
%Therefore all $b_i=1$, and $b:=\sum_{i=1}^\nu b_i=\nu$.  
%Most assertions of the conjecture are seen to follow easily.  The trickiest is assertion (iii) on the $t$-degrees of $N(t,q),\overline{N}(t,q)$. But since $\left[ \Eseries_P(t,q) \right]_{q=1}=\Eseries_P(t)$ satisfies assertion (i) of the Classical Ehrhart Theorem from the Introduction, this implies $\Eseries_P(t,q)$ satisfies Conjecture~\ref{conj:intro-omnibus}(iii).
\end{proof}

\begin{remark}
\label{rmk:lookahead-to-chain-polytopes}
See Section~\ref{sec: chain-order-polytope} below for a discussion of an interesting family of antiblocking polytopes mentioned
in the Introduction:  the {\it chain polytope} $C_\PPP$ associated to a finite poset $\PPP$.
It will be shown there that $C_\PPP$ shares the same $q$-Ehrhart series as the {\it order polytope}
$O_\PPP$ associated to $\PPP$, generalizing 
a result of Stanley \cite{stanley1986two} on
their classical Ehrhart series.
\end{remark}

\begin{remark}
\label{remark-first-interesting-triangle-discussion}
    Not all lattice polytopes $P$ are equivalent via $\Aff(\ZZ^d)$
    to antiblocking polytopes.  In particular, sometimes $\gr\,\II(\ZZ^d \cap P)$ is not a monomial ideal in $\RR[\xx]$, and $V_{\ZZ^d \cap P} \subset \RR[\yy]$ has no monomial $\RR$-basis.  For example, the lattice triangle $P$ shown here
    with vertices $\{(0,0),(2,1),(1,2)\}$ 
\begin{center}
    \begin{tikzpicture}[scale = 0.4]
        \draw[fill = black!10] (0,0) -- (1,2) -- (2,1) --  (0,0);
        \draw[step=1.0,black,thin] (-0.5,-0.5) grid (2.5,2.5);
        \node at (0,0) {$\bullet$};
        \node at (1,1) {$\bullet$};
        \node at (1,2) {$\bullet$};
        \node at (2,1) {$\bullet$};
    \end{tikzpicture}
\end{center}
is equivalent to the second triangle of area $3$ in Figure~\ref{fig:q-Ehrhart-examples}, with vertices $\{(0,0),(1,0),(2,3)\}$.  It has
\begin{align*}
\Eseries_P(t) &=\frac{1+ t + t^2 }{(1-t)^3},\\
\Eseries_P(t,q) &= \frac{(1+qt)(1+ q t + q^2 t^2)}{(1-t)(1- q^2 t)(1 - q^3 t^2)}.
\end{align*}
Via hand calculation or using {\tt Macaulay2}, one can check that
\begin{equation}
\gr \, \II(\ZZ^2 \cap P) = (\,\, x_1^2 - x_2^2,\,\, 2 x_1x_2 - x_2^2,\,\, x_2^3\,\,) \subseteq \RR[x_1,x_2]
\end{equation}
and this ideal cannot be generated by monomials.
Again hand calculation or {\tt Macaulay2} shows that
$$
V_\Zpoints=\spn_\RR\{1,\,\, y_1,y_2,\,\, y_1^2+y_1y_2+y_2^2\}
$$
which has no $\RR$-basis of monomials.
\end{remark}

\begin{remark}
\label{rmk:Chapoton-remark}
The $q$-weight enumerators in Definition~\ref{def:naive-q-enumerator} bear some resemblance to work of Chapoton \cite{Chapoton}.  He introduced $q$-analogues of $\Eseries_P(t)$
that first require a choice a $\ZZ$-linear functional $\lambda: \ZZ \rightarrow \ZZ$ which is nonconstant along edges of $P$
and has $\lambda(\zz^{(i)}) \geq 0$ for all vertices of $P$. 
Having made such a choice,  he defined
\begin{align*}
W_\lambda(P,q):=\sum_{\zz \in \ZZ^n \cap P} q^{\lambda(\zz)} 
\quad \text{ and } \quad
\mathrm{Ehr}_{P,\lambda}(t,q):=\sum_{m \geq 0} t^m W_\lambda(mP,q). 
\end{align*}
Therefore Corollary~\ref{cor:harmonic-equal-naive-Ehrhart-for-antiblockers} shows that if a lattice polytope $P$ is antiblocking,
and happens to have that the functional $\lambda(\zz):=|\zz|=z_1+\cdots+z_n$ is nonconstant along its edges, then $\mathrm{Ehr}_{P,\lambda}(t,q)=\Eseries_P(t,q)$. 
This $q$-analogue is also considered by Adeyemo and Szendr\H{o}i \cite{AdeyemoSzendroi},
who calculate it for standard simplices, cross-polytopes and cubes.
\end{remark}

%%%%%
\subsection{Definition of the equivariant $q$-Ehrhart series}

We next incorporate symmetries
of subgroups of $GL_n(\ZZ)$ acting on a lattice polytope. Such groups are often
called {\it crystallographic groups}.

\begin{definition} \rm
 \label{def:equivariant-q-Ehrhart} 
    Let $P$ be a lattice polytope in $\RR^n$ which is stable under the action of a finite crystallographic group $G \subset GL_n(\ZZ)$. Define the {\em equivariant $q$-Ehrhart series} 
    \[
    \Eseries^G_P(t,q) := \sum_{m =0}^\infty  [ V_{\ZZ^n \cap mP} ]_q \cdot t^m
    \qquad \left(
    = \sum_{m =0}^\infty  [ R(\ZZ^n \cap mP) ]_q \cdot t^m
    \right)
    \]
    where recall $[U]_q=\sum_{d \geq 0} [U_d] q^d$ is the class in $\Cl_\ZZ(G)[[q]]$ of any graded $\kk[G]$-module $U=\oplus_{d \geq 0} U_d$. Hence $\Eseries^G_P(t,q)$ lies in the power series ring $\Cl_\ZZ(G)[[t,q]]$,
    and even lies in its subring $\Cl_\ZZ(G)[q][[t]]$.
\end{definition}

When $q \to 1$, this series $\Eseries^G_P(t,q)$ specializes to Stapledon's {\it equivariant Ehrhart series} \cite{Stapledon} involving the ungraded characters of $V_{mP \cap \ZZ^n}$, or equivalently the permutation $\RR[G]$-modules $\ZZ^n \cap mP$.

\begin{example}
\label{ex:repeat-from-intro-with-symmetry}
We return to Example~\ref{ex:repeat-from-intro}, where $P$ is a lattice polytope $P \subset \RR^1$, but now assume it is stable under the action of a nontrivial crystallographic group $G$.
This implies $P=[-b,b]$ for some $b \geq 0$ and
$G = GL_1(\ZZ)=\{\pm 1\}$.
One can identify the representation ring  
$$
\Cl_\RR(G) \cong \ZZ[\epsilon]/(\epsilon^2-1) = \spn_\ZZ \{1,\epsilon\},
$$
where $1, \epsilon$ denote the isomorphism classes of the one-dimension trivial and nontrivial $\RR[G]$-modules, respectively.  A calculation using $\ZZ^1 \cap mP=\{-mb,-mb+1,\ldots,mb-1,mb\}$, similar to the one in the Introduction, shows that 
$$
\II(\ZZ^1 \cap mP)=\left( \prod_{i=-mb}^{+mb} (x-i) \right), \quad 
 \gr\, \II(\ZZ^1 \cap mP) = (x^{2mb+1}) \subset S=\RR[x]
$$
and consequently, one has 
\begin{align*}
R(\ZZ^1 \cap mP) = S/(x^{2bm+1}) &=
\spn_\RR \{1, \bar{x}, \bar{x}^2, \dots, \bar{x}^{2bm} \},\\
V_{\ZZ^1 \cap mP}  =\ker\left( \left( \frac{\partial}{\partial y}\right)^{2bm+1}
: \Div \rightarrow \Div \right)&= \spn_\RR\{1,y,y^2,\ldots,y^{2bm}\}
\subset \Div=\RR[y].
\end{align*}
Since $G=\{\pm 1\}$ negates the variables $x,y$ in $S,\Div$, one concludes (with some algebra) that
\begin{align}
    \Eseries^G_P(t,q) &= \sum_{m=0}^\infty 
      (1 + q^2 + q^4 + \cdots + q^{2mb})t^m + \sum_{m=0}^\infty (q + q^3 + q^5 + \cdots + q^{2mb-1}) \cdot \epsilon t^m\\ 
\label{eq:line-segement-equivariant-series}
&= \frac{1}{1-q^2}\left[
    \frac{1}{1-t} -\frac{q^2}{1-tq^{2b}}
    + q \epsilon\left( 
    \frac{1}{1-t} -\frac{1}{1-tq^{2b}}
    \right)
    \right]
    =\frac{1+t\left( q^2[b-1]_{q^2} + q[b]_{q^2} \epsilon\right)}
    {(1-t)(1-tq^{2b})}
\end{align}
This expression \eqref{eq:line-segement-equivariant-series} 
specializes to the non-equvariant series $\Eseries_P(t,q)$ by applying the ring homomorphism $\Cl_\RR(G)\rightarrow \ZZ$
that sets $\epsilon=1$, giving
an answer consistent with \eqref{eq:one-dimensional-Eseries-repeat} at
$v=\vol_1(P)=2b$:
$$
\Eseries_P(t,q)
=\left[ \Eseries^G_P(t,q) \right]_{\epsilon=1}
=\frac{1+t\left( q^2[b-1]_{q^2} + q[b]_{q^2}\right)}
    {(1-t)(1-tq^{2b})}
=\frac{1+t q[2b-1]_q}
    {(1-t)(1-tq^{2b})}.
$$

On the other hand, setting $q=1$ in \eqref{eq:line-segement-equivariant-series} 
should also give Stapledon's equivariant Ehrhart series $\Eseries^G_P(t)$, which we
calculate directly here.  The $G$-orbit structure permuting
$$
\ZZ^1 \cap mP=\{-mb,\ldots,-1,0,+1,\ldots,mb\}
=\{0\} \sqcup \bigsqcup_{i=1}^{mb} \{\pm i\}
$$
has one fixed point orbit $\{0\}$ with class $1$ in $\Cl_\RR[G]$
and $mb$ free orbits $\{\pm i\}_{i=1,2,\ldots,mb}$,
each with class $1+\epsilon$
in  $\Cl_\RR[G]$.  Hence one calculates that
$$
    \Eseries^G_P(t):=\sum_{m=0}^\infty
    [\ZZ^1 \cap mP] t^m
    =\sum_{m=0}^\infty \left( 1 + (1+\epsilon) mb) \right) t^m 
    =\frac{1}{1-t} + \frac{(1+\epsilon)bt}{(1-t)^2}
    =\frac{1+t((b-1)+b\epsilon)}{(1-t)^2},
$$
which agrees with the expression for $\Eseries^G_P(t,q)$ in \eqref{eq:line-segement-equivariant-series} at $q=1$.
\end{example}

%%%%%%
\subsection{Examples: standard simplices} 
\label{sec:standard-simplices}
We compute the equivariant $q$-Ehrhart series for
two families of simplices that carry an action of the symmetric group $\symm_n$ on $n$ letters, permuting the standard basis vectors $\ee_1, \dots, \ee_n$ in $\RR^n$.

\begin{definition} \rm
The {\em standard $(n-1)$-simplex} in $\RR^n$ is 
$$
\Delta^{n-1} := \mathrm{conv}\{ \ee_1, \dots, \ee_n\},
$$
lying inside the affine subspace of $\RR^n$ where $x_1+\cdots+x_n=1$.
There is also a full $n$-dimensional simplex in $\RR^n$ which is the {\it pyramid} $\pyr(\Delta^{n-1})$ having $\Delta^{n-1}$ as its base, and apex at the origin $\origin$:
$$
\pyr(\Delta^{n-1}) := \mathrm{conv}\{\origin, \ee_1, \dots, \ee_n\}.
$$
\end{definition}

We compute the equivariant $q$-Ehrhart series of $\Delta^{n-1}, \pyr(\Delta^{n-1})$.
For this, it helps to have two facts.  
The first is an easy lemma on
power series $F(q) = \sum_{i \geq 0} a_i q^i \in R[[q]]$ for commutative rings $R$, and their 
{\em truncations to $q^{\leq m}$},  
defined as these polynomial partial sums in $R[q]$:
$$
\trunc_{\leq q^m}F(q):=\sum_{i = 0}^m a_i q^i.
$$

\begin{lemma}
\label{lem:truncation-series-lemma}
For any $F(q)$ in $R[[q]]$, one has
    $
    \sum_{m=0}^\infty t^m \cdot \trunc_{\leq q^m}F(q) 
    =  \frac{F(tq)}{1-t}
    $ 
in $R[[t,q]]$.
\end{lemma}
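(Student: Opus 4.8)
The plan is to expand both sides as elements of $R[[t,q]]$ and compare, the only subtlety being the legitimacy of interchanging two infinite sums. First I would write $F(q) = \sum_{i \geq 0} a_i q^i$ and substitute the definition of the truncation into the left-hand side, obtaining
$$
\sum_{m=0}^\infty t^m \cdot \trunc_{\leq q^m} F(q)
= \sum_{m=0}^\infty \sum_{i=0}^m a_i q^i t^m.
$$
Next I would swap the order of summation. This is valid in $R[[t,q]]$ because, for each fixed monomial $q^i t^m$, only the single term with those exact exponents contributes (and it appears only when $i \leq m$), so no infinite sums of coefficients in $R$ are involved; this is the one point where a word of justification is needed, though it is entirely routine. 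After swapping,
$$
\sum_{i=0}^\infty a_i q^i \sum_{m=i}^\infty t^m
= \sum_{i=0}^\infty a_i q^i \cdot \frac{t^i}{1-t}
= \frac{1}{1-t} \sum_{i=0}^\infty a_i (tq)^i
= \frac{F(tq)}{1-t},
$$
where the inner sum is the geometric series $\sum_{m \geq i} t^m = t^i/(1-t)$ in $R[[t]]$.

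There is essentially no obstacle here: the identity is a formal bookkeeping statement. The only thing to be mildly careful about is making clear that all manipulations take place in the formal power series ring $R[[t,q]]$ (so that $1/(1-t)$ means $\sum_{m \geq 0} t^m$ and the rearrangement is legitimate coefficientwise), rather than treating these as analytic series requiring convergence hypotheses. If one prefers to avoid even mentioning the interchange of summation, an equivalent route is to verify directly that the coefficient of $q^i t^m$ on both sides equals $a_i$ when $i \leq m$ and $0$ otherwise; on the right, $F(tq)/(1-t) = \big(\sum_i a_i q^i t^i\big)\big(\sum_j t^j\big)$ has coefficient of $q^i t^m$ equal to $a_i$ exactly when $m - i \geq 0$, matching the left-hand side. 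Either presentation is short; I would include the summation-swap version with a parenthetical remark that it is justified in $R[[t,q]]$.
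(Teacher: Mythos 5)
Your proof is correct. The paper states Lemma~\ref{lem:truncation-series-lemma} without proof, treating it as an easy formal identity, and your argument (expand the truncations, swap the two sums in $R[[t,q]]$, and sum the geometric series $\sum_{m \geq i} t^m = t^i/(1-t)$ — or equivalently compare the coefficient of $q^i t^m$ on both sides) is exactly the routine verification the authors had in mind; either of your two presentations would serve as a complete proof.
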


The second fact is a formula due to Lusztig and Stanley \cite[Prop. 4.11]{Stanley-invariant-theory} for the $\symm_n$-equivariant Hilbert series of the polynomial ring $S=\RR[\xx]$.
Recall that the simple $\RR[\symm_n]$-modules
$\{ \specht^\lambda \}$ are indexed by number partitions $\lambda$ of $n$ (written $\lambda \vdash n$), so that
$\Cl_\RR(\symm_n)$ has $\ZZ$-basis $\{[\specht^\lambda]: \lambda \vdash n\}$.

\begin{thm}
\label{thm:Lusztig-Stanley}
(Lusztig, Stanley)
The graded $\RR[\symm_n]$-module $S=\RR[\xx]$ has this class in $\Cl_\RR(\symm_n)[[q]]$:
$$
[S]_q:=\sum_{d=0}^\infty q^d [S_d]
=\frac{1}{(1-q)(1-q^2) \cdots (1-q^n)}
\sum_{\lambda \vdash n}
f^\lambda(q) \cdot [\specht^\lambda]
$$
with $f^\lambda(q)$ the $\symm_n$ fake-degree polynomial, having these sum,  product expressions \cite[Cor. 7.21.5]{Stanley-EC2}
\begin{equation}
    \label{eq:two-fake-degree-formulas}
f^\lambda(q)=\sum_T  q^{\mathrm{maj}(T)}
=q^{b(\lambda)} \frac{[n]!_q}{\prod_{x \in \lambda} [h(x)]_q} 
\end{equation}
whose notations are explained below.
\end{thm}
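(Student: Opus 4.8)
The plan is to compute the graded Frobenius characteristic of $S=\RR[\xx]$ directly and then expand it in the Schur basis. Identify $\Cl_\RR(\symm_n)$ with the ring $\Lambda_n$ of symmetric functions homogeneous of degree $n$ via the Frobenius characteristic, so that $\ch([\specht^\lambda])=s_\lambda$, and extend this coefficientwise to an isomorphism $\Cl_\RR(\symm_n)[[q]]\to\Lambda_n[[q]]$. Setting $\ch_q(S):=\sum_{d\ge 0}q^d\,\ch(S_d)$, the theorem is then equivalent to the symmetric-function identity
\[
\ch_q(S)\ =\ \frac{1}{\prod_{i=1}^n(1-q^i)}\sum_{\lambda\vdash n}f^\lambda(q)\,s_\lambda .
\]

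\textbf{Computing $\ch_q(S)$.} First I would evaluate the graded trace of a permutation $\sigma\in\symm_n$ of cycle type $\mu=(\mu_1,\dots,\mu_\ell)$. Decomposing $S$ as an internal tensor product $\bigotimes_c\RR[x_i:i\in c]$ over the cycles $c$ of $\sigma$, the $\sigma$-fixed monomials of degree $d$ in the factor attached to a cycle of length $\mu_j$ are exactly those with all exponents equal, which occur precisely when $\mu_j\mid d$; hence the graded trace on that factor is $\sum_{a\ge 0}q^{\mu_j a}=(1-q^{\mu_j})^{-1}$, and $\sum_d q^d\chi_{S_d}(\sigma)=\prod_j(1-q^{\mu_j})^{-1}$. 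Passing to Frobenius characteristics through $\ch(V)=\sum_{\mu\vdash n}z_\mu^{-1}\chi_V(\mu)\,p_\mu$ and using the plethystic identities $p_k[X/(1-q)]=p_k/(1-q^k)$ and $h_n=\sum_{\mu\vdash n}p_\mu/z_\mu$ gives
\[
\ch_q(S)\ =\ \sum_{\mu\vdash n}\frac{p_\mu}{z_\mu\prod_j(1-q^{\mu_j})}\ =\ h_n\!\left[\frac{X}{1-q}\right].
\]

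\textbf{Expanding in the Schur basis.} Next I would apply the Cauchy identity $\sum_n h_n[XY]=\prod_{i,j}(1-x_iy_j)^{-1}=\sum_\lambda s_\lambda(X)\,s_\lambda(Y)$ with the second alphabet specialized to $Y=\{1,q,q^2,\dots\}$; extracting the degree-$n$ part in $X$ yields $\ch_q(S)=\sum_{\lambda\vdash n}s_\lambda(1,q,q^2,\dots)\,s_\lambda$. Now invoke the principal specialization formula $s_\lambda(1,q,q^2,\dots)=q^{b(\lambda)}\big/\prod_{x\in\lambda}(1-q^{h(x)})$ and clear factors via $1-q^k=(1-q)[k]_q$: since $[n]!_q=\prod_{i=1}^n(1-q^i)/(1-q)^n$ and $\prod_{x\in\lambda}[h(x)]_q=\prod_{x\in\lambda}(1-q^{h(x)})/(1-q)^n$,
\[
\frac{q^{b(\lambda)}}{\prod_{x\in\lambda}(1-q^{h(x)})}\ =\ \frac{1}{\prod_{i=1}^n(1-q^i)}\cdot q^{b(\lambda)}\frac{[n]!_q}{\prod_{x\in\lambda}[h(x)]_q}\ =\ \frac{f^\lambda(q)}{\prod_{i=1}^n(1-q^i)},
\]
which is the asserted identity. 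The equality of the two expressions for $f^\lambda(q)$ in \eqref{eq:two-fake-degree-formulas} — the tableaux sum $\sum_T q^{\mathrm{maj}(T)}$ versus the $q$-hook product — is the $q$-analogue of the hook length formula, which I would simply cite from \cite[Cor.~7.21.5]{Stanley-EC2}.

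\textbf{Main obstacle.} There is no single deep step: the argument is a chain of standard symmetric-function manipulations. The part most external to the paper's own toolkit is the Schur expansion, which leans on the Cauchy identity and the principal specialization of Schur functions; if one wished to minimize that input, an alternative is to use Chevalley's theorem that $S$ is free over its invariant subring $\RR[e_1,\dots,e_n]$ (with $\deg e_i=i$), giving a graded $\symm_n$-module factorization $S\cong\RR[e_1,\dots,e_n]\otimes_\RR\bigl(S/(e_1,\dots,e_n)\bigr)$ and hence $\ch_q(S)=\prod_{i=1}^n(1-q^i)^{-1}\,\ch_q\bigl(S/(e_1,\dots,e_n)\bigr)$, reducing the theorem to the classical fact that $\specht^\lambda$ occurs in degree $d$ of the coinvariant algebra with multiplicity $\#\{T\in\mathrm{SYT}(\lambda):\mathrm{maj}(T)=d\}$; but this merely transfers the work into the coinvariant-algebra computation, so I would favor the generating-function route above.
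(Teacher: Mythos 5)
Your argument is correct: the graded-trace computation $\sum_d q^d\chi_{S_d}(\sigma)=\prod_j(1-q^{\mu_j})^{-1}$, the identification $\ch_q(S)=h_n\left[\tfrac{X}{1-q}\right]$, and the Schur expansion via Cauchy plus the principal specialization $s_\lambda(1,q,q^2,\dots)=q^{b(\lambda)}/\prod_{x\in\lambda}(1-q^{h(x)})$ together give exactly the stated identity, with the two expressions for $f^\lambda(q)$ cited as in the paper. The paper itself offers no proof of this theorem---it is quoted from Lusztig/Stanley \cite[Prop.~4.11]{Stanley-invariant-theory} and \cite[Cor.~7.21.5]{Stanley-EC2}, and its Remark~\ref{rmk:join-pyramid}-adjacent discussion of $[S]_q\mapsto h_n\left[\tfrac{X}{1-q}\right]$ records precisely the plethystic identity you derive---so your write-up is the standard argument the paper implicitly relies on.
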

\noindent
Here $T$ in the summation in \eqref{eq:two-fake-degree-formulas} runs over all {\it standard Young tableaux} of shape $\lambda$, and $\mathrm{maj}(T)$ is the sum of all entries $i$ in $T$ for which $i+1$ appears
weakly southwest of $i$ (using English notation for tableaux).  The {\it $q$-factorial} is $[n]!_q:=[n]_q [n-1]_q \cdots [2]_q [1]_q$. For a partition $\lambda$ with $\ell$ parts, we have $b(\lambda):=\sum_{i=1}^\ell (i-1) \lambda_i$.
For a box $x$ in row $(i,j)$ of the Ferrers diagram of $\lambda$ (written $x \in \lambda$ in the product), the {\it hooklength} is $h(x):=\lambda_i+\lambda_j'-(i+j)+1$, with
$\lambda'$ the {\it conjugate partition} to $\lambda$.

\medskip

We first compute 
the equivariant $q$-Ehrhart series for the $n$-simplex $\pyr(\Delta^{n-1}) \subset \RR^n$.

\begin{prop}
\label{prop:pyramid-over-standard-simplex}
For the $n$-simplex $P:=\pyr(\Delta^{n-1})
=\conv\{\origin,\ee_1,\ldots,\ee_n\}\subset \RR^n$, one has
\begin{align}
\label{eq:equivariant-q-Ehrhart-for-pyramid-simplex}
\Eseries_P^{\symm_n}(t,q)
%&= \frac{1}{1-t} \cdot h_n\left[\frac{X}{1-tq}\right]\\
&=
\frac{1}{(1-t) \cdot \prod_{i=1}^n (1-t^i q^i)}
 \sum_{\lambda \vdash n} f^{\lambda}(tq) \cdot [\specht^\lambda],
\\
\label{eq:nonequivariant-q-Ehrhart-for-pyramid-simplex}
\Eseries_P(t,q)
&%=\frac{[n]!_{tq}}{(1-t) \cdot (1-tq) (1-(tq)^2) \cdots (1-(tq)^n)}
=\frac{1}{(1-t)(1-tq)^{n}}.
%\qquad \left( \overset{q\rightarrow 1}{\rightsquigarrow} \frac{1}{(1-t)^{n+1}} =\Eseries_P(t) \right).
\end{align}
\end{prop}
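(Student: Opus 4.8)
The plan is to make the dilate loci fully explicit, recognize each as a \emph{shifted} point locus (equivalently, $\pyr(\Delta^{n-1})$ is antiblocking), so that Lemma~\ref{lem:lower-harmonic} applies, and then read off both series from the truncation identity of Lemma~\ref{lem:truncation-series-lemma} together with the closed forms for the (equivariant) Hilbert series of the polynomial ring.

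First I would note that $mP=\conv\{\origin,m\ee_1,\dots,m\ee_n\}$, so $\ZZ^n\cap mP=\{\aa\in\NN^n:a_1+\cdots+a_n\le m\}$. Replacing $\aa$ by a componentwise-smaller element of $\NN^n$ only decreases the coordinate sum, so $\ZZ^n\cap mP$ is shifted in the sense of Definition~\ref{def:shifted-point-locus}. Hence Lemma~\ref{lem:lower-harmonic}(i) gives $\gr\,\II(\ZZ^n\cap mP)=\spn_\RR\{\xx^\aa:|\aa|\ge m+1\}=(x_1,\dots,x_n)^{m+1}$, so that $R(\ZZ^n\cap mP)=S/(x_1,\dots,x_n)^{m+1}$ has $d$th graded component equal to $S_d$ for $d\le m$ and to $0$ for $d\ge m+1$. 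Because $\symm_n$ permutes the variables and fixes the ideal $(x_1,\dots,x_n)^{m+1}$, this is an isomorphism of graded $\RR[\symm_n]$-modules.

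Next, by \eqref{eq:graded-rep-isos-for-point-loci} one has a graded $\RR[\symm_n]$-module isomorphism $V_{\ZZ^n\cap mP}\cong R(\ZZ^n\cap mP)$, so $[V_{\ZZ^n\cap mP}]_q=\sum_{d=0}^m q^d[S_d]=\trunc_{\le q^m}[S]_q$ in $\Cl_\RR(\symm_n)[[q]]$; forgetting the group action gives $\Ehr_P(m;q)=\trunc_{\le q^m}\Hilb(S,q)$. I would then sum over $m\ge0$ and apply Lemma~\ref{lem:truncation-series-lemma} twice, once with coefficient ring $\Cl_\RR(\symm_n)$ and $F(q)=[S]_q$, and once with coefficient ring $\RR$ and $F(q)=\Hilb(S,q)$, obtaining
\[
\Eseries_P^{\symm_n}(t,q)=\frac{1}{1-t}\sum_{d\ge0}t^dq^d[S_d]
\qquad\text{and}\qquad
\Eseries_P(t,q)=\frac{1}{1-t}\sum_{d\ge0}t^dq^d\dim_\RR(S_d).
\]
To finish, substitute the closed forms: for \eqref{eq:nonequivariant-q-Ehrhart-for-pyramid-simplex}, $\sum_d q^d\dim_\RR(S_d)=\Hilb(S,q)=(1-q)^{-n}$, so replacing $q$ by $tq$ gives $(1-tq)^{-n}$; for \eqref{eq:equivariant-q-Ehrhart-for-pyramid-simplex}, Theorem~\ref{thm:Lusztig-Stanley} gives $[S]_q=\prod_{i=1}^n(1-q^i)^{-1}\sum_{\lambda\vdash n}f^\lambda(q)[\specht^\lambda]$, and replacing $q$ by $tq$ sends $1-q^i$ to $1-t^iq^i$ and $f^\lambda(q)$ to $f^\lambda(tq)$.

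The argument has no serious obstacle; the one point needing care is the verification that $\ZZ^n\cap mP$ is shifted, which is exactly what collapses the a priori inhomogeneous ideal $\gr\,\II(\ZZ^n\cap mP)$ to the monomial ideal $(x_1,\dots,x_n)^{m+1}$ and turns the remainder of the proof into formal power-series bookkeeping. One also wants to observe that the identification $V_{\ZZ^n\cap mP}\cong S/(x_1,\dots,x_n)^{m+1}$ respects the $\symm_n$-action, but this is immediate since the ideal is spanned by a set of monomials permuted by $\symm_n$.
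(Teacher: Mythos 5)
Your proposal is correct and follows essentially the same route as the paper: identify $\ZZ^n\cap mP$ as a shifted (antiblocking) locus so that Lemma~\ref{lem:lower-harmonic} gives $R(\ZZ^n\cap mP)=S/(x_1,\dots,x_n)^{m+1}=S/S_{\geq m+1}$, recognize its class as the truncation $\trunc_{\leq q^m}[S]_q$, and then apply Lemma~\ref{lem:truncation-series-lemma} together with Theorem~\ref{thm:Lusztig-Stanley} (respectively $\Hilb(S,q)=(1-q)^{-n}$) after substituting $q\mapsto tq$. The only cosmetic difference is that you phrase the equivariant identification through $V_{\ZZ^n\cap mP}\cong R(\ZZ^n\cap mP)$ via \eqref{eq:graded-rep-isos-for-point-loci}, which the paper leaves implicit.
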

\begin{proof}
We start with \eqref{eq:equivariant-q-Ehrhart-for-pyramid-simplex}.
Note $P$ and all of its dilates $mP$ are antiblocking,
with $mP$ defined by the inequalities $\xx \geq 0$ and $x_1+\cdots+x_n \leq m$.
Hence Lemma~\ref{lem:lower-harmonic} shows that
$$
R(\ZZ^n \cap mP)=S/S_{\geq m+1}.
$$
%where $S_{\geq m+1}:=\bigoplus_{d \geq m+1} S_d=(x_1,\ldots,x_n)^{m+1}$ is the $(m+1)^{th}$
%power of the maximal graded ideal $(x_1,\ldots,x_n)$ in $S$. 
Thus the graded $\RR[\symm_n]$-module $R(\ZZ^n \cap mP)$ is the same as the permutation module on the monomials of degree at most $m$ in $S=\RR[\xx]$.  Therefore in $\Cl_\RR(\symm_n)[[q]]$ one has
$$
[ R(\ZZ^n \cap mP) ]_q=\trunc_{q^{\leq m}}[S]_q.
$$
Applying Lemma~\ref{lem:truncation-series-lemma} then gives the equality marked (*) here
\begin{align}
\notag
\Eseries^{\symm_n}_P(t,q)
&:=\sum_{m=0}^\infty
[ R(\ZZ^n \cap mP) ]_q \cdot t^m \\
\label{eq:upshot-of-pyramid-calculation}
&\overset{(*)}{=}\frac{1}{1-t} \cdot [S]_{q \mapsto tq}
=\frac{1}{(1-t) \cdot \prod_{i=1}^m (1-t^i q^i)}
 \sum_{\lambda \vdash n} f^{\lambda}(tq) \cdot [\specht^\lambda],
\end{align}
with the last equality substituting $q \mapsto tq$ in the formula
from Theorem~\ref{thm:Lusztig-Stanley}.
This proves \eqref{eq:equivariant-q-Ehrhart-for-pyramid-simplex}.

Although one could deduce \eqref{eq:nonequivariant-q-Ehrhart-for-pyramid-simplex} from 
\eqref{eq:equivariant-q-Ehrhart-for-pyramid-simplex},
a perhaps simpler path repeats the same argument but replacing $[S]_q$ with the known non-equivariant Hilbert series $\Hilb(S,q)=1/(1-q)^n$.
\end{proof}

Now it is easier to deal with the 
equivariant series for the $(n-1)$-simplex $\Delta^{n-1} \subset \RR^n$.

\begin{prop}
\label{prop:standard-simplex}
For $P:=\Delta^{n-1}
=\conv\{\ee_1,\ldots,\ee_n\}\subset \RR^n$, one has
\begin{align}
\label{eq:equivariant-q-Ehrhart-for-standard-simplex}
\Eseries_P^{\symm_n}(t,q)
&=(1-tq) \cdot \Eseries_{\pyr(P)}^{\symm_n}(t,q)
=\frac{1}{(1-t) \cdot \prod_{i=2}^n (1-t^i q^i)}
 \sum_{\lambda \vdash n} f^{\lambda}(tq) \cdot [\specht^\lambda]
\\
\label{eq:nonequivariant-q-Ehrhart-for-standard-simplex}
\Eseries_P(t,q)
&=(1-tq) \cdot \Eseries_{\pyr(P)}(t,q)
=\frac{1}{(1-t)(1-tq)^{n-1}}. 
\end{align}
\end{prop}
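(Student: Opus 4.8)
The plan is to reduce everything to the pyramid case already handled in Proposition~\ref{prop:pyramid-over-standard-simplex}, by first identifying the point-orbit ring $R(\ZZ^n\cap m\Delta^{n-1})$ explicitly as a ``hyperplane slice'' of $R(\ZZ^n\cap m\pyr(\Delta^{n-1}))=S/S_{\geq m+1}$, and then turning the handle with Lemma~\ref{lem:truncation-series-lemma} and Theorem~\ref{thm:Lusztig-Stanley}.

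\emph{Step 1 (the ring).} Write $L:=x_1+\cdots+x_n$ and $\Zpoints:=\ZZ^n\cap m\Delta^{n-1}=\{\zz\in\NN^n:|\zz|=m\}$, of size $\binom{m+n-1}{n-1}$. I would prove
$$\gr\,\II(\Zpoints)=(L)+S_{\geq m+1},\qquad\text{so}\qquad R(\ZZ^n\cap m\Delta^{n-1})=S/\big((L)+S_{\geq m+1}\big).$$
One containment is easy: the polynomial $L-m$ vanishes on $\Zpoints$, so its top component $\tau(L-m)=L$ lies in $\gr\,\II(\Zpoints)$ (the case $m=0$ being immediate); and $S_{\geq m+1}\subseteq\gr\,\II(\Zpoints)$ will follow from Proposition~\ref{prop:graded-quotient-is-a-gr} once I show $R(\Zpoints)$ vanishes in degrees $>m$, equivalently that the images of $S_{\leq m}$ already span $S/\II(\Zpoints)=\RR[\Zpoints]$. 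The crucial device here is the family of degree-$m$ interpolation polynomials $p_\zz(\xx):=\prod_{i=1}^n\binom{x_i}{z_i}$ for $\zz\in\Zpoints$: evaluating at $\zz'\in\Zpoints$ gives $\prod_i\binom{z'_i}{z_i}$, which is nonzero only if $z'_i\geq z_i$ for all $i$, and this together with $|\zz'|=|\zz|=m$ forces $\zz'=\zz$, whereupon $p_\zz(\zz)=1$. Thus $\{p_\zz|_\Zpoints\}_{\zz\in\Zpoints}$ is the indicator-function basis of $\RR[\Zpoints]$, giving the spanning statement. Having shown $(L)+S_{\geq m+1}\subseteq\gr\,\II(\Zpoints)$, the reverse containment is a dimension count: $\dim_\RR S/\big((L)+S_{\geq m+1}\big)=\sum_{d=0}^m\binom{d+n-2}{n-2}=\binom{m+n-1}{n-1}=\#\Zpoints=\dim_\RR R(\Zpoints)$.

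\emph{Step 2 (the series).} The quotient $\bar S:=S/(L)$ is a polynomial ring in $n-1$ variables carrying the restricted $\symm_n$-action, and since $L$ is $\symm_n$-invariant, multiplication by $L$ is an injective degree-one map of graded $\symm_n$-modules, so $[\bar S]_q=(1-q)[S]_q$ in $\Cl_\RR(\symm_n)[[q]]$. By Step 1, $R(\ZZ^n\cap m\Delta^{n-1})=\bar S/\bar S_{\geq m+1}$ has graded class $\trunc_{q^{\leq m}}\big((1-q)[S]_q\big)$; summing over $m$ and applying Lemma~\ref{lem:truncation-series-lemma} to $F(q)=(1-q)[S]_q$ gives
$$\Eseries^{\symm_n}_{\Delta^{n-1}}(t,q)=\frac{(1-tq)\,[S]_{q\mapsto tq}}{1-t}=(1-tq)\cdot\Eseries^{\symm_n}_{\pyr(\Delta^{n-1})}(t,q),$$
the second equality being \eqref{eq:upshot-of-pyramid-calculation}. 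Substituting Theorem~\ref{thm:Lusztig-Stanley} and cancelling the factor $(1-t^1q^1)=(1-tq)$ out of $\prod_{i=1}^n(1-t^iq^i)$ yields \eqref{eq:equivariant-q-Ehrhart-for-standard-simplex}. Re-running the same computation with $[S]_q$ replaced by $\Hilb(S,q)=(1-q)^{-n}$ (so $(1-q)\Hilb(S,q)=(1-q)^{-(n-1)}$) gives \eqref{eq:nonequivariant-q-Ehrhart-for-standard-simplex}; alternatively it follows from \eqref{eq:equivariant-q-Ehrhart-for-standard-simplex} by sending each $[\specht^\lambda]$ to $\dim_\RR\specht^\lambda$.

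\emph{Expected obstacle.} Everything after Step 1 is bookkeeping already rehearsed for the pyramid. The one point requiring an idea is the sharp degree bound $R(\ZZ^n\cap m\Delta^{n-1})_{>m}=0$ inside Step 1: the general Artinian vanishing statement (Lemma~\ref{lem:technical-Artinian-GB-lemma}) only kills degrees above $\#\Zpoints$, which is far too weak, and it is the explicit interpolants $p_\zz=\prod_i\binom{x_i}{z_i}$, of degree exactly $|\zz|=m$, that make the correct bound visible.
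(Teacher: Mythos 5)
Your proposal is correct, and its overall architecture is the paper's: identify $\gr\,\II(\ZZ^n\cap m\Delta^{n-1})=(e_1)+S_{\geq m+1}$ with $e_1=x_1+\cdots+x_n$, pin it down by a dimension count against $\#\Zpoints=\binom{m+n-1}{m}$, and then run the truncation lemma (Lemma~\ref{lem:truncation-series-lemma}) together with $[\overline{S}]_q=(1-q)[S]_q$ (multiplication by the invariant nonzerodivisor $e_1$) and Theorem~\ref{thm:Lusztig-Stanley}, cancelling the factor $1-tq$. The one place you diverge is the containment $S_{\geq m+1}\subseteq\gr\,\II(\ZZ^n\cap m\Delta^{n-1})$, which you flag as the step ``requiring an idea'' and handle with the interpolants $p_\zz(\xx)=\prod_i\binom{x_i}{z_i}$ to show $S_{\leq m}$ already surjects onto $\RR[\Zpoints]$. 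That argument is valid (and as a bonus produces the explicit delta-function basis of degree exactly $m$), but it is not needed in the paper's route: since $\ZZ^n\cap m\Delta^{n-1}\subseteq\ZZ^n\cap m\,\pyr(\Delta^{n-1})$, one has $\II(\ZZ^n\cap m\Delta^{n-1})\supseteq\II(\ZZ^n\cap m\,\pyr(\Delta^{n-1}))$ and hence $\gr\,\II(\ZZ^n\cap m\Delta^{n-1})\supseteq\gr\,\II(\ZZ^n\cap m\,\pyr(\Delta^{n-1}))=S_{\geq m+1}$, the last equality being exactly what was established for the (antiblocking) pyramid in Proposition~\ref{prop:pyramid-over-standard-simplex} via Lemma~\ref{lem:lower-harmonic}. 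So your ``expected obstacle'' dissolves into a one-line consequence of the pyramid case you were already citing; your workaround is a fine, slightly more self-contained substitute, and everything downstream matches the paper.
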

\begin{proof}
We start by identifying $\gr\,\II(\ZZ^n \cap mP)$.
Since $\ZZ^n \cap mP \subset \ZZ^n \cap m\pyr(P)$,
one has 
\begin{align*}
\II(\ZZ^n \cap mP) &\supset \II(\ZZ^n \cap m\pyr(P)),\\
\gr\,\II(\ZZ^n \cap mP) &\supset \gr\, \II(\ZZ^n \cap m\pyr(P))=S_{\geq m+1}.
\end{align*}
On the other hand, since $mP$ lies in the affine hyperplane $x_1+\cdots+x_n=m$ within $\RR^n$, one also has $x_1+\cdots+x_n-m \in \II(\ZZ^n \cap mP)$ and $e_1:=x_1+\cdots+x_n \in \gr\,\II(\ZZ^n \cap mP)$.
Consequently, 
$$
\gr\,\II(\ZZ^n \cap mP) \supseteq (e_1) + S_{\geq m+1},
$$
and we claim that this inclusion is actually an equality.  To see this, note that the surjection 
$$
S/\left( (e_1)+ S_{\geq m+1} \right)
 \twoheadrightarrow 
S/\gr\,\II(\ZZ^n \cap mP) = R(\ZZ^n \cap mP)
$$
is an isomorphism via dimension-counting:
since $mP$ is the dilation of a standard $(n-1)$-simplex,
$$
\dim_\RR R(\ZZ^n \cap mP)
=\#\ZZ^n \cap mP
=\binom{m+n-1}{m},
$$
and if one abbreviates the quotient ring $\overline{S}:=S/(e_1) \cong \RR[x_1,\ldots,x_{n-1}]$,
then
$$
\dim_\RR S/\left( (e_1)+ S_{\geq m+1}) \right)
= \dim_\RR \overline{S}/ \overline{S}_{\geq m+1}
=\binom{m+n-1}{m}.
$$
From this one concludes that in $\Cl_\RR(\symm_n)[q]$ one has
$$
[R(\ZZ^n \cap mP)]_q = \trunc_{\leq q^m} [\overline{S}]_q
$$
and hence, applying Lemma~\ref{lem:truncation-series-lemma}, one has
$$
\Eseries^{\symm_n}_P(t,q)
:=\sum_{m=0}^\infty
[ R(\ZZ^n \cap mP) ]_q t^m
=\frac{1}{1-t} \cdot [\overline{S}]_{q \mapsto tq}.
$$
On the other hand, we claim that 
$
[\overline{S}]_{q}=(1-q) \cdot [S]_q.
$
This holds
because $e_1=x_1+\cdots+x_n$ is an $\symm$-invariant
nonzero divisor in $S$, so that multiplying by $e_1$ on $S$ gives rise to an exact sequence of graded $\RR[\symm_n]$-modules $0 \rightarrow S(-1) \rightarrow S \rightarrow \overline{S} \rightarrow 0$.
Comparing this with \eqref{eq:upshot-of-pyramid-calculation} then proves \eqref{eq:equivariant-q-Ehrhart-for-standard-simplex}.  

The proof of
\eqref{eq:nonequivariant-q-Ehrhart-for-standard-simplex} is again a parallel, but easier computation.
\end{proof}

\begin{remark}
We note here some notationally convenient reformulations of these results, for 
readers  familiar with the {\it ring of symmetric functions} $\Lambda=\bigoplus_{n=0}^\infty \Lambda_n$ and the {\it Frobenius characteristic isomorphism}
$
\Lambda_n \cong \Cl_\RR(\symm_n);
$
see \cite{Macdonald, Sagan, Stanley-EC2} for more background. One note of
{\bf caution:} multiplication in $\Cl_\RR(\symm_n)$ corresponds to the {\it internal} or {\it Kronecker product} on $\Lambda_n$, not
the {\it external} or {\it induction product}
corresponding to the multiplication $\Lambda_{n_1} \times \Lambda_{n_2} \rightarrow \Lambda_{n_1+n_2}$.

This Frobenius characteristic isomorphism maps the {\it Schur function} $s_\lambda \mapsto [\specht^\lambda]$, and the {\it complete homogeneous} symmetric function $h_n \mapsto [1]$. 
%More generally, for any $\lambda=(\lambda_1,\ldots,\lambda_\ell) \vdash n$, the product $h_\lambda:=h_{\lambda_1} h_{\lambda_2} \cdots h_{\lambda_\ell}$ maps to the class of the $\RR[\symm_n]$-module permuting the set of monomials $\xx^\aa$ in $S=\RR[\xx]$ ... 
%(what Vic had written here was wrong!).
Using the {\it plethystic notation}, explained in Bergeron \cite[Ch.~3]{Bergeron}, Haglund \cite{Haglund-qt}, Loehr and Remmel \cite{LoehrRemmel}, one can show (see \cite[eqn. (7.7)]{Bergeron}) that the class $[S]_q$ of the 
graded $\RR[\symm_n]$-module $S$ corresponds  to
this element of $\Lambda_n[[q]]$:
$$
[S]_q \mapsto h_n[X+qX+q^2X+\cdots]=
h_n\left[\frac{X}{1-q}\right].
$$
The Lusztig-Stanley 
Theorem~\ref{thm:Lusztig-Stanley}
expanding this plethysm in Schur functions is
discussed in Haglund \cite[(1.89)]{Haglund-qt}.
Consequently, one can recast  Propositions~\ref{prop:standard-simplex} and \ref{prop:pyramid-over-standard-simplex} plethystically:
$$
\Eseries_{\pyr^n}^{\symm_n}(t,q)
= \frac{1}{1-t} \cdot h_n\left[\frac{X}{1-tq}\right]
\quad \text{and} \quad
\Eseries_{\Delta^{n-1}}^{\symm_n}(t,q)
= \frac{1-tq}{1-t} \cdot h_n\left[\frac{X}{1-tq}\right].
$$
\end{remark}

\begin{remark}
\label{rmk:join-pyramid}
    The relation $\Eseries_P(t,q)
=(1-tq) \cdot \Eseries_{\pyr(P)}(t,q)$
for $P=\Delta^{n-1}$ that appeared 
in Propositions~\ref{prop:pyramid-over-standard-simplex},
\ref{prop:standard-simplex} anticipates
the behavior of $\Eseries_P(t,q)$ under the pyramid
operation $P \mapsto \pyr(P)$, and more generally
under free joins $P*Q$ of lattice polytopes, as described in Theorem~\ref{thm:intro-three-constructions-on-series}(iii).
Pyramids are the special case $\pyr(P)=P*Q$
where $Q$ is the one-point polytope of dimension zero.
\end{remark}

%%%%%
\subsection{Examples: cross-polytopes}

The {\em cross-polytope} 
or {\em hyperoctahedron} $\Cross^n \subseteq \RR^n$ is defined by
$$
\Cross^n:=\conv\{ \pm \ee_1, \dots, \pm \ee_n\}.
$$
Its symmetries are the 
{\it hyperoctahedral group}
$\symmB_{n}$ of all {\it signed permutation
matrices}, acting by permuting and negating coordinates
in $\RR^n$.  We first analyze the non-equivariant $q$-Ehrhart series for $\Cross_n$,
and then incorporate the group action.

\begin{prop}
\label{prop:cross-polytope}
The $q$-Ehrhart series of the cross-polytope $\Cross^n$ is given by
\begin{equation}
E_{\Cross^n}(t,q)  
%    \frac{\left( 1 + qt + q^2 t + q^3 t^2 + q^4 t^2 + q^5 t^3 + q^6 t^3 + \cdots \right)^n}{(1-t)} 
= \frac{1}{1-t} \left( \frac{1 + qt}{1 - q^2 t} \right)^n.
\end{equation}
\end{prop}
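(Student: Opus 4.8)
The plan is to compute $R(\ZZ^n \cap m\Cross^n)$ for every $m$ by pinning down its defining associated graded ideal, and then to collect the resulting Hilbert series into a generating function. Write $\Zpoints_m := \ZZ^n \cap m\Cross^n = \{\zz \in \ZZ^n : |z_1| + \cdots + |z_n| \le m\}$, and let $M_m \subseteq S = \RR[x_1,\dots,x_n]$ be the monomial ideal whose standard monomials are exactly
$$B_m := \Big\{ \xx^\aa \;:\; \textstyle\sum_{i=1}^n \lceil a_i/2 \rceil \le m \Big\};$$
this set is closed under divisibility (as $a\mapsto\lceil a/2\rceil$ is monotone), so such an $M_m$ exists and is finite. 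I claim $\gr\,\II(\Zpoints_m) = M_m$; granting this, $R(\Zpoints_m)=S/M_m$ has Hilbert series $\sum_{\xx^\aa \in B_m} q^{a_1 + \cdots + a_n}$, and the Proposition falls out of a short manipulation.

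For the inclusion $M_m \subseteq \gr\,\II(\Zpoints_m)$ I would produce explicit witnesses. Given a monomial $\xx^\aa \notin B_m$, set $T := \supp(\aa)$ and $b_i := \lceil a_i/2\rceil - 1 \ge 0$ for $i \in T$; then $\sum_{i \in T}(b_i+1) = \sum_i \lceil a_i/2\rceil \ge m+1$. The polynomial
$$f_\aa := \xx^{\,\aa - \sum_{i\in T}(2b_i+1)\ee_i} \cdot \prod_{i \in T}\Big( x_i \, (x_i^2 - 1)(x_i^2 - 4)\cdots(x_i^2 - b_i^2)\Big)$$
has nonnegative exponents (since $a_i \ge 2b_i+1$ for $i\in T$ and $a_i=0$ otherwise) and vanishes on $\Zpoints_m$: if $f_\aa(\zz)\neq 0$ then $z_i \notin \{0,\pm 1,\dots,\pm b_i\}$ for every $i\in T$, so $|z_i|\ge b_i+1$ and $\sum_{i}|z_i|\ge \sum_{i\in T}(b_i+1)\ge m+1$, contradicting $\zz\in\Zpoints_m$. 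Since $\tau(f_\aa)=\xx^\aa$, we get $\xx^\aa\in\gr\,\II(\Zpoints_m)$, hence $M_m\subseteq\gr\,\II(\Zpoints_m)$.

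The reverse inclusion I would get by a dimension count. The quotient $S/M_m$ has $\RR$-basis the images of the monomials in $B_m$, so $\dim_\RR S/M_m = \#B_m$. The assignment $\aa\mapsto\zz$ with $a_i\mapsto 0$ if $a_i=0$, $a_i\mapsto -c$ if $a_i=2c-1$, and $a_i\mapsto c$ if $a_i=2c$, is a bijection $B_m \to \Zpoints_m$, since it satisfies $|z_i|=\lceil a_i/2\rceil$ and so respects the defining inequalities; thus $\#B_m = \#\Zpoints_m$. On the other hand $\dim_\RR S/\gr\,\II(\Zpoints_m) = \dim_\RR S/\II(\Zpoints_m) = \#\Zpoints_m$ by Lagrange interpolation and Proposition~\ref{prop:graded-quotient-is-a-gr}. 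So the surjection $S/M_m\twoheadrightarrow S/\gr\,\II(\Zpoints_m)$ is an isomorphism, giving $\gr\,\II(\Zpoints_m)=M_m$ as desired.

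Finally I would sum the Hilbert series:
$$\Eseries_{\Cross^n}(t,q) = \sum_{m \ge 0} t^m \!\!\sum_{\substack{\aa \in \NN^n \\ \sum_i \lceil a_i/2\rceil \le m}}\!\! q^{a_1 + \cdots + a_n} = \frac{1}{1-t} \sum_{\aa \in \NN^n} q^{a_1+\cdots+a_n}\, t^{\sum_i \lceil a_i/2\rceil} = \frac{1}{1-t}\Big( \sum_{a \ge 0} q^a t^{\lceil a/2 \rceil}\Big)^{\! n},$$
and $\sum_{a\ge 0} q^a t^{\lceil a/2\rceil} = 1 + \sum_{c\ge 1}(q^{2c-1}+q^{2c})t^c = 1 + \frac{q(1+q)t}{1-q^2 t} = \frac{1+qt}{1-q^2 t}$, which yields the claimed formula. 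The one genuinely nonroutine step is guessing the ideal $M_m$ together with the polynomials $f_\aa$ certifying $M_m\subseteq\gr\,\II(\Zpoints_m)$; once these are in hand the rest is bookkeeping, and the key inequality $\sum_{i\in T}(b_i+1)\ge m+1$ is exactly what couples the $\ell^1$-constraint defining the cross-polytope to the ceiling-of-half statistic on exponents.
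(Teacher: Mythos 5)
Your proof is correct and follows essentially the same route as the paper: you identify $\gr\,\II(\ZZ^n\cap m\Cross^n)$ as a monomial ideal whose standard monomials satisfy $\sum_i\lceil a_i/2\rceil\le m$ (the paper phrases the same condition as $|\aa|+\#B\le m$ via the even/odd split $\xx^{2\aa}\xx_B$), certify the inclusion with the same kind of product-of-linear-factors vanishing polynomials, and close the gap with the identical bijection and dimension count. The only cosmetic difference is that you sum the resulting Hilbert series by a direct interchange of summation rather than through the paper's $\RR[\xx^2]\otimes U$ bigrading and truncation lemma, which changes nothing substantive.
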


\begin{proof}
Let $P:=\Cross^n$.  Then for each $m \geq 0$, the lattice points of the dilate $mP$ are given by
\begin{equation}
    \ZZ^n \cap mP = 
    \{ \zz \in \ZZ^n 
    \,:\, |z_1| + \cdots + |z_n| \leq m \}.
\end{equation}
We wish to identify the ideal 
$\gr \, \II(\ZZ^n \cap m \cdot P) \subset S$.
For this, it helps to note that
every monomial in $S=\RR[\xx]$ can be expressed uniquely in the form that separates even and odd powers of variables
\begin{equation}
\label{eq:monomial-even-odd-expression}
\xx^{2\aa} \cdot \xx_B = x_1^{2a_1} \cdots x_n^{2a_n} \,\, \cdot \,\, \prod_{j \in B} x_j 
\end{equation}
for some pair $(\aa,B)$ with $\aa \in \NN^n$ and $B \subseteq [n]$.
We then have the following claim.
\begin{quote}
{\bf Claim:} 
   $\gr \, \II(\ZZ^n \cap m \cdot P)$
is the monomial ideal $I:=\spn_\RR
\{ \xx^{2\aa} \cdot \xx_B: |\aa|+\#B>m \}.$
\end{quote}
To verify the Claim, we first show that for any such monomial $\xx^{2\aa} \cdot \xx_B$ in $I$, so $|\aa|+\#B>m$,
if one defines the set $A:=\supp(\aa)=\{j: a_j \geq 1\}$, then $\xx^{2\aa} \cdot \xx_B$ 
is divisible by 
$$
\tau(f)=\prod_{j \in A \setminus B} x_j^{2a_j-1} \prod_{j \in B} x_j^{2a_j+1}
$$
where $f$ is the following polynomial
in $\II(\ZZ^n \cap mP)$:
$$
f(\xx) := \prod_{j \in A \setminus B} \,\,
     %\left( 
     \prod_{i_j=-(a_j-1)}^{+(a_j-1)} (x_j - i_j) 
     %\right)
\cdot 
\prod_{j \in B} \,\,
     %\left( 
     \prod_{i_j=-a_j}^{+a_j} (x_j - i_j). 
     %\right)
$$
To see $f$ has the appropriate vanishing to lie in
$\II(\ZZ^n \cap mP)$, note
that for any $\zz \in \ZZ^n$ where $f(\zz) \neq 0$,
each coordinate $z_j$ with $j \in A \setminus B$ must lie in
$\ZZ \setminus [-(a_j-1),+(a_j-1)]$, and hence have $|z_j| \geq a_j$; similarly, each coordinate 
$z_j$ with $j \in B$ must lie in
$\ZZ \setminus [-a_j,+a_j]$, and hence have $|z_j| \geq a_j+1$
But this shows that $\zz \not\in \ZZ^n \cap mP$, since 
$$
|\zz| =\sum_{j=1}^n |z_i|
\geq \sum_{j \in A \setminus B} a_j 
+ \sum_{j \in B} (a_j+1) 
=|\aa| +\#B> m.
$$

On the other hand, $\ZZ^n \cap mP$ has a bijection to the {\it complementary} set of monomials 
$$
\ZZ^n \cap mP=\{\zz \in \ZZ^n:\sum_{i=1}^n |z_i| \leq m\} 
\quad \longleftrightarrow \quad
\{\xx^{2\aa} \cdot \xx_B: |\aa|+\#B \leq m\},
$$
sending $\zz \longmapsto \xx^{2\aa} \cdot \xx_B$
with $B:=\{j:z_j < 0\}$ and
$$
a_j:=\begin{cases}
z_j &\text{ if }z_j \geq 0,\\
|z_j|-1 &\text{ if }z_j < 0.
\end{cases}
$$
The inverse bijection sends $\xx^{2\aa} \cdot \xx_B \longmapsto \zz$ defined by
$z_j:=a_j$ for $j \not\in B$ and $z_j=-(a_j+1)$ for $j \in B$.
This shows the inclusion
$\gr \, \II(\ZZ^n \cap mP) \supseteq
I$ is an equality, since the surjection
$$
R(\ZZ^n \cap mP)=S/\gr \, \II(\ZZ^n \cap m \cdot P) \twoheadrightarrow
S/I
$$
must be an isomorphism via dimension-counting.

To calculate $\Eseries_P(t,q)$, it helps to recast the unique expressions in \eqref{eq:monomial-even-odd-expression} as a graded $\RR$-vector space isomorphism.
Define the polynomial subalgebra $\RR[\xx^2]=\RR[x_1^2,\ldots,x_n^2]$
within $S=\RR[\xx]$, and introduce the $\RR$-linear subspace $U \subset S$
which is spanned by the squarefree monomials,
that is, $U:=\spn_\RR\{\xx_B: B \subseteq [n]\}$.
One then has the following graded $\RR$-vector space isomorphism:
\begin{equation}
\label{eq:cross-polytope-tensor-iso}
\begin{array}{rcl}
\RR[\xx^2] \,\, \otimes_\RR \,\, U &\longrightarrow& S\\
\xx^{2\aa} \otimes \xx_B & \longmapsto & \xx^{2\aa}\cdot \xx_B.
\end{array}
\end{equation}
Endow the tensor product on the left with a {\it bigrading} or {\it $\NN^2$-grading} 
in which 
$$
\deg_{\NN^2}(\xx^{2\aa} \otimes \xx_B)=(|\aa|,\#B).$$ 
Then the  bigraded Hilbert series in variables $(q_1,q_2)$
that tracks $\xx^{2\aa} \otimes \xx_B$ via $q_1^{|\aa|} q_2^{\#B}$ is
$$
\Hilb_{\NN^2}( \RR[\xx^2] \otimes_\RR U, q_1,q_2) = \left( \frac{1+q_2}{1-q_1} \right)^n.
$$
To recover the {\it usual} $\NN$-grading, tracking
$\xx^{2\aa} \otimes \xx_B$ via $q^{2|\aa|+\#B}$, one must set $q_1=q^2$ and $q_2=q$.  

On the other hand, our 
analysis above shows that the map \eqref{eq:cross-polytope-tensor-iso}, followed by the quotient map $S \twoheadrightarrow S/I \cong R(\ZZ^n \cap mP)$, induces a graded $\RR$-vector space isomorphism
\begin{equation}
\label{eq:hyperoctahedral-truncated-iso}
\left( \RR[\xx^2] \,\, \otimes_\RR \,\, U\right)_{\substack{\leq q_1^{a} q_2^{b}\text{ with}\\ a+b\leq m}} \longrightarrow R(\ZZ^n \cap mP).
\end{equation}
Consequently, using Lemma~\ref{lem:truncation-series-lemma} at the equality labeled (*) below, one has
\begin{align*}
\Eseries_P(t,q)
:=\sum_{m=0}^\infty t^m \cdot \Hilb(R(\ZZ^n \cap mP),q)
&=  \left[ 
\sum_{m=0}^\infty t^m \cdot  \trunc_{\substack{\leq q_1^{a} q_2^{b}\text{ with}\\ a+b\leq m}}
\Hilb( \RR[\xx^2] \otimes_\RR  U, q_1,q_2)
\right]_{\substack{q_1\mapsto q^2\\q_2 \mapsto q}}
\\
&\overset{(*)}{=} \left[ \frac{1}{1-t} 
\left( \frac{1+tq_2}{1-tq_1} \right)^n
\right]_{\substack{q_1\mapsto q^2\\q_2 \mapsto q}}
=\frac{1}{1-t} \left( \frac{1+tq}{1-tq^2} \right)^n. \qedhere
\end{align*}
\end{proof}

To enhance Proposition~\ref{prop:cross-polytope} to a $\symmB_n$-equivariant calculation,
one must recall the parametrization of
simple $\RR[\symmB_n]$-modules, e.g., from Geissinger and Kinch \cite{GeissingerKinch}, Macdonald \cite[Ch.~1,~App. B]{Macdonald}.
The simple $\RR[\symmB_n]$-modules
$\{ \specht^{(\lambda^+,\lambda^-)} \}_{(\lambda^+,\lambda^-)}$
are indexed by {\it ordered pairs of partitions} 
$$
(\lambda^+,\lambda^-)
\text{ where } 
|\lambda^+|=n_+, \,\, |\lambda|=n_-
\text{ with }n_+ + n_-=n.
$$
One can construct $\specht^{(\lambda^+,\lambda^-)}$ using the simple $\RR[\symm_n]$-modules $\{ \specht^\mu\}_{\mu \vdash n}$ as follows.
Introduce
\begin{itemize}
\item the operation of {\it induction} 
$$
(U_+,U_-) \longmapsto \left( U_+ \otimes U_-\right) \uparrow_{\symmB_{n_+} \times\symmB_{n_-}}^{\symmB_n},
$$
for any pairs $U_+,U_-$ of $\RR[\symmB_{n_+}], \RR[\symmB_{n_-}]$-modules,
\item the operation of {\it inflation} 
$
V \longmapsto V\Uparrow
$
of $\RR[\symm_n]$-modules
$V$ to $\RR[\symmB_n]$-modules, by precomposing with the group quotient map $\pi: \symmB_n \longrightarrow \symm_n$ that ignores the $\pm$ signs, and
\item the one-dimensional character
$\chi_\pm: \symmB_n \rightarrow \{\pm 1\}$
sending a signed permutation matrix $w$ to the product of its $\pm 1$ signs, that is, $\chi_\pm(w):=\det(w)/\det(\pi(w))$.
\end{itemize}
Then starting with the simple $\RR[\symm_n]$-modules
$\{\specht^\mu\}$, one builds 
the $\RR[\symmB_n]$-module $\specht^{(\lambda^+,\lambda^-)}$ as follows:
$$
\specht^{(\lambda^+,\lambda^-)}
:= \left(\quad
\specht^{\lambda^+}\Uparrow
\qquad \otimes \qquad
%\left(
\chi_\pm  \otimes  ( \specht^{\lambda^-} \Uparrow )
%\right)
\quad \right)
\big\uparrow_{\symmB_{n_+} \times \symmB_{n_-}}^{\symmB_n}.
$$

\begin{prop}
\label{prop:cross-polytope-equivariantly}
For the cross-polytope $P:=\Cross^n \subset \RR^n$, one has
in $\Cl_\RR[\symmB_n][q][[t]]$
$$
\Eseries_P^{\symmB_n}(t,q)
=\frac{1}{(1-t) \cdot \prod_{i=1}^n (1-t^i q^{2i})}
\left( \sum_{\lambda \vdash n} f^{\lambda}(tq^2) \cdot [\specht^{(\lambda,\varnothing)}]
\right)
 \left(
\sum_{i=0}^n (tq)^i  \cdot [\specht^{((n-i),(i))}]
\right)
$$
\end{prop}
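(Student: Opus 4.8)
The plan is to re-run the proof of Proposition~\ref{prop:cross-polytope} while keeping track of the $\symmB_n$-action, since every step there is visibly compatible with signed permutation matrices. Write $P:=\Cross^n$. The proof of Proposition~\ref{prop:cross-polytope} identifies $\gr\,\II(\ZZ^n\cap mP)$ with the monomial ideal $I=\spn_\RR\{\xx^{2\aa}\cdot\xx_B:|\aa|+\#B>m\}$, and this ideal is patently $\symmB_n$-stable, because a signed permutation sends $\xx^{2\aa}\cdot\xx_B$ to $\pm\,\xx^{2w(\aa)}\cdot\xx_{w(B)}$, preserving the statistic $|\aa|+\#B$. Hence $R(\ZZ^n\cap mP)=S/\gr\,\II(\ZZ^n\cap mP)=S/I$ is an equality of graded $\RR[\symmB_n]$-modules. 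Moreover the $\NN^2$-graded vector space isomorphism \eqref{eq:cross-polytope-tensor-iso}, $\xx^{2\aa}\otimes\xx_B\mapsto\xx^{2\aa}\cdot\xx_B$, is $\symmB_n$-equivariant for the restricted actions on $\RR[\xx^2]=\RR[x_1^2,\dots,x_n^2]$ and $U=\spn_\RR\{\xx_B:B\subseteq[n]\}$, so composing with $S\twoheadrightarrow S/I$ refines \eqref{eq:hyperoctahedral-truncated-iso} to a $\symmB_n$-equivariant identification $\bigoplus_{a+b\le m}\RR[\xx^2]_a\otimes_\RR U_b\cong R(\ZZ^n\cap mP)$, in which the summand $\RR[\xx^2]_a\otimes_\RR U_b$ lies in polynomial degree $2a+b$.

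Next I would compute the two equivariant generating functions. Since signs square away, $\symmB_n$ acts on $\RR[\xx^2]$ through the quotient $\pi:\symmB_n\twoheadrightarrow\symm_n$; thus $\RR[\xx^2]$ is the inflation along $\pi$ of the polynomial ring $S=\RR[\xx]$ with its $\symm_n$-permutation action, with $x_i^2$ in degree one. Applying the (monoidal, additive) inflation map $\Cl_\RR(\symm_n)\to\Cl_\RR(\symmB_n)$ to the Lusztig--Stanley formula of Theorem~\ref{thm:Lusztig-Stanley}, and using $\specht^\mu\!\Uparrow=\specht^{(\mu,\varnothing)}$ from the parametrization recalled above, gives $\sum_{a\ge0}[\RR[\xx^2]_a]\,u^a=\frac{1}{\prod_{i=1}^n(1-u^i)}\sum_{\mu\vdash n}f^\mu(u)\,[\specht^{(\mu,\varnothing)}]$. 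For $U$, taking the distinguished subset to be the last $i$ coordinates, one checks that the stabilizer in $\symmB_n$ of the line $\RR\,\xx_{\{n-i+1,\dots,n\}}$ is $\symmB_{n-i}\times\symmB_i$, acting on that line through $\mathrm{triv}\otimes\chi_\pm$ — the sign that appears is the product of the $\pm1$'s on the coordinates in $B$, with \emph{no} permutation sign, because monomials commute — which is exactly the data defining the simple module $\specht^{((n-i),(i))}$. Hence $U_i\cong\specht^{((n-i),(i))}$ and $\sum_{b\ge0}[U_b]\,v^b=\sum_{i=0}^n v^i\,[\specht^{((n-i),(i))}]$. Getting this commutative sign right and matching $U_i$ to the stated irreducible is the only genuinely delicate point; the rest is formal.

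Finally I would assemble the answer. From the equivariant identification above, $[R(\ZZ^n\cap mP)]_q=\sum_{a+b\le m}[\RR[\xx^2]_a\otimes_\RR U_b]\,q^{2a+b}$ with the product taken in $\Cl_\RR(\symmB_n)$, so a direct interchange of summation (the bigraded analogue of Lemma~\ref{lem:truncation-series-lemma}) yields
\[
\Eseries_P^{\symmB_n}(t,q)=\sum_{m\ge0}t^m\!\!\sum_{a+b\le m}[\RR[\xx^2]_a\otimes_\RR U_b]\,q^{2a+b}=\frac{1}{1-t}\Bigl(\sum_{a\ge0}[\RR[\xx^2]_a](tq^2)^a\Bigr)\Bigl(\sum_{b\ge0}[U_b](tq)^b\Bigr).
\]
Substituting $u=tq^2$ and $v=tq$ into the two generating functions found above, and noting $\prod_{i=1}^n(1-(tq^2)^i)=\prod_{i=1}^n(1-t^iq^{2i})$, produces exactly the claimed expression for $\Eseries_P^{\symmB_n}(t,q)$. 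As a consistency check, applying $\dim_\RR$ (i.e.\ setting all $[\specht^{(\lambda,\varnothing)}]$ and $[\specht^{((n-i),(i))}]$ to $1$) collapses this to $\frac{1}{1-t}\bigl(\frac{1+tq}{1-tq^2}\bigr)^n$, recovering Proposition~\ref{prop:cross-polytope}.
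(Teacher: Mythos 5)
Your proposal is correct and follows essentially the same route as the paper's proof: exploit the $\symmB_n$-equivariance of the tensor decomposition \eqref{eq:cross-polytope-tensor-iso}/\eqref{eq:hyperoctahedral-truncated-iso}, identify $[\RR[\xx^2]]$ via inflation of the Lusztig--Stanley formula and $U_i$ with $\specht^{((n-i),(i))}$ via the stabilizer $\symmB_{n-i}\times\symmB_i$ acting through $\mathrm{triv}\otimes\chi_\pm$, then multiply by $\frac{1}{1-t}$ and substitute $q_1\mapsto tq^2$, $q_2\mapsto tq$. The only differences are cosmetic (choice of representative squarefree monomial, and spelling out the interchange of summation that the paper delegates to the proof of Proposition~\ref{prop:cross-polytope}).
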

\begin{proof}
The group $\symmB_n$ acts on $S=\RR[\xx]$
by permuting and negating the variables $x_1,\ldots,x_n$.
As the isomorphism \eqref{eq:hyperoctahedral-truncated-iso} is also an isomorphism of graded $\RR[\symmB_n]$-modules, the key is to calculate the class 
$$
\left[ \RR[\xx^2] \otimes_\RR U\right]_{q_1,q_2}
=
\left[ \RR[\xx^2] \right]_{q_1} 
\cdot 
\left[ U \right]_{q_2}
$$
within the representation ring $\Cl_\RR[\symmB_n][[q_1,q_2]]$
of $\NN^2$-graded $\RR[\symmB_n]$-modules.

For the right tensor factor $U=\spn_\RR \{\xx_B: B \subseteq [n]\} \subset S$, we claim that its $i^{th}$-graded component $U_i$ carries the simple $\RR[\symmB_n]$-module
$\specht^{((n-i),(i))}$.  This is because it is a direct sum of the $\binom{n}{i}$ lines which are the $\symmB_n$-images of the line $L:=\RR\cdot x_1 x_2 \cdots x_i$.
This line $L$ is stabilized setwise by the subgroup $\symmB_{n-i} \times \symmB_i$,
where the $\symmB_{n-i}$ factor acts trivially
and the $\symmB_i$ factor acts via $\chi_\pm$.
Hence 
$$
[U]_{q_2} =\sum_{i=0}^n q_2^i \cdot [\specht^{((n-i),(i))}].
$$

To analyze $\RR[\xx^2]$ as a $\RR\symm[B_n]$-module,
note that since all variables $x_i$ appear
squared as $x_i^2$, the sign changes in $B_n$ have no effect;  $B_n$ acts via inflation through the surjection $\pi:B_n \rightarrow \symm_n$.  Consequently, one can obtain
the class of the graded $\RR[\symmB_n]$-module $\RR[\xx^2]$ from that of the graded $\RR[\symm_n]$-module $S=\RR[\xx]$ given in Theorem~\ref{thm:Lusztig-Stanley}, simply by applying the inflation
map 
$$
[\specht^\lambda] \mapsto 
[\specht^\lambda \Uparrow]
=[\specht^{(\lambda,\varnothing)}].
$$
The upshot is that
$$
\left[ \RR[\xx^2] \otimes_\RR U\right]_{q_1,q_2}
=
\left[ \RR[\xx^2] \right]_{q_1} 
\cdot 
\left[ U \right]_{q_2}
=\left( 
\frac{
\sum_{\lambda \vdash n} f^{\lambda}(q_1) \cdot [\specht^{(\lambda,\varnothing)}]}
{\prod_{i=1}^n (1-q_1^i)}
\right)
\cdot \left(
\sum_{i=0}^n q_2^i  \cdot [\specht^{((n-i),(i))}]
\right).
$$
Finally, as in the proof of Proposition~\ref{prop:cross-polytope}, $\Eseries^{\symmB_n}_P(t,q)$ is obtained from $\left[ \RR[\xx^2] \otimes_\RR U\right]_{q_1,q_2}$ upon multiplying by $\frac{1}{1-t}$ and replacing  $q_1 \mapsto tq^2$ and $q_2 \mapsto tq$.
\end{proof}

%%%%%%%%%%%%%%%%%%%%%%%%%%%%%%%%%%%%%%%%%%%
\section{Minkowski closure and the proof of Theorem~\ref{thm:minkowski-closure}}
\label{sec:Minkowski}

Recall from the Introduction that we hope to approach Conjecture~\ref{conj:intro-omnibus} through the new {\it harmonic algebra} $\HHH_P$ of a lattice polytope defined in Section~\ref{sec:Harmonic} below.  The existence of
this algebra structure relies on 
Theorem~\ref{thm:minkowski-closure}, asserting that for finite point loci
$\Zpoints, \Zpoints' \subset \kk^n$ (such as $\Zpoints=\ZZ^n \cap mP$ and $\Zpoints'=\ZZ^n \cap m'P \subset \RR^n$),
their harmonic spaces always satisfy
$$
V_\Zpoints \cdot V_{\Zpoints'} \subseteq V_{\Zpoints+\Zpoints'}.
$$
The goal of this section is to prove this result.  The first two subsections collect some preparatory material.  This includes broadening the notion of the perp $I^\perp$
or Macaulay inverse system to allow not only homogeneous ideals $I \subset S=\kk[\xx]$, but arbitrary ideals.

%%%%%
\subsection{Vanishing ideals for finite point loci}
For $\Zpoints \subset \kk^n$ with $\kk$ a field, the {\it vanishing ideal} is 
$$
I(\Zpoints):=\{ f(\xx) \in \kk[\xx]: f(\zz)=0 \text{ for all }\zz \in \Zpoints\}.
$$
It is convenient to have a concrete generating set for $I(\Zpoints)$ for $\Zpoints$ finite, even if it has redundant generators.  The following proposition is well-known, but perhaps hard to find in the literature.

\begin{lemma}
\label{lem:generating-vanishing-ideals}
For any field $\kk$ and finite subset $\Zpoints \subset \kk^n$, its vanishing ideal has these descriptions:
\begin{align}
\label{eq:I(Z)-is-an-intersection}
I(\Zpoints) &= \bigcap_{\zz \in \Zpoints}
(x_1-z_1,\ldots,x_n-z_n)\\
\label{eq:I(Z)-is-a-product}
&= \prod_{\zz \in \Zpoints}
(x_1-z_1,\ldots,x_n-z_n)\\
\label{eq:generators-of-a-product-ideal}
&= \left( \prod_{\zz \in \Zpoints} (x_{p(\zz)}-z_{p(\zz)}) : 
\text{ all functions }p:\Zpoints \rightarrow [n] \right)
\end{align}
\end{lemma}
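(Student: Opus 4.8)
The plan is to prove the three displayed descriptions by going down the chain of equalities, the key observation being that each ideal $(x_1-z_1,\ldots,x_n-z_n)$ appearing above is the maximal ideal $\mm_\zz := \ker(\mathrm{ev}_\zz)$ of the point $\zz$, where $\mathrm{ev}_\zz \colon \kk[\xx] \to \kk$ is evaluation $f \mapsto f(\zz)$. Indeed substituting $x_i = z_i$ shows $\kk[\xx]/(x_1-z_1,\ldots,x_n-z_n) \cong \kk$, so this ideal is maximal and equals $\ker(\mathrm{ev}_\zz)$; note this works over an arbitrary field since $\zz \in \kk^n$.

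First I would establish \eqref{eq:I(Z)-is-an-intersection}. A polynomial $f$ vanishes at $\zz$ precisely when $f \in \mm_\zz$, so $f \in I(\Zpoints)$ if and only if $f$ vanishes at every $\zz \in \Zpoints$, i.e. $f \in \bigcap_{\zz \in \Zpoints} \mm_\zz$. This is immediate from the definition.

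Next, for \eqref{eq:I(Z)-is-a-product}, I would check that the finitely many ideals $\{\mm_\zz\}_{\zz \in \Zpoints}$ are pairwise comaximal: if $\zz \neq \zz'$, then $z_i \neq z_i'$ for some $i$, so $(x_i - z_i') - (x_i - z_i) = z_i - z_i' \in \kk^\times$ lies in $\mm_\zz + \mm_{\zz'}$, forcing $\mm_\zz + \mm_{\zz'} = \kk[\xx]$. Then I would invoke the standard fact that for pairwise comaximal ideals $I_1,\ldots,I_k$ in a commutative ring one has $\bigcap_{j} I_j = \prod_{j} I_j$ (the inclusion $\prod \subseteq \bigcap$ is automatic; for the reverse, one proves by induction on $k$ that $I_1\cdots I_{k-1}$ is again comaximal with $I_k$, writes $1 = a+b$ with $a \in I_1\cdots I_{k-1}$ and $b \in I_k$, and observes that any $x \in \bigcap_j I_j$ satisfies $x = xa + xb \in (I_1\cdots I_{k-1})I_k$). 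Applied to $k = \#\Zpoints$ this gives $I(\Zpoints) = \prod_{\zz \in \Zpoints} \mm_\zz$.

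Finally, for \eqref{eq:generators-of-a-product-ideal}, I would use the elementary description of a product of finitely generated ideals: if $I_j = (b_{j,1},\ldots,b_{j,n})$, then $\prod_{j=1}^k I_j$ is generated as an ideal by the products $b_{1,p(1)} b_{2,p(2)} \cdots b_{k,p(k)}$ over all functions $p \colon \{1,\ldots,k\} \to [n]$, since a general generator $a_1\cdots a_k$ with $a_j = \sum_i c_{j,i} b_{j,i}$ expands by distributivity into a $\kk[\xx]$-linear combination of such products. Taking $I_j = \mm_\zz$ with generators $x_i - z_i$ and reindexing $p$ as a function $\Zpoints \to [n]$ yields exactly $\prod_{\zz \in \Zpoints} \mm_\zz = \big(\, \prod_{\zz \in \Zpoints}(x_{p(\zz)} - z_{p(\zz)}) : \text{all } p \colon \Zpoints \to [n]\,\big)$, which combined with the previous two paragraphs completes the proof. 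There is no genuine difficulty here: the only steps needing care are the two boilerplate commutative-algebra lemmas (intersection equals product for pairwise comaximal ideals, and the generating set of a product of finitely generated ideals), both routine inductions, so the work is really just in assembling them cleanly.
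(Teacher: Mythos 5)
Your proof is correct, and its overall skeleton (identify each $(x_1-z_1,\ldots,x_n-z_n)$ as the maximal ideal $\mm_\zz$ of the point, get \eqref{eq:I(Z)-is-an-intersection} definitionally, pass from intersection to product via coprimeness, then expand generators of the product) matches the paper's. The one genuine difference is how coprimeness is obtained in step \eqref{eq:I(Z)-is-a-product}: the paper inducts on $\#\Zpoints$ and shows that $\mm_\zz$ is comaximal with the \emph{entire} vanishing ideal $\II(\Zpoints\setminus\{\zz\})$ by invoking multivariate Lagrange interpolation (a polynomial $f$ with $f(\zz)=1$ and $f\equiv 0$ on the rest gives $1\in \mm_\zz+\II(\Zpoints\setminus\{\zz\})$), whereas you prove pairwise comaximality of the maximal ideals directly --- if $\zz\neq\zz'$ then $z_i-z_i'\in\kk^\times$ lies in $\mm_\zz+\mm_{\zz'}$ --- and then cite the standard lemma that pairwise comaximal ideals have intersection equal to product, carrying the needed ``product of the first $k-1$ is still comaximal with the $k$-th'' step inside that lemma's induction. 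Your route is slightly more self-contained (no appeal to interpolation, which the paper outsources to a reference), at the cost of stating and proving the general comaximality lemma; the paper's route gets the comaximality it needs in one stroke and folds everything into a single induction on $\#\Zpoints$. The final step \eqref{eq:generators-of-a-product-ideal}, expanding generators of a product of finitely generated ideals, is essentially identical in both arguments.
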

\begin{proof}
When $\#\Zpoints=1$ so that $\Zpoints=\{\zz\}$, the assertions all
hold, and $I(\{\zz\})$ is the maximal ideal $(x_1-z_1,\ldots,x_n-z_n)$.  From this,
\eqref{eq:I(Z)-is-an-intersection} follows by definition.

Equality \eqref{eq:I(Z)-is-a-product} then follows by induction on $\#\Zpoints$, once we check that the two ideals $I=I(\{\zz\})$ and 
$I'=I(\Zpoints \setminus \{\zz\})$ are coprime in the sense that $1 \in I+I'$: this implies the inclusion $I \cap I'\supseteq I \cdot I'$ becomes an equality.  Coprimeness of $I,I'$ follows from {\it multivariate Lagrange interpolation} (see e.g. \cite[top of p. 13]{huang2018ordered}) which provides the existence of a polynomial $f(\xx)$ that 
has $f(\zz)=1$ and $f(\zz')=0$ for all $\zz' \in \Zpoints \setminus \{\zz\}$, so $f(\xx) \in I'$:  in the quotient ring $\kk[\xx]/I \cong \kk$,  
one has $1 \equiv f(\xx) \bmod I$, and
hence $1 \in I+I'$.

The equality \eqref{eq:generators-of-a-product-ideal} also follows by induction on $\#\Zpoints$, since the product $I\cdot I'$ of two ideals generated as $I=(f_p)_{p=1,2,\ldots}, I'=(f'_q)_{q=1,2,\ldots}$ can be generated as $I \cdot I'=(f_p f'_q)_{p,q=1,2,\ldots}$.
\end{proof}

\subsection{Harmonic spaces for inhomogeneous ideals: completions and exponentials}
\label{sec:completions-and-exponentials}

It will be much easier to identify the harmonic spaces $\II(\Zpoints)^\perp$ of the {\it inhomogeneous} ideals $\II(\Zpoints) \subset S=\RR[\xx]$, rather than their homogeneous deformations $\gr \, \II(\Zpoints)$. However, the harmonic spaces $I^\perp$ for inhomogeneous ideals $I \subset \kk[\xx]$ naturally live in the power series
{\it completion} $\RR[[\yy]]$ of $\RR[\yy]$, or more generally, a completion $\hat{\Div}_\kk(\yy)$ of the divided power algebra $\Div_\kk(\yy)$.  Within these completions, the
harmonic spaces $\II(\Zpoints)^\perp$ will turn out to have a simple basis of
``exponentials" indexed by $\zz \in \Zpoints$; see Lemma~\ref{lem:inhomogeneous-exponential-divided} below.

Recall the set-up from Sections~\ref{sec:harmonics-char-zero}, \ref{sec:harmonics-over-all-fields} for harmonic spaces of homogeneous ideals in $S=\kk[\xx]$.
For $\kk$ a characteristic zero field, consider the {\it polynomial algebra} 
$$
\Div:=\kk[\yy]=\kk[y_1,\ldots,y_n]
\text{ with }\kk\text{-basis }\{\yy^\aa: \aa\in \NN^n\}.
$$
More generally, over any field $\kk$ or any commutative ring with $1$, consider
the divided power algebra 
$$
\Div:=\Div_\kk(\yy) 
\text{ with }\kk\text{-basis }
\{\yy^{(\aa)}=y_1^{(a_1)} \cdots y_n^{(a_n)}
: \aa\in \NN^n\}, \,\, \text{ where }
\yy^{(\aa)} \yy^{(\bb)}=\prod_{i=1}^n \binom{a_i+b_i}{a_i}\yy^{(\aa+\bb)}.
$$
making the identification $y_1^{(a_1)} \cdots y_n^{(a_n)}:=\frac{\yy^\aa}{a_1! \cdots a_n!}$
whenever $\kk$ has characteristic zero.
One has an $S$-module structure on $\Div$ given by 
$
\odot: S \times \Div \rightarrow \Div
$
where $x_i$ acts on $\Div$ as the derivation $\frac{\partial}{\partial y_i}$ (so that $x_i \odot y_j=\delta_{ij}$), with these formulas in the characteristic zero and arbitrary ring cases:
\begin{align*}
\xx^\aa \odot \yy^\bb
&=\begin{cases}
\prod_{i=1}^n \frac{b_i!}{(b_i-a_i)!}
\cdot \yy^{\bb-\aa} &\text{ if }a_i \leq b_i \text{ for }i=1,\ldots,n,\\
0& \text{ otherwise.}
\end{cases} \\
\xx^\aa \odot \yy^{(\bb)}
&=\begin{cases}
 \yy^{(\bb-\aa)} &\text{ if }a_i \leq b_i \text{ for }i=1,\ldots,n,\\
0& \text{ otherwise.}
\end{cases}
\end{align*}
This leads to a $\kk$-bilinear pairing
$\langle -, - \rangle: S \times \Div \rightarrow \kk$ defined by
\begin{equation}
    \label{eq:background-pairing}
    \langle f(\xx), g(\yy) \rangle :=
    \text{the constant term of $f \odot g$}.
\end{equation}
This $\langle -,-\rangle$ 
pairs the $\kk$-bases
$
\{ \xx^\aa \}_{\aa \in \NN^n}
\text{ and }
\left\{ \yy^{(\aa)} \right\}_{\aa \in \NN^n}
$
as $\langle \xx^\aa, \yy^{(\bb)} \rangle=\delta_{\aa,\bb}$, giving a {\it perfect pairing}
in each degree, identifying
$\Div_d \cong S_d^*$ and $\Div_{\leq d} \cong S_{\leq d}^*$,
for all $d \geq 0$.
In other words, $S,\Div$ are what are sometimes called {\it graded} (or {\it restricted)} $\kk$-duals.

When working with {\it inhomogeneous} ideals $I \subseteq S=\kk[\xx]$, we will
include $\Div$ in a larger ring $\hat{\Div}=\varprojlim \Div_{\leq m}$ which is the inverse
limit of the projections $\Div/\Div_{\geq m} \twoheadrightarrow \Div/\Div_{\geq m-1}$.
In other words, 
$\hat{\Div}=\{ \sum_{\aa \in \NN^n} c_\aa \yy^{(\aa)} : c_\aa \in \kk\}$
with multiplication defined $\kk$-linearly extending
the rule in \eqref{eq:divided-power-multiplication}.
We can therefore extend the $\odot$ pairing and $S$-module structure to $\hat{\Div}$ to
$
\odot: S \times \hat{\Div} \rightarrow \hat{\Div}
$
via
$
f \odot g := \partial(f)(g),
$
and extend the $\kk$-bilinear pairing 
\begin{equation}
\label{eq:completed-bilinear-pairing}
\langle -, - \rangle: S \times \hat{\Div} \rightarrow \kk
\end{equation}
via the same formula in which
$\langle f, g \rangle$ is the  constant term of $f \odot g$.
This pairing $\langle -,-\rangle$ identifies $\hat{\Div}$ isomorphically 
with the (unrestricted) $\kk$-linear dual space: 
$$
\begin{array}{rcl}
S^*:= \Hom_\kk(S,\kk)&\longrightarrow &\hat{\Div}\\
\varphi & \longmapsto & 
\displaystyle \sum_{\aa \in \NN^n} \varphi(\xx^\aa) \cdot  y^{(\aa)}.
\end{array}.
$$

 As a consequence, for any subspace $W \subset S$, its {\it annihilator or perp space}
$$
W^\perp:=\{g(\yy) \in \hat{\Div}: \langle f(\xx),g(\yy)\rangle=0
\text{ for all }f(\xx) \in W\}
$$
has a $\kk$-linear isomorphism
$$
\begin{array}{rcl}
W^\perp &\longrightarrow & (S/W)^* :=\Hom_\kk(S/W,\kk)\\
g(\yy) &\longmapsto &\langle -,g(\yy) \rangle
\end{array}
$$
In particular, when $\kk$ is a field and $S/W$ is finite-dimensional, then so is $W^\perp$, since $W^\perp \cong (S/W)^*$.

\begin{definition} \rm
For any field $\kk$, and for any
(possibly inhomogeneous) ideal $I \subseteq S=\kk[\xx]$, define its {\it harmonic space} as $I^\perp \subseteq \hat{\Div}=\hat{\Div}_\kk(\yy) (=\kk[[\yy]]\text{ if }\kk \supseteq \QQ)$.
\end{definition}

The preceding discussion shows that, for $\kk$ a field and whenever $\dim_\kk S/I$ is finite, one has
\begin{equation}
\label{eq:inhomogeneous-harmonics-still-have-correct-dimension}
\dim_\kk I^\perp = \dim_\kk (S/I)^* = \dim_\kk S/I.
\end{equation}
The associated graded ideal $\gr \, I \subseteq S$ has harmonic space $(\gr \, I)^\perp \subseteq \Div$. Before turning our attention to ideals coming from point loci, we give a general relationship between the harmonic spaces $I^\perp$ and $(\gr \, I)^\perp$ whenever $S/I$ is Artinian. 

Given a nonzero element $f \in \hat{\Div}$, let $\beta(f) \in \Div$ be the bottom degree homogeneous component of $f$. That is, if $f = \sum_{i \geq d} f_i$ with $f_i$ homogeneous of degree $i$ and $f_d \neq 0$, we have $\beta(f) = f_d$.  Also define $\beta(0) = 0$. The map $\beta: \hat{\Div} \to \Div$ is not $\kk$-linear; if $f_1 = 1+y$ and $f_2 = -1$ one has $\beta(f_1 + f_2) = y$ whereas $\beta(f_1) + \beta(f_2) = 1 - 1 = 0$. 
%\vic{Thanks for catching that error on my part (the linearity assertion for $\beta,\tau$), and fixing it.  I had even been overlooking the more obvious issue that you fixed, about how the image $\beta(W)$ is not a subspace of $\Div$, since it lands in $\bigcup_{d=0}^\infty \Div_d$, the subset of homogeneous elements inside $\Div$, which is not a subspace!}

\begin{prop}
    \label{prop:beta-projection}
    Let $W \subseteq \hat{\Div}$ be any $\kk$-linear subspace of $\hat{\Div}$. The subspace of $\Div$ generated by its image $\beta(W)$ under $\beta$ is a $\kk$-linear subspace of $\Div$ of the same dimension as $W$.
\end{prop}
\begin{proof}
Let $\hat{\Div}_{\geq d} \subseteq \hat{\Div}$ be the $\kk$-subspace consisting of series of the form $g_d + g_{d+1} + \cdots$ where $g_i$ is homogeneous of degree $i$. We have a descending filtration on $\Div$
$$
\hat{\Div} = \hat{\Div}_{\geq 0} \supset
\hat{\Div}_{\geq 1} \supset 
\hat{\Div}_{\geq 2} \supset \cdots
$$
with $\bigcap_{d=0}^\infty \hat{\Div}_{\geq d}=\{0\}$. This induces a similar filtration
$$
W=W_{\geq 0} \supset 
W_{\geq 1} \supset 
W_{\geq 2} \supset \cdots
$$
in which $W_{\geq d}:=W \cap \hat{\Div}_{\geq d}$ where  $\bigcap_{d=0}^\infty W_d=\{0\}$. One therefore has
\begin{equation}
    \dim_\kk W = \sum_{d \, = \, 0}^{\infty} \dim_\kk(W_{\geq d}/W_{\geq d+1}).
\end{equation}

Let $\Div_{\geq d} := \hat{\Div}_{\geq d} \cap \Div$ and consider the composite map $\beta_{\geq d}: \hat{\Div}_{\geq d} \to \Div_d$ 
\begin{equation}
    \beta_{\geq d}: \hat{\Div}_{\geq d} \xrightarrow{ \, \beta \, } \Div_{\geq d} \twoheadrightarrow \Div_{\geq d}/\Div_{\geq d+1} = \Div_d
\end{equation}
where the map $\Div_{\geq d} \twoheadrightarrow \Div_{\geq d}/\Div_{\geq d+1}$ is the canonical projection. Despite the fact that $\beta: \hat{\Div} \to \Div$ is not $\kk$-linear, it is not hard to see that $\beta_{\geq d}: \hat{\Div}_{\geq d} \to \Div_d$ is $\kk$-linear and satisfies $\beta_{\geq d}(\hat{\Div}_{\geq d}) = 0$.

Write $U \subseteq \Div$ for the subspace generated by $\beta(W)$. Then $U$ is graded, and its degree $d$ graded piece $U_d$ equals $\beta_{\geq d}(W_{\geq d})$. Since $\beta_{\geq d}(\hat{\Div}_{\geq d+1}) = 0$, the linear map $\beta_{\geq d}: W_{\geq d} \to \Div_d$ induces a linear map $W_{\geq d} / W_{\geq d+1} \to \Div_d$, and it is not hard to see that this latter map is bijective. Consequently, we have a linear isomorphism $W_{\geq d} / W_{\geq d+1} \cong U_d$ and
\begin{equation}
    \dim_\kk W = \sum_{d \, = \, 0}^{\infty} \dim_\kk(W_{\geq d}/W_{\geq d+1}) = \sum_{d \, = \, 0}^{\infty} \dim_\kk U_d = \dim_\kk U.\qedhere
\end{equation}
\end{proof}

Given $g = g_d + g_{d+1} + \cdots \in \hat{\Div}$ with $g_i$ homogeneous of degree $i$ and $g_d \neq 0$, define the {\em valuation} $\nu(g) := d$. Also define $\nu(0) := 0$. The relationship between the set map $\beta$ and harmonic spaces may be stated as follows.

\begin{cor}
    \label{cor:beta-perp-identification}
    Let $I \subseteq S$ be any ideal (not necessarily homogeneous) such that $S/I$ is a finite-dimensional $\kk$-vector space, so that $I^\perp \subseteq \hat{\Div}$. Then $(\gr \, I)^\perp \subseteq \Div$ is the $\kk$-linear subspace of $\Div$ spanned by $\beta(I^\perp)$.
\end{cor}

\begin{proof}
    Let $U \subseteq \Div$ be the subspace spanned by $\beta(I^\perp)$.
    We have $$\dim_\kk S/I = \dim_\kk S / \gr \, I = \dim_\kk (\gr \, I)^\perp.$$
    Equation~\eqref{eq:inhomogeneous-harmonics-still-have-correct-dimension} and Proposition~\ref{prop:beta-projection} guarantee that $U$ shares this common dimension. It is therefore enough to establish $\beta(I^\perp) \subseteq (\gr \, I)^\perp$. Indeed, let $f \in I$ and $g \in I^\perp$. We need to show that $\tau(f) \odot \beta(g) = 0$. However, the element $\tau(f) \odot \beta(g) \in \Div$ is the degree $\nu(g) - \deg(f)$ homogeneous component of $f \odot g \in \hat{\Div}$. Since $f \odot g = 0$, we conclude that  $\tau(f) \odot \beta(g) = 0$.
\end{proof}

We apply Corollary~\ref{cor:beta-perp-identification} to ideals of the form $\II(\Zpoints)$ for finite loci $\Zpoints \subseteq \kk^n$. The harmonic spaces $\II(\Zpoints)^\perp \subseteq \hat{\Div}$ of these ideals have simple $\kk$-bases.

\begin{definition} \rm
For any field $\kk$, given $\zz \in \kk^n$, define the {\it exponential} in the completion $\hat{\Div}$
$$
\exp(\zz \cdot \yy)
:=\sum_{d=0}^\infty (z_1 y_1 + \cdots z_n y_n)^{(d)}
=\sum_{d=0}^\infty \quad
\sum_{\substack{\dd=(d_1,\ldots,d_n):\\\sum_i d_i=d}} \zz^\dd \yy^{(\dd)},
$$
where the right equality above used
\eqref{eq:beginner-multinomial-thm}.  
It can be rewritten more
familiarly when $\kk \supseteq \QQ$ as
$$
\exp(\zz \cdot \yy)
:=\sum_{d=0}^\infty \frac{(z_1 y_1 + \cdots z_n y_n)^d}{d!}\\
=\sum_{d=0}^\infty \quad 
\sum_{\substack{\dd=(d_1,\ldots,d_n):\\\sum_i d_i=d}} \zz^\dd 
\frac{\yy^\dd}{d_1! \cdots d_n!}.
$$
\end{definition}
It is not hard to check that one has
\begin{equation}
\label{eq:factoring-exponentials}
\exp(\zz \cdot \yy)=\exp( z_1 \cdot y^{(1)}_1 ) \cdots \exp( z_n \cdot y^{(1)}_n )
\end{equation}
where for $z \in \kk$ and $y \in \hat{\Div}_1$, one has
$$
\exp( z \cdot y^{(1)} ):=\sum_{d=0}^\infty z^d y^{(d)}
\left( =\exp( z \cdot y ) = \sum_{d=0}^\infty z^d \frac{y^d}{d!} \text{ if }\kk \supseteq \QQ.
\right).
$$

\begin{lemma}
    \label{lem:inhomogeneous-exponential-divided}
    For $\kk$ a field and $\Zpoints \subset \kk^n$ a finite subset,
    the harmonic space
    $\II(\Zpoints)^\perp \subseteq \hat{\Div}$ has $\kk$-basis 
    $$
    \{ \exp(\zz \cdot \yy) \,:\, \zz \in \Zpoints \}.
    $$
\end{lemma}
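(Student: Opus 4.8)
The plan is to verify the claim in two short moves: first that each exponential $\exp(\zz\cdot\yy)$ actually lies in $\II(\Zpoints)^\perp$, and second that these $\#\Zpoints$ elements are linearly independent in $\hat\Div$; by the dimension count \eqref{eq:inhomogeneous-harmonics-still-have-correct-dimension}, which gives $\dim_\kk \II(\Zpoints)^\perp = \#\Zpoints$, independence together with membership forces them to be a basis. So the only real content is membership and independence, and neither is hard.

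For membership, the key computation is how a polynomial $f(\xx)\in S$ acts on an exponential via $\odot$. I would first record the single-variable identity $x_i \odot \exp(z\cdot y_i^{(1)}) = z\cdot\exp(z\cdot y_i^{(1)})$, which is immediate from $x_i\odot y_i^{(d)} = y_i^{(d-1)}$ and reindexing the sum. Combining this with the factorization \eqref{eq:factoring-exponentials} and the fact that each $x_i$ acts as a derivation, one gets $x_i \odot \exp(\zz\cdot\yy) = z_i\,\exp(\zz\cdot\yy)$, hence by iterating, for any monomial $\xx^\aa$ one has $\xx^\aa \odot \exp(\zz\cdot\yy) = \zz^\aa\,\exp(\zz\cdot\yy)$, and by $\kk$-linearity $f(\xx)\odot\exp(\zz\cdot\yy) = f(\zz)\cdot\exp(\zz\cdot\yy)$ for every $f\in S$. (One should note the $\odot$-action and pairing extend to $\hat\Div$ as set up in Section~\ref{sec:completions-and-exponentials}, and that $\exp(\zz\cdot\yy)$ has constant term $1$, so $\langle f,\exp(\zz\cdot\yy)\rangle = f(\zz)$.) In particular, if $f\in\II(\Zpoints)$ and $\zz\in\Zpoints$ then $f(\zz)=0$, so $f\odot\exp(\zz\cdot\yy)=0$; thus $\exp(\zz\cdot\yy)\in\II(\Zpoints)^\perp$.

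For linear independence, suppose $\sum_{\zz\in\Zpoints} c_\zz\exp(\zz\cdot\yy)=0$ in $\hat\Div$. Using multivariate Lagrange interpolation (invoked already in the excerpt, e.g.\ in the proof of Lemma~\ref{lem:generating-vanishing-ideals}), for each fixed $\zz_0\in\Zpoints$ choose $g\in S$ with $g(\zz_0)=1$ and $g(\zz)=0$ for $\zz\in\Zpoints\setminus\{\zz_0\}$. Pairing the relation with $g$ and using $\langle g,\exp(\zz\cdot\yy)\rangle = g(\zz)$ from the membership computation yields $c_{\zz_0}=0$. Since $\zz_0$ was arbitrary, all coefficients vanish.

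The main obstacle — really the only place to be careful — is making sure the formal manipulations in the completed ring $\hat\Div$ are legitimate: that $\odot$, the pairing $\langle-,-\rangle$, and the "constant term" functional behave as expected on the infinite sum defining $\exp(\zz\cdot\yy)$, and that the beginner's multinomial theorem \eqref{eq:beginner-multinomial-thm} and the factorization \eqref{eq:factoring-exponentials} hold over an arbitrary field (not just in characteristic zero, where one could write $y^d/d!$). All of this is already laid out in Section~\ref{sec:completions-and-exponentials}, so the write-up just needs to cite it rather than reprove it; the proof itself is then two paragraphs.
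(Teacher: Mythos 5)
Your proof is correct, and it differs from the paper's in a way worth noting. For membership, you establish the eigen-identity $f(\xx)\odot\exp(\zz\cdot\yy)=f(\zz)\cdot\exp(\zz\cdot\yy)$ for \emph{every} $f\in S$ (equivalently $\langle f,\exp(\zz\cdot\yy)\rangle=f(\zz)$, i.e.\ the exponentials are evaluation functionals), which handles all of $\II(\Zpoints)$ at once; the paper instead invokes the explicit generating set of $\II(\Zpoints)$ from Lemma~\ref{lem:generating-vanishing-ideals} and kills each generator through its single factor $(x_{p(\zz)}-z_{p(\zz)})$. Both reduce to the same univariate computation $x_i\odot\exp(z\,y_i^{(1)})=z\exp(z\,y_i^{(1)})$, so this is only a mild variant, though your formulation is the more flexible one. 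The genuine divergence is in the linear independence step: you pair a putative relation $\sum_\zz c_\zz\exp(\zz\cdot\yy)=0$ against multivariate Lagrange interpolation polynomials $g_{\zz_0}$ with $g_{\zz_0}(\zz)=\delta_{\zz,\zz_0}$, and the evaluation identity immediately gives $c_{\zz_0}=0$; the paper instead proves the $n=1$ case by a Vandermonde determinant on the coefficients of $1,y^{(1)},\dots,y^{(N-1)}$ and then handles $n\ge 2$ by viewing the exponentials inside $\hat{\Div}_\kk(y_1)\otimes\cdots\otimes\hat{\Div}_\kk(y_n)$ and using independence of the full grid indexed by $p_1(\Zpoints)\times\cdots\times p_n(\Zpoints)$. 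Your duality argument is uniform in $n$, works verbatim over any field, and avoids the tensor-product detour; what the paper's route buys is the slightly stronger grid statement and a template that it essentially reuses (with truncations and a Vandermonde in the first coordinate) in Lemma~\ref{lem:projected-expontential-basis}, where the interpolation trick would not apply as directly because the pairing is only against $S_{\leq N}$. Your appeal to \eqref{eq:inhomogeneous-harmonics-still-have-correct-dimension} to convert independence plus membership into a basis matches the paper, and your cautions about working in $\hat{\Div}$ (Leibniz rule on products, constant term of $\exp$ equal to $1$, \eqref{eq:beginner-multinomial-thm} and \eqref{eq:factoring-exponentials} over arbitrary $\kk$) are exactly the right points and are covered by Section~\ref{sec:completions-and-exponentials}.
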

\begin{proof}
Note that \eqref{eq:inhomogeneous-harmonics-still-have-correct-dimension} shows that the set in the lemma has the correct size 
$$
\#\Zpoints =\dim_\kk S/\II(\Zpoints)=\dim_\kk \II(\Zpoints)^\perp.
$$
Hence it suffices to check that its elements lie in $\II(\Zpoints)^\perp$, and are $\kk$-linearly independent.

To check that $\exp(\zz \cdot \yy) \in \II(\Zpoints)^\perp$ 
for each $\zz=(z_1,\ldots,z_n) \in \Zpoints$,  
by Lemma~\ref{lem:generating-vanishing-ideals}
it suffices to check that for any $p: \Zpoints \rightarrow [n]$
one has 
$f_p(\xx) \odot \exp(\zz \cdot \yy)=0$ 
where 
$$
f_p(\xx)=\prod_{\zz' \, \in \, \Zpoints} (x_{p(\zz')} - \zz'_{p(\zz')})\\
=\left( \prod_{\zz' \, \in \, \Zpoints \setminus \{\zz\}} 
(x_{p(\zz')} - z'_{p(\zz)}) \right) \cdot (x_{p(\zz)} - z_{p(\zz)}).
$$
Thus it suffices to check 
$(x_{p(\zz)} - z_{p(\zz)}) \odot \exp(\zz \cdot \yy) = 0$.  Letting $i:=p(\zz)$, one has
\begin{align*}
(x_{p(\zz)} - z_{p(\zz)}) \odot \exp(\zz \cdot \yy) 
&=(x_i-z_i) \odot \left(  \exp(z_1 y^{(1)}_1) \cdots \exp(z_n y^{(1)}_n)\right) \\
&=\left(  \exp(z_1 y^{(1)}_1) \cdots \widehat{\exp(z_i y^{(1)}_i)} \cdots \exp(z_n y^{(1)}_n)\right) \cdot
(x_i-z_i) \odot \exp(z_i y^{(i)}_1).
\end{align*}
Thus in the end it suffices to check 
$(x_i-z) \odot \exp(z y^{(1)}_i)
=0$ for $z \in \kk$, which is not hard:
$$
x_i \odot \exp(z y^{(1)}_i)
=\sum_{d=0}^\infty x_i \odot \left( z^d y^{(d)}_i \right)
=\sum_{d=1}^\infty z^d y^{(d-1)}_i
=z \sum_{d=1}^\infty z^{d-1} y^{(d-1)}_i
=z \exp(z y^{(1)}_i).
$$

We next check that 
$\{ \exp(\zz \cdot \yy) : \zz \in \Zpoints \}$ 
are $\kk$-linearly independent within $\hat{\Div}$.
We first deal with the $n=1$ case.
When $n = 1$, if we let $N:=\#\Zpoints$, so that $\Zpoints = \{z_1, \ldots,z_N\} \subset \kk$, 
we note that inside $\hat{\Div}=\hat{\Div}_\kk(y)$, one has
\begin{align*}
    \exp(z_1 \cdot y)&=1 + z_1 y^{(1)} +  z_1^2 y^{(2)} + \cdots +  z_1^{N-1} y^{(N-1)} + \cdots \\
    \exp(z_2 \cdot y)&=1 + z_2 y^{(1)} +  z_2^2 y^{(2)} + \cdots +  z_2^{N-1} y^{(N-1)} + \cdots \\
    \vdots \qquad & \\
     \exp(z_N \cdot y)&=1 + z_N y^{(1)} +  z_N^2 y^{(2)} + \cdots +  z_N^{N-1} y^{(N-1)} + \cdots 
\end{align*}
These exponentials are $\kk$-linearly independent because $\{1,y^{(1)},y^{(2)},\ldots,y^{(N-1)}\}$ are
$\kk$-linearly independent and the Vandermonde matrix $(z_i^{j-1})_{i,j=1,2,\ldots,N}$ is invertible. 

When $n \geq 2$, it is helpful to note that
the factorization \eqref{eq:factoring-exponentials}
lets one identify 
 $\exp(\zz \cdot \xx)$ with the element 
 $\exp( z_1 y^{(1)}_1 ) \otimes \cdots \otimes \exp( z_n y^{(1)}_n )$ lying within the proper\footnote{Even in characteristic zero, one has $\kk[[y_1]] \otimes \cdots \otimes \kk[[y_n]] \subsetneq \kk[\yy]]$, without forming a {\it completed tensor product}.} subspace
$$
\hat{\Div}_\kk(y_1) \otimes \cdots \otimes \hat{\Div}_\kk(y_n) \subsetneq \hat{\Div}_{\kk}(\yy).
$$
Letting $p_i: \kk^n \twoheadrightarrow \kk$ for $i=1,2,\ldots,n$ denote
the coordinate projections, the $n=1$ case proven
above shows each subset 
$\{ \exp( z_i y^{(1)}_i) \}_{z \in p_i(\Zpoints)}
\subset \hat{\Div}_\kk(y_i)$ is $\kk$-linearly independent.  Therefore the set
\begin{equation}
\label{eq:full-grid-set}
\{ \exp(z_1 y^{(1)}_1) \otimes \cdots \otimes 
\exp( z_n y^{(1)}_n) : \zz=(z_1,\ldots,z_n) \in p_1(\Zpoints)\times \cdots \times p_n(\Zpoints) \}
\end{equation}
is $\kk$-linearly independent inside $\hat{\Div}_\kk(y_1) \otimes \cdots \otimes \hat{\Div}_\kk(y_n)$.  Since the set in \eqref{eq:full-grid-set} contains
$$
\{ \exp(z_1 y^{(1)}_1) \otimes \cdots \otimes 
\exp( z_n y^{(1)}_n): \zz=(z_1,\ldots,z_n) \in \Zpoints\}
$$
as a subset, the latter must therefore
also be $\kk$-linearly independent subset, as desired.
\end{proof}

Recall the familiar formula
\begin{equation}
\label{eq:exponential-law}
    \exp(\zz \cdot \yy) \cdot \exp(\zz' \cdot \yy) = \exp((\zz + \zz') \cdot \yy)
\end{equation}
which holds in the power series ring $\hat{\Div}=\kk[[\yy]]$;  it is an easy exercise using \eqref{eq:beginner-binomial-thm} to check that it remains valid in $\hat{\Div}=\hat{\Div}_\kk(\yy)$
for any field $\kk$.  Combining \eqref{eq:exponential-law}
 with Lemma~\ref{lem:inhomogeneous-exponential-divided} immediately gives
 the following easier and more precise 
 {\it inhomogeneous ideal} version 
of Theorem~\ref{thm:minkowski-closure}.

\begin{prop}
\label{prop:inhomogeneous-minkowski-addition}
For any field $\kk$ and finite subsets $\Zpoints, \Zpoints' \subseteq \kk^n$, one has inside $\hat{\Div}$ that
\begin{equation}
    \label{eq:minkowski-inhomogeneous-product}
    \II(\Zpoints)^\perp \cdot \II(\Zpoints')^\perp = \II(\Zpoints + \Zpoints')^\perp.
\end{equation}
\end{prop}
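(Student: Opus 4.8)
The plan is to read both sides off the exponential basis of Lemma~\ref{lem:inhomogeneous-exponential-divided}, so that essentially no computation remains. First I would recall that for $\kk$-subspaces $V, W$ of the ring $\hat{\Div}$, the product $V \cdot W$ denotes the $\kk$-span of $\{fg : f \in V,\ g \in W\}$, and that by bilinearity of multiplication this equals the $\kk$-span of $\{e_i e'_j\}$ as $e_i$ and $e'_j$ run over any chosen $\kk$-bases of $V$ and $W$. Hence it is enough to compute with the exponential bases.

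Next I would apply Lemma~\ref{lem:inhomogeneous-exponential-divided} to $\Zpoints$ and to $\Zpoints'$, giving $\II(\Zpoints)^\perp = \spn_\kk\{\exp(\zz\cdot\yy) : \zz \in \Zpoints\}$ and likewise for $\Zpoints'$, so that
$$
\II(\Zpoints)^\perp \cdot \II(\Zpoints')^\perp
= \spn_\kk\bigl\{ \exp(\zz\cdot\yy)\cdot\exp(\zz'\cdot\yy) : \zz\in\Zpoints,\ \zz'\in\Zpoints' \bigr\}.
$$
Then I would invoke the exponential law \eqref{eq:exponential-law}, valid in $\hat{\Div}_\kk(\yy)$ over any field $\kk$, to rewrite each $\exp(\zz\cdot\yy)\cdot\exp(\zz'\cdot\yy)$ as $\exp((\zz+\zz')\cdot\yy)$; the span on the right then becomes $\spn_\kk\{\exp(\mathbf{w}\cdot\yy) : \mathbf{w} \in \Zpoints+\Zpoints'\}$. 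Finally, since $\Zpoints+\Zpoints'$ is again a finite subset of $\kk^n$, a third application of Lemma~\ref{lem:inhomogeneous-exponential-divided}, this time to the locus $\Zpoints+\Zpoints'$, identifies that span with $\II(\Zpoints+\Zpoints')^\perp$, which is the asserted equality.

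The one point to treat with a sentence of care is that the Minkowski sum $\Zpoints+\Zpoints'$ may have strictly fewer than $|\Zpoints|\cdot|\Zpoints'|$ points, owing to coincidences $\zz_1+\zz_1' = \zz_2+\zz_2'$; I would note that this is harmless, since the statement only claims an equality of spans and Lemma~\ref{lem:inhomogeneous-exponential-divided} applies to $\Zpoints+\Zpoints'$ irrespective of its cardinality. I do not expect a genuine obstacle here: all the real content has already been absorbed into Lemma~\ref{lem:inhomogeneous-exponential-divided} — whose proof handles the characteristic-free subtleties through divided-power exponentials and a Vandermonde argument — and into the verification that \eqref{eq:exponential-law} survives passage to the completed divided power algebra, both of which are in hand by the time this proposition is reached.
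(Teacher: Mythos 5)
Your proposal is correct and is essentially the paper's own argument: the paper also derives \eqref{eq:minkowski-inhomogeneous-product} immediately by combining the exponential basis of Lemma~\ref{lem:inhomogeneous-exponential-divided} (applied to $\Zpoints$, $\Zpoints'$, and $\Zpoints+\Zpoints'$) with the exponential law \eqref{eq:exponential-law} in $\hat{\Div}_\kk(\yy)$. Your extra remark about possible coincidences in the Minkowski sum is a harmless clarification, not a divergence.
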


We have all of the tools necessary to prove the Minkowski Closure Theorem. Let us recall its statement.

\vskip.1in
\noindent
{\bf Theorem }~\ref{thm:minkowski-closure}
{\it
For any pair of finite point loci $\Zpoints, \Zpoints'$ in $\kk^n$ over any field $\kk$, one has 
$$
V_\Zpoints \cdot V_{\Zpoints'} 
    \subseteq V_{\Zpoints + \Zpoints'}.
$$
}

\begin{proof}
     Recall that $V_\Zpoints = (\gr \, \II(\Zpoints))^\perp$ and similarly for $V_{\Zpoints'}$ and $V_{\Zpoints + \Zpoints'}$. Let $g \in V_\Zpoints$ and $g' \in V_\Zpoints$ be homogeneous elements. We show that $g \cdot g' \in V_{\Zpoints + \Zpoints'}$ as follows.
    
    By Corollary~\ref{cor:beta-perp-identification} and the fact that $g, g'$ are homogeneous, there exist elements $\hat{g} \in \II(\Zpoints)^\perp$ and $\hat{g}' \in \II(\Zpoints')^\perp$ so that $\beta(\hat{g}) = g$ and $\beta(\hat{g}') = g'$. Proposition~\ref{prop:inhomogeneous-minkowski-addition} implies that $\hat{g} \cdot \hat{g}' \in \II(\Zpoints + \Zpoints')^\perp$. We have
    \begin{equation}
        g \cdot g' = \beta(\hat{g}) \cdot \beta(\hat{g}') = \beta(\hat{g} \cdot \hat{g}') \in V_{\Zpoints + \Zpoints'}
    \end{equation}
    where the membership $\beta(\hat{g} \cdot \hat{g}') \in V_{\Zpoints + \Zpoints'}$ follows from Corollary~\ref{cor:beta-perp-identification}.
\end{proof}

We conclude this subsection with a few remarks.
The first are
some interesting examples of Theorem~\ref{thm:minkowski-closure}, including two small examples with point loci in $\RR^2$. Another example discusses the relation
between Theorem~\ref{thm:minkowski-closure} for point loci in $\kk^1$, and the {\it additive combinatorics of sumsets} and the {\it Cauchy-Davenport Theorem}.  In the next subsection, we give another proof of Theorem~\ref{thm:minkowski-closure} which connects to deformation geometry
and remark on a similar-sounding result of
F. Gundlach.

\begin{example}
One can easily have a {\it proper} inclusion $V_\Zpoints \cdot V_{\Zpoints'} \subsetneq V_{\Zpoints+\Zpoints'}$. E.g., inside $\RR^2$ take 
\begin{align*}
\Zpoints & = \{(0,0),(1,0),(0,1)\},\\
\Zpoints' &= \{(0,0),(1,0),(1,1)\},\\
\Zpoints + \Zpoints' &= \{(0,0),(1,0),(0,1),(1,1),(2,0),(2,1),(1,2)\}.
\end{align*}
Although $\Zpoints \neq \Zpoints'$, the two loci $\Zpoints, \Zpoints'$ are equivalent under 
$\Aff(\ZZ^2)$, with 
\begin{align*}
\gr \, \II(\Zpoints) = \gr \, \II(\Zpoints') 
 &= (x_1^2,x_1x_2,x_2^2) \subseteq \RR[x_1,x_2],\\
V_\Zpoints = V_{\Zpoints'} &= \spn_\RR \{1,y_1,y_2\} \subset \Div_\RR(y_1,y_2)=\RR[y_1,y_2],\\
V_\Zpoints \cdot V_{\Zpoints'} &= \spn_\RR \{1,y_1,y_2,y_1^2,y_1 y_2,y_2^2\}.
\end{align*}
On the other hand, $V_{\Zpoints+\Zpoints'}$ has dimension $7=\#(\Zpoints+\Zpoints')$,
and one can compute that
\begin{align*}
\gr \, \II(\Zpoints + \Zpoints') &= (x_1^3,x_2^3,x_1x_2^2),\\
V_{\Zpoints + \Zpoints'} &= \spn_\RR \{1,y_1,y_2,y_1^2,y_1 y_2,y_2^2,\,\, {\color{red} y_1^2 y_2} \}
\end{align*}
so that $V_{\Zpoints + \Zpoints'}$ properly contains the
$6$-dimensional space $V_\Zpoints \cdot V_{\Zpoints'}$ in
this case.
\end{example}

\begin{example}
\label{ex:second-interesting-triangle-discussion}
A somewhat nontrivial instance of Theorem~\ref{thm:minkowski-closure}
occurs when 
$$
\Zpoints=\Zpoints'=\{(0,0),(1,1),(2,1),(1,2)\}=\ZZ^2 \cap P,
$$
where $P$ is the lattice triangle discussed
in Remark~\ref{remark-first-interesting-triangle-discussion}.  Here one
can check that
$$
\Zpoints+\Zpoints'=\{(0,0),(1,1),(2,1),(1,2),(2,2),(3,2),(2,3),(4,2),(3,3),(2,4)\}
=\ZZ^2 \cap 2P,
$$
as shown below:
\begin{center}
    \begin{tikzpicture}[scale = 0.4]
        \draw[fill = black!10] (0,0) -- (1,2) -- (2,1) --  (0,0);
        \draw[step=1.0,black,thin] (-0.5,-0.5) grid (2.5,2.5);
        \node at (0,0) {$\bullet$};
        \node at (1,1) {$\bullet$};
        \node at (1,2) {$\bullet$};
        \node at (2,1) {$\bullet$};
    \end{tikzpicture}
    \qquad 
       \begin{tikzpicture}[scale = 0.4]
        \draw[fill = black!10] (0,0) -- (2,4) -- (4,2) --  (0,0);
        \draw[step=1.0,black,thin] (-0.5,-0.5) grid (4.5,4.5);
        \node at (0,0) {$\bullet$};
        \node at (1,1) {$\bullet$};
        \node at (1,2) {$\bullet$};
        \node at (2,1) {$\bullet$};
        \node at (2,2) {$\bullet$};
        \node at (3,2) {$\bullet$};
        \node at (2,3) {$\bullet$};
        \node at (4,2) {$\bullet$};
        \node at (3,3) {$\bullet$};
        \node at (2,4) {$\bullet$};
    \end{tikzpicture}
\end{center}
Remark~\ref{remark-first-interesting-triangle-discussion} already mentioned one can calculate by hand or {\tt Macaulay2} that
$$
V_\Zpoints(=V_{\Zpoints'})=\spn_\RR\{1,\quad y_1,y_2,\quad y_1^2+y_1y_2+y_2^2\}.
$$
with spacing to indicate segregation of $\RR$-basis elements by degree.
Similarly one can calculate that
\begin{multline*}
V_{\Zpoints + \Zpoints'} 
= \spn_\RR  \{1,\quad y_1,y_2,\quad y_1^2, y_1 y_2, y_2^2, \quad 
        y_1^3, y_1^2 y_2+y_1y_2^2, y_2^3, \quad y_1^4 + 2y_1^3 y_2 + 3y_1^2 y_2^2 + 2y_1 y_2^3 + y_2^4 \}.
\end{multline*}
    It can be checked that multiplying any two basis elements of $V_\Zpoints$ and $V_{\Zpoints'}$ results in an element of $V_{\Zpoints + \Zpoints'}$.  In this example, one has 
    equality $V_{\Zpoints} \cdot V_{\Zpoints'} = V_{\Zpoints + \Zpoints'}$.
\end{example}

\begin{example}
Let us see what Theorem~\ref{thm:minkowski-closure} says in a $1$-dimensional space $\kk^1$,
about the cardinalities $r=\#\Zpoints,r'=\#\Zpoints'$ of two finite loci $\Zpoints, \Zpoints'$, versus the cardinality $r''$ of their {\it sumset} $\Zpoints+\Zpoints'$.  As in the Example from the Introduction, one can easily check for finite $\Zpoints \subset \kk^1$ that one
has
\begin{align*}
    \II(\Zpoints)&=\left( \prod_{z \in \Zpoints} (x-z) \right) \subset S=\kk[x], \quad
    \gr\,\II(\Zpoints)= ( x^r ) \subset S=\kk[x],\\
    V_\Zpoints&=\spn_\kk \{1,y^{(1)},y^{(2)},\ldots,y^{(r-1)}\} \subset \Div=\Div_\kk(y).
\end{align*}
From this one can calculate
\begin{align*}
V_\Zpoints \cdot V_{\Zpoints'}
&= \spn_\kk \{1,y^{(1)},y^{(2)},\ldots,y^{(r-1)}\} \cdot 
\spn_\kk \{1,y^{(1)},y^{(2)},\ldots,y^{(r'-1)}\} \\
&= \spn_\kk \left\{ y^{(k)} \cdot y^{(k')} = \binom{k+k'}{k,k'} y^{(k+k')}  : 0 \leq k \leq r-1 \text{ and }0 \leq k' \leq r'-1 \right\}.
\end{align*}
Consequently, the assertion $V_\Zpoints \cdot V_{\Zpoints'} \subseteq V_{\Zpoints+\Zpoints'}$ from Theorem~\ref{thm:minkowski-closure} holds if and only if
\begin{align*}
r''-1 
 &\geq \max\left\{k+k': \exists \,\, 0 \leq k\leq r-1 \text{ and } 0 \leq k' \leq r'-1\text{ with }\binom{k+k'}{k,k'} \in \kk^\times \right\}, \text{ or equivalently,}\\
r''
 &\geq \underbrace{ \min\left\{n: (u+v)^n=\sum_{k+k'=n} \binom{n}{k,k'} u^k v^{k'} 
\text{ lies in the ideal } (u^r,v^{r'}) \subset \kk[u,v] \right\}}_{\text{ call this function }\beta_\kk(r,r')}.
\end{align*}
In other words, Theorem~\ref{thm:minkowski-closure} says that for $\Zpoints, \Zpoints' \subset \kk^1$, one has 
\begin{equation}
\label{eq:Cauchy-Davenport-like-bound}
\#(\Zpoints+\Zpoints')
\geq \beta_\kk( \#\Zpoints, \#\Zpoints').
\end{equation}
When $\kk$ has characteristic zero, one has $\beta_\kk(r,r')=r+r'-1$, and the sumset lower bound \eqref{eq:Cauchy-Davenport-like-bound} is not hard to show directly;  it is also easily seen to be sharp. When $\kk=\FF_{p^d}$ is a finite field, this lower bound is a result of Eliahou and Kervaire \cite[Thm. 2.1]{eliahou1998sumsets}, generalizing both the case $d=1$ for $\kk=\FF_p$ known as the {\it Cauchy-Davenport Theorem}, as well as the case $p=2$ for $\kk=\FF_{2^d}$ due to work of Yuzvinsky \cite{Yuzvinsky} on quadratic forms.
Eliahou and Kervaire show \cite[Thm. 2.2]{EliahouKervaire} that this lower bound \eqref{eq:Cauchy-Davenport-like-bound} is also sharp for $\kk=\FF_{p^d}$. Interestingly, their proof method for~\eqref{eq:Cauchy-Davenport-like-bound} uses associated graded ideals $\gr\, I$ and is close in spirit to our results.
\end{example}

\subsection{An alternative proof of Theorem~\ref{thm:minkowski-closure}} 
The proof of Theorem~\ref{thm:minkowski-closure} given above was concise, but does not directly relate to the point-orbit geometry of linearly deforming a locus $\Zpoints \subseteq \kk^n$ to the origin. We describe a method for proving Theorem~\ref{thm:minkowski-closure} which makes this geometry apparent by introducing a parameter $\epsilon$ to situate the locus $\Zpoints$ in a flat family over $\AA^1_\kk = \Spec(\kk[\epsilon])$ as follows.

Let $R = \kk[\epsilon]$ be the univariate polynomial ring over $\kk$ and consider the divided power algebra $\Div := \Div_R(\yy)$ with coefficients in $R$ and its completion $\hat{\Div} := \hat{\Div}_R(\yy)$. If $S := R[\xx]$, we have an $S$-module structure $\odot: S \times \hat{\Div} \to \hat{\Div}$ and an $R$-bilinear pairing $\langle -, - \rangle: S \times \hat{\Div} \to R$ as before in which $\langle f(\xx), g(\yy) \rangle$ extracts the constant term of $f \odot g$.
For $\Zpoints \subseteq R^n$, we again define the {\it vanishing ideal} in $S=R[\xx]$, and its {\it annihilator/perp} $R$-submodule of $\hat{\Div}$:
\begin{align*}
    \II_R(\Zpoints) &:= \{ f(\xx) \in S=R[\xx] \,:\, f(\zz) = 0 \text{ for all $\zz \in \Zpoints$} \} \subseteq R[\xx],\\
   \II_R(\Zpoints)^\perp &:= \{ g \in \hat{\Div} =\hat{\Div}_R(\yy) \,:\, \langle f, g \rangle = 0 \text{ for all $f \in \II_R(\Zpoints)$} \} \subseteq \hat{\Div}, 
\end{align*}
Here we emphasize the dependence on the ring $R$ via the subscript in the notation. 

Now consider for $\kk$ a field and finite locus $\Zpoints \subseteq \kk^n$, the ``rescaled locus" $\epsilon \Zpoints \subseteq R^n$ for $R=\kk[\epsilon]$, along with its vanishing ideal and annihilator/perp $R$-module
\begin{align*}
\epsilon \Zpoints := 
\{ \epsilon \zz \,:\, \zz \in \Zpoints \} \subseteq R^n,\quad
\II_R(\epsilon \Zpoints) \subset R[\xx],\quad 
\II_R(\epsilon \Zpoints)^\perp \subset \hat{\Div}_{R}(\yy).
\end{align*}
 Although the ideal $\II_{R}(\epsilon \Zpoints)$ annihilates $\exp(\epsilon \zz \cdot \yy)$ under the $\odot$-action for $\zz \in \Zpoints$, the $R$-module $\II_{R}(\epsilon \Zpoints)^\perp$ contains elements which are not in the $R$-span of $\{ \exp(\epsilon \zz \cdot \yy) \,:\, \zz \in \Zpoints \}$. In contrast with Lemma~\ref{lem:inhomogeneous-exponential-divided}, to obtain the full $R$-module $\II_{R}(\epsilon \Zpoints)^\perp$ we must {\em saturate} with respect to $\epsilon$ as follows.

\begin{lemma}
    \label{lem:divided-harmonic-space}
     The above $R$-submodule $\II_{{R}} (\epsilon \Zpoints)^\perp \subseteq \hat{\Div}_{R}(\yy)$ has this description:
    \[
    \II_{R}(\epsilon \Zpoints)^\perp = \{ f \in \hat{\Div}_{R}(\yy) \,:\, \epsilon^M \cdot f \in \mathrm{span}_{R} \{ \exp(\epsilon \zz \cdot \yy) \,:\, \zz \in \Zpoints \} \text{ for some $M \geq 0$} \}.
    \]
\end{lemma}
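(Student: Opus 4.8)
The plan is to prove the two containments separately. Write $M := \mathrm{span}_{R}\{\exp(\epsilon \zz \cdot \yy) : \zz \in \Zpoints\}$ and let $M^{\mathrm{sat}} := \{f \in \hat{\Div}_{R}(\yy) : \epsilon^M f \in M \text{ for some } M \geq 0\}$ denote the $\epsilon$-saturation; we must show $\II_{R}(\epsilon\Zpoints)^\perp = M^{\mathrm{sat}}$.

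For the inclusion $\supseteq$: first observe that each $\exp(\epsilon\zz\cdot\yy)$ lies in $\II_R(\epsilon\Zpoints)^\perp$, by exactly the same computation as in the proof of Lemma~\ref{lem:inhomogeneous-exponential-divided} — the generators $f_p(\xx)$ of $\II_R(\epsilon\Zpoints)$ from Lemma~\ref{lem:generating-vanishing-ideals} (which holds verbatim over the domain $R = \kk[\epsilon]$, since $\kk(\epsilon)$ is its fraction field and clearing denominators is harmless) annihilate $\exp(\epsilon\zz\cdot\yy)$ because $(x_i - \epsilon z_i) \odot \exp(\epsilon z_i y_i^{(1)}) = 0$. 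Hence $M \subseteq \II_R(\epsilon\Zpoints)^\perp$. Now if $f \in M^{\mathrm{sat}}$, so $\epsilon^M f \in M \subseteq \II_R(\epsilon\Zpoints)^\perp$, then for any $g(\xx) \in \II_R(\epsilon\Zpoints)$ we have $\epsilon^M \langle g, f\rangle = \langle g, \epsilon^M f\rangle = 0$ in $R = \kk[\epsilon]$; since $\kk[\epsilon]$ is a domain and $\epsilon^M \neq 0$, this forces $\langle g, f\rangle = 0$, so $f \in \II_R(\epsilon\Zpoints)^\perp$.

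For the harder inclusion $\subseteq$: let $f \in \II_R(\epsilon\Zpoints)^\perp \subseteq \hat{\Div}_R(\yy)$. The idea is to pass to the fraction field $K := \kk(\epsilon)$. Over $K$, the locus $\epsilon\Zpoints$ is a genuine finite subset of $K^n$, so Lemma~\ref{lem:inhomogeneous-exponential-divided} applies: $\II_K(\epsilon\Zpoints)^\perp \subseteq \hat{\Div}_K(\yy)$ has $K$-basis $\{\exp(\epsilon\zz\cdot\yy) : \zz \in \Zpoints\}$. One checks that $\II_R(\epsilon\Zpoints)^\perp \subseteq \II_K(\epsilon\Zpoints)^\perp$ (viewing $\hat{\Div}_R(\yy) \hookrightarrow \hat{\Div}_K(\yy)$ by extension of scalars — this uses that the pairing is compatible with base change and that $\II_R(\epsilon\Zpoints)$ generates $\II_K(\epsilon\Zpoints)$ after tensoring, again via Lemma~\ref{lem:generating-vanishing-ideals}). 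Therefore we may write $f = \sum_{\zz \in \Zpoints} c_\zz(\epsilon) \exp(\epsilon\zz\cdot\yy)$ with coefficients $c_\zz(\epsilon) \in K = \kk(\epsilon)$. Choosing a common denominator $\epsilon^a h(\epsilon)$ with $h(0) \neq 0$, we get $\epsilon^a h(\epsilon) f = \sum_\zz p_\zz(\epsilon)\exp(\epsilon\zz\cdot\yy) \in M$ for polynomials $p_\zz(\epsilon) \in \kk[\epsilon]$. It then remains to remove the unit-at-$0$ factor $h(\epsilon)$: since $h(\epsilon)$ is invertible in the completion $\kk[[\epsilon]]$, but we want to stay in $\kk[\epsilon]$, I would instead argue directly that $h(\epsilon) \mid p_\zz(\epsilon)$ in $\kk[\epsilon]$ for every $\zz$ — this follows because $\epsilon^a h(\epsilon) f$ lies in $M \cap (\text{image of } \hat{\Div}_R(\yy))$, and the $\exp(\epsilon\zz\cdot\yy)$ are $K$-linearly independent, so the coefficients of $f$ in this basis are $p_\zz(\epsilon)/(\epsilon^a h(\epsilon))$ and must already lie in the localization $\kk[\epsilon]_{(\epsilon)}$ (as $f$ has $\kk[\epsilon]$-coefficients, a Vandermonde-type inversion expresses each $p_\zz/(\epsilon^a h)$ as a $\kk(\epsilon)$-combination of coefficients of $f$ with denominators involving only the nonzero differences $\epsilon(z_i - z_i')$, which are units in $\kk[\epsilon]_{(\epsilon)}$ after possibly extracting further powers of $\epsilon$). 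Consequently $\epsilon^{a'} f \in M$ for some $a' \geq 0$, i.e. $f \in M^{\mathrm{sat}}$.

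The main obstacle is this last saturation bookkeeping: making precise why the $\kk(\epsilon)$-coefficients $c_\zz(\epsilon)$ of $f$, a priori arbitrary rational functions, can only have $\epsilon$ in their denominators and not any other irreducible factor. The cleanest route is the Vandermonde inversion argument sketched above — after the coordinate change making first coordinates distinct (as in Lemma~\ref{lem:projected-expontential-basis}(i)), the change-of-basis matrix between $\{\exp(\epsilon\zz\cdot\yy)\}$ and a suitable monomial set is Vandermonde in the $\epsilon z_1$'s, with determinant a product of $\epsilon(z_1 - z_1')$, so inverting it introduces only powers of $\epsilon$ and of the (nonzero, hence unit) scalars $(z_1 - z_1')$ in the denominators; applying this matrix to the $\kk[\epsilon]$-coefficient vector of $f$ shows each $c_\zz(\epsilon) \in \kk[\epsilon][\epsilon^{-1}]$, which is exactly the saturation statement. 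I expect everything else to be routine given Lemmas~\ref{lem:generating-vanishing-ideals} and~\ref{lem:inhomogeneous-exponential-divided} and the base-change compatibility of the apolarity pairing.
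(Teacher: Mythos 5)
Your overall route is the same as the paper's: pass to the fraction field $\kk(\epsilon)$, expand $f$ in the exponential basis supplied by Lemma~\ref{lem:inhomogeneous-exponential-divided}, clear denominators, and then show that only powers of $\epsilon$ can genuinely occur in the denominators. The paper packages this last step as a strengthened saturation claim (if $h(\epsilon)\cdot f$ lies in the right-hand side, so does $f$), proved by replacing $\kk$ with $\overline{\kk}$, specializing $\epsilon \mapsto \alpha$ at each nonzero root $\alpha$ of $h$, using linear independence of $\{\exp(\alpha\zz\cdot\yy)\}$ to force $c_\zz(\alpha)=0$, and cancelling the factors $\epsilon-\alpha$ one at a time; you instead invert, over $\kk(\epsilon)$, the Vandermonde system coming from the coefficients of $y_1^{(k)}$ after arranging distinct first coordinates, and conclude directly that each $c_\zz(\epsilon)$ is a Laurent polynomial in $\epsilon$. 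These are equivalent mechanisms (the paper's detailed version of the claim also compares coefficients of $y_1^{(k)}$ and uses the same Vandermonde determinant), so I count your proof as essentially the paper's. One loose end: the coordinate change making first coordinates distinct may require an extension field $\kk'\supseteq\kk$, so your inversion a priori gives $c_\zz\in\kk'[\epsilon,\epsilon^{-1}]$; since $c_\zz\in\kk(\epsilon)$ from the start, add the easy observation that $\kk(\epsilon)\cap\kk'[\epsilon,\epsilon^{-1}]=\kk[\epsilon,\epsilon^{-1}]$, so that $\epsilon^{a'}f$ really lies in $\spn_{R}\{\exp(\epsilon\zz\cdot\yy)\}$ and not merely in its $\kk'[\epsilon]$-span.

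There is one genuine, though easily repaired, flaw in your easy inclusion: Lemma~\ref{lem:generating-vanishing-ideals} does \emph{not} hold verbatim over $R=\kk[\epsilon]$. Over a ring that is not a field, the vanishing ideal is in general strictly larger than the ideal generated by the products $\prod_{\zz}(x_{p(\zz)}-\epsilon z_{p(\zz)})$; this saturation phenomenon is exactly why the lemma you are proving is nontrivial. For instance, with $\Zpoints=\{(0,0),(1,1)\}$ the polynomial $x_1-x_2$ lies in $\II_{R}(\epsilon\Zpoints)$ but not in the product ideal, since every element of that ideal has its $\xx$-linear part divisible by $\epsilon$. Consequently, checking that the product generators annihilate the exponentials does not by itself yield $\spn_{R}\{\exp(\epsilon\zz\cdot\yy)\}\subseteq\II_{R}(\epsilon\Zpoints)^\perp$. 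The fix is one line: for \emph{any} $g\in\II_{R}(\epsilon\Zpoints)$ one has $g(\xx)\odot\exp(\epsilon\zz\cdot\yy)=g(\epsilon\zz)\cdot\exp(\epsilon\zz\cdot\yy)=0$, because $\xx^{\aa}\odot\exp(\epsilon\zz\cdot\yy)=(\epsilon\zz)^{\aa}\exp(\epsilon\zz\cdot\yy)$; alternatively, pass to $\kk(\epsilon)$ and quote Lemma~\ref{lem:inhomogeneous-exponential-divided} there. (For the containment $\II_{R}(\epsilon\Zpoints)^\perp\subseteq\II_{\kk(\epsilon)}(\epsilon\Zpoints)^\perp$ you likewise do not need any generation statement, only that every element of $\II_{\kk(\epsilon)}(\epsilon\Zpoints)$ becomes an element of $\II_{R}(\epsilon\Zpoints)$ after clearing denominators.) With these repairs your argument is complete and matches the paper's.
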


\begin{proof} (Sketch)
Naming the right side in the lemma as $W_\Zpoints$,  the inclusion 
$\II_{{R}} (\epsilon \Zpoints)^\perp \supseteq W_\Zpoints$ is not hard, and follows similarly to the proof of Lemma~\ref{lem:inhomogeneous-exponential-divided}.   To show $\II_{{R}} (\epsilon \Zpoints)^\perp \subseteq W_\Zpoints$, one strengthens 
the saturation property of $W_\Zpoints$, to show this 
\begin{quote}
{\bf Claim}: If $0 \neq h(\epsilon) \in R=\kk[\epsilon]$ and
$f \in \hat{\Div}_{R}(\yy)$ have $h(\epsilon) \cdot f \in W_\Zpoints$,
then $f \in W_\Zpoints$. 
    \end{quote}
By replacing $\kk$ with its algebraic closure, it is enough to establish this claim when $\kk$ is algebraically closed. To do this, start with a relation of the form
\begin{equation}
    \label{eq:w-witness-relation}
    \epsilon^M \cdot h(\epsilon) \cdot f = \sum_{\zz \in  \Zpoints} c_\zz(\epsilon) \cdot \exp(\epsilon \zz \cdot \yy)
\end{equation}
for some $c_\zz(\epsilon) \in \kk[\epsilon]$. If $\alpha \in \kk^\times$ is a nonzero root of $h(\epsilon)$, substituting $\epsilon \to \alpha$ and using the $\kk$-linear independence of $\{ \exp(\alpha \zz \cdot \yy) \,:\, \zz \in \Zpoints \}$ we get $c_\zz(\alpha) = 0$ for all $\zz \in \Zpoints$, so we may cancel a factor of $(\epsilon - \alpha)$ from both sides of \eqref{eq:w-witness-relation}. We reduce to the case where $h(\epsilon) = \epsilon^r$ for some $r$ and the claim follows easily.

Given the claim, one extends scalars in $\hat{\Div}_R(\yy)$ to $\hat{\Div}_{\kk(\epsilon)}(\yy)$ where $\kk(\epsilon)$ is the fraction field of $R=\kk[\epsilon]$. Lemma~\ref{lem:inhomogeneous-exponential-divided} applies to give the $\kk(\epsilon)$-basis $\{\exp(\epsilon \zz \cdot \yy) \,:\, \zz \in \Zpoints\}$ of $\II_{\kk(\epsilon)}(\epsilon \Zpoints)^\perp$. The inclusion $\II_R(\epsilon \Zpoints)^\perp \subseteq W_\Zpoints$ (and hence Lemma~\ref{lem:divided-harmonic-space}) is deduced from the containment $\II_R(\epsilon \Zpoints)^\perp \subseteq \II_{\kk(\epsilon)}(\epsilon \Zpoints)^\perp$,
clearing denominators, and applying the claim.
\end{proof}

Note that in $\hat{\Div}_R(\yy)$, one still has
$$
\exp( \epsilon \zz \cdot \yy)\cdot \exp( \epsilon \zz \cdot \yy)=\exp( \epsilon (\zz+\zz') \cdot \yy),
$$
and hence Lemma~\ref{lem:divided-harmonic-space} immediately implies the following analogue of 
 Proposition~\ref{prop:inhomogeneous-minkowski-addition}.

\begin{cor}
     \label{cor:divided-inhomogeneous-minkowski-addition}
     For any two finite loci $\Zpoints, \Zpoints' \subseteq \kk^n$, as $R$-submodules of $\hat{\Div}_{R}(\yy)$ one has 
     \begin{equation}
    \II_{R}(\epsilon \Zpoints)^\perp \cdot \II_{R}(\epsilon \Zpoints')^\perp \subseteq \II_{R}(\epsilon(\Zpoints + \Zpoints')).
\end{equation}
 \end{cor}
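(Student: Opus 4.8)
The plan is to deduce Corollary~\ref{cor:divided-inhomogeneous-minkowski-addition} directly from the saturation description of harmonic spaces in Lemma~\ref{lem:divided-harmonic-space}, in exactly the way the field-coefficient statement Proposition~\ref{prop:inhomogeneous-minkowski-addition} followed from Lemma~\ref{lem:inhomogeneous-exponential-divided} together with the exponential law \eqref{eq:exponential-law}. Since $\II_R(\epsilon\Zpoints)^\perp \cdot \II_R(\epsilon\Zpoints')^\perp$ is, by definition, the $R$-submodule of $\hat{\Div}_R(\yy)$ spanned by all products $f\cdot f'$ with $f \in \II_R(\epsilon\Zpoints)^\perp$ and $f' \in \II_R(\epsilon\Zpoints')^\perp$, it suffices to show every such product lies in $\II_R(\epsilon(\Zpoints+\Zpoints'))^\perp$.

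So first I would fix $f \in \II_R(\epsilon\Zpoints)^\perp$ and $f' \in \II_R(\epsilon\Zpoints')^\perp$. By Lemma~\ref{lem:divided-harmonic-space} there are integers $M, M' \geq 0$ and coefficients $c_\zz(\epsilon) \in R = \kk[\epsilon]$, $c'_{\zz'}(\epsilon) \in R$ with
\[
\epsilon^M \cdot f = \sum_{\zz \in \Zpoints} c_\zz(\epsilon)\cdot \exp(\epsilon\zz\cdot\yy),
\qquad
\epsilon^{M'} \cdot f' = \sum_{\zz' \in \Zpoints'} c'_{\zz'}(\epsilon)\cdot \exp(\epsilon\zz'\cdot\yy).
\]
Multiplying these two identities inside $\hat{\Div}_R(\yy)$, using $R$-bilinearity of the multiplication to distribute over the finite sums, and invoking the exponential law $\exp(\epsilon\zz\cdot\yy)\cdot\exp(\epsilon\zz'\cdot\yy) = \exp(\epsilon(\zz+\zz')\cdot\yy)$ (which, as recorded just above the corollary, remains valid over $R$), one obtains
\[
\epsilon^{M+M'} \cdot f f' \;=\; \sum_{\zz\in\Zpoints,\ \zz'\in\Zpoints'} c_\zz(\epsilon)\,c'_{\zz'}(\epsilon)\cdot \exp\bigl(\epsilon(\zz+\zz')\cdot\yy\bigr).
\]
Every point $\zz+\zz'$ appearing on the right lies in $\Zpoints+\Zpoints'$, and each product $c_\zz(\epsilon)c'_{\zz'}(\epsilon)$ lies in $R$, so the right-hand side belongs to $\mathrm{span}_R\{\exp(\epsilon\mathbf{w}\cdot\yy) : \mathbf{w} \in \Zpoints+\Zpoints'\}$. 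Applying Lemma~\ref{lem:divided-harmonic-space} in the reverse direction to the finite locus $\Zpoints+\Zpoints'$ then shows $f f' \in \II_R(\epsilon(\Zpoints+\Zpoints'))^\perp$, which gives the asserted containment.

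I do not expect a genuine obstacle here: once Lemma~\ref{lem:divided-harmonic-space} is available, the argument is a formal computation. The only points requiring care are bookkeeping ones — that the powers of $\epsilon$ add as $M+M'$, that the coefficient products $c_\zz(\epsilon)c'_{\zz'}(\epsilon)$ remain in $R$, and that distributing a product over finite $R$-linear combinations and applying the exponential law are all legitimate operations in the completed divided power algebra $\hat{\Div}_R(\yy)$ over a general coefficient ring $R$. All of these are immediate from the setup recalled in the preceding paragraphs, so if anything the "hard part" is merely notational.
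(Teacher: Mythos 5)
Your proposal is correct and follows essentially the same route as the paper: the paper states that the exponential law $\exp(\epsilon\zz\cdot\yy)\cdot\exp(\epsilon\zz'\cdot\yy)=\exp(\epsilon(\zz+\zz')\cdot\yy)$ over $R$ together with the saturation description of Lemma~\ref{lem:divided-harmonic-space} "immediately implies" the corollary, and your computation with $\epsilon^{M+M'}\cdot ff'$ is exactly the spelled-out version of that argument. No gaps.
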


We remark that one can produce small examples with $n=1$ showing both that 
the $\epsilon$-saturation in the statement of Lemma~\ref{lem:divided-harmonic-space} is necessary, and in contrast to Proposition~\ref{prop:inhomogeneous-minkowski-addition}, that the inclusion in Corollary~\ref{cor:divided-inhomogeneous-minkowski-addition} can be proper. 

\begin{lemma}
    \label{lem:surjective-map}
    Let $e: \hat{\Div}_{R}(\yy) \rightarrow \hat{\Div}_\kk(\yy)$ be the map which evaluates $\epsilon \to 0$. For any finite locus $\Zpoints \subseteq \kk^n$, the map $e$ restricts to a surjection $e: \II_{R}(\epsilon \Zpoints)^\perp \twoheadrightarrow \gr \, \II_\kk (\Zpoints)^\perp$.
\end{lemma}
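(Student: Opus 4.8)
The plan is to prove the two inclusions $e\bigl(\II_{R}(\epsilon\Zpoints)^\perp\bigr)\subseteq\gr\,\II_\kk(\Zpoints)^\perp$ and $e\bigl(\II_{R}(\epsilon\Zpoints)^\perp\bigr)\supseteq\gr\,\II_\kk(\Zpoints)^\perp$ separately; the first is a short computation with homogenizations, while the second is a rank count over the PID $R=\kk[\epsilon]$.

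For the inclusion, first note that $e\colon R[\xx]\to\kk[\xx]$ is a surjective ring homomorphism with kernel $(\epsilon)$, so $e\bigl(\II_{R}(\epsilon\Zpoints)\bigr)$ is an ideal of $\kk[\xx]$. For $g\in\II_\kk(\Zpoints)$, its homogenization $g^\epsilon(\xx)=\epsilon^{\deg g}g(\xx/\epsilon)$ lies in $R[\xx]$ and vanishes at every point $\epsilon\zz$, hence $g^\epsilon\in\II_{R}(\epsilon\Zpoints)$, and $e(g^\epsilon)=\tau(g)$ by \eqref{eq:homgenization-doesnt-change-tau}. Since the $\tau(g)$ generate $\gr\,\II_\kk(\Zpoints)$ and $e\bigl(\II_{R}(\epsilon\Zpoints)\bigr)$ is an ideal, we get $\gr\,\II_\kk(\Zpoints)\subseteq e\bigl(\II_{R}(\epsilon\Zpoints)\bigr)$. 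The reduction $e$ is compatible with the pairing, in the sense that $e(\langle f,g\rangle)=\langle e(f),e(g)\rangle$, because both the $\odot$-action and the extraction of the constant term are computed coefficientwise. Therefore, given $g\in\II_{R}(\epsilon\Zpoints)^\perp$ and any $h\in\gr\,\II_\kk(\Zpoints)$, write $h=e(f)$ with $f\in\II_{R}(\epsilon\Zpoints)$; then $\langle h,e(g)\rangle=e(\langle f,g\rangle)=e(0)=0$, so $e(g)\in\gr\,\II_\kk(\Zpoints)^\perp$.

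For surjectivity, put $L:=\II_{R}(\epsilon\Zpoints)^\perp\subseteq\hat\Div_R(\yy)$, $d:=\#\Zpoints$, and $N:=d-1$, and work also over $K:=\kk(\epsilon)$. Two structural facts about $L$ do the job. First, $L$ is $\epsilon$-saturated in $\hat\Div_R(\yy)$: by Lemma~\ref{lem:divided-harmonic-space}, $L$ consists of those $f$ with $\epsilon^{M}f\in\mathrm{span}_R\{\exp(\epsilon\zz\cdot\yy):\zz\in\Zpoints\}$ for some $M$, so $\epsilon f\in L$ forces $f\in L$; the same description shows $L\subseteq\II_{K}(\epsilon\Zpoints)^\perp$ and $L\supseteq\mathrm{span}_R\{\exp(\epsilon\zz\cdot\yy)\}$. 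Second, $L$ is a free $R$-module of rank $d$: over $K$, Lemma~\ref{lem:inhomogeneous-exponential-divided} and Lemma~\ref{lem:projected-expontential-basis}(i) (applied to the $d$ distinct points $\epsilon\Zpoints\subseteq K^{n}$, with $N\geq\#(\epsilon\Zpoints)-1$) say that $\{\exp(\epsilon\zz\cdot\yy)\}_{\zz\in\Zpoints}$ is a $K$-basis of $\II_{K}(\epsilon\Zpoints)^\perp$ on which $\pi_N$ is injective; hence $\pi_N$ embeds $L$ into the finite free $R$-module $\Div_R(\yy)_{\leq N}$, so $L$ is finitely generated and torsion-free, hence free, with $\mathrm{rank}_R L=\dim_K(L\otimes_R K)=\dim_K\II_{K}(\epsilon\Zpoints)^\perp=d$. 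Now $\ker\bigl(e\colon\hat\Div_R(\yy)\to\hat\Div_\kk(\yy)\bigr)=\epsilon\,\hat\Div_R(\yy)$, so $\epsilon$-saturation gives $e(L)\cong L/\epsilon L$, a $\kk$-vector space of dimension $\mathrm{rank}_R L=d$. Together with the inclusion $e(L)\subseteq\gr\,\II_\kk(\Zpoints)^\perp$ and $\dim_\kk\gr\,\II_\kk(\Zpoints)^\perp=\#\Zpoints=d$ (from \eqref{eq:perp-and-quotient-share-Hilbert-series-divided} and Proposition~\ref{prop:graded-quotient-is-a-gr}(iii)), we conclude $e(L)=\gr\,\II_\kk(\Zpoints)^\perp$, which is the asserted surjectivity.

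The main obstacle is purely the module-theoretic bookkeeping in establishing that $L$ is finitely generated and free of the expected rank: $\hat\Div_R(\yy)$ is a huge non-Noetherian $R$-module, so the $\epsilon$-saturation of a rank-$d$ submodule is not automatically finitely generated, and it is precisely the injectivity of $\pi_N$ on $L$ (inherited from the exponential basis over $\kk(\epsilon)$) that places $L$ inside a finite free module and rescues the argument. Once that is in hand, the remainder is the elementary observation that reducing an $\epsilon$-saturated free $R$-module of rank $d$ modulo $\epsilon$ yields a $\kk$-space of dimension $d$, matched against the already-known dimension of $\gr\,\II_\kk(\Zpoints)^\perp$.
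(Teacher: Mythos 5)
Your proof is correct. The containment half --- lifting each $\tau(g)$ into $\II_{R}(\epsilon\Zpoints)$ via the homogenization $g^\epsilon$ and using that $e$ commutes with the pairing $\langle-,-\rangle$ --- is essentially the paper's argument (the paper merely asserts the existence of a lift $f_\epsilon\in\II_R(\epsilon\Zpoints)$ of each element of $\gr\,\II_\kk(\Zpoints)$; your homogenization makes the lift explicit; note only that the cited equation concerns $\tau(g^\epsilon)$ for $\epsilon\in\kk^\times$, while the fact you actually need, $e(g^\epsilon)=\tau(g)$, is read off directly from the displayed expansion of $g^\epsilon$). The surjectivity half is where you genuinely diverge: the paper works concretely, row-reducing the coefficient matrix of $\{\exp(\epsilon\zz\cdot\yy)\}_{\zz\in\Zpoints}$ over $\kk$, transferring the row operations to the $\epsilon$-dependent matrix, and invoking the saturation in Lemma~\ref{lem:divided-harmonic-space} to strip powers of $\epsilon$, thereby exhibiting $\#\Zpoints$ explicit elements of $\II_R(\epsilon\Zpoints)^\perp$ whose $e$-images are linearly independent. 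You instead argue structurally over the PID $R=\kk[\epsilon]$: Lemma~\ref{lem:divided-harmonic-space} shows $L:=\II_R(\epsilon\Zpoints)^\perp$ is $\epsilon$-saturated and contained in $\II_{\kk(\epsilon)}(\epsilon\Zpoints)^\perp$, the injectivity of $\pi_N$ on the latter (Lemmas~\ref{lem:inhomogeneous-exponential-divided} and~\ref{lem:projected-expontential-basis}(i) applied over $\kk(\epsilon)$) embeds $L$ into a finite free $R$-module, so $L$ is free of rank $\#\Zpoints$, and saturation identifies $e(L)$ with $L/\epsilon L$, of $\kk$-dimension $\#\Zpoints$; the dimension count against $\dim_\kk\gr\,\II_\kk(\Zpoints)^\perp=\#\Zpoints$ then forces equality. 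Your route handles the non-Noetherian ambient space $\hat{\Div}_R(\yy)$ cleanly (the $\pi_N$-embedding is exactly what rescues finite generation) and avoids any matrix bookkeeping; the paper's route produces explicit pivot-type preimages realizing the surjection without needing freeness of $L$. Both arguments rest on the same two inputs: the saturation description of $\II_R(\epsilon\Zpoints)^\perp$ and the dimension of $\gr\,\II_\kk(\Zpoints)^\perp$.
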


\begin{proof} (Sketch)
One checks that $e$ indeed maps 
$\hat{\Div}_{R}(\yy)$ {\it into} $\hat{\Div}_\kk(\yy)$. 
Row reduction over the field $\kk$ shows that the $\kk$-span of 
 $\{ \exp(\epsilon \zz \cdot \yy)\}_{\zz \in \Zpoints}$ 
 contains $\#\Zpoints$ elements $\{f_\zz\}_{\zz \in \Zpoints}$ with the following property: 
\begin{quote}each $f_\zz=\epsilon^{M_\zz} g_\zz$ for some nonnegative power $M_\zz$ of $\epsilon$ times another element $g_\zz$ (so $g_\zz$ also lies in  $\II_{R}(\epsilon \Zpoints)^\perp$
by Lemma~\ref{lem:divided-harmonic-space}) and the 
$e$-images $\{e(g_\zz)\}_{\zz \in \Zpoints}$ 
are $\kk$-linearly independent in $\hat{\Div}_\kk(\yy)$.
\end{quote}
Since the space $\gr \, \II_\kk(\Zpoints)^\perp$ has $\kk$-dimension $\#\Zpoints$, this shows that $e$ surjects.
\end{proof}

Lemma~\ref{lem:surjective-map} is closely related to Corollary~\ref{cor:beta-perp-identification}. Indeed, the images $e(g_\zz)$ appearing in the above proof are nothing but the bottom degree components $\beta(g_\zz)$ of the $g_\zz$.

\begin{proof}[Second proof of Theorem~\ref{thm:minkowski-closure}]
Given $g,g'$ in $V_\Zpoints, V_\Zpoints'$, 
one wishes to show that $g \cdot g'$ lies in 
$V_{\Zpoints+\Zpoints'}$.  Lift them both using Lemma~\ref{lem:surjective-map} to $g_\epsilon, g'_\epsilon$ in  $\II_{R}(\epsilon \Zpoints)^\perp,  \II_{R}(\epsilon \Zpoints')^\perp$ with $e(g_\epsilon)=g, e(g'_\epsilon)=g'$. Then Corollary~\ref{cor:divided-inhomogeneous-minkowski-addition} implies that $g_\epsilon \cdot g'_\epsilon$ lies in
    $\II_{R}(\epsilon (\Zpoints+\Zpoints'))^\perp$.
    Finally,
    $
    g \cdot g'=e(g_\epsilon) \cdot e(g_\epsilon') = e(g_\epsilon \cdot g'_\epsilon)
    $
    lies in $V_{\Zpoints+\Zpoints'}$, 
    due to Lemma~\ref{lem:surjective-map} once more.
    \end{proof}

The geometric interpretation of this second proof is as follows.  As mentioned 
in the Introduction, one can view $S/\II(\Zpoints) \leadsto R(\Zpoints)$ as a flat deformation of the reduced subscheme $\Zpoints \subseteq \kk^n$ deforming linearly to a zero dimensional subscheme of degree $\#\Zpoints$ supported at the origin.
\begin{center}
    \tdplotsetmaincoords{70}{110}
    \begin{tikzpicture}[scale = 0.2]
        \tdplotsetrotatedcoords{0}{0}{0}

        \draw[fill = gray!0, tdplot_rotated_coords] (5,5,6) -- (5,-5,6) -- (-5,-5,6) -- (-5,5,6) -- (5,5,6);

        \draw[fill = gray!0, tdplot_rotated_coords] (5,5,0) -- (5,-5,0) -- (-5,-5,0) -- (-5,5,0) -- (5,5,0);

        \node[tdplot_rotated_coords] at (0,0,0) {$\bullet$};
        \draw[tdplot_rotated_coords] (0,0,0) circle (13pt);
        \draw[tdplot_rotated_coords] (0,0,0) circle (20pt);

        \node[tdplot_rotated_coords] at (-13,-13,0) {$c \Zpoints$};
        \node[tdplot_rotated_coords] at (2.67,2.67,6) {$\bullet$};
        \node[tdplot_rotated_coords] at (-2.67,2.67,6) {$\bullet$};
        \node[tdplot_rotated_coords] at (2.67,-2.67,6) {$\bullet$};
        \node[tdplot_rotated_coords] at (-2.67,-2.67,6) {$\bullet$};

        \draw[very thick, tdplot_rotated_coords] (0,0,-2) -- (0,0,9);
        \draw[dashed, tdplot_rotated_coords] (-0.88,-0.88,-2) -- (4,4,9);
        \draw[dashed, tdplot_rotated_coords] (0.88,-0.88,-2) -- (-4,4,9);
        \draw[dashed, tdplot_rotated_coords] (-0.88,0.88,-2) -- (4,-4,9);
        \draw[dashed, tdplot_rotated_coords] (0.88,0.88,-2) -- (-4,-4,9);

        \draw[very thick, tdplot_rotated_coords] (40,40,16) -- (40,40,27);
        \node[tdplot_rotated_coords] at (40,40,18) {$\bullet$};
        \node[tdplot_rotated_coords] at (42,42,19) {$0$};

        \node[tdplot_rotated_coords] at (40,40,24) {$\bullet$};
        \node[tdplot_rotated_coords] at (42,42,25) {$c$};

        \node[tdplot_rotated_coords] at (55,55,34) {$\mathbb{A}_\kk^1 = \mathrm{Spec}(\kk[\epsilon])$};

        \draw[->, very thick, tdplot_rotated_coords] (10,10,7.4) -- (35,35,19);
    \end{tikzpicture}
\end{center}   
The extension of scalars from $\kk$ to $R=\kk[\epsilon]$
in the second proof corresponds to viewing $\epsilon \Zpoints \subseteq \AA^n_\kk \times \AA^1_{\kk}$ as a flat family over $\AA^1_{\kk} = \Spec(\kk[\epsilon])$ via $\epsilon \Zpoints = \Spec(\kk[\epsilon][\xx]/\II_{\kk[\epsilon]}(\epsilon \Zpoints))$, with generic fibers over $\epsilon=c \in \kk^\times$
and special fiber over $\epsilon=0$.
Taking the limit $\epsilon \to 0$ corresponds to 
applying the map $e$ or (equivalently) the functor 
$(-) \otimes_{\kk[\epsilon]} \kk[\epsilon]/(\epsilon)$,
focusing on the special fiber.
Passage to the fraction field $\kk(\epsilon)$, as in the proof of Lemma~\ref{lem:divided-harmonic-space}, corresponds to applying 
$(-) \otimes_{\kk[\epsilon]} \kk(\epsilon)$,
which geometrically is localization over the general point $(0) \in \Spec(\kk[\epsilon])$. The containment of harmonic spaces $\II_{\kk[\epsilon]}(\Zpoints)^\perp \cdot \II_{\kk[\epsilon]}(\Zpoints')^\perp \subseteq \II_{\kk[\epsilon]}(\Zpoints + \Zpoints')^\perp$ in Corollary~\ref{cor:divided-inhomogeneous-minkowski-addition} (in contrast to the equality in Proposition~\ref{prop:inhomogeneous-minkowski-addition}) philosophically holds because the flat family $\epsilon \Zpoints$ sees behavior over its special fiber $\epsilon \to 0$ which its general fiber $\epsilon \to 1$ does not.

\begin{remark}
\label{rmk:Gundlach-second-remark}
Theorem~\ref{thm:minkowski-closure} has a resemblance to another
interesting observation of Gundlach \cite{Gundlach}.
Suppose $\kk$ has characteristic 0.
For any finite point set $\Zpoints \subseteq \kk^n$, and any choice of monomial ordering $\prec$ on $S=\kk[\xx]$,
his result quoted as Proposition~\ref{prop:Gundlach-prop} earlier
shows the set $\SSS^\prec_\Zpoints$ of all $\prec$-standard monomials for $\II(\Zpoints)$ has the following alternate description:
$\SSS^\prec_\Zpoints= \{\sm_\prec(f): f \in \kk^\Zpoints \setminus \{0\}\}.$
He then uses this to fairly quickly prove the following fact \cite[eqn.~(5)]{Gundlach} :
for any two finite point sets $\Zpoints, \Zpoints' \subseteq \kk^n$, one has
\begin{equation}
\label{eq:Gundlach-Minkowski-closure-fact}
\SSS^\prec_\Zpoints \cdot \SSS^\prec_{\Zpoints'} \subseteq \SSS^\prec_{\Zpoints + \Zpoints'}.
\end{equation}
It is not clear how closely this is related to the similar-sounding Theorem~\ref{thm:minkowski-closure}. 
\end{remark}

\begin{remark}
    \label{rmk:metric-topological-proof}
    An earlier version of this paper \cite[p. 36-41]{RRver2} gives yet another proof of Theorem~\ref{thm:minkowski-closure}. Extending scalars if necessary, one assumes that $\kk$ is a {\em metric topological field} and considers degree-truncating surjections $\hat{\Div} \twoheadrightarrow \Div_{< N}, S \twoheadrightarrow S_{< N}$ to finite-dimensional subspaces for $N$ sufficiently large. Theorem~\ref{thm:minkowski-closure} is deduced from a convergence statement \cite[Lem. 4.17]{RRver2} within Grassmannians of subspaces of these finite-dimensional vector spaces.
\end{remark}

%%%%%%%%%%%%%%%%%%%%%%%%%%%%%%%%%%%
\section{Harmonic algebras}
\label{sec:Harmonic}

Our motivation for Theorem~\ref{thm:minkowski-closure} was to define here
the {\it harmonic algebra} $\HHH_P$ as an approach
toward our main Conjecture~\ref{conj:intro-omnibus} generalizing the Classical Ehrhart Theorem.  Before defining $\HHH_P$, we explain why a less algebraic approach to Conjecture~\ref{conj:intro-omnibus} seems
elusive.

\subsection{The missing valuative property}
\label{subsec:missing-valuation}
Recall from the Introduction that the Classical Ehrhart Theorem
asserts several properties for the
{\it Ehrhart series} $\Eseries_P(t):=\sum_{m=0}^\infty \Ehr_P(m) t^m$
of a $d$-dimensional lattice polytope $P$ in $\RR^n$, where $\Ehr_P(m):=\#(mP \cap \ZZ^n)$.
Specifically, one has its {\it rationality}
$
\Eseries_P(t) = \sum_{i=0}^d h^*_i t^i/(1-t)^{d+1}
$
with a precise {\it denominator}, the {\it nonnegativity} of the numerator coefficients $\{h_i^*\}$, the combinatorial interpretation of the $\{h^*_i\}$
in terms of the semi-open parallelepiped \eqref{eq: semi-open-parallelepiped}
when $P$ is a simplex, and the {\it reciprocity} relating
$\Eseries_P(t^{-1})$ to $\intEseries_P(t)$.

These properties were originally given elementary proofs that avoid any commutative algebra, via the following strategy:  one first proves the result for lattice {\it simplices} (and semi-open simplices), and then generalizes to arbitrary lattice polytopes employing
triangulations (and sometimes {\it shellings} of a triangulation) of $P$.
The key tool in such proofs is the following {\it valuative property} that comes
directly from the definitions of $\Ehr_P(m)$ and $\Eseries_P(t)$:
when $P,Q$ are lattice polytopes and their intersection $P \cap Q$ is a proper face of both $P,Q$, then
\begin{align}
\label{eq:valuation-property-for-Ehrhart-function}
  i_{P \cup Q}(m) &= \Ehr_P(m)+\Ehr_Q(m)-i_{P\cap Q}(m)\\
  \label{eq:valuation-property-for-Ehrhart-series}
  \Eseries_{P \cup Q}(t) &= \Eseries_P(t)+\Eseries_Q(t)-\Eseries_{P\cap Q}(t).
\end{align}

One might therefore hope to prove the analogous
Conjecture~\ref{conj:intro-omnibus} on
\begin{align*}
\Ehr_P(m;q)&:=\Hilb(R(\ZZ^n \cap mP),q)=\Hilb(V_{\ZZ^n \cap mP},q), \text{ and }\\
\Eseries_P(t,q)&:=\sum_{m=0}^\infty \Ehr_P(m;q)t^m,
\end{align*}
via similar valuative techniques. The following simple example
illustrates some difficulty in identifying an analogous valuative property.

\begin{example}
    \label{ex:coprime-example}
    Let $a,b \geq 1$ be coprime positive integers, and let $P,Q \subseteq \RR^2$ be the lattice triangles
    \[P := \mathrm{conv}\{(0,0),(0,b),(a,0)| \quad \text{ and } \quad Q := \mathrm{conv}\{(0,b),(a,0),(a,b)\}.\]
    Their union is the lattice rectangle $P \cup Q = [0,a] \times [0,b]$, and their intersection is the line segment $P \cap Q = \mathrm{conv}\{(0,b),(a,0)\}$.
    The figure below depicts the case $a=5,b=4$.

\begin{center}
    \begin{tikzpicture}[scale = 0.4]
        \draw[fill = black!10] (0,0) -- (0,4) -- (5,4) --  (5,0) -- (0,0);
        \draw[] (0,4) -- (5,0);
        \draw[step=1.0,black,thin] (-0.5,-0.5) grid (5.5,4.5);    
        \node at (0,0) {$\bullet$};
        \node at (0,1) {$\bullet$};
        \node at (0,2) {$\bullet$};
        \node at (0,3) {$\bullet$};
        \node at (0,4) {$\bullet$};
        \node at (1,0) {$\bullet$};
        \node at (1,1) {$\bullet$};
        \node at (1,2) {$\bullet$};
        \node at (1,3) {$\bullet$};
        \node at (1,4) {$\bullet$};
        \node at (2,0) {$\bullet$};
        \node at (2,1) {$\bullet$};
        \node at (2,2) {$\bullet$};
        \node at (2,3) {$\bullet$};
        \node at (2,4) {$\bullet$};
        \node at (3,0) {$\bullet$};
        \node at (3,1) {$\bullet$};
        \node at (3,2) {$\bullet$};
        \node at (3,3) {$\bullet$};
        \node at (3,4) {$\bullet$};
        \node at (4,0) {$\bullet$};
        \node at (4,1) {$\bullet$};
        \node at (4,2) {$\bullet$};
        \node at (4,3) {$\bullet$};
        \node at (4,4) {$\bullet$};
        \node at (5,0) {$\bullet$};
        \node at (5,1) {$\bullet$};
        \node at (5,2) {$\bullet$};
        \node at (5,3) {$\bullet$};
        \node at (5,4) {$\bullet$};
    \end{tikzpicture}
\end{center}
 
Since $P,Q$ are equivalent under $\Aff(\ZZ^2)$, and since $P$ and $P\cup Q$ are both antiblocking polytopes, one can apply Corollary~\ref{cor:harmonic-equal-naive-Ehrhart-for-antiblockers} to compute that  
\begin{align}
\label{eq:two-equivalent-triangles-q-Ehrhart-count}
\Ehr_P(m;q)=\Ehr_Q(m;q)
&=\sum_{\substack{\zz=(z_1,z_2)\\ \text{ in } \ZZ^2 \cap mP}} q^{z_1+z_2}
=\sum_{\substack{(z_1,z_2) \in  \ZZ_{\geq 0}: \\ bz_1+az_2 \, \leq \, mab}} q^{z_1+z_2}\\
\label{eq:rectangle-q-Ehrhart-count}
i_{P \cup Q}(m;q) 
 &=\sum_{\substack{\zz=(z_1,z_2)\\ \text{ in } \ZZ^2 \cap m(P\cup Q)}} q^{z_1+z_2} =
[ma]_q \cdot [mb]_q
\end{align}
Since $a,b$ are coprime, the set $P \cap Q \cap \ZZ^2 = \{(0,b),(a,0)\}$ consists of only two lattice points,
and its dilate $\ZZ^2 \cap m(P \cap Q)$ consists of $m+1$ collinear lattice points, so that
\begin{equation}
\label{eq:coprime-line-segment-q-Ehrhart-count}
i_{P \cap Q}(m;q)=1 + q+ q^2 + \cdots + q^m = [m+1]_q.
\end{equation}
It is not clear if there is a $q$-analogue of
the valuative property \eqref{eq:valuation-property-for-Ehrhart-function} that applies here
to relate \eqref{eq:two-equivalent-triangles-q-Ehrhart-count},\eqref{eq:rectangle-q-Ehrhart-count},\eqref{eq:coprime-line-segment-q-Ehrhart-count}.
This subtlety is not surprising-- for general finite loci $\Zpoints, \Zpoints' \subseteq \kk^n$, it is  difficult to predict the structure of 
$\gr \, \II(\Zpoints \cup \Zpoints')$ from that of $\gr \, \II(\Zpoints), \gr \, \II(\Zpoints')$ and $\gr \, \II(\Zpoints \cap \Zpoints)$.
\end{example}

%%%%%%
\subsection{The harmonic algebra $\HHH_P$ and a conjecture} 

As explained in the Introduction, work of Stanley (see, e.g., \cite{Stanley-CCA}) provided an alternative proof for the assertions in the Classical Ehrhart Theorem, avoiding valuative techniques, and substituting
commutative algebra.
For a $d$-dimensional lattice polytope $P \subset \RR^n$,
he considered the {\it affine semigroup ring} 
$A_P$ inside 
the Laurent polynomial ring $\kk[y_0, y_1^{\pm 1},\ldots,y_n^{\pm 1}]$ and its 
{\it interior ideal} $\overline{A}_P$, defined as follows:
\begin{align*}
%    \label{eq:intro-affine-semigroup}
    A_P &:= \kk[\ZZ^{n+1} \cap \cone(P)]
    :=\spn_\kk\{y_0^m \yy^{\zz}: \zz \in mP\},\\
    \overline{A}_P &:=\kk[\ZZ^{n+1} \cap \interior{\cone(P)]}
    :=\spn_\kk\{y_0^m \yy^{\zz}: \zz \in \interior{mP}\},
\end{align*}
This ring $A_P$ is the affine semigroup ring
associated to the polyhedral $(d+1)$-dimensional cone 
$$
\cone(P) := \RR_{\geq 0} \cdot (\{1\} \times P) \subset \RR^{n+1},
$$
whose lattice points in $\ZZ^{n+1}$ form a semigroup under addition. 
The distinguished zero$^{th}$ coordinate on $\RR^{n+1}=\RR^1 \times \RR^n$ endows $A_P$ with the structure of an $\NN$-graded algebra of Krull dimension $d+1$, and one can re-interpret
\begin{align}
\label{eq:affine-semigroup-ring-Hilb-is-Eseries}
\Eseries_P(t)&=\Hilb(A_P,t),\\
\intEseries_P(t)&=\Hilb(\overline{A}_P,t).
\end{align}
As mentioned in the Introduction, Stanley explained the 
\begin{itemize}
 \item {\it rationality} of $\Eseries_P(t)$ via $A_P$ being a finitely generated
$\kk$-algebra, shown by Gordan \cite{Gordan},
\item 
{\it denominator} $(1-t)^{d+1}$ of $\Eseries_P(t)$ via Noether's Normalization Lemma \cite{Noether},
\item {\it nonnegativity} of the $\{h_i^*\}$ via Cohen-Macaulayness of $A_P$, a result of Hochster \cite{Hochster}, and
\item {\it reciprocity}
via $\overline{A}_P \cong \Omega A_P$, the {\it canonical module} of $A_P$, work of Danilov \cite{Danilov}, Stanley \cite{Stanley-Diophantine}.
\end{itemize}
Motivated by this, we would like to eventually
prove the $q$-analogous assertions of Conjecture~\ref{conj:intro-omnibus}, via the following commutative algebra.

\begin{definition} \rm
    \label{def:harmonic-algebra}
    Let $P$ be a lattice polytope in $\RR^n$, and introduce the polynomial ring
    $$
    \RR[y_0,\yy]:=\RR[y_0,y_1,\ldots,y_n]
    \quad 
    \left( 
    \cong \RR[y_0] \otimes_\RR \Div_\RR[\yy]
    \right).
    $$
    We will consider $\RR[y_0,\yy]$ as a {\it bigraded} or {\it $\NN^2$-graded} $\RR$-algebra in which 
    $$
    \deg(y_0)=(1,0), \quad \deg(y_1)=\cdots=\deg(y_n)=(0,1),
    $$
    with Hilbert series in $t,q$ tracking a monomial
    $y_0^{m} y_1^{z_1}\cdots y_n^{z_n}$ 
    by $t^m q^{z_1+\cdots+z_n}$.  Occasionally we will
    specialize this to a single $\NN$-grading by sending $q \mapsto 1$.
    Then define the {\em harmonic algebra} $\HHH_P$ and its {\em interior ideal} $\overline{\HHH}_P$ as the following two $\NN^2$-homogeneous $\RR$-linear subspaces of $\RR[y_0,\yy]$:
\begin{align} 
    \HHH_P &:= \bigoplus_{m=0}^\infty \RR \cdot y_0^m \otimes_\RR V_{\ZZ^n \cap mP},\\
    \overline{\HHH}_P &:= \bigoplus_{m=0}^\infty 
    \RR \cdot y_0^m \otimes_\RR V_{\ZZ^n \cap \interior{mP}}.
\end{align}
\end{definition}
We first justify the terms ``algebra", ``ideal" 
in the definition, starting with an easy observation.

\begin{lemma}
    \label{lem:harmonic-nesting-lemma}
    For nested finite subsets $\Zpoints \subseteq \Zpoints' \subseteq \kk^n$ over any field $\kk$,
    one has $V_\Zpoints \subseteq V_{\Zpoints'}$.
\end{lemma}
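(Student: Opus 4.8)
\textbf{Proof proposal for Lemma~\ref{lem:harmonic-nesting-lemma}.}

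The plan is to reduce the containment $V_\Zpoints \subseteq V_{\Zpoints'}$ to an inclusion between the corresponding associated graded ideals, and then dualize. First I would observe that since $\Zpoints \subseteq \Zpoints'$, every polynomial vanishing on $\Zpoints'$ also vanishes on $\Zpoints$, so $\II(\Zpoints') \subseteq \II(\Zpoints)$ inside $S = \kk[\xx]$. Next I would pass to associated graded ideals: for any nonzero $f \in \II(\Zpoints')$ we have $f \in \II(\Zpoints)$, so $\tau(f) \in \gr\,\II(\Zpoints)$, and since $\gr\,\II(\Zpoints')$ is generated by such top-degree forms $\tau(f)$, this yields $\gr\,\II(\Zpoints') \subseteq \gr\,\II(\Zpoints)$. (Note the containment of ideals stays in the same direction as the containment of the ideals themselves, even though dimensions go the other way — the point is just that $\tau$ is applied elementwise.)

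The final step is to take perp spaces. Recall $V_\Zpoints = (\gr\,\II(\Zpoints))^\perp$ and $V_{\Zpoints'} = (\gr\,\II(\Zpoints'))^\perp$, both computed inside $\Div = \Div_\kk(\yy)$ with respect to the pairing $\langle -,-\rangle : S \times \Div \to \kk$ of Section~\ref{sec:harmonics-over-all-fields}. Since the perp operation is inclusion-reversing — if $I \subseteq J$ then $J^\perp \subseteq I^\perp$, directly from the definition $I^\perp = \{g : \langle f, g\rangle = 0 \text{ for all } f \in I\}$ — the inclusion $\gr\,\II(\Zpoints') \subseteq \gr\,\II(\Zpoints)$ gives $(\gr\,\II(\Zpoints))^\perp \subseteq (\gr\,\II(\Zpoints'))^\perp$, that is, $V_\Zpoints \subseteq V_{\Zpoints'}$, as desired.

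There is essentially no obstacle here: the only mild point worth a sentence is making sure the perp is taken in a homogeneous setting (both $\gr\,\II(\Zpoints)$ and $\gr\,\II(\Zpoints')$ are homogeneous ideals, so their perps live in $\Div_\kk(\yy)$ itself, not in the completion $\hat{\Div}_\kk(\yy)$ needed for inhomogeneous ideals), and that the finiteness of $\Zpoints'$ guarantees $\dim_\kk S/\gr\,\II(\Zpoints')$ is finite so all these spaces are finite-dimensional and the inclusion-reversal of perp is unambiguous. The whole argument is three one-line implications.
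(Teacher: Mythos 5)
Your proof is correct and follows exactly the paper's argument: reverse the inclusion of vanishing ideals, pass to associated graded ideals via $\tau$ applied elementwise, and then use that taking perps in $\Div_\kk(\yy)$ reverses inclusions. The extra remarks on homogeneity and finite-dimensionality are fine but not needed; nothing to change.
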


\begin{proof}
$\Zpoints \subseteq \Zpoints'$ 
implies the opposite inclusion $\II(\Zpoints) \supseteq \II(\Zpoints')$ within $S=\RR[\xx]$.  This then implies
$\gr \, \II(\Zpoints) \supseteq \gr \, \II(\Zpoints')$.
Taking perps in $\Div_\kk(\yy)$ reverses inclusion: 
$V_\Zpoints = \gr \, \II(\Zpoints)^\perp \subseteq \gr \, \II(\Zpoints')^\perp = V_{\Zpoints'}$.
\end{proof}

\begin{prop}
    \label{prop:harmonic-algebra-is-an-algebra}
    Within the ring $\RR[y_0,\yy]$, the $\RR$-linear subspace $\HHH_P$ is an $\NN^2$-graded $\RR$-subalgebra, and $\overline{\HHH}_P$ is an $\NN^2$-graded ideal of $\HHH_P$, with
\begin{align}
 \label{eq:harmonic-algebra-Hilb-is-q-Eseries}   \Eseries_P(t,q)&=\Hilb(\HHH_P,t,q),\\
\intEseries_P(t,q)&=\Hilb(\overline{\HHH}_P,t,q).
\end{align}
Furthermore, any finite subgroup $G$
of $GL_n(\ZZ)$ preserving $P$ acts by $\NN^2$-graded
automorphisms on $\HHH_P$. Thus in the
representation ring
$\Cl(G)[[t,q]]$ of $\NN^2$-graded $\RR G$-modules, one has
$$
\Eseries^G_P(t,q)=\sum_{m,j \geq 0} [(\HHH_P)_{m,j}] \cdot  t^m q^j 
$$
\end{prop}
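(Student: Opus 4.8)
\textbf{Proof plan for Proposition~\ref{prop:harmonic-algebra-is-an-algebra}.}

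The plan is to verify in turn each of the four assertions: $\HHH_P$ is a subalgebra, $\overline{\HHH}_P$ is an ideal of it, the bigraded Hilbert series identities, and the $G$-equivariance. The subalgebra claim is the crux and is where Theorem~\ref{thm:minkowski-closure} enters. First I would note that a homogeneous element of $\HHH_P$ in bidegree $(m,j)$ is of the form $y_0^m \otimes g$ with $g \in (V_{\ZZ^n\cap mP})_j$, and the product of $y_0^m\otimes g$ and $y_0^{m'}\otimes g'$ in $\RR[y_0,\yy]$ is $y_0^{m+m'}\otimes (g\cdot g')$, where the product $g\cdot g'$ is taken in $\Div_\RR(\yy)=\RR[\yy]$. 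So closure under multiplication amounts to showing $g\cdot g' \in V_{\ZZ^n\cap (m+m')P}$. Here I invoke the IDP-free Minkowski inequality built into the definitions: by Theorem~\ref{thm:minkowski-closure} applied to $\Zpoints = \ZZ^n\cap mP$ and $\Zpoints' = \ZZ^n\cap m'P$, one has $V_{\ZZ^n\cap mP}\cdot V_{\ZZ^n\cap m'P}\subseteq V_{(\ZZ^n\cap mP)+(\ZZ^n\cap m'P)}$. Then since $(\ZZ^n\cap mP)+(\ZZ^n\cap m'P)\subseteq \ZZ^n\cap(m+m')P$ always (no IDP hypothesis needed — this is the trivial inclusion), Lemma~\ref{lem:harmonic-nesting-lemma} gives $V_{(\ZZ^n\cap mP)+(\ZZ^n\cap m'P)}\subseteq V_{\ZZ^n\cap(m+m')P}$. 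Chaining these containments yields $g\cdot g'\in V_{\ZZ^n\cap(m+m')P}$, and since $1 = y_0^0\otimes 1 \in \HHH_P$ (as $1\in V_{\ZZ^n\cap 0\cdot P}=V_{\{\origin\}}=\RR$), $\HHH_P$ is a unital $\NN^2$-graded subalgebra. The ideal claim for $\overline{\HHH}_P$ is the same computation: a product $y_0^m\otimes g$ with $g\in V_{\ZZ^n\cap mP}$ times $y_0^{m'}\otimes \bar g$ with $\bar g\in V_{\ZZ^n\cap\interior{m'P}}$ lands in $y_0^{m+m'}\otimes V_{(\ZZ^n\cap mP)+(\ZZ^n\cap\interior{m'P})}$, and one checks the elementary inclusion $(\ZZ^n\cap mP)+(\ZZ^n\cap\interior{m'P})\subseteq \ZZ^n\cap\interior{(m+m')P}$ (sum of a point of $mP$ with an interior point of $m'P$ is interior in $(m+m')P$), then apply Theorem~\ref{thm:minkowski-closure} and Lemma~\ref{lem:harmonic-nesting-lemma} as before.

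The Hilbert series identities are then immediate bookkeeping. By construction the bidegree-$(m,j)$ component of $\HHH_P$ is $\RR\cdot y_0^m \otimes (V_{\ZZ^n\cap mP})_j$, which has $\RR$-dimension $\dim_\RR (V_{\ZZ^n\cap mP})_j$. Hence
\begin{equation*}
\Hilb(\HHH_P,t,q)=\sum_{m\geq 0} t^m \sum_{j\geq 0} \dim_\RR(V_{\ZZ^n\cap mP})_j \, q^j = \sum_{m\geq 0}\Hilb(V_{\ZZ^n\cap mP},q)\,t^m = \sum_{m\geq 0}\Ehr_P(m;q)\,t^m = \Eseries_P(t,q),
\end{equation*}
using Definition~\ref{def:q-Ehrhart}, and identically for $\overline{\HHH}_P$ and $\intEseries_P(t,q)$ with $mP$ replaced by $\interior{mP}$.

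For the equivariance, a finite subgroup $G\subseteq GL_n(\ZZ)$ preserving $P$ acts on $\RR^n$, hence on $\RR[\yy]$ (on the $y_i$) and trivially on $y_0$, giving an action on $\RR[y_0,\yy]$ by $\NN^2$-graded $\RR$-algebra automorphisms. Since $G$ preserves $P$ it preserves each dilate $mP$ and each interior $\interior{mP}$, hence preserves the loci $\ZZ^n\cap mP$ setwise, hence (as discussed around \eqref{eq:graded-rep-isos-for-point-loci}) preserves each $V_{\ZZ^n\cap mP}$ and $V_{\ZZ^n\cap\interior{mP}}$ as graded $\RR[G]$-submodules of $\Div_\RR(\yy)$. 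Therefore $G$ preserves $\HHH_P$ and $\overline{\HHH}_P$, acting by $\NN^2$-graded automorphisms, and the bidegree-$(m,j)$ component $(\HHH_P)_{m,j}\cong (V_{\ZZ^n\cap mP})_j$ as an $\RR[G]$-module. Taking classes in $\Cl(G)[[t,q]]$ and comparing with Definition~\ref{def:equivariant-q-Ehrhart} gives $\Eseries^G_P(t,q)=\sum_{m,j\geq 0}[(\HHH_P)_{m,j}]\,t^m q^j$, as claimed.

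The main obstacle is conceptual rather than technical: it is recognizing that the only nontrivial input is Theorem~\ref{thm:minkowski-closure}, and that one does \emph{not} need $P$ to have the IDP — the naive inclusion $(\ZZ^n\cap mP)+(\ZZ^n\cap m'P)\subseteq \ZZ^n\cap(m+m')P$ together with the monotonicity Lemma~\ref{lem:harmonic-nesting-lemma} bridges the gap. Everything else is routine degree bookkeeping and the standard observation that polytope symmetries act compatibly on the associated harmonic spaces.
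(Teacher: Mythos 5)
Your proposal is correct and follows essentially the same route as the paper: closure under multiplication via Theorem~\ref{thm:minkowski-closure} chained with the trivial inclusion $(\ZZ^n\cap mP)+(\ZZ^n\cap m'P)\subseteq \ZZ^n\cap(m+m')P$ and Lemma~\ref{lem:harmonic-nesting-lemma}, the ideal claim via $mP+\interior{m'P}\subseteq\interior{(m+m')P}$, and the Hilbert series and equivariance statements as direct bookkeeping from the definitions. No gaps.
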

\begin{proof}
For the first algebra assertion about $\HHH_P$, given $m,m' \geq 0$, one must check that
$$
\left( \RR \cdot y_0^{m} \otimes_\RR V_{\ZZ^n \cap m P} \right)
\cdot \left( \RR \cdot y_0^{m'} \otimes_\RR V_{\ZZ^n \cap m' P} \right)
\subseteq  \RR \cdot y_0^{m+m'} \otimes_\RR V_{\ZZ^n \cap (m+m') P}.
$$
However, note that one can
use Theorem~\ref{thm:minkowski-closure}
to deduce the first inclusion here
$$
V_{\ZZ^n \cap m P} 
\cdot V_{\ZZ^n \cap m' P} 
\quad  \subseteq  \quad 
V_{(\ZZ^n \cap m P)+(\ZZ^n \cap m' P)}
\quad \subseteq \quad
V_{\ZZ^n \cap (m+m') P}.
$$
The second inclusion follows from Proposition~\ref{lem:harmonic-nesting-lemma}
together with this inclusion
\begin{equation}
\label{eq:strict-inclusion-for-non-IDP}
(\ZZ^n \cap m P)+(\ZZ^n \cap m' P)
\,\, \subseteq \,\,
\ZZ^n \cap (m+m') P,
\end{equation}
derived from $mP + m'P=(m+m')P$.  The ideal assertion for $\overline{\HHH}_P$
is similar, replacing the last fact 
with $mP + \interior{m'P}=
\interior{(m+m')P}$.
The remaining assertions follow from the
definitions.
\end{proof}

We conjecture the following properties for the harmonic algebra $\HHH_P$,
analogous to those known for the affine semigroup ring $A_P$,
which would explain most of Conjecture~\ref{conj:intro-omnibus}.

\begin{conj}
\label{conj:harmonic-algebra-omnibus}
For any lattice polytope $P$, the harmonic algebra $\HHH_P$ is 
\begin{itemize}
\item [(i)] a Noetherian (finitely generated) $\RR$-subalgebra
of $\RR[y_0,\yy]$,
\item[(ii)] a Cohen-Macaulay algebra,
and
\item[(iii)]  its canonical module $\Omega \HHH_P$ is isomorphic to the ideal $\overline{\HHH}_P$, up to a shift in $\NN^2$-grading.
\end{itemize}
\end{conj}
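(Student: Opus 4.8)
The plan is to mirror Stanley's commutative-algebra proof of the Classical Ehrhart Theorem, with the three parts of Conjecture~\ref{conj:harmonic-algebra-omnibus} playing the roles, respectively, of Gordan's finite generation of $A_P$, Hochster's Cohen--Macaulayness of $A_P$, and the Danilov--Stanley identification $\overline{A}_P \cong \Omega A_P$. Part (i) is logically the crux: once $\HHH_P$ is known to be a finitely generated $\NN^2$-graded $\RR$-algebra, the $y_0$-grading admits a homogeneous system of parameters, and I would deduce (ii) and (iii) by graded local duality (an Ishida-complex argument computing the local cohomology of $\HHH_P$, supported on the ``interior'' ideal $\overline{\HHH}_P$) together with a compatibility between the orbit-harmonics degeneration $\ZZ^n\cap mP \leadsto R(\ZZ^n\cap mP)$ and the interior/boundary stratification of $\cone(P)$. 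Note that part (iii) of Conjecture~\ref{conj:intro-omnibus} is exactly the Hilbert-series shadow of part (iii) here (with the $q^d$ there being the $\NN^2$-grading shift $\Omega\HHH_P \cong \overline{\HHH}_P$), so a graded version of Ehrhart--Macdonald reciprocity, relating $\Hilb(R(\ZZ^n\cap mP),q)$ to $\Hilb(R(\ZZ^n\cap\interior{mP}),q)$, is the main new input for (iii).

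\textbf{Finite generation.} For part (i) I would pursue two complementary routes. The direct route is to bound, in terms of $P$, a degree $m_0$ so that
\[
V_{\ZZ^n \cap mP} \;=\; \sum_{\substack{a+b=m\\ a,b \geq 1}} V_{\ZZ^n \cap aP}\cdot V_{\ZZ^n \cap bP}
\qquad\text{for all } m > m_0,
\]
which, by Theorem~\ref{thm:minkowski-closure}, amounts to \emph{surjectivity} (not merely the containment already in hand) of these multiplication maps in high degree, and which immediately yields $\HHH_P = \RR\bigl[\bigoplus_{m \le m_0} y_0^m \otimes V_{\ZZ^n \cap mP}\bigr]$ by downward induction on degree. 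To obtain such surjectivity one would run the exponential-basis analysis of Section~\ref{sec:Minkowski} (Lemmas~\ref{lem:inhomogeneous-exponential-divided} and~\ref{lem:projected-expontential-basis}) uniformly over the rescaled loci $\epsilon(\ZZ^n\cap mP)$ and then pass to $\epsilon\to 0$; the real content is to control the saturation depth appearing in Lemma~\ref{lem:divided-harmonic-space} \emph{uniformly in $m$}. One expects the bound $m_0$ to be of Bruns--Gubeladze--Trung type, so that $\HHH_{cP}$ is generated in a single $y_0$-degree once $c \geq \dim P - 1$; Theorem~\ref{thm:intro-three-constructions-on-series}(i) would then reduce the general case to a Veronese subalgebra of such an $\HHH_{cP}$. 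The second route is a degeneration argument: assemble the rescaled harmonic spaces into a $\kk[\epsilon]$-algebra $\mathcal{R} = \bigoplus_m y_0^m \otimes \II_{\kk[\epsilon]}(\epsilon(\ZZ^n\cap mP))^\perp$ inside $\kk[\epsilon][y_0]\otimes \hat{\Div}_{\kk[\epsilon]}(\yy)$, observe (via Lemma~\ref{lem:inhomogeneous-exponential-divided} and the exponential product law) that inverting $\epsilon$ recovers the affine semigroup ring $A_P$ up to base change while setting $\epsilon=0$ recovers $\HHH_P$, and then deduce finite generation of $\HHH_P = \mathcal{R}/\epsilon\mathcal{R}$ from Gordan's theorem for $A_P$ by a flatness/generic-flatness argument over $\Spec\kk[\epsilon]$. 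The obstruction to this route is identical in substance: one must first know $\mathcal{R}$ is a finitely generated $\kk[\epsilon]$-algebra, which again reduces to uniform control of the $\epsilon$-torsion.

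\textbf{The main obstacle.} The essential difficulty is that Theorem~\ref{thm:minkowski-closure} supplies only the containment $V_\Zpoints\cdot V_{\Zpoints'}\subseteq V_{\Zpoints+\Zpoints'}$, and the examples immediately following it (contrasted with Example~\ref{ex:second-interesting-triangle-discussion}, where equality happens to hold) show the inclusion can be strict even when $\Zpoints+\Zpoints'=\ZZ^n\cap mP$. Thus we have no clean analogue of ``$A_P$ is generated in $y_0$-degree $1$ when $P$ is IDP'', and in fact the Reeve-tetrahedron computation~\eqref{eq:Vadym-Reeve-tetrahedron-counterexample} suggests $\HHH_P$ can require generators in $y_0$-degrees large enough to force $\nu > \dim P + 1$ denominator factors, so the bound $m_0$ must genuinely depend on the geometry of $P$ rather than on its dimension alone. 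A second, independent obstacle is that $\HHH_P$ is not reduced (the fibers of its orbit-harmonics degeneration are fat points supported at the cone axis), so neither Hochster's normality-based proof of Cohen--Macaulayness nor the toric-geometry proof of the canonical-module statement transfers directly; parts (ii) and (iii) will have to be established by working with the non-reduced bigraded structure itself, presumably through an explicit free resolution or an Ishida-type complex whose exactness encodes both Cohen--Macaulayness and the identification $\Omega\HHH_P\cong\overline{\HHH}_P$.
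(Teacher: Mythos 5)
You have not given a proof, and indeed the paper does not contain one: the statement you are addressing is Conjecture~\ref{conj:harmonic-algebra-omnibus}, which the authors explicitly leave open. What the paper does provide is (a) the discussion in the subsection following the conjecture showing how parts (i)--(iii) would imply most of Conjecture~\ref{conj:intro-omnibus} (your observation that Conjecture~\ref{conj:intro-omnibus}(iii) is the Hilbert-series shadow of (iii) here, up to getting the $\NN^2$-grading shift right, matches that discussion), and (b) verification of the conjecture in special families: for antiblocking polytopes, where Proposition~\ref{prop:lower-convex-semigroup} gives an $\NN^2$-graded identification $\HHH_P=A_P$ and the classical results of Gordan, Hochster, and Danilov--Stanley apply; for chain and order polytopes via Theorem~\ref{thm:chain-order} combined with the antiblocking case; and for individual examples such as the triangle of Section~\ref{sec:interesting-triangle-in-depth}, where finite generation and Cohen--Macaulayness are checked by exhibiting an explicit $\NN^2$-homogeneous system of parameters and a free basis over it. There is no general argument in the paper against which your two routes to (i), or your local-duality plan for (ii)--(iii), can be compared.

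As for the proposal itself, it is an honest strategy sketch rather than a proof, and the gaps you flag are exactly where the difficulty lies: Theorem~\ref{thm:minkowski-closure} only gives the containment $V_{\Zpoints}\cdot V_{\Zpoints'}\subseteq V_{\Zpoints+\Zpoints'}$, and the surjectivity in high $y_0$-degree that your first route needs (equivalently, the uniform-in-$m$ control of the saturation in Lemma~\ref{lem:divided-harmonic-space} that your second route needs) is not established anywhere, nor is the Ishida-type complex or resolution that would deliver (ii) and (iii); note also that even granting (i), Cohen--Macaulayness is a genuine extra property, not a formal consequence of graded local duality. Two specific cautions. First, your suggested Bruns--Gubeladze--Trung-type bound ``$\HHH_{cP}$ generated in a single $y_0$-degree once $c\geq\dim P-1$'' is speculative and sits uneasily with the paper's own evidence: unlike $A_P$, where the Cox--Haase--Hibi--Higashitani dilation result applies, the harmonic algebra of the area-$3$ triangle needs a generator in $y_0$-degree $3>\dim P$, and the non-IDP Reeve data suggest the generation degree depends on finer geometry of $P$. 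Second, in the degeneration route the claim that setting $\epsilon=0$ in $\mathcal{R}$ ``recovers $\HHH_P$'' needs care: Lemma~\ref{lem:surjective-map} gives a surjection of each graded piece onto $V_{\ZZ^n\cap mP}$, but identifying $\mathcal{R}/\epsilon\mathcal{R}$ with $\HHH_P$ (and arranging flatness over $\kk[\epsilon]$) requires controlling $\epsilon$-torsion, which, as you say, is the same problem in disguise. So the proposal is a reasonable research plan aligned with the paper's own framing, but every essential step remains open, consistent with the statement's status as a conjecture.
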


\begin{comment}
% This is the old version of Conj 5.5 conjecture, before Kurylenko showed us his guess for the Reeve tetrahedron E_P(t,q) that would contradict some parts
of Conj 1.1.

\begin{conj}
\label{conj:harmonic-algebra-omnibus}
Let $P$ be a $d$-dimensional lattice polytope in $\RR^n$, having $V$ vertices.

\begin{itemize}
\item [(i)] The harmonic algebra $\HHH_P$
is a Noetherian (finitely generated) $\RR$-subalgebra
of $\RR[y_0,\yy]$.
\item[(ii)] There exist $\NN^2$-homogeneous
elements $\{\theta_i\}_{1,2,\ldots,\nu}$ in
$\HHH_P$ with $\dim(P)+1 \leq \nu \leq V$,
generating an $\RR$-subalgebra $B \subseteq \HHH_P$
for which $\HHH_P$ is a finite ring extension.
\item[(iii)] The $\NN^2$-homogeneous generators for $\HHH_P$ and the elements $\{\theta_i\}$ 
from (ii) can be chosen with 
with bidegrees $(b_i,a_i)$ having $b_i \leq \#(P \cap \ZZ^n)-1$.
\item[(iv)] The algebra $\HHH_P$ is Cohen-Macaulay.
\item[(v)]  Its canonical module $\Omega \HHH_P$ is isomorphic to the ideal $\overline{\HHH}_P$, up to a shift in $\NN^2$-grading.
\end{itemize}
\end{conj}

\end{comment}

\subsection{Relation of  Conjecture~\ref{conj:harmonic-algebra-omnibus} to Conjecture~\ref{conj:intro-omnibus}}
We explain here the implications of the algebraic Conjecture~\ref{conj:harmonic-algebra-omnibus} on $\HHH_P$ for the enumerative
 Conjecture~\ref{conj:intro-omnibus}
 on its Hilbert series $\Eseries_P(t,q)$.
 
Note that when we specialize the $\NN^2$-grading of the
harmonic algebra $\HHH_P$ to the $\NN$-grading, by setting $q=1$, its Hilbert series $\Eseries_P(t,q)$ becomes
the Ehrhart series $\Eseries_P(t)$, matching that of 
the affine semigroup ring $A_P$:
\begin{equation}
\label{eq:same-singly-graded-Hilbs}
   \left[ \Hilb(\HHH_P,t,q) \right]_{q=1}
   = \left[ \Eseries_P(t,q) \right]_{q=1}
   = \Eseries_P(t) 
   =\Hilb(A_P,t),
\end{equation}
where the first equality uses \eqref{eq:harmonic-algebra-Hilb-is-q-Eseries} and the last is \eqref{eq:affine-semigroup-ring-Hilb-is-Eseries}.
Consequently,
if Conjecture~\ref{conj:harmonic-algebra-omnibus}(i) holds, so $\HHH_P$ is a finitely generated algebra, we know $\HHH_P$ has {\it Krull dimension} $d+1$, the same that of $A_P$.

Also, one could then choose a finite set of
{\it $\NN^2$-homogeneous} algebra generators, e.g., by taking the set of all $\NN^2$-homogeneous components from any finite list of algebra generators.   
Assume one has chosen such a list
of $\NN^2$-homogeneous algebra generators for
$\HHH_P$, or even just some $\{\theta_1,\ldots,\theta_\nu\}$ satisfying the weaker condition that they generate a subalgebra $B \subseteq \HHH_P$ over which $\HHH_P$ is a finite extension.
%as in Conjecture~\ref{conj:harmonic-algebra-omnibus}(ii).  
Name their
$\NN^2$-degrees $\deg(\theta_i)=(b_i,a_i)$ for $i=1,2,\ldots,\nu$, and create an $\NN^2$-graded polynomial ring $S:=\RR[X_1,\ldots,X_\nu]$ with variables
having
$\deg(X_i):=(b_i,a_i)$. Then the map $S\rightarrow \HHH_P$ sending $X_i \mapsto \theta_i$
makes $\HHH_P$ a finitely generated  $\NN^2$-graded $S$-module. Hilbert's Syzygy Theorem therefore predicts the existence of a finite $\NN^2$-graded free $S$-resolution of $\HHH_P$
\begin{equation}
\label{eq:MFR}
0 \leftarrow \HHH_P \leftarrow F_0 \leftarrow F_1 \leftarrow F_2 \leftarrow \cdots \leftarrow F_\nu \leftarrow 0.
\end{equation}
Here each free $S$-module $F_j$ in the resolution has the form
$$
F_j= \bigoplus_{(b,a) \in \NN^2} S(-(b,a))^{\beta_{j,(b,a)}}, 
$$
for some $\NN^2$-graded Betti numbers $\beta_{j,(b,a)}$, meaning $F_j$ has exactly $\beta_{j,(b,a)}$ of its $S$-basis elements of $\NN^2$-degree $(b,a)$. Taking the $\NN^2$-graded Euler characteristic of the resolution \eqref{eq:MFR} implies 
\begin{align*}
\Eseries_P(t,q)=
\Hilb(\HHH_P,t,q)&=\sum_{j=0}^\nu (-1)^j
\Hilb(S(-(b,a))^{\beta_{j,(b,a)}},t,q)\\
&=
\Hilb(S,t,q) \cdot
  \sum_{j=0}^\nu (-1)^j \sum_{(b,a)\in \NN^2}
\beta_{j,(b,a)} q^b t^a\\
&=
\frac{\sum_{j=0}^\nu (-1)^j \sum_{(b,a)\in \NN^2}
\beta_{j,(b,a)} q^b t^a}
  { \prod_{i=1}^\nu (1-q^{a_i} t^{b_i}) }.
\end{align*}
which shows the first rationality assertion in Conjecture~\ref{conj:intro-omnibus} on
$\Eseries_P(t,q)$, and also 
Conjecture~\ref{conj:intro-omnibus}(i).
This argument also works for the rationality assertion on $\intEseries_P(t,q)$:
the ideal $\overline{\HHH}_P$ in $\HHH_P$
would be finitely generated, due to Noetherian-ness of $\HHH_P$, and hence also a finitely generated $S$-module.  
%If one has finer information on the number $\nu$ of $\NN^2$-homogeneous generators of $\HHH_P$ as in Conjecture~\ref{conj:harmonic-algebra-omnibus}(ii),(iii), then these would imply Conjecture~\ref{conj:intro-omnibus}(i),(iv).

The Cohen-Macaulayness assertion Conjecture~\ref{conj:harmonic-algebra-omnibus}(ii) would 
imply an important special case of Conjecture~\ref{conj:intro-omnibus}(ii), as follows.  
Since $\HHH_P$ has Krull dimension $d+1$, when one picks $\{\theta_1,\ldots,\theta_\nu\}$ as in
the previous paragraph to generate
a subalgebra $B \subseteq \HHH_P$
having $\HHH_P$ as a finite extension,
necessarily $\nu \geq d+1$.  
Suppose $P$ is a simplex, and that $\nu=d+1$, meaning {\it
$\HHH_P$ contains an $\NN^2$-homogeneous system of parameters}\footnote{Noether's Normalization Lemma shows $\NN$-graded Noetherian $\RR$-algebras always contain an $\NN$-homogeneous system of parameters. But some
$\NN^2$-graded Noetherian $\RR$-algebras have {\it no} $\NN^2$-homogeneous system of parameters.
E.g., inside $\RR[t,x,y]$ with
$\NN^2$-grading where $\deg(t)=(1,0)$ and $\deg(x)=(0,1)=\deg(y)$,
the $\NN^2$-graded subalgebra $R=\RR[t,tx,ty^2,txy^2]$ of Krull dimension $3$ has
no $\NN^2$-homogeneous system of parameters $\{\theta_1,\theta_2,\theta_3\}$, and it has
$$\Hilb(R,t,q)=\frac{1-t^2q^3}{(1-t)(1-tq)(1-tq^2)(1-tq^3)}.$$}. Then the previous
paragraph shows that the hypothesis of
Conjecture~\ref{conj:intro-omnibus}(ii) holds, and we claim that Conjecture~\ref{conj:harmonic-algebra-omnibus}(ii) yields the conclusion of Conjecture~\ref{conj:intro-omnibus}(ii) for the numerator $N_P(t,q)$
of $\Eseries_P(t,q)$:  Cohen-Macaulayness implies $\HHH_P$ is a {\it free} $S$-module, so that the resolution \eqref{eq:MFR} stops at $F_0$, and
$$
\Eseries_P(t,q)=\Hilb(F_0,t,q)=
\frac{ \sum_{(b,a)\in \NN^2}
\beta_{0,(b,a)} q^b t^a}
  { \prod_{i=1}^\nu (1-q^{a_i} t^{b_i}) }.
$$
In other words, $N_P(t,q)=\sum_{(b,a)\in \NN^2}
\beta_{0,(b,a)} q^b t^a$.
A similar argument applies to the numerator $\overline{N}_P(t,q)$
of $\intEseries_P(t,q)$ in this case, assuming that Conjecture~\ref{conj:harmonic-algebra-omnibus}(iii) holds: this
would imply that $\overline{\HHH}_P$
is isomorphic to the Cohen-Macaulay module $\Omega \HHH_P$, and hence is also a free $S$-module.

Lastly, Conjecture~\ref{conj:harmonic-algebra-omnibus}(iii)
could imply the $q$-reciprocity assertion Conjecture~\ref{conj:intro-omnibus}(iii), if the shift in $\NN^2$-grading for the isomorphism $\Omega \HHH_P \cong \overline{\HHH}_P$ works out correctly: one knows
that the canonical module $\Omega \HHH_P$ for an
$\NN^2$-graded Cohen-Macaulay ring $\HHH_P$ of Krull dimension $d+1$ satisfies 
$$
\Hilb(\Omega\HHH_P,t,q)= (-1)^{d+1} q^B t^A \cdot \Hilb(\HHH_P,t^{-1},q^{-1})
$$
for some choice of $\NN^2$-degree $(B,A)$; see Stanley \cite[\S I.12, p.49]{Stanley-CCA}.

%%%%%%%
\subsection{The singly-graded rings $\HHH_P$
and $A_P$ are generally not isomorphic}
\label{sec:lack-of-singly-graded-isomorphism}

In light of the fact \eqref{eq:same-singly-graded-Hilbs} that $A_P$ and $\HHH_P$ have the same singly-graded Hilbert series, one might wonder whether they are isomorphic as $\NN$-graded algebras.  This fails already for small lattice polygons, e.g., this triangle $P=\conv\{(0,0),(1,2),(3,1)\}$,
for which a portion of $\ZZ^3 \cap \cone(P)$ is shown below:
\begin{center}
    \tdplotsetmaincoords{70}{110}
    \begin{tikzpicture}[scale = 0.4]
        \tdplotsetrotatedcoords{90}{0}{-10}

        \draw[fill = gray!30, tdplot_rotated_coords] (0,0,2) -- (1,2,2) -- (3,1,2) -- (0,0,2);
        \draw[fill = gray!30, tdplot_rotated_coords] (0,0,4) -- (2,4,4) -- (6,2,4) -- (0,0,4);
    
        \node[tdplot_rotated_coords] at (0,0,0) {$\bullet$};

        \node[tdplot_rotated_coords] at (0,0,2) {$\bullet$};
        \node[tdplot_rotated_coords] at (1,2,2) {$\bullet$};
        \node[tdplot_rotated_coords] at (3,1,2) {$\bullet$};
        \node[tdplot_rotated_coords] at (1,1,2) {$\bullet$};
        \node[tdplot_rotated_coords] at (2,1,2) {$\bullet$};

        \node[tdplot_rotated_coords] at (0,0,4) {$\bullet$};
        \node[tdplot_rotated_coords] at (2,4,4) {$\bullet$};
        \node[tdplot_rotated_coords] at (6,2,4) {$\bullet$};
        \node[tdplot_rotated_coords] at (3,1,4) {$\bullet$};
        \node[tdplot_rotated_coords] at (1,1,4) {$\bullet$};
        \node[tdplot_rotated_coords] at (1,2,4) {$\bullet$};
        \node[tdplot_rotated_coords] at (2,2,4) {$\bullet$};
        \node[tdplot_rotated_coords] at (2,3,4) {$\bullet$};
        \node[tdplot_rotated_coords] at (2,1,4) {$\bullet$};
        \node[tdplot_rotated_coords] at (3,2,4) {$\bullet$};
        \node[tdplot_rotated_coords] at (3,3,4) {$\bullet$};
        \node[tdplot_rotated_coords] at (4,2,4) {$\bullet$};
        \node[tdplot_rotated_coords] at (4,3,4) {$\bullet$};
        \node[tdplot_rotated_coords] at (5,2,4) {$\bullet$};
      
        \draw[very thick, tdplot_rotated_coords] (0,0,0) -- (0,0,5);
        \draw[dashed, tdplot_rotated_coords] (0,0,0) -- (2.5,5,5);
        \draw[dashed, tdplot_rotated_coords] (0,0,0) -- (7,2.5,5);
    \end{tikzpicture}
\end{center} 
To see that $A_P \not\cong \HHH_P$ as $\NN$-graded algebras, one can check that 
\begin{align}
\label{eq:this-triangle-is-IDP}
(A_P)_1 \cdot (A_P)_1 &=(A_P)_2,\text{ but }\\
\label{eq:this-triangle-has-harmonic-algebra-not-1-generated}
(\HHH_P)_1 \cdot (\HHH_P)_1 & \subsetneq (\HHH_P)_2.
\end{align}
Checking \eqref{eq:this-triangle-is-IDP} means showing 
$
(\ZZ^2 \cap P) + (\ZZ^2 \cap P) = (\ZZ^2 \cap 2P),
$
which can be done directly. 
Meanwhile, the proper inclusion \eqref{eq:this-triangle-has-harmonic-algebra-not-1-generated} follows from examining the first few terms of $\Eseries_P(t,q)$:
\[
\Eseries_P(t,q) = 1 + (1 + 2q + 2q^2) t + (1 + 2q + 3q^2 + 4 q^3 + 3 q^4 + q^5) t^2 + o(t^2) 
\]
One sees $(\HHH_P)_1 \cdot (\HHH_P)_1$ has as its highest nonvanishing $q$-degree  $q^2\cdot q^2=q^4$, but for  $(\HHH_P)_2$ it is $q^5$.
This example seems related to the deformation $S/\II(\Zpoints) \leadsto S/\gr \, \II(\Zpoints)$ forgetting algebra structure.

\begin{remark}
In contrast to the above example, it may be interesting to note that there is a very similar-sounding algebra to $\HHH_P$ which actually 
{\it is} isomorphic to $A_P$.
Recall from Section~\ref{sec:completions-and-exponentials} that for the inhomogeneous ideal $\II(\Zpoints)$ one can still define a perp space $\II(\Zpoints)^\perp$ inside the power series completion $\hat{\Div}=\hat{\Div}_\RR[\yy] \cong \RR[[y_1,\ldots,y_n]]$.  The space $\II(\Zpoints)^\perp$  has $\RR$-basis given in Lemma~\ref{lem:inhomogeneous-exponential-divided} by
$\{ \exp(\zz \cdot \yy) \}_{\zz \in \Zpoints}$, and \eqref{eq:exponential-law} asserts that these basis elements multiply via the usual
rule 
$$
\exp( \zz \cdot \yy) \cdot \exp( \zz' \cdot \yy) = \exp( (\zz+\zz') \cdot \yy).
$$
If one then considers the ring $\RR[y_0][[y_1,\ldots,y_n]]$ as $\NN$-graded by $\deg(y_0^m)=1$
and $\deg(y_i)=0$ for $i=1,2,\dots,n$, one can
compile the spaces
$\II(\ZZ^n \cap mP)^\perp$ for $m\geq 0$ to
define an $\NN$-graded subalgebra 
$$
\hat{\HHH}_P:=\bigoplus_{m=0}^\infty \RR \cdot y_0^m \otimes \II(\ZZ^n \cap mP)^\perp
\quad \text{ inside }
\RR[y_0][[y_1,\ldots,y_n]].
$$
Since $A_P$ has $\RR$-basis $\{y_0^m \yy^\zz: \zz \in \ZZ^n \cap mP\}$,
with $y_0^m \yy^\zz \cdot y_0^{m'} \yy^{\zz'} = y_0^{m+m'} \yy^{\zz+\zz'}$,
the map 
$$
\begin{array}{rcl}
A_P &\longrightarrow &\hat{\HHH}_P\\
y_0^m \yy^\zz &\longmapsto& y_0^m \otimes \exp(\zz \cdot \yy)
\end{array}
$$
is an $\NN$-graded $\RR$-algebra isomorphism.
\end{remark}

%%%%%%%%
\subsection{Example: antiblocking polytopes revisited}

In spite of the example in Section~\ref{sec:lack-of-singly-graded-isomorphism}, there is a subfamily of lattice polytopes $P$ for which one has not only an $\NN$-graded,
but even an $\NN^2$-graded algebra isomorphism 
$\HHH_P \cong A_P$, and Conjecture~\ref{conj:harmonic-algebra-omnibus} holds:  the {\it antiblocking polytopes}
of Section~\ref{sec:antiblocking}.  

In order to discuss this, we first note that
for a lattice polytope $P \subset \RR^n$, the
affine semigroup ring $A_P$ has a finer $\NN \times \ZZ^n$-grading.
This is because it is a homogeneous subalgebra
of $\RR[y_0,y_1^{\pm 1},\ldots,y_n^{\pm 1}]$
with respect to the $\NN \times \ZZ^n$-multigrading
in which $\deg(y_i)=(0,\ldots,0,1,0,\ldots,0)$
is the $i^{th}$ standard basis vector.  One can
then specialize this to a $\NN^2$-grading via $\deg(y_0)=(1,0)$ and $\deg(y_i)=(0,1)$ for $i=1,2,\ldots,n$.  In other words, this $\NN^2$-grading has $\deg(y_0^m y_1^{z_1} \cdots y_n^{z_n})=(m,z_1+\cdots+z_n)$, tracked
by the Hilbert series monomial $t^m q^{z_1+\cdots+z_n}$.

\begin{prop}
    \label{prop:lower-convex-semigroup}
    For any antiblocking lattice polytope $P \subseteq \RR^n$ one has an $\NN^2$-graded $\RR$-algebra isomorphism $A_P \longrightarrow \HHH_P$ defined by the bijection on $\RR$-bases $y_0^m \yy^\zz \longmapsto y_0^m \otimes \yy^\zz$ for $\zz \in \ZZ^n \cap mP$. 
    That is, identifying $\RR[y_0] \otimes_\RR \Div_\RR(\yy)
    \cong \RR[y_0,\yy]$, one has equality $A_P=\HHH_P$
    of the two subalgebras. 
    
Consequently, Conjecture~\ref{conj:harmonic-algebra-omnibus} holds for antiblocking polytopes.
\end{prop}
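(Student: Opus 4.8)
The plan is to show that for an antiblocking $P$ the subspace $\HHH_P \subseteq \RR[y_0,\yy]$ coincides on the nose with the affine semigroup ring $A_P$, and then deduce Conjecture~\ref{conj:harmonic-algebra-omnibus} from the classical structure theory of normal affine semigroup rings.

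First I would establish the equality $A_P = \HHH_P$. By Proposition~\ref{prop:antiblocking-polytopes-yield-shifted-loci}(i) each finite locus $\ZZ^n \cap mP$ is shifted, so Lemma~\ref{lem:lower-harmonic}(iii) gives $V_{\ZZ^n \cap mP}$ the monomial $\RR$-basis $\{\yy^\zz : \zz \in \ZZ^n \cap mP\}$ inside $\Div_\RR(\yy) = \RR[\yy]$. Under the identification $\RR[y_0] \otimes_\RR \Div_\RR(\yy) \cong \RR[y_0,\yy]$, the algebra $\HHH_P = \bigoplus_m \RR \cdot y_0^m \otimes_\RR V_{\ZZ^n \cap mP}$ therefore has $\RR$-basis $\{ y_0^m \yy^\zz : m \geq 0,\ \zz \in \ZZ^n \cap mP \}$. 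Since $P$ is antiblocking, $mP \subseteq \RR_{\geq 0}^n$, so $\ZZ^{n+1} \cap \cone(P) \subseteq \NN^{n+1}$ and $A_P$ likewise sits inside $\RR[y_0,\yy]$ with exactly this same monomial $\RR$-basis (using $\cone(P) \cap (\{m\} \times \RR^n) = \{m\} \times mP$). Two $\RR$-subspaces of $\RR[y_0,\yy]$ with the same $\RR$-basis are equal, and both are closed under the ambient multiplication ($A_P$ by construction, $\HHH_P$ by Proposition~\ref{prop:harmonic-algebra-is-an-algebra}); hence $A_P = \HHH_P$ as $\NN^2$-graded $\RR$-subalgebras, the claimed isomorphism being the restriction of the identity $y_0^m \yy^\zz \mapsto y_0^m \otimes \yy^\zz$.

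Next I would identify $\overline{\HHH}_P$ with a graded twist of $\overline{A}_P$; this is the one delicate point, since $\ZZ^n \cap \interior{mP}$ is itself not shifted. Write $d = \dim P$ and reindex coordinates as in Proposition~\ref{prop:antiblocking-polytopes-yield-shifted-loci}(ii), a permutation in $GL_n(\ZZ)$ that acts compatibly on $A_P, \overline{A}_P, \HHH_P, \overline{\HHH}_P$; then $\Zpoints_m' := (\ZZ^n \cap \interior{mP}) - (\one_d,\origin_{n-d})$ is shifted for every $m$. Because translation by a vector leaves $\gr\,\II$ (hence the harmonic space) unchanged — this is the $h_0 = \mathrm{id}$ case of the computation $\gr\,\II(h\Zpoints) = h_0(\gr\,\II(\Zpoints))$ in the proof of Proposition~\ref{prop:affine-invariance} — we get $V_{\ZZ^n \cap \interior{mP}} = V_{\Zpoints_m'}$, which by Lemma~\ref{lem:lower-harmonic}(iii) has $\RR$-basis $\{ \yy^{\zz - (\one_d,\origin_{n-d})} : \zz \in \ZZ^n \cap \interior{mP} \}$; these are genuine monomials of $\RR[\yy]$ since every interior point of an antiblocking dilate has all its active coordinates $\geq 1$ (as established in that same proof). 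Consequently $\overline{\HHH}_P = \{ (y_1 \cdots y_d)^{-1} f : f \in \overline{A}_P \}$ inside $\RR[y_0,\yy]$, and multiplication by $(y_1 \cdots y_d)^{-1}$ inside the fraction field of the domain $A_P = \HHH_P$ is an $\HHH_P$-linear bijection $\overline{A}_P \to \overline{\HHH}_P$ shifting the $\NN^2$-grading by $(0,d)$. Thus $\overline{\HHH}_P \cong \overline{A}_P$ as $\NN^2$-graded $\HHH_P$-modules up to a grading shift.

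Finally I would conclude Conjecture~\ref{conj:harmonic-algebra-omnibus} for $P$ by transporting the classical facts along $\HHH_P = A_P$: part (i) is Gordan's lemma \cite{Gordan} that $A_P$ is a finitely generated $\RR$-algebra; part (ii) is Hochster's theorem \cite{Hochster} that $A_P$ is Cohen--Macaulay; and for part (iii), the Danilov--Stanley identification $\Omega A_P \cong \overline{A}_P$ up to a grading shift \cite{Danilov, Stanley-Diophantine}, composed with the isomorphism $\overline{A}_P \cong \overline{\HHH}_P$ up to a grading shift from the previous paragraph, yields $\Omega \HHH_P = \Omega A_P \cong \overline{\HHH}_P$ up to a grading shift. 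The main obstacle is the middle paragraph: one must resist the temptation to treat the interior locus like the full locus, instead computing its harmonic space via its shifted translate and keeping precise track of the monomial factor $y_1 \cdots y_d$ introduced, so that $\overline{\HHH}_P$ is recognized as a graded twist of $\overline{A}_P$ rather than being merely abstractly isomorphic to it — which is exactly what is needed to match the grading-shift clause of Conjecture~\ref{conj:harmonic-algebra-omnibus}(iii).
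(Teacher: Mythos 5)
Your proposal is correct and follows essentially the same route as the paper: identify $A_P=\HHH_P$ via the shiftedness of $\ZZ^n\cap mP$ and Lemma~\ref{lem:lower-harmonic}(iii), then import Gordan, Hochster, and Danilov--Stanley. Your middle paragraph, identifying $\overline{\HHH}_P$ as the graded twist $(y_1\cdots y_d)^{-1}\overline{A}_P$ via translation-invariance of $\gr\,\II$ and Proposition~\ref{prop:antiblocking-polytopes-yield-shifted-loci}(ii), correctly supplies a detail that the paper's proof leaves implicit but which is needed to match part (iii) of Conjecture~\ref{conj:harmonic-algebra-omnibus} as stated for $\overline{\HHH}_P$ rather than $\overline{A}_P$.
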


\begin{proof}
    The identification $A_P=\HHH_P$  follows from Lemma~\ref{lem:lower-harmonic}(iii). Then Conjecture~\ref{conj:harmonic-algebra-omnibus} follows from the previously mentioned results of Gordan, of Hochster and of Danilov and Stanley, showing that $A_P$ is finitely-generated, Cohen-Macaulay,
    and with canonical module $\Omega A_P \cong \overline{A}_P$, respectively.
    %For parts (ii),(iii), note that if the polytope $P$ has vertices $\{\zz^{(i)}\}_{i=1,2,\ldots,V}$, then $A_P$ contains $\NN^2$-homogeneous elements $\{\theta_i=y_0^1 \yy^{\zz^{(i)}}\}_{i=1,2,\ldots,V}$ generating a subalgebra $B$ for which $B \subseteq A_P$ is a finite extension.  The proof of Gordan's result also shows that $A_P$ is generated as a $B$-module by the elements \begin{equation}\label{eq:semi-open-zonotope-lattice-points} \left\{y_0^m \yy^\zz: (m,\zz) = \sum_{i=1}^V c_i \cdot (1,\zz^{(i)}) \in \ZZ^{n+1} \text{ with } 0 \leq c_i < 1\right\}. \end{equation} These elements in \eqref{eq:semi-open-zonotope-lattice-points} have $\NN^2$-degrees $(b,a)$ with $0 \leq b \leq V-1$, and together with  $\{\theta_i\}_{i=1,2,\ldots,V}$ generate $A_P$ as an algebra. Since $V \leq \#\ZZ^n \cap P$, the degree bound in Conjecture~\ref{conj:harmonic-algebra-omnibus}(iii) holds.
\end{proof}

%%%%%%%%
\subsection{Example: Chain and order polytopes}
\label{sec: chain-order-polytope}
Let $(\PPP, \prec)$ be a finite poset. Stanley \cite{stanley1986two} associated two polytopes in the positive orthant of $\RR^\PPP$ to the poset $\PPP$ as follows. The {\em chain polytope} $C_\PPP$ consists of functions $f: \PPP \to \RR$ satisfying 
\begin{center}
$0 \leq f(p_1) + \cdots + f(p_r) \leq 1$ for each chain $p_1 \prec \cdots \prec p_r$ in $\PPP$.
\end{center} 
The {\em order polytope} $O_\PPP$ consists of functions $g: \PPP \to \RR$ satisfying
\[\begin{cases}  g(p) \geq 0 & \text{for all $p \in \PPP$,} \\ g(p) \leq g(p') & \text{whenever $p \prec p'$.} \end{cases}\]

The chain polytope $C_\PPP$ is antiblocking, so the previous example applies to describe $\HHH_{C_\PPP}$ and proves that Conjecture~\ref{conj:harmonic-algebra-omnibus} holds. Although the order polytope $O_\PPP$ is almost never antiblocking, we will show that Conjecture~\ref{conj:harmonic-algebra-omnibus} holds for this family of polytopes, as well.

For any $m \geq 0$, Stanley introduced a piecewise-linear map
\begin{equation}
    \varphi: m O_\PPP \to m C_\PPP
\end{equation}
given by the formula
\begin{equation}
    \varphi g(p) = g(p) - \max_{p' \prec p} g(p')
\end{equation}
where the maximum is taken over elements $p' \in \PPP$ covered by $p$. If $p$ is a minimal element of $\PPP$, this maximum is interpreted to be 0. Stanley proved that $\varphi$ is bijective, and restricts to a bijection
\begin{equation}
\label{eq:Stanley-bijection}
m O_{\PPP} \cap \ZZ^{\PPP} \xrightarrow{\, \, \sim \, \,} m C_\PPP \cap \ZZ^{\PPP}.
\end{equation}
Consequently, $\Ehr_{O_\PPP}(m)=\Ehr_{C_\PPP}(m)$,
and $\Eseries_{O_\PPP}(t)=\Eseries_{C_\PPP}(t)$.
Observe that the map $\varphi$ is only piecewise-linear; indeed, the chain and order polytopes are not in general affine-equivalent. Nevertheless, we have the following result.

\begin{thm}
    \label{thm:chain-order}
    For finite posets $\PPP$,
    inside $\RR[y_0,\yy]$,
    one has equality $\HHH_{C_\PPP}=\HHH_{O_\PPP}$
    of 
    the harmonic algebras of its chain and  order polytopes, as well as equality
    $\overline{\HHH}_{C_\PPP}= \overline{\HHH}_{O_\PPP}$
    for their interior ideals.
\end{thm}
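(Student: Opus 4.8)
The plan is to reduce the theorem, for each fixed $m \ge 0$, to the purely algebraic identity $\gr\,\II(\ZZ^\PPP \cap mO_\PPP) = \gr\,\II(\ZZ^\PPP \cap mC_\PPP)$ of homogeneous ideals in $S = \RR[x_p : p \in \PPP]$ (and its analogue with interiors). This suffices because the degree-$m$ graded piece of $\HHH_P$ is $\RR \cdot y_0^m \otimes V_{\ZZ^\PPP \cap mP}$ with $V_\Zpoints = \gr\,\II(\Zpoints)^\perp \subseteq \Div = \RR[y_p : p \in \PPP]$, and in each polynomial degree $d$ the perfect pairing $\langle -, - \rangle : S_d \times \Div_d \to \RR$ makes $U \mapsto U^\perp$ an inclusion-reversing bijection between subspaces of $S_d$ and of $\Div_d$; hence $V_\Zpoints = V_{\Zpoints'}$ if and only if $\gr\,\II(\Zpoints) = \gr\,\II(\Zpoints')$. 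Since $C_\PPP$, and every dilate $mC_\PPP$, is antiblocking, Lemma~\ref{lem:lower-harmonic} identifies $\gr\,\II(\ZZ^\PPP \cap mC_\PPP)$ as the \emph{monomial} ideal $J_m$ spanned by $\{\xx^\aa : \aa \in \NN^\PPP \setminus (\ZZ^\PPP \cap mC_\PPP)\}$. An elementary check then shows the minimal monomial generators of $J_m$ are exactly the $\xx^\aa$ for which the support of $\aa$ is a chain $p_1 \prec \cdots \prec p_r$ of $\PPP$ with $a_{p_1} + \cdots + a_{p_r} = m+1$: a lattice point outside $mC_\PPP$ that is minimal for the componentwise order cannot have two incomparable elements in its support, and on a chain the minimal violation has sum exactly $m+1$.

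The heart of the argument is the inclusion $J_m \subseteq \gr\,\II(\ZZ^\PPP \cap mO_\PPP)$, and I would prove it by writing down, for each minimal generator $\xx^\aa$ as above, an explicit element of $\II(\ZZ^\PPP \cap mO_\PPP)$ whose top homogeneous component is $\xx^\aa$. Put $s_0 = 0$ and $s_i = a_{p_1} + \cdots + a_{p_i}$, so $s_r = m+1$, and set
\[
f_\aa \;:=\; \prod_{i=1}^{r}\ \prod_{j = s_{i-1}}^{s_i - 1} \bigl( x_{p_i} - j \bigr) \;\in\; S.
\]
The $i$-th block has degree $s_i - s_{i-1} = a_{p_i}$, so $\tau(f_\aa) = \prod_i x_{p_i}^{a_{p_i}} = \xx^\aa$. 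And $f_\aa$ involves only the variables $x_{p_1},\dots,x_{p_r}$ and vanishes on $\ZZ^\PPP \cap mO_\PPP$: a lattice point $g$ of $mO_\PPP$ is an order-preserving map $\PPP \to \{0,1,\dots,m\}$, so $0 \le g(p_1) \le g(p_2) \le \cdots \le g(p_r) \le m$; if no block of $f_\aa$ vanished at $g$, then $g(p_1) \notin \{0,\dots,s_1-1\}$ forces $g(p_1) \ge s_1$, then $g(p_2) \ge g(p_1) \ge s_1$ together with $g(p_2) \notin \{s_1,\dots,s_2-1\}$ forces $g(p_2) \ge s_2$, and inductively $g(p_r) \ge s_r = m+1 > m$, a contradiction. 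Hence $\xx^\aa \in \gr\,\II(\ZZ^\PPP \cap mO_\PPP)$ for every minimal generator of $J_m$, so $J_m \subseteq \gr\,\II(\ZZ^\PPP \cap mO_\PPP)$.

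To promote this to equality I would compare dimensions. By Proposition~\ref{prop:graded-quotient-is-a-gr}(iii), $\dim_\RR S/\gr\,\II(\ZZ^\PPP \cap mP) = \#(\ZZ^\PPP \cap mP)$ for both $P = O_\PPP$ and $P = C_\PPP$, and Stanley's bijection \eqref{eq:Stanley-bijection} gives $\#(\ZZ^\PPP \cap mO_\PPP) = \#(\ZZ^\PPP \cap mC_\PPP)$. Thus the natural surjection $S/J_m \twoheadrightarrow S/\gr\,\II(\ZZ^\PPP \cap mO_\PPP)$ is a map between $\RR$-vector spaces of the same finite dimension, hence an isomorphism, so $J_m = \gr\,\II(\ZZ^\PPP \cap mO_\PPP)$. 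Taking perps degree by degree gives $V_{\ZZ^\PPP \cap mC_\PPP} = V_{\ZZ^\PPP \cap mO_\PPP}$ for all $m$, and therefore $\HHH_{C_\PPP} = \HHH_{O_\PPP}$ inside $\RR[y_0,\yy]$.

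For the interior ideals $\overline{\HHH}$ the scheme is the same and rests on three ingredients. First, Stanley's transfer map $\varphi$ is a piecewise-linear homeomorphism with integral inverse $\varphi^{-1} f(p) = \max_{\cc} \sum_{q \in \cc} f(q)$ (maximum over chains $\cc$ with top element $p$), so it restricts to an integral bijection $\ZZ^\PPP \cap \interior{mO_\PPP} \to \ZZ^\PPP \cap \interior{mC_\PPP}$; in particular these two sets have the same cardinality. Second, by Proposition~\ref{prop:antiblocking-polytopes-yield-shifted-loci}(ii) the interior lattice points of $mC_\PPP$, translated by a fixed integer vector, form a shifted locus, so $\gr\,\II(\ZZ^\PPP \cap \interior{mC_\PPP})$ is again a monomial ideal, and one checks its minimal generators are once more supported on single chains, now with the sum condition dictated by the strict chain-facet inequalities and the boundary conditions of $C_\PPP$. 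Third, for each such generator the analogous product $\prod_i \prod_j (x_{p_i} - j)$, with the blocks shifted to reflect the conditions $g(p) \ge 1$ on minimal elements and $g(p) \le m-1$ on maximal ones, lies in $\II(\ZZ^\PPP \cap \interior{mO_\PPP})$ and has the prescribed top term, by the same telescoping estimate. The dimension count then forces $\gr\,\II(\ZZ^\PPP \cap \interior{mO_\PPP}) = \gr\,\II(\ZZ^\PPP \cap \interior{mC_\PPP})$, hence $\overline{\HHH}_{C_\PPP} = \overline{\HHH}_{O_\PPP}$. I expect the only genuinely delicate step to be this last one: correctly identifying the minimal monomial generators of $\gr\,\II(\ZZ^\PPP \cap \interior{mC_\PPP})$ and pinning down the exact block boundaries $s_{i-1}, s_i$ so that both the telescoping inequality still closes and the leading monomial comes out right; everything else is bookkeeping on top of the two structural inputs already in the paper (Lemma~\ref{lem:lower-harmonic}/Proposition~\ref{prop:antiblocking-polytopes-yield-shifted-loci} on the chain side, and Stanley's bijection for the dimension count).
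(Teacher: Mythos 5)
Your treatment of the closed polytopes is correct and is essentially the paper's own argument: the paper likewise reduces to the single inclusion $\gr\,\II(mC_\PPP\cap\ZZ^\PPP)\subseteq\gr\,\II(mO_\PPP\cap\ZZ^\PPP)$, kills each offending monomial with the same telescoping product of linear factors along a violated chain (it does this for \emph{every} monomial $\xx^f$ with $f\notin mC_\PPP$, carrying along the off-chain factor $\prod_{p\neq p_1,\dots,p_r}x_p^{f(p)}$, whereas you restrict to the minimal generators of the monomial ideal --- a harmless simplification, and your characterization of those generators as chain-supported of degree $m+1$ is right), and closes with the same dimension count coming from Stanley's bijection \eqref{eq:Stanley-bijection}.

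The gap is in the interior half, and it sits exactly where you flagged the delicacy --- but the recipe you sketch would fail as stated. After translating by $\one$ and invoking Proposition~\ref{prop:antiblocking-polytopes-yield-shifted-loci}(ii) and Lemma~\ref{lem:lower-harmonic}, the monomial $\xx^g$ lies in $\gr\,\II(\interior{mC_\PPP}\cap\ZZ^\PPP)$ if and only if $g+\one$ violates some strict facet inequality of $mC_\PPP$, i.e.\ if and only if some \emph{maximal} chain $C$ has $\sum_{p\in C}g(p)\ \ge\ m-\#C$. The threshold depends on the length of $C$, not merely on the support of $g$ together with the conditions ``$\ge 1$ at minimal elements, $\le m-1$ at maximal elements.'' Accordingly, a minimal generator supported on a chain $S$ has degree $m-\max\{\#C:\ C\ \text{a maximal chain with}\ C\supseteq S\}$, and to annihilate it on $\interior{mO_\PPP}\cap\ZZ^\PPP$ you must harvest one unit of slack at \emph{every} cover of such a maximal chain, including covers through elements not in the support. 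Concretely, for $\PPP$ the chain $q_1\prec q_2\prec q_3$ and $a,b\ge 1$ with $a+b=m-3$, the monomial $x_{q_1}^{a}x_{q_3}^{b}$ is a minimal generator, but a product built only on $\{q_1,q_3\}$ with blocks shifted as you describe (even granting one unit of strictness between $q_1$ and $q_3$) fails to vanish at the interior point $(a+1,\,a+2,\,m-1)$ of $mO_\PPP$; the inequality you need is $x_{q_3}\ge x_{q_1}+2$, and the extra $+1$ comes from the intermediate element $q_2\notin\supp(g)$.

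The repair is routine once this is seen. For any $\xx^g$ in the interior monomial ideal choose a witnessing maximal chain $C=\{q_1\prec\cdots\prec q_L\}$ with $\sum_{j=1}^{L}g(q_j)\ge m-L$, set $t_0=0$ and $t_j=j+g(q_1)+\cdots+g(q_j)$, and take
\begin{equation*}
h_g(\xx)\ :=\ \prod_{p\notin C}x_p^{g(p)}\ \cdot\ \prod_{j=1}^{L}\ \prod_{i=t_{j-1}+1}^{t_j-1}\bigl(x_{q_j}-i\bigr),
\end{equation*}
whose $j$-th block has exactly $g(q_j)$ factors (possibly none). Then $\tau(h_g)=\xx^g$, and if $\xx\in\interior{mO_\PPP}\cap\ZZ^\PPP$ (a strictly order-preserving map to $\{1,\dots,m-1\}$) made no factor vanish, induction along $C$ would give $x_{q_j}\ge t_j$, hence $x_{q_L}\ge L+\sum_j g(q_j)\ge m>m-1$, a contradiction. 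This proves the inclusion for all monomials of the ideal, and your dimension count then finishes (your transfer-map bijection on interior lattice points is correct and is a mild variant of the paper's appeal to Ehrhart--Macdonald reciprocity). For comparison, the paper's own write-up of the interior half is also terse at precisely this point --- its stated spanning condition does not carry the $\#C$-dependence and its displayed annihilator uses only one unit of strictness --- so your instinct that this chain-length bookkeeping is the real crux of the interior statement is on target; it just remains to actually do it.
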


\begin{proof}
    To prove $\HHH_{C_\PPP} = \HHH_{O_\PPP}$ it suffices to establish for each $m \geq 0$, the equality of ideals $$\gr \, \II(m O_{\PPP} \cap \ZZ^{\PPP}) = \gr \, \II(m C_\PPP \cap \ZZ^{\PPP})$$ inside the polynomial ring $\RR[\xx]$ where $\xx = \{x_p \,:\, p \in \PPP\}$. 
    In fact, it suffices to show an inclusion,
\begin{equation}
\label{eq:chain-order-ideal-gr-ideal-inclusion}
    \gr \, \II(m C_\PPP \cap \ZZ^{\PPP}) \subseteq
    \gr \, \II(m O_\PPP \cap \ZZ^{\PPP}),
\end{equation}
since then the equality
$$
\#(m O_{\PPP} \cap \ZZ^{\PPP}) =\Ehr_{O_\PPP}(m)= \Ehr_{C_\PPP}(m)= \#(m C_\PPP \cap \ZZ^{\PPP})
$$
would show the
surjection
$
R(m C_\PPP \cap \ZZ^{\PPP}) \twoheadrightarrow R(m O_\PPP \cap \ZZ^{\PPP})
$
is bijective, via dimension-counting.

For any $f: \PPP \to \ZZ_{\geq 0}$, let $\xx^f := \prod_{p \in \PPP} x_p^{f(p)}$ in $\RR[\xx]$. Since $C_\PPP$ is antiblocking, we have
    \begin{equation*}
        \gr \, \II(m C_\PPP \cap \ZZ^{\PPP}) = \mathrm{span}_\RR \{ \xx^f \,:\, f: \PPP \to \ZZ_{\geq 0} \text{ is not a lattice point in $m C_\PPP$} \}.
    \end{equation*}
The desired inclusion \eqref{eq:chain-order-ideal-gr-ideal-inclusion} is then implied by the following claim.
    \begin{quote}
    {\bf Claim:} Suppose that $f: P \to \ZZ_{\geq 0}$ is not a lattice point of $m C_\PPP$. Then there exists an element $g \in \II(m O_{\PPP} \cap \ZZ^\PPP)$ such that $\xx^f = \tau(g)$.
    \end{quote}
To prove the Claim, first observe that 
    $$m O_\PPP \cap \ZZ^\PPP = \{ f: \PPP \to \{0,1,\dots,m\} \,:\, p \preceq p' \, \Rightarrow \, f(p) \leq f(p') \}.$$
    The required element $g \in \II(m O_{\PPP} \cap \ZZ^\PPP)$ may be constructed explicitly. Since $f \notin m C_\PPP$, there is a chain $p_1 \prec \cdots \prec p_r$ in $\PPP$ such that  $f(p_1) + \cdots + f(p_r) > m$.  Consider the polynomial
\begin{align*}
    g(\xx) &= x_{p_1} (x_{p_1} - 1) \cdots (x_{p_1} - (f(p_1) - 1)) \\
    & \quad \cdot (x_{p_2} - f(p_1)) (x_{p_2} - (f(p_1) + 1)) \cdots (x_{p_2} - (f(p_1) + f(p_2) - 1)) \\
    & \quad \cdot (x_{p_2} - (f(p_1)+f(p_2))) (x_{p_2} - (f(p_1)+f(p_2) + 1)) \cdots (x_{p_2} - (f(p_1) + f(p_2)+f(p_3) - 1)) \\
    &\qquad \qquad \qquad \vdots  \\
    & \quad \cdot \left(x_{p_r} - \sum_{i=1}^{r-1}f(p_i)\right) 
    \left(x_{p_r} - \sum_{i=1}^{r-1}f(p_i)-1)\right) \cdots \left(x_{p_r} - \sum_{i=1}^{r}f(p_i) \right)\\
    & \quad \cdot \prod_{p \, \neq \, p_1, \dots, p_r} x_p^{f(p)}
\end{align*}
One can readily check that 
$
\tau(g)=\prod_{p \, \neq \, p_1, \dots, p_r} x_p^{f(p)} \cdot \prod_{i=1}^r x_{p_r}^{f(p_r)} =\xx^f.
$
It remains to show that $g(\xx)=0$ for all $\xx$ in $\ZZ^\PPP \cap m O_\PPP$.  To see this, assume for the sake of contradication that $\xx$ lies in
$\ZZ^\PPP \cap m O_\PPP$ but $g(\xx) \neq 0$.  Adopting the
convention that $x_{p_0}:=0$, one can show by induction on $i$ that $x_{p_i} \geq f(p_1)+f(p_2) + \cdots + f(p_i)$
for each $i=0,1,2,\ldots,r$, as follows. 
The base case $i=0$ follows from $x_0=0$. In the inductive step, note that $p_{i-1} \prec p_i$ and $\xx \in O_\PPP$ implies that 
\begin{equation}
\label{eq:order-polytope-induction-inequality}
x_{p_i} \geq x_{p_{i-1}} \geq f(p_1)+f(p_2) + \cdots + f(p_{i-1}).
\end{equation}
On the other hand, the fact that $g(\xx) \neq 0$ implies that
\begin{equation}
\label{eq:root-avoidance}
x_{p_i} \not\in \left\{\sum_{j=1}^{i-1}f(p_j)\,\, ,\,\,
1+\sum_{j=1}^{i-1}f(p_j)\,\, , \,\, 
2+\sum_{j=1}^{i-1}f(p_j)\,\, , \,\, 
\cdots \,\, , \,\,  
f(p_i)-1+\sum_{j=1}^{i-1}f(p_j)\right\}.
\end{equation}
But then since $\xx$ lies in $\ZZ^\PPP$, together \eqref{eq:order-polytope-induction-inequality}, \eqref{eq:root-avoidance}  imply $x_{p_i} \geq f(p_1)+f(p_2) + \cdots + f(p_i)$, completing the inductive step.  However, one then reaches the contradiction $x_{p_r} \geq \sum_{i=1}^r f(p_i) > m$.

The proof that $\overline{\HHH}_{C_\PPP} = \overline{\HHH}_{O_\PPP}$ is similar.
It again suffices to prove for $m \geq 1$ the inclusions
\begin{equation}
\label{eq:chain-order-interior-gr-inclusion}
\gr \, \II(\interior{m C_\PPP} \cap \ZZ^\PPP) \subseteq \gr \, \II(\interior{m O_\PPP} \cap \ZZ^\PPP),
\end{equation}
since they would then be equalities
via dimension-count: as $C_\PPP, O_\PPP$ have the same classical Ehrhart series, then Ehrhart-Macdonald Reciprocity implies  
$\#(\interior{m C_\PPP} \cap \ZZ^\PPP) = \#(\interior{m O_\PPP} \cap \ZZ^\PPP)$.

To argue \eqref{eq:chain-order-interior-gr-inclusion},
note that since $C_\PPP$ is antiblocking, the ideal $\gr \, \II(\interior{m C_\PPP} \cap \ZZ^\PPP)$ is spanned over $\RR$ by monomials $\xx^f$ where $f: \PPP \to \ZZ_{\geq 0}$ satisfies  $N := f(p_1) + \cdots + f(p_r) > m-2$ for some chain $p_1 \prec \cdots \prec p_r$ in $\PPP$. If $\xx^f$ is such a monomial, consider the polynomial
\begin{align*}
g(\xx) 
&= (x_{p_1} - 1) (x_{p_1} - 2) \cdots (x_{p_1} - f(p_1)) \\
   &\quad \cdot (x_{p_1} -(f(p_1) + 1)) (x_{p_1} - (f(p_1) + 2)) \cdots (x_{p_1} - (f(p_1) +f(p_2))) \\
   &\quad \cdot (x_{p_1} -(f(p_1) +f(p_2) + 1)) (x_{p_1} - (f(p_1) + f(p_2)+2)) \cdots (x_{p_1} - (f(p_1) +f(p_2)+f(p_3))) \\
    &\qquad \qquad \quad \vdots\\
   &\quad \cdot 
    \left(x_{p_r} - \left( \sum_{i=1}^{r-1}f(p_i)+1\right)\right) 
    \left(x_{p_r} - \left( \sum_{i=1}^{r-1}f(p_i)+2\right)\right)
    \cdots 
     \left(x_{p_r} -  \sum_{i=1}^{r}f(p_i)\right) \\
    &\quad \cdot \prod_{p \, \neq \, p_1, \dots, p_r} x_p^{f(p)}
\end{align*}
which one can readily check has $\tau(g)=\xx^f$. Using the description
$$\interior{m O_\PPP} \cap \ZZ^\PPP = \{ f: \PPP \to \{1,\dots,m-1\} \,:\, p \prec p' \, \Rightarrow \, f(p) < f(p') \}$$
one similarly checks $g(\xx)$ vanishes on $\interior{m O_\PPP} \cap \ZZ^\PPP$:  any $\xx$ in 
$\interior{m O_\PPP} \cap \ZZ^\PPP$ with $g(\xx)\neq 0$
would have $x_{p_i} \geq 1+\sum_{j=1}^i f(p_j)$ for $i=1,\ldots,r$,
and $x_{p_r} \geq 1+\sum_{j=1}^r f(p_j)>m-1$ is a contradiction.
\end{proof}

In contrast to Theorem~\ref{thm:chain-order}, for most posets $\PPP$ the affine semigroup rings of $C_\PPP$ and $O_\PPP$ are not isomorphic. Indeed, if $P, Q$ are lattice polytopes, even in the category of ungraded algebras it follows from \cite[Chap. 5]{bruns2009polytopes} that $A_P \cong A_Q$ if and only if $P$ is unimodularly equivalent to $Q$. Hibi and Li proved \cite{hibi2016unimodular} that for a poset $\PPP$, the chain polytope $C_\PPP$ and the order polytope $O_\PPP$ are unimodularly equivalent if and only if $\PPP$ does not contain the 5-element `X-shape' shown below as a subposet. Theorem~\ref{thm:chain-order} therefore gives an infinite family of lattice polytopes with isomorphic harmonic algebras but nonisomorphic semigroup rings.

\begin{center}
    \begin{tikzpicture}[scale = 0.5]
        \node (A) at (0,0) {$\bullet$};
        \node (B) at (2,0) {$\bullet$};
        \node (C) at (1,1) {$\bullet$};
        \node (D) at (0,2) {$\bullet$};
        \node (E) at (2,2) {$\bullet$};

        \draw (0,0) -- (1,1);
        \draw (2,0) -- (1,1);
        \draw (1,1) -- (0,2);
        \draw (1,1) -- (2,2);
    \end{tikzpicture}
\end{center}

\begin{remark}
\label{rmk:two-extreme-posets}
We note that it is easy to compute $\Eseries_{C_\PPP}(t,q)=\Eseries_{O_\PPP}(t,q)$ for the two ``extreme" posets $\PPP$ on $n$ elements: chains and antichains.  When $\PPP$ is the chain $1<2<\cdots<n$ on  
$\{1,2,\ldots,n\}$, by definition, $C_\PPP$ is the simplex $\pyr(\Delta^{n-1})$ considered in Section~\ref{sec:standard-simplices}. Therefore one has
$$
\Eseries_{C_\PPP}(t,q)=\Eseries_{O_\PPP}(t,q)
=\Eseries_{\pyr(\Delta^{n-1})}(t,q)=\frac{1}{(1-t)(1-tq)^n}.
$$
When $\PPP$ is an antichain on $n$ elements, by definition $O_\PPP=C_\PPP=[0,1]^n$, the {\it $n$-dimensional cube}.
Using the fact that these are antiblocking polytopes,
from Corollary~\ref{cor:harmonic-equal-naive-Ehrhart-for-antiblockers} one can conclude that
$$
\Ehr_{C_\PPP}(m;q)=\Ehr_{O_\PPP}(m;q)
=\Ehr_{[0,1]^n}(m;q)
=\sum_{\zz \in \{0,1,\ldots,m\}^n} q^{z_1+\cdots+z_n}
=( [m+1]_q )^n.
$$
This then implies that
\begin{align}
\notag
\Eseries_{C_\PPP}(t,q)=\Eseries_{O_\PPP}(t,q)
=\Eseries_{[0,1]^n}(t,q)
&= \sum_{m = 0}^\infty t^m  ( [m+1]_q )^n\\
\label{eq:Carlitz-Macmahon}
&= \frac{\sum_{w \in \symm_n}
t^{\mathrm{des}(w)} q^{\mathrm{maj}(w)}}{ (1-t) (1-t q) (1-t q^2) ... (1-t q^n) },
\end{align}
where the {\it descent number} $\mathrm{des}(w)$
and {\it major index} $\mathrm{maj}(w)$ are defined by
\begin{align*}
\mathrm{des}(w)&=\#\{i: w(i) > w(i+1)\},\\
\mathrm{maj}(w)&=\sum_{i:w(i) > w(i+1)} i.
\end{align*}
The last equality in \eqref{eq:Carlitz-Macmahon} is a famous result of Carlitz, sometimes also credited to MacMahon;  see the historical discussion 
surrounding Theorem 1.1 in Braun and Olsen \cite{BraunOlsen}.
\end{remark}

%%%%%%%%
\subsection{Case study: an interesting triangle}
\label{sec:interesting-triangle-in-depth}

For lattice polytopes $P$ that are {\it not} antiblocking, 
the analysis of $\HHH_P$ can be significantly more complicated. 
We return to study the lattice triangle $P := \mathrm{conv}\{\origin, (1,2), (2,1)\} \subset \RR^2$ 
from Remark~\ref{remark-first-interesting-triangle-discussion}, Example~\ref{ex:second-interesting-triangle-discussion}, with $P, 2P, 3P$ shown below:
\begin{center}
    \begin{tikzpicture}[scale = 0.3]
        \draw[fill = black!10] (0,0) -- (1,2) -- (2,1) --  (0,0);
        \draw[step=1.0,black,thin] (-0.5,-0.5) grid (2.5,2.5);
        \node at (0,0) {$\bullet$};
        \node at (1,1) {$\bullet$};
        \node at (1,2) {$\bullet$};
        \node at (2,1) {$\bullet$};
    \end{tikzpicture}
    \qquad 
       \begin{tikzpicture}[scale = 0.3]
        \draw[fill = black!10] (0,0) -- (2,4) -- (4,2) --  (0,0);
        \draw[step=1.0,black,thin] (-0.5,-0.5) grid (4.5,4.5);
        \node at (0,0) {$\bullet$};
        \node at (1,1) {$\bullet$};
        \node at (1,2) {$\bullet$};
        \node at (2,1) {$\bullet$};
        \node at (2,2) {$\bullet$};
        \node at (3,2) {$\bullet$};
        \node at (2,3) {$\bullet$};
        \node at (4,2) {$\bullet$};
        \node at (3,3) {$\bullet$};
        \node at (2,4) {$\bullet$};
    \end{tikzpicture}
     \qquad 
        \begin{tikzpicture}[scale = 0.3]
        \draw[fill = black!10] (0,0) -- (3,6) -- (6,3) --  (0,0);
        \draw[step=1.0,black,thin] (-0.5,-0.5) grid (6.5,6.5);
        \node at (0,0) {$\bullet$};
        \node at (1,1) {$\bullet$};
        \node at (1,2) {$\bullet$};
        \node at (2,1) {$\bullet$};
        \node at (2,2) {$\bullet$};
        \node at (3,2) {$\bullet$};
        \node at (2,3) {$\bullet$};
        \node at (4,2) {$\bullet$};
        \node at (3,3) {$\bullet$};
        \node at (2,4) {$\bullet$};
        \node at (4,3) {$\bullet$};
        \node at (3,4) {$\bullet$};
        \node at (5,3) {$\bullet$};
        \node at (4,4) {$\bullet$};
        \node at (3,5) {$\bullet$};
        \node at (6,3) {$\bullet$};
        \node at (5,4) {$\bullet$};
        \node at (4,5) {$\bullet$};
        \node at (3,6) {$\bullet$};
    \end{tikzpicture}
\end{center}
This triangle $P$ is not antiblocking, and is $\Aff(\ZZ^2)$-equivalent to the second lattice triangle of area $3$ in Figure~\ref{fig:q-Ehrhart-examples}, with these Ehrhart and $q$-Ehrhart series
\begin{align*}
\Eseries_P(t) &=\frac{1+ t + t^2 }{(1-t)^3}=1 +4t + 10t^2 + 19t^3 + \cdots\\
\Eseries_P(t,q) &= \frac{(1+qt)(1+ q t + q^2 t^2)}{(1-t)(1- q^2 t)(1 - q^3 t^2)}\\
&=1 + (1+2q+q^2)t+ (1+2q+3q^2+3q^3+q^4)t^2+(1+2q+3q^2+4q^3+5^4+3q^5+q^6)t^3+\cdots
\end{align*}
We explain below how one can use the harmonic algebra $\HHH_p$ to prove the above calculation of $\Eseries_P(t,q)$ is correct.  Before doing so, we note
two interesting features of $\Eseries_P(t,q)$.

First, note $[\Eseries_P(t,q)]_{q=1}=\Eseries_P(t)$ requires canceling a numerator/denominator factor at $q=1$.

Second, note that the bidegrees $t, q^2 t, q^3 t^2$ appearing in the denominator factors of $\Eseries_P(t,q)$ are {\it all different}, even though the triangle $P$ has a $GL(\ZZ^2)$-symmetry that swaps the two vertices $(2,1),(1,2)$.   Thus even in the special case where both  Conjecture~\ref{conj:harmonic-algebra-omnibus}(ii) and Conjecture~\ref{conj:intro-omnibus}(ii) hold and $P$ is a lattice triangle with $\nu=d+1=3$, one should not expect some simple and natural bijection between the three vertices of $P$ and the $\NN^2$-graded system of parameters $\{\theta_1,\theta_2,\theta_3\}$.

Expecting that Conjecure~\ref{conj:harmonic-algebra-omnibus} might hold, and examining $\Eseries_P(t,q)$, shown color-coded here,
\begin{equation}
\label{interesting-triangle-q-Ehrhhart-series-colored}
\Eseries_P(t,q) =
    \frac{{\color{brown}1+2qt+2 q^2 t^2+q^3 t^3}}{(1-{\color{red}t})(1- {\color{red}q^2 t})(1 - {\color{red}{q^3 t^2}})}.
\end{equation}
one might expect from the denominator that the harmonic algebra 
$\HHH_P$ contains a homogeneous system of parameters $\theta_1, \theta_2, \theta_3$ of $\NN^2$-degrees $(0,1),(1,2),(2,3)$. From the numerator one might expect to 
find six $\RR[\theta_1, \theta_2, \theta_3]$-basis elements whose $\NN^2$-degrees match $1 + 2qt + 2q^2 t^2 + q^3 t^3.$
Using {\tt Macaulay2}, one can compute $\NN$-graded
$\RR$-bases for the harmonic spaces $V_{\ZZ^2 \cap mP}$ with $m=0,1,2,3$, and hence $\NN^2$-graded $\RR$-bases for $(\HHH_P)_0, (\HHH_P)_1, (\HHH_P)_2, (\HHH_P)_3$.  We then rewrote these bases to make the action of 
$G=\ZZ/2\ZZ$ apparent in each graded piece, and identified candidates
for $\theta_1,\theta_2,\theta_3$ and the six $\RR[\theta_1, \theta_2, \theta_3]$-basis elements, color-coded in this table:
$$
\tiny
\begin{array}{|c||c|c|c|c|c|c|c|}\hline
(\HHH_P)_0&{\color{brown}1}& & & & & &\\\hline
(\HHH_P)_1&{\color{red}\theta_1=}&{\color{brown}y_0y_1}&{\color{red}\theta_2=}& & & &\\
           &{\color{red} y_0}&{\color{brown}y_0y_2}&{\color{red}y_0(y_1^2+y_1y_2+y_2^2)}          & & & &\\\hline
(\HHH_P)_2&y_0^2&y_0^2y_1&{\color{brown}y_0^2y_1^2}&y_0^2y_1^3& y_0^2(y_1^4+2y_1^3y_2& & \\
           & &y_0^2y_2&y_0^2y_1y_2&{\color{red}\theta_3=y_0^2(y_1^2y_2 + y_1y_2^2)} & +3y_1^2y_2^2+2y_1y_2^3 & & \\
           & & &{\color{brown}y_0^2y_2^2}& y_0^2y_2^3&+y_2^4) & & \\\hline
(\HHH_P)_3&y_0^3&y_0^3y_1&y_0^3y_1^2&y_0^3y_1^3&y_0^3y_1^4& y_0^3(y_1^5+y_1^4y_2+y_1^3y_2^2), 
&y_0^3(y_1^6+3y_1^5y_2
%+7y_1^3y_2^3+6y_1^2y_2^4+3y_1y_2^5+y_2^6 
\\
& &y_0^3y_2 &y_0^3y_1y_2 &y_0^3(y_1^2y_2 +y_1y_2^2)&y_0^3y_1^3y_2 &y_0^3(y_2^5+y_1y_2^4+y_1^2y_2^3) &+6y_1^4y_2^2+7y_1^3y_2^3+6y_1^2y_2^4 \\
& & &y_0^3y_2^2 &{\color{brown} y_0^3(y_1^2y_2-y_1y_2^2)} &y_0^3y_1^2y_2^2 &y_0^3(y_1^4y_2+2y_1^3y_2^2 & +3y_1y_2^5 +y_2^6) \\
& & & & y_0^3y_2^3 &y_0^3y_1y_2^3 &+2y_1^2y_2^3+y_1y_2^4) & \\
& & & & & y_0^3y_2^4& & \\ \hline
\end{array}
$$
With these candidates, one can use
{\tt Macaulay2} to verify 
\eqref{interesting-triangle-q-Ehrhhart-series-colored} as follows. One
can check that $\theta_1,\theta_2,\theta_3$, which have $\NN$-degrees $1,1,2$, 
do generate a subalgebra $B$ of $\HHH_P \subset \RR[y_0,y_1,y_2]$ having $\NN$-graded Hilbert series $\frac{1}{(1-t)^2(1-t^2)}$.  Thus $\theta_1,\theta_2,\theta_3$ are algebraically independent.
And then one can check that together with the six elements colored brown in
the table, they generate a subalgebra of $\HHH_P$ having $\NN$-graded
Hilbert series $\frac{1+t+t^2}{(1-t)^3}=\frac{1+2t+2t^2+t^3}{(1-t)^2(1-t^2)}$.  Since this matches
$\Hilb(\HHH_P,t)=\Eseries_P(t)$, these nine red and brown elements must generate {\it all} of $\HHH_P$.  Furthermore, this shows that the six brown elements must be free $B$-basis elements for $\HHH_P$, so it is Cohen-Macaulay, as predicted in 
Conjecture~\ref{conj:harmonic-algebra-omnibus}(ii).
%and $\HHH_P$ is generated by elements of $\NN^2$-degrees $(b,a)$ with $b \leq  3 = \#(P \cap \ZZ^2)-1$,  consistent with Conjecture~\ref{conj:harmonic-algebra-omnibus}(iii).

\begin{remark} 
\label{rmk:algorithm-for-q-Ehrhart-series}
Note that this algorithm computed
$\Eseries_P(t,q)=\Hilb(\HHH_P,t,q)$ without finding generators for the 
ideals $\gr \, \II(mP \cap \ZZ^n)$ for all $m \geq 0$. 
We began with the guess
\eqref{interesting-triangle-q-Ehrhhart-series-colored} for $\Eseries_P(t,q)$
that came from computing the Hilbert series of $V_{mP \cap \ZZ^n} 
= \gr \, \II(mP \cap \ZZ^n)^\perp$ for small values of $m$. 
The answer suggested that we might
find generators for $\HHH_P$ whose
bidegrees $(b_i,a_i)$ all had $b_i \leq 3$.  And indeed, after computing $\RR$-bases for $V_{mP \cap \ZZ^n}$ for $m=1,2,3$, we were then able to prove that they generate $\HHH_P$.  Such an algorithm is not guaranteed to terminate in all cases, but was successfully used to verify all of the $\Eseries_P(t,q)$ in Figures~\ref{fig:q-Ehrhart-examples}, \ref{fig:area-4-polygons}, \ref{fig:area-5-polygons}.
\end{remark}

Lastly, we use the above description of $\HHH_P$ to compute the equivariant
$q$-Ehrhart series
$\Eseries^G_P(t,q)$ in $\Cl_\RR(G)[q][[t]]$
for the action of the group $G$ of order two that swaps $y_1, y_2$.
As in Example~\ref{ex:repeat-from-intro-with-symmetry}, the representation ring  
$
\Cl_\RR(G) \cong \ZZ[\epsilon]/(\epsilon^2-1) = \spn_\ZZ \{1,\epsilon\},
$
where $1, \epsilon$ denote the isomorphism classes of the one-dimension trivial and nontrivial $\RR[G]$-modules, respectively.
Examining the color-coded elements in the above table, one sees that
they were chosen so that the red $\theta_i$ for $i=1,2,3$ are all $G$-fixed, while the brown basis elements are either $G$-fixed (carrying the representation $1$), or $G$-negated (carrying the representation $\epsilon$), or swapped as a pair (carrying the regular representation $1+\epsilon$).  Consequently, one has
\begin{equation}
\Eseries_{P}^G(t,q) = \ch_{t,q}(\HHH_P) = 
    \frac{{\color{brown}1+(1+\epsilon) qt+ (1+\epsilon) q^2 t^2 + \epsilon q^3 t^3}}{(1-{\color{red}t})(1- {\color{red}q^2 t})(1 - {\color{red}{q^3 t^2}})}.
\end{equation}

\subsection{Some cautionary remarks}
We give here a few further cautionary remarks regarding the definition of $\HHH_P$ and
Conjecture~\ref{conj:harmonic-algebra-omnibus}.

\begin{remark}
One might be tempted to define the harmonic algebra differently, in a more general context.
Starting with any finite subset $\Zpoints \subseteq \kk^n$, over any field $\kk$, one could introduce
an $\NN^2$-graded $\kk$-subalgebra of the ring $\kk[y_0] \otimes_\kk \Div_\kk(\yy)$ defined by
\begin{equation}
\label{eq:suspect-definition}
    \HHH_\Zpoints := 
    \bigoplus_{m=0}^\infty \kk \cdot y_0^m \otimes_\kk V_{\underbrace{\Zpoints + \cdots + \Zpoints}_{m\text{ times}}}
\end{equation}
Here we adopt the convention that the 0-fold Minkowski sum of $\Zpoints$ with itself is $\{\origin\}$, 
so that $1=y_0^0 \otimes_\kk \yy^\origin \in (\HHH_\Zpoints)_0$.
Theorem~\ref{thm:minkowski-closure} shows that this does define a $\kk$-subalgebra of $\kk[y_0] \otimes_\kk \Div_\kk(\yy)$.
However, there are two issues with this definition \eqref{eq:suspect-definition} for $\HHH_{\Zpoints}$,
even when $\kk=\RR$.

The first issue is that when one takes $\Zpoints=\ZZ^n \cap P$ for a lattice polytope $P \subset \RR^n$, the above algebra $\HHH_\Zpoints$ can be a {\it proper subalgebra} of 
the harmonic algebra $\HHH_P$. This is because the inclusion
$$
\underbrace{(\ZZ^n \cap P)+ \cdots + (\ZZ^n \cap P)}_{m \text{ times}} \subseteq \ZZ^n \cap mP
$$
can be strict, namely for lattice polytopes $P$ failing to have the {\it integer decomposition property (IDP)} discussed in Defintion~\ref{def:IDP-property}.
Although not every $d$-dimensional lattice polytope $P$ has the IDP,
a result of Cox, Haase, Hibi and Higashitani \cite{cox2012integer} shows that its dilation $dP$
always has the IDP; see \cite{cox2012integer}. It follows that the affine semigroup ring $A_P$ is always generated by elements
of $\NN$-degree at most $d=\dim(P)$.
However, the interesting lattice triangle $P$
discussed in Section~\ref{sec:interesting-triangle-in-depth} required an algebra generator for its harmonic algebra $\HHH_P$ of $\NN$-degree $3 > 2=d$, showing that sometimes we must go to higher degrees than $d$ to generate $\HHH_P$.

A second issue with the above definition
\eqref{eq:suspect-definition} for $\HHH_\Zpoints$ is that it is not always 
finitely generated as an algebra, even when $\kk=\RR$
and $n=1$.
For example, let $\Zpoints = \{0,2,3\} \subset \RR^1$. It is not hard to check that the $m$-fold Minkowski sum $\Zpoints+\cdots+\Zpoints= \{0,2,3,4,\ldots,3m-1,3m\}
$ for all $m \geq 1$.  This means that, after identifying $\Div_\RR(\yy)$ with $\RR[y_1]$,
one has $V_{\underbrace{\Zpoints+\cdots+\Zpoints}_{m\text{ times}}}=\spn_\RR\{1,y_1,y_1^2,\ldots,y_1^{3m-1}\}$,
and
\begin{equation}
    \HHH_\Zpoints = \mathrm{span}_\RR \{1\} \oplus \mathrm{span}_\RR \{ y_0^m y_1^j \,:\, m \geq 0 \text{ and } 0 \leq j \leq 3m-1 \}.
\end{equation}
Therefore $\HHH_\Zpoints \subset \RR[y_0,y_1]$ is the semigroup ring for the additive subsemigroup of $\NN^2$
that may be visualized as follows, with a dot
at coordinates $(m,j)$ representing $y_0^m y_1^j$ in $\HHH_\Zpoints$:
\begin{center}
\begin{small}
    \begin{tikzpicture}[scale = 0.2]

    \node at (0,0) {$\bullet$};
    \node at (1,0) {$\bullet$};
    \node at (1,1) {$\bullet$};
    \node at (1,2) {$\bullet$};
    \node at (2,0) {$\bullet$};
    \node at (2,1) {$\bullet$};
    \node at (2,2) {$\bullet$};
    \node at (2,3) {$\bullet$};
    \node at (2,4) {$\bullet$};
    \node at (2,5) {$\bullet$};
    \node at (3,0) {$\bullet$};
    \node at (3,1) {$\bullet$};
    \node at (3,2) {$\bullet$};
    \node at (3,3) {$\bullet$};
    \node at (3,4) {$\bullet$};
    \node at (3,5) {$\bullet$};
    \node at (3,6) {$\bullet$};
    \node at (3,7) {$\bullet$};
    \node at (3,8) {$\bullet$};

    \node at (6,4) {$\cdots$};
    \end{tikzpicture}
\end{small}
\end{center}
This illustrates the lack of finite generation:  minimal monomial $\RR$-algebra generators for $\HHH_\Zpoints$ are
$$
\{y_0 y_1^2,\,\, \,\, y_0 y_1^3,\,\, y_0^2 y_1^5,\,\, y_0^3 y_1^8,\,\, y_0^4 y_1^{11},\,\, \ldots\} = \{y_0 y_1^2\} \cup \,\, \{y_0^m y_1^{3m-1}\}_{m=2,3,4,\ldots}.
$$
This does not violate Conjecture~\ref{conj:harmonic-algebra-omnibus}(i), since $\HHH_\Zpoints\not\cong \HHH_P$
for any lattice polytope $P$. But proving Conjecture~\ref{conj:harmonic-algebra-omnibus} must involve extra features of harmonic spaces of lattice points inside polytopes.
\end{remark}

\begin{remark}
    \label{rmk:harmonic-functor}
    One can regard $P \leadsto \HHH_P$ as a functor $\HHH: \mathcal{C} \rightarrow \mathcal{D}$, where $\mathcal{D}$ is the category of $\NN^2$-graded $\RR$-algebras.  Here
$\mathcal{C}$ is a category of lattice polytopes $P \subset \RR^n$ whose morphisms $P \to P'$ are $\ZZ$-linear maps $\varphi:\ZZ^{n} \to \ZZ^{n'}$ with $f(P)\subseteq P'$.
The fact that this induces an $\RR$-algebra map $\HHH_P \to \HHH_{P'}$ stems from the fact that the point configurations 
$\Zpoints:=\ZZ^n \cap mP$ and $\Zpoints':=\ZZ^{n'} \cap mP'$
have $f(\Zpoints) \subseteq \Zpoints'$. This implies that the map 
$\varphi^\sharp: \RR[x_1,\ldots,x_{n'}] \rightarrow \RR[x_1,\ldots,x_n]$
that precomposes $f \mapsto f\circ \varphi$ will have $\varphi^\sharp (\II(\Zpoints')) \subseteq \II(\Zpoints)$, and hence $\varphi^\sharp(\gr\,\II(\Zpoints')) \subseteq \gr\,\II(\Zpoints)$.
Therefore the adjoint map $\varphi: \RR^n \rightarrow \RR^{n'}$,
when extended to a ring map $\RR[y_1,\ldots,y_n] \rightarrow \RR[y_1,\ldots,y_{n'}]$ satisfies
$$
\varphi(V_\Zpoints)=\varphi(\gr\,\II(\Zpoints)^\perp) \subseteq \gr\,\II(\Zpoints')^\perp = \varphi(V_{\Zpoints'}).
$$

If one instead considers $\HHH: P \leadsto \HHH_P$ as a functor into
the category of $\NN$-graded $\RR$-algebras, one can compare it to
the similar such functor $A: P \leadsto A_P$.  These two functors $\HHH_{(-)}$ and $A_{(-)}$ are not isomorphic; see the example in Section~\ref{sec:lack-of-singly-graded-isomorphism}. The authors do not know if they are equivalent.
\end{remark}

%%%%
\begin{remark}
\label{rmk:inhomogeneous-basis}
One might hope to find some canonical $\RR$-linear basis
for $\HHH_P$. Although the authors are unaware of 
such a basis which is $\NN^2$-homogeneous, there is at least a canonical $\NN$-homogeneous $\RR$-basis,
coming from such $\RR$-bases for harmonic spaces $V_\Zpoints$ of finite subsets $\Zpoints \subset \RR^n$.

To describe these, assume $\kk$ is a field of characteristic zero, and identify $\kk^n$ and its basis
$y_1,\ldots,y_n$ with $(\kk^n)^*$ and its basis $x_1,\ldots,x_n$.  Then mapping $y_n \longmapsto x_n$
gives the middle isomorphism here  
$$
\Div:=\Div_\kk(\yy)=\kk[y_1,\ldots,y_n] \cong \kk[x_1,\ldots,x_n]=:S.
$$
This lets one view the  subspace 
$V_\Zpoints :=\gr \, \II(\Zpoints)^\perp \subset \Div$ as a subspace of $S=\kk[\xx]$.

\begin{prop}
    \label{prop:interpolation-general-loci}
    For $\kk$ a field of characteristic zero, and any finite point set $\Zpoints \subset \kk^n$, there is a unique basis $\{ g_\zz(\xx) \,:\, \zz \in \Zpoints \}$ of the harmonic space $V_\Zpoints$ such that
    $g_\zz(\zz') = \delta_{\zz,\zz'}$ for all $\zz' \in \Zpoints$.
\end{prop}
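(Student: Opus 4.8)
The plan is to prove existence and uniqueness separately, using the perfect pairing between $S_{\leq N}$ and $\Div_{\leq N}$ together with the exponential description of $\II(\Zpoints)^\perp$ from Lemma~\ref{lem:inhomogeneous-exponential-divided}. First I would fix $N \geq \#\Zpoints - 1$, so that by Lemma~\ref{lem:harmonic-spaces-are-limits}(i) the harmonic space $V_\Zpoints = \gr\,\II(\Zpoints)^\perp$ sits inside $\Div_{\leq N}$, and $\dim_\kk V_\Zpoints = \#\Zpoints$. Under the identification $\Div_\kk(\yy) \cong S = \kk[\xx]$ sending $y_i \mapsto x_i$ (valid since $\kk$ has characteristic zero), the pairing $\langle -, - \rangle$ becomes the standard ``differential'' pairing on $S$, and evaluation of a polynomial $g(\xx)$ at a point $\zz \in \kk^n$ can be expressed via this pairing: one checks $g(\zz) = \langle \exp(\zz \cdot \xx), g(\xx) \rangle$, because $\langle \xx^\aa, \exp(\zz\cdot\xx)\rangle$ extracts the coefficient that reconstitutes $\zz^\aa$ — more precisely $\langle \exp(\zz\cdot\yy), \yy^{(\bb)}\rangle = \zz^\bb$ from the expansion of $\exp$, and summing over $\bb$ recovers $g(\zz)$. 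So the linear functionals ``evaluation at $\zz$'', for $\zz \in \Zpoints$, restricted to $V_\Zpoints$, are exactly the functionals $g \mapsto \langle \exp(\zz\cdot\xx), g\rangle$.

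Next I would argue that these $\#\Zpoints$ evaluation functionals are linearly independent on $V_\Zpoints$. Here is where the two halves of the duality meet: $V_\Zpoints = \gr\,\II(\Zpoints)^\perp$, while the exponentials $\{\exp(\zz\cdot\xx) : \zz \in \Zpoints\}$ span $\II(\Zpoints)^\perp$ (Lemma~\ref{lem:inhomogeneous-exponential-divided}), and these are two \emph{different} subspaces of the same ambient $\Div$, both of dimension $\#\Zpoints$. However, after projecting to $\Div_{\leq N}$ via $\pi_N$, Lemma~\ref{lem:projected-expontential-basis}(i) gives that $\{\pi_N \exp(\zz\cdot\xx)\}_{\zz\in\Zpoints}$ is a basis of $\pi_N(\II(\Zpoints)^\perp)$, and by Lemma~\ref{lem:projection-and-perp-reformulation} this equals $(\II(\Zpoints)_{\leq N})^\perp \cap \Div_{\leq N}$. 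The pairing $\langle -, -\rangle: S_{\leq N} \times \Div_{\leq N} \to \kk$ is perfect, and it identifies $\Div_{\leq N}$ with $S_{\leq N}^*$; under this, $V_\Zpoints = (\gr\,\II(\Zpoints)_{\leq N})^\perp$ is the annihilator of $\gr\,\II(\Zpoints)_{\leq N}$, while the span of $\{\pi_N\exp(\zz\cdot\xx)\}$ is the annihilator of $\II(\Zpoints)_{\leq N}$. The point is that the composite of inclusion $V_\Zpoints \hookrightarrow \Div_{\leq N}$ followed by the \emph{other} perp pairing against $\pi_N\exp(\zz\cdot\xx)$ is precisely the evaluation map. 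To see it is nondegenerate, suppose $g \in V_\Zpoints$ has $g(\zz) = 0$ for all $\zz \in \Zpoints$; then $g$ as an element of $\Div$, paired against all of $\II(\Zpoints)^\perp$, vanishes — but $\II(\Zpoints)^\perp$ has dimension $\#\Zpoints = \dim \Div_{\leq N}/\,?$... instead I would use: $g$ vanishing at all points of $\Zpoints$ means, by Lagrange interpolation / the isomorphism $S/\II(\Zpoints) \cong \kk[\Zpoints]$, that $g \in \II(\Zpoints)$. But $g$ is a polynomial of degree $\leq N$, so $g \in \II(\Zpoints)_{\leq N}$. On the other hand, $g \in V_\Zpoints \subseteq \Div_{\leq N}$, viewed in $S_{\leq N}$ under the identification, and the key compatibility is $V_\Zpoints \cap \II(\Zpoints)_{\leq N} = 0$ inside $S_{\leq N}$: indeed $\gr\,\II(\Zpoints)$ and $\II(\Zpoints)$ have, by Proposition~\ref{prop:graded-term-orders-and-gr}, the same standard monomials, and $V_\Zpoints$ projects isomorphically onto the span of those standard monomials modulo $\gr\,\II(\Zpoints)$, which in turn meets $\II(\Zpoints)$ trivially in degrees $\leq N$. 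This forces $g = 0$, establishing that the $\#\Zpoints$ evaluation functionals form a basis of $V_\Zpoints^*$.

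Given nondegeneracy, the conclusion is immediate: the linear map $V_\Zpoints \to \kk^\Zpoints$ sending $g \mapsto (g(\zz))_{\zz\in\Zpoints}$ is an isomorphism between spaces of equal finite dimension $\#\Zpoints$, so there is a unique preimage basis $\{g_\zz\}_{\zz\in\Zpoints}$ of the standard basis $\{e_{\zz'}\}$ of $\kk^\Zpoints$, i.e. unique elements $g_\zz \in V_\Zpoints$ with $g_\zz(\zz') = \delta_{\zz,\zz'}$. I expect the main obstacle to be pinning down cleanly the compatibility claim $V_\Zpoints \cap \II(\Zpoints)_{\leq N} = 0$ (equivalently, that a harmonic polynomial which actually vanishes on $\Zpoints$ must be zero) — one must be careful that $V_\Zpoints$ lives a priori in $\Div$, not in $S$, and that the identification $\Div \cong S$ carries the perp structure correctly, so that ``vanishing at all points of $\Zpoints$'' and ``lying in $\gr\,\II(\Zpoints)^\perp$'' interact as claimed. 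Once that linear-algebra bookkeeping is settled, everything else is formal.
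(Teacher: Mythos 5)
Your overall skeleton---prove that the evaluation map $V_\Zpoints \to \kk^{\Zpoints}$, $g \mapsto (g(\zz))_{\zz \in \Zpoints}$, is injective, then conclude by the dimension count $\dim_\kk V_\Zpoints = \#\Zpoints$ and Lagrange interpolation---is essentially the paper's (the paper phrases injectivity as: the composite $V_\Zpoints \hookrightarrow S \twoheadrightarrow S/\II(\Zpoints)$ is an isomorphism). But the step you yourself flag as the main obstacle, namely $V_\Zpoints \cap \II(\Zpoints)_{\leq N} = 0$, is exactly where all the content sits, and your justification does not establish it. You argue: $V_\Zpoints$ maps isomorphically onto $S/\gr\,\II(\Zpoints)$; the standard-monomial span is another complement of $\gr\,\II(\Zpoints)$ and it meets $\II(\Zpoints)$ trivially (Proposition~\ref{prop:graded-term-orders-and-gr}); hence $V_\Zpoints$ meets $\II(\Zpoints)$ trivially. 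The last inference is a non sequitur: the fact that \emph{one} complement of $\gr\,\II(\Zpoints)$ intersects $\II(\Zpoints)$ trivially says nothing about a \emph{different} complement. Moreover your premise that $V_\Zpoints \hookrightarrow S \twoheadrightarrow S/\gr\,\II(\Zpoints)$ is an isomorphism (equivalently, $S = V_\Zpoints \oplus \gr\,\II(\Zpoints)$ after identifying $\yy$ with $\xx$) is itself not free: it amounts to nondegeneracy of the apolarity form restricted to $\gr\,\II(\Zpoints)$, which is not a formal consequence of characteristic zero. The paper supplies the missing bridge with Proposition~\ref{prop:graded-quotient-is-a-gr}(ii): a homogeneous basis of a graded complement of $\gr\,\II(\Zpoints)$ stays a basis modulo $\II(\Zpoints)$. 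Alternatively, once $V_\Zpoints \cap \gr\,\II(\Zpoints) = 0$ is in hand, a one-line fix of your step is: a nonzero $g \in V_\Zpoints \cap \II(\Zpoints)$ has $\tau(g) \in \gr\,\II(\Zpoints)$, while $\tau(g) \in V_\Zpoints$ because $V_\Zpoints$ is graded, a contradiction. Neither of these bridges appears in your argument; Proposition~\ref{prop:graded-term-orders-and-gr} alone cannot do the job.

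That this step is genuinely non-formal can be seen over $\kk = \CC$: for $\Zpoints = \{(0,0),(i,1)\} \subset \CC^2$ one computes $\II(\Zpoints)=(x_1-ix_2,\,x_2^2-x_2)$, $\gr\,\II(\Zpoints) = (x_1 - i x_2,\, x_2^2)$ and $V_\Zpoints = \spn_\CC\{1,\; i y_1 + y_2\}$, and $i y_1 + y_2$ vanishes at both points of $\Zpoints$, so evaluation $V_\Zpoints \to \CC^{\Zpoints}$ is not injective and no interpolating basis exists. (Under the identification $y_j \leftrightarrow x_j$ the offending element is $i(x_1 - i x_2) \in \II(\Zpoints)$, so $V_\Zpoints \cap \II(\Zpoints) \neq 0$ and $S \neq V_\Zpoints \oplus \gr\,\II(\Zpoints)$; this also touches the direct-sum step in the paper's own proof, which really uses that the form is anisotropic, as it is over $\RR$ where $\langle f,f\rangle$ is a positive combination of squares of coefficients.) So ``linear-algebra bookkeeping'' under the stated hypotheses cannot close your gap; over $\RR$, where $S = V_\Zpoints \oplus \gr\,\II(\Zpoints)$ does hold, your strategy goes through once the standard-monomial detour is replaced by Proposition~\ref{prop:graded-quotient-is-a-gr}(ii) or the top-degree argument above. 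A minor point: your opening detour expressing evaluation as pairing against $\exp(\zz\cdot\yy)$ is never used and is typed on the wrong side of the pairing (the exponential lives in the completion $\hat{\Div}$, not in $S$); it can simply be dropped.
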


\begin{proof}
  Since $V_\Zpoints:=\gr\,\II(\Zpoints)^\perp$, one has a  $\kk$-vector space direct sum decomposition
  $S=V_\Zpoints \oplus \gr\,\II(\Zpoints)$.  Consequently,
  the composite map
  $
  V_\Zpoints \hookrightarrow S \twoheadrightarrow S/\gr\,\II(\Zpoints)
  $
  is a $\kk$-vector space isomorphism.
  Thus any set of homogeneous polynomials that give a $\kk$-basis for the (graded) subspace $V_\Zpoints \subset S$ will descend to a $\kk$-basis
  of $S/\gr\,\II(\Zpoints)$. Proposition~\ref{prop:graded-quotient-is-a-gr}(ii) implies that the same set of polynomials
  will also descend to a basis of $S/\II(\Zpoints)$.
  Hence this composite is also a  $\kk$-vector space isomorphism:
\begin{equation}
\label{eq:harmonics-give-coset-reps}
V_\Zpoints \hookrightarrow S \twoheadrightarrow S/\II(\Zpoints).
\end{equation}
  Now multivariate Lagrange interpolation identifies 
\begin{equation}
\label{eq:Lagrange-interpoliation-iso}
  S/\II(\Zpoints) \cong \{\text{ all functions }g:\Zpoints \to \kk\,\,\},
\end{equation}
  and the space on the right has a unique $\kk$-basis of $\{g_\zz\}_{\zz \in \ZZ}$ defined by $g_\zz(\zz')=\delta_{\zz,\zz'}$.
  The assertion then follows from combining this with
  \eqref{eq:harmonics-give-coset-reps} and
  \eqref{eq:Lagrange-interpoliation-iso}.
\end{proof}
Note that Proposition~\ref{prop:interpolation-general-loci} provides $\kk$-bases $\{g_\zz(\xx): \zz \in \Zpoints\}$
that are generally inhomogeneous.  Nevertheless, given a lattice polytope $P$, and taking $\kk=\RR$, one can assemble all of these distinguished $\RR$-bases for $V_{\ZZ^n \cap mP}$ into an $\NN$-graded $\RR$-basis $\{y_0^m g_{\zz}(\yy): \zz \in \ZZ^n \cap mP\}$
for $\HHH_P$.   Note that, even though this $\RR$-basis for $\HHH_P$ is in bijection with an $\RR$-basis for the semigroup ring $A_P$,
the bijection does not respect multiplication.  This is to be
expected, since the example discussed in Section~\ref{sec:lack-of-singly-graded-isomorphism} shows that these two rings are not isomorphic as $\NN$-graded $\RR$-algebras.
\end{remark}

%%%%%%%%%%%%%%%%%%%%%%%%%%%%%%%%%%%%%%%%%%%%%%%%%%
\section{Dilations, products, and joins: proof of Theorem~\ref{thm:intro-three-constructions-on-series}}
\label{sec: dilations-products-joins}
Recall from the Introduction these three basic operations on polytopes:
\begin{itemize}
\item {\it dilation} by an positive integer factor $d$, sending $P$ to $dP$,
\item {\it Cartesian product}, sending $P \subset \RR^n$ and $Q \subset \RR^m$ to $P \times Q \subset \RR^{n+m}$,
\item {\it free join},
sending $P,Q$ to  
$P * Q \subseteq \RR^{1+n+m}$ defined by
\begin{align*}
    P*Q &:= \{ (t,t\pp,(1-t)\qq) \,:\, 0 \leq t \leq 1, \, \pp \in P, \, \qq \in Q \}\\
    &= \conv \left(\quad
    \{1\} \times P \times \origin_m 
    \quad \sqcup \quad 
    \{0\} \times \origin_n \times Q
    \quad \right)
\end{align*}
\end{itemize}

We wish to understand
how these operations interact with $q$-Ehrhart series $\Eseries_P(t,q)$.  In the process we will
see how they interact with harmonic algebras $\HHH_P$, by relating them to the  commutative algebra constructions for $\NN$-graded algebras of 
{\it Veronese subalgebras, Segre products} and {\it graded tensor products}; 
When applying these
constructions to the $\NN^2$-graded algebra $\HHH_P$, we will always use
the grading specialization $\deg(y_0^m \yy^\zz)=m$ to regard it as an $\NN$-graded
algebra.  The analysis will also prove Theorem~\ref{thm:intro-three-constructions-on-series}, whose statement we
recall here.

\vskip.1in
\noindent
{\bf Theorem~\ref{thm:intro-three-constructions-on-series}.}
{\it
Let $P, Q$ be lattice polytopes.
    \begin{itemize}
        \item[(i)] For positive integers $d$, the dilation $dP$ has $\Eseries_{dP}(t,q)$ given by
        $$
        \Eseries_{dP}(t,q)=\sum_{m=0}^\infty \Ehr_P(dm;q) t^m. 
        %\left( = \frac{1}{d} \sum_{j=0}^{d-1} \Eseries_P(q,\zeta^j t) \quad \text{where }\zeta:=e^{\frac{2\pi i}{d}}.\right)
        $$   
        \item[(ii)] The Cartesian product $P\times Q$ has $\Eseries_{P \times Q}(t,q)$ given by the Hadamard product
        $$
        \Eseries_{P \times Q}(t,q)=
        \sum_{m=0}^\infty \Ehr_P(m;q) \cdot  \Ehr_Q(m;q) \cdot t^{m}.
        $$
        \item[(iii)] The free join $P * Q$ has $\Eseries_{P * Q}(t,q)$ given by
        $$
        \Eseries_{P * Q}(t,q)=
        \frac{1-t}{1-qt} \cdot 
        \Eseries_P(t,q) \cdot \Eseries_Q(t,q).
        $$
    \end{itemize}
}
\vskip.1in

We deal with each of the three parts of this theorem in
the next three subsections.

%%%%
\subsection{Dilations and the Veronese construction} 
\label{sec:dilation-Veronese}

The first and simplest of these is given by dilation $P \mapsto dP$. If $A = \bigoplus_{m \geq 0} A_m$ is a graded algebra, the $d^{th}$ {\em Veronese subalgebra} is 
\begin{equation}
    \Ver_d(A) := \bigoplus_{m \, \geq \, 0} A_{dm}.
\end{equation}
Then Theorem~\ref{thm:intro-three-constructions-on-series}(i) along with the following result are immediate from the definitions.

\begin{prop}
    \label{prop:veronese}
    For a lattice polytope $P$ and any $d \geq 0$ one has  
    $\HHH_{dP} \cong \Ver_d(\HHH_P)$.
\end{prop}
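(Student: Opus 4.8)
The plan is to unwind both sides directly from their definitions and observe that they literally agree summand-by-summand. Recall that
$$
\HHH_P = \bigoplus_{m=0}^\infty \RR \cdot y_0^m \otimes_\RR V_{\ZZ^n \cap mP}
$$
as an $\NN^2$-graded subalgebra of $\RR[y_0,\yy]$, and that we are specializing to the $\NN$-grading in which $\deg(y_0^m \yy^\zz) = m$. Under this $\NN$-grading the degree-$m$ component $(\HHH_P)_m$ is precisely $\RR \cdot y_0^m \otimes_\RR V_{\ZZ^n \cap mP}$. Therefore the Veronese subalgebra has
$$
\Ver_d(\HHH_P) = \bigoplus_{m=0}^\infty (\HHH_P)_{dm} = \bigoplus_{m=0}^\infty \RR \cdot y_0^{dm} \otimes_\RR V_{\ZZ^n \cap (dm)P}.
$$
On the other hand, applying the definition of the harmonic algebra to the polytope $dP$ gives
$$
\HHH_{dP} = \bigoplus_{m=0}^\infty \RR \cdot y_0^m \otimes_\RR V_{\ZZ^n \cap m(dP)} = \bigoplus_{m=0}^\infty \RR \cdot y_0^m \otimes_\RR V_{\ZZ^n \cap (dm)P},
$$
since $m(dP) = (dm)P$. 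So the two $\NN^2$-graded vector spaces are identified by the $\RR$-linear bijection on monomial bases sending $y_0^{dm} \otimes_\RR \yy^{(\aa)}$ (a basis element of the $m$-th summand of $\Ver_d(\HHH_P)$) to $y_0^m \otimes_\RR \yy^{(\aa)}$ (the corresponding basis element of the $m$-th summand of $\HHH_{dP}$), for each $m$ and each $\yy^{(\aa)}$ in a chosen basis of $V_{\ZZ^n \cap (dm)P}$.

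The only thing to check is that this bijection is an algebra homomorphism, which amounts to noting that multiplication is compatible. In $\Ver_d(\HHH_P)$, the product of an element of the $m$-th summand and an element of the $m'$-th summand is computed inside $\RR[y_0,\yy]$ and lands in the $(m+m')$-th summand, i.e.\ in $\RR \cdot y_0^{d(m+m')} \otimes_\RR V_{\ZZ^n \cap d(m+m')P}$, using Proposition~\ref{prop:harmonic-algebra-is-an-algebra} (which guarantees $V_{\ZZ^n \cap (dm)P} \cdot V_{\ZZ^n \cap (dm')P} \subseteq V_{\ZZ^n \cap d(m+m')P}$ via Theorem~\ref{thm:minkowski-closure}). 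In $\HHH_{dP}$ the product of an element of the $m$-th summand and an element of the $m'$-th summand lands in $\RR \cdot y_0^{m+m'} \otimes_\RR V_{\ZZ^n \cap (m+m')(dP)}$, and $(m+m')(dP) = d(m+m')P$ gives exactly the same harmonic space. Since in both cases the multiplication in the $\yy$-variables is just multiplication in $\RR[\yy]$ (equivalently in $\Div_\RR(\yy)$), and the power of $y_0$ only bookkeeps the grading, the bijection above intertwines the two multiplications. Hence it is an isomorphism of $\NN^2$-graded (and in particular $\NN$-graded) $\RR$-algebras.

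There is essentially no obstacle here: the statement is a formal consequence of the definition of $\HHH_P$ together with the already-established fact that $\HHH_P$ is an algebra. The only point requiring a moment's care is keeping straight which grading is being specialized (the $y_0$-grading, not the $\yy$-grading), so that ``degree $dm$'' in $\HHH_P$ really does correspond to the $dm$-th dilate; once that is noted, the identification $\Ver_d(\HHH_P) = \HHH_{dP}$ is immediate, and Theorem~\ref{thm:intro-three-constructions-on-series}(i) follows by taking bigraded Hilbert series of both sides, using $\Hilb(\HHH_P,t,q) = \Eseries_P(t,q)$ from Proposition~\ref{prop:harmonic-algebra-is-an-algebra} and the obvious fact that passing to the $d$-th Veronese in the $t$-variable extracts the terms $t^m \mapsto \Ehr_P(dm;q)$.
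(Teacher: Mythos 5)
Your proof is correct and follows the same route the paper takes: the paper declares Proposition~\ref{prop:veronese} (and Theorem~\ref{thm:intro-three-constructions-on-series}(i)) ``immediate from the definitions,'' and your unwinding of $m(dP)=(dm)P$ together with the rescaling $y_0^{dm}\otimes g \mapsto y_0^m \otimes g$ is exactly the verification being left implicit. One cosmetic remark: you phrase the map via a choice of basis written as monomials $\yy^{(\aa)}$, but $V_{\ZZ^n\cap (dm)P}$ need not have a monomial basis (cf.\ Remark~\ref{remark-first-interesting-triangle-discussion}); this is harmless since your map $y_0^{dm}\otimes g \mapsto y_0^m\otimes g$ is defined basis-freely on each summand, and multiplicativity holds exactly as you argue.
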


%%%%%
\subsection{Cartesian products and the Segre construction} 
\label{sec:product-Segre}

The Cartesian product $P \times Q$ of lattice polytopes is again a lattice polytope. Since $m(P \times Q)=mP \times mQ$, it is easily seen that
$$
i_{P \times Q}(m)=\Ehr_P(m) \cdot \Ehr_Q(m).
$$
There is a similarly easy relation for their
harmonic algebras, which comes from a general
lemma on the behavior of orbit harmonic rings
$R(\Zpoints)=S/\gr\,\II(\Zpoints)$ 
and harmonic spaces $V_\Zpoints$ when one has a Cartesian product of point loci.

\begin{lemma}
    \label{lem:harmonics-across-product}
    For a field $\kk$ and finite subsets
    $\Zpoints \subseteq \kk^n, \Zpoints' \subseteq \kk^{n'}$, one
    has graded isomorphisms
    \begin{align*}
    R(\Zpoints \times \Zpoints') &\cong 
    R(\Zpoints) \otimes_\kk R(\Zpoints'),\\
    V_{\Zpoints \times \Zpoints'} &\cong V_\Zpoints \otimes_\kk V_{\Zpoints'}.
    \end{align*}
\end{lemma}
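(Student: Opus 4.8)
The plan is to reduce the statement about associated graded ideals and harmonic spaces to a transparent statement about vanishing ideals, using the fact that for a Cartesian product of point loci the vanishing ideal decomposes nicely. Concretely, write $S = \kk[\xx]$, $S' = \kk[\xx']$ for the two coordinate sets, and $S'' = S \otimes_\kk S' = \kk[\xx,\xx']$ for $\kk^{n+n'}$. First I would observe that $\II(\Zpoints \times \Zpoints') = \II(\Zpoints) \cdot S'' + S'' \cdot \II(\Zpoints')$, i.e.\ it is generated by the two extended ideals; this is immediate from the fact that $\Zpoints \times \Zpoints'$ is cut out by requiring the $\xx$-coordinates to lie in $\Zpoints$ and the $\xx'$-coordinates in $\Zpoints'$, together with $S''/(\II(\Zpoints)S'' + \II(\Zpoints')S'') \cong (S/\II(\Zpoints)) \otimes_\kk (S'/\II(\Zpoints'))$ having the correct dimension $\#\Zpoints \cdot \#\Zpoints' = \#(\Zpoints \times \Zpoints')$.

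The key technical step is then to show $\gr\,\II(\Zpoints \times \Zpoints') = (\gr\,\II(\Zpoints)) S'' + S''(\gr\,\II(\Zpoints'))$ as homogeneous ideals in $S''$. For this I would fix a graded term order $\prec$ on $S$ and on $S'$, and build from them a graded term order on $S''$ (e.g.\ refine total degree by a product order). By Proposition~\ref{prop:graded-term-orders-and-gr}, Gr\"obner bases $G, G'$ for $\II(\Zpoints), \II(\Zpoints')$ give Gr\"obner bases $\tau(G), \tau(G')$ for $\gr\,\II(\Zpoints), \gr\,\II(\Zpoints')$, with matching standard monomial sets $\BBB, \BBB'$. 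Because the two ideals live in disjoint variable sets, $G \cup G'$ is a Gr\"obner basis for $\II(\Zpoints)S'' + \II(\Zpoints')S''$ with standard monomials $\BBB \cdot \BBB'$ and with $\init_\prec(G \cup G') $ generated by $\init_\prec(G) \cup \init_\prec(G')$ --- the crucial point being that S-polynomials of a generator from $G$ and one from $G'$ reduce to zero since their initial terms are coprime (Buchberger's criterion). Hence $\#(\BBB \cdot \BBB') = \#\BBB \cdot \#\BBB' = \#\Zpoints \cdot \#\Zpoints'$, forcing $\II(\Zpoints)S'' + \II(\Zpoints')S'' = \II(\Zpoints \times \Zpoints')$; applying $\tau(-)$ to $G \cup G'$ and invoking Proposition~\ref{prop:graded-term-orders-and-gr} again yields $\gr\,\II(\Zpoints \times \Zpoints') = (\tau(G) \cup \tau(G'))$, which visibly equals $(\gr\,\II(\Zpoints))S'' + S''(\gr\,\II(\Zpoints'))$. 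This gives $R(\Zpoints \times \Zpoints') = S''/\gr\,\II(\Zpoints \times \Zpoints') \cong (S/\gr\,\II(\Zpoints)) \otimes_\kk (S'/\gr\,\II(\Zpoints')) = R(\Zpoints) \otimes_\kk R(\Zpoints')$ as graded algebras.

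For the harmonic space statement I would dualize: under the perfect pairings $S''_{\leq N} \times \Div''_{\leq N} \to \kk$ and the natural identification $\Div_\kk(\yy,\yy') \cong \Div_\kk(\yy) \otimes_\kk \Div_\kk(\yy')$ (which is the graded $\kk$-algebra isomorphism recorded in the excerpt, since $\Div$ is built as a tensor product of rank-one divided power algebras), the annihilator of a sum of extended ideals is the tensor product of the annihilators in each factor. Thus $V_{\Zpoints \times \Zpoints'} = \gr\,\II(\Zpoints \times \Zpoints')^\perp = \big((\gr\,\II(\Zpoints))S'' + S''(\gr\,\II(\Zpoints'))\big)^\perp = (\gr\,\II(\Zpoints))^\perp \otimes_\kk (\gr\,\II(\Zpoints'))^\perp = V_\Zpoints \otimes_\kk V_{\Zpoints'}$, as graded $\kk$-vector spaces; this is compatible with the algebra isomorphism on the $R$ side via \eqref{eq:graded-rep-isos-for-point-loci}. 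The main obstacle is the coprimality/Buchberger argument establishing $\gr\,\II(\Zpoints \times \Zpoints')$ is the extended ideal rather than something strictly larger --- the dimension count does this cleanly, but one must be careful that the term order on $S''$ is graded and restricts correctly to each factor. Everything else is formal bookkeeping with perp spaces.
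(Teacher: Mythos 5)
Your argument is correct, but it takes a more machinery-heavy route than the paper's own proof. The paper does not touch Gr\"obner bases here: it simply observes that for nonzero $f \in \II(\Zpoints)$ and $f' \in \II(\Zpoints')$ the polynomials $f \otimes 1$ and $1 \otimes f'$ vanish on $\Zpoints \times \Zpoints'$, so $\tau(f)\otimes 1$ and $1 \otimes \tau(f')$ lie in $\gr\,\II(\Zpoints \times \Zpoints')$; this gives the containment of the sum of extended associated graded ideals inside $\gr\,\II(\Zpoints\times\Zpoints')$, and then the single dimension count $\#(\Zpoints\times\Zpoints')=\#\Zpoints\cdot\#\Zpoints'$ forces equality, after which both tensor decompositions (for $R$ and for $V$, via graded duality) follow at once. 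You instead first identify $\II(\Zpoints\times\Zpoints')$ with the sum of extended vanishing ideals, then show that $G\cup G'$ is a Gr\"obner basis of it (Buchberger with coprime leading terms for mixed pairs), and apply Proposition~\ref{prop:graded-term-orders-and-gr} to transfer this to the associated graded ideal; the ingredients you flag — a graded term order on $\kk[\xx,\xx']$ restricting to the chosen graded orders on each factor (e.g.\ total degree refined by a block comparison), and the fact that reductions within $G$ alone remain valid in the larger ring — do check out, so there is no gap. What your route buys is strictly more information: a standard monomial basis $\BBB\cdot\BBB'$ for $R(\Zpoints\times\Zpoints')$ compatible with the factors (in the spirit of Remark~\ref{rmk:first-Gundlach-remark}) and the explicit statement that concatenating Gr\"obner bases of the factors yields one for the product locus; the cost is length, and the paper's two-step containment-plus-count is the leaner argument. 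One small point in your dualization step: the identity $\bigl(\gr\,\II(\Zpoints)S'' + S''\gr\,\II(\Zpoints')\bigr)^\perp = V_\Zpoints \otimes_\kk V_{\Zpoints'}$ deserves a line of justification, either by noting $(\gr\,\II(\Zpoints)S'')^\perp = V_\Zpoints \otimes \Div_\kk(\yy')$ and $(S''\gr\,\II(\Zpoints'))^\perp = \Div_\kk(\yy)\otimes V_{\Zpoints'}$ and intersecting, or simply by the Hilbert series match \eqref{eq:perp-and-quotient-share-Hilbert-series-divided} against the already-proved isomorphism for $R$.
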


\begin{proof}
    Write $S_n, S_{n'}$ for the polynomial rings $\kk[x_1, \dots, x_n], \kk[x_1, \dots, x_{n'}]$ which contain $\II(\Zpoints),\II(\Zpoints')$, so that $\II(\Zpoints \times \Zpoints')$ will be an ideal in $S_n \otimes S_{n'}$.
    We claim one has the following equality of ideals  
    \begin{equation}
    \label{eq:Cartesian-product-grs}
        I:=\gr  \, \II(\Zpoints \times \Zpoints')
        \quad = \quad 
        \gr \, \II(\Zpoints) \otimes S_{n'} + S_n \otimes \gr \, \II(\Zpoints)=:J,
    \end{equation}
    from which the remaining assertions will follow.
    To see this claim, first check the containment $I \supseteq J$:  for
     $f \in \II(\Zpoints)$ and $f' \in \II(\Zpoints')$, one has both $f \otimes 1$ and $1 \otimes f'$ lying in $\II(\Zpoints \times \Zpoints')$, and if  $f,f' \neq 0$,
     their top degree components $\tau(f \otimes 1) = \tau(f) \otimes 1$ and $\tau(1 \otimes f') = 1 \otimes \tau(f')$ both lie in $I$.  On the other hand, the inclusion $I \supseteq J$ must be an equality because 
    the surjection
    $$
     (S_n \otimes S_{n'})/J=
    %S_n \otimes S_{n'}/\left( \gr \, \II(\Zpoints) \otimes S_n + S_m \otimes \gr \, \II(\Zpoints) \right)
    \,\, \twoheadrightarrow \,\,
    %S_n \otimes S_{n'}/ \gr  \, \II(\Zpoints \times\Zpoints')=  
    (S_n \otimes S_{n'})/ I
    $$
    is an isomorphism via dimension-counting: 
    $$
    \dim_\kk  (S_n \otimes S_{n'})/J \leq 
    \dim_\kk S_n/\gr \, \II(\Zpoints) \cdot 
    \dim_\kk S_{n'}/\gr \, \II(\Zpoints')
   % = \#\Zpoints \cdot \#\Zpoints' 
    = \#(\Zpoints \times \Zpoints')=\dim_\kk  (S_n \otimes S_{n'})/I. \qedhere
    $$
\end{proof}

For $\NN$-graded $\kk$-algebras
$A, B$, their {\em Segre product} $\Segre(A,B)$ is the graded algebra
\begin{equation}
    \Segre(A,B) = \bigoplus_{m=0}^\infty \,\, \underbrace{A_m \otimes_\kk B_m}_{\Segre(A,B)_m}
    %\quad \text{ with } \quad \Segre(A,B)_m := A_m \otimes_\kk B_m.
\end{equation}

\begin{prop}
    \label{prop:segre}
    For any lattice polytopes $P$ and $Q$, one has
    $\HHH_{P \times Q} \cong \Segre(\HHH_P, \HHH_Q).$
\end{prop}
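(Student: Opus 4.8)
The plan is to identify the two $\NN$-graded algebras degree by degree, using the decomposition $m(P\times Q)=mP\times mQ$ together with Lemma~\ref{lem:harmonics-across-product}. First I would recall that by definition the degree-$m$ component of $\HHH_{P\times Q}$ is $\RR\cdot y_0^m\otimes_\RR V_{\ZZ^{n+n'}\cap m(P\times Q)}$, and since $m(P\times Q)=mP\times mQ$ inside $\RR^{n}\times\RR^{n'}$ we have $\ZZ^{n+n'}\cap m(P\times Q)=(\ZZ^n\cap mP)\times(\ZZ^{n'}\cap mQ)$. Applying Lemma~\ref{lem:harmonics-across-product} to the finite loci $\Zpoints=\ZZ^n\cap mP$ and $\Zpoints'=\ZZ^{n'}\cap mQ$ gives a graded $\RR$-vector space isomorphism $V_{\ZZ^{n+n'}\cap m(P\times Q)}\cong V_{\ZZ^n\cap mP}\otimes_\RR V_{\ZZ^{n'}\cap mQ}$, hence
$$
(\HHH_{P\times Q})_m \;\cong\; (\HHH_P)_m\otimes_\RR(\HHH_Q)_m \;=\; \Segre(\HHH_P,\HHH_Q)_m
$$
as graded $\RR$-vector spaces (where one must remember to match up the inner $q$-grading coming from the $\yy$-variables on both sides, which Lemma~\ref{lem:harmonics-across-product} respects).

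The second, and slightly more delicate, step is to check that this degreewise identification is compatible with multiplication, so that it assembles into an $\NN$-graded (indeed $\NN^2$-graded) $\RR$-algebra isomorphism. For this I would trace through the proof of Lemma~\ref{lem:harmonics-across-product}: the key content is the ideal equality $\gr\,\II(\Zpoints\times\Zpoints')=\gr\,\II(\Zpoints)\otimes S_{n'}+S_n\otimes\gr\,\II(\Zpoints')$, which dualizes to say $V_{\Zpoints\times\Zpoints'}$ is spanned by products $g(\yy)\cdot g'(\yy')$ with $g\in V_\Zpoints$, $g'\in V_{\Zpoints'}$ in the (disjoint) variable sets. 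So the natural map is $y_0^m\otimes (g\cdot g')\longmapsto (y_0^m\otimes g)\otimes(y_0^m\otimes g')$, and one verifies directly that the multiplication rule in $\HHH_{P\times Q}$ — which is just multiplication of polynomials in $y_0,y_1,\dots,y_{n+n'}$ — matches the Segre product multiplication $(a\otimes b)(a'\otimes b')=aa'\otimes bb'$, because the $\yy$ and $\yy'$ variables commute and occupy disjoint blocks.

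I expect the main (minor) obstacle to be purely bookkeeping: making sure the $\NN^2$-grading (the $q$-degree, tracking $\deg y_i=(0,1)$) is handled correctly on the Segre side, since $\Segre$ as literally written in the excerpt is defined for singly $\NN$-graded algebras, so one either extends the Segre construction to bigraded algebras (taking $\Segre(A,B)_{m,\bullet}=\bigoplus_j A_{m,j}\otimes B_{m,\bullet-\text{?}}$ — here actually the correct bigraded statement keeps the two inner $q$-gradings added, i.e. $(\HHH_{P\times Q})_{m,j}\cong\bigoplus_{j_1+j_2=j}(\HHH_P)_{m,j_1}\otimes(\HHH_Q)_{m,j_2}$) or one simply states the isomorphism at the level of $\NN$-graded algebras as in the proposition and records the refined bigraded statement as a remark. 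Either way there is no real difficulty, and Theorem~\ref{thm:intro-three-constructions-on-series}(ii) then follows by taking bigraded Hilbert series of both sides of $\HHH_{P\times Q}\cong\Segre(\HHH_P,\HHH_Q)$ and using Proposition~\ref{prop:harmonic-algebra-is-an-algebra}.
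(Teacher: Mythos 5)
Your proposal is correct and follows essentially the same route as the paper: the paper's proof is exactly the two-line observation that $\ZZ^{n+n'}\cap m(P\times Q)=(\ZZ^n\cap mP)\times(\ZZ^{n'}\cap mQ)$ combined with Lemma~\ref{lem:harmonics-across-product}, leaving the compatibility with multiplication to "the definitions." Your extra verification that the degreewise identification respects products (via the disjoint variable blocks and the dual description of $V_{\Zpoints\times\Zpoints'}$ as $V_\Zpoints\cdot V_{\Zpoints'}$) is a correct spelling-out of what the paper leaves implicit.
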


\begin{proof}
    This comes via Lemma~\ref{lem:harmonics-across-product} and the definitions,
    as $\ZZ^n \cap m(P \times Q)
    = (\ZZ^n \cap mP) \times (\ZZ^n \cap mQ )$
\end{proof}

%%%%
\subsection{Joins} 
\label{sec:join-tensor}
Recall that for polytopes $P, Q \subseteq \RR^n, \RR^{n'}$, their {\it (free) join} $P * Q \subseteq \RR^{1+n+n'}$ is
\begin{align*}
    P*Q := \{ (t,t\pp,(1-t)\qq) \,:\, 0 \leq t \leq 1, \, \pp \in P, \, \qq \in Q \}
    = \conv \left(\,\,
    \{1\} \times P \times \origin_{n'} 
    \quad \sqcup \quad 
    \{0\} \times \origin_n \times Q
    \,\, \right)
\end{align*}
If $P$ and $Q$ are lattice polytopes, the join $P*Q$ is again a lattice polytope. The effect of join on harmonic algebras is more complicated than dilation and Cartesian product.  We will eventually see, in Theorem~\ref{thm:join-isomorphism} below, that it is almost, but not quite, the {\it $\NN$-graded tensor product} 
$$
A \otimes B = \bigoplus_{m=0}^\infty \,\,
\underbrace{ \bigoplus_{i+j \, = \, m} A_i \otimes B_j}_{(A\otimes B)_m}.
$$

We first recall some facts on joins 
of lattice polytopes $P,Q$ in $\RR^n, \RR^{n'}$.  Let $x_0$ denote the extra first coordinate in the ambient space $\RR^1 \times \RR^n \times \RR^{n'} =\RR^{1+n+n'}$ for the join $P*Q$. Slicing by the hyperplanes $x_0=r$ gives a disjoint decomposition
of the dilates $m(P*Q)$ for $m \geq 0$:

\begin{equation}
\label{eq:first-stratification}
\ZZ^{1+n+n'} \cap m(P*Q) 
= \bigsqcup_{\substack{(r,r'):\\ r+r'=m}} 
\{r\} \times
(\ZZ^n \cap rP) \times (\ZZ^{n'} \cap r'Q).
\end{equation}

We illustrate \eqref{eq:first-stratification} for $m(P * Q)$ with $m=1,2,3$,  when 
$P=[0,1] \subset \RR^1$ and  $Q=[0,2] \subset \RR^1$:
$$
\begin{tikzpicture}[scale = 0.2]
        \node at (0,0) {$\bullet$};
        \node at (0,2) {$\bullet$};
        \node at (0,4) {$\bullet$};
        \draw[] (0,0) -- (0,4);
        \node at (5,0) {$\bullet$};
        \node at (6,1) {$\bullet$};
        \draw[] (0,0) -- (0,4) --(6,1) -- (5,0) -- (0,0) -- (6,1);
        \draw[] (5,0) -- (0,4);
 \end{tikzpicture}
    \qquad 
\begin{tikzpicture}[scale = 0.2]
        \node at (0,0) {$\bullet$};
        \node at (0,2) {$\bullet$};
        \node at (0,4) {$\bullet$};
        \node at (0,6) {$\bullet$};
        \node at (0,8) {$\bullet$};
        %\draw[] (0,0) -- (0,6);
        %
        \draw[fill = black!10] (5,0) -- (5,4) --(6,5) -- (6,1) -- (5,0);
        \node at (5,0) {$\bullet$};
        \node at (6,1) {$\bullet$};
        \node at (5,2) {$\bullet$};
        \node at (6,3) {$\bullet$};
        \node at (5,4) {$\bullet$};
        \node at (6,5) {$\bullet$};
        \node at (10,0) {$\bullet$};
        \node at (11,1) {$\bullet$};
        \node at (12,2) {$\bullet$};
        \draw[] (0,0) -- (0,8) --(10,0) -- (12,2) -- (0,0)--(10,0);
        \draw[] (0,8) -- (12,2);
 \end{tikzpicture}
    \qquad 
\begin{tikzpicture}[scale = 0.2]
        \node at (0,0) {$\bullet$};
        \node at (0,2) {$\bullet$};
        \node at (0,4) {$\bullet$};
        \node at (0,6) {$\bullet$};
        \node at (0,8) {$\bullet$};
        \node at (0,10) {$\bullet$};
        \node at (0,12) {$\bullet$};
        \draw[] (0,0) -- (0,12);
         \draw[fill = black!10] (5,0) -- (5,8) --(6,9) -- (6,1) -- (5,0);
        \node at (5,0) {$\bullet$};
        \node at (5,2) {$\bullet$};
        \node at (5,4) {$\bullet$};
        \node at (5,6) {$\bullet$};
        \node at (5,8) {$\bullet$};
        \node at (6,1) {$\bullet$};
        \node at (6,3) {$\bullet$};
        \node at (6,5) {$\bullet$};
        \node at (6,7) {$\bullet$};
        \node at (6,9) {$\bullet$};
        \draw[fill = black!10] (10,0) -- (12,2) --(12,6) -- (10,4) -- (10,0);
        \node at (10,0) {$\bullet$};
        \node at (11,1) {$\bullet$};
        \node at (12,2) {$\bullet$};
        \node at (10,2) {$\bullet$};
        \node at (11,3) {$\bullet$};
        \node at (12,4) {$\bullet$};
        \node at (10,4) {$\bullet$};
        \node at (11,5) {$\bullet$};
        \node at (12,6) {$\bullet$};
        \node at (15,0) {$\bullet$};
        \node at (16,1) {$\bullet$};
        \node at (17,2) {$\bullet$};
        \node at (18,3) {$\bullet$};
        \draw[] (15,0) -- (18,3);

        \draw[] (0,0) -- (15,0) -- (0,12) -- (18,3) -- (0,0);
        %\draw[] (15,0) -- (18,3);
        %\draw[] (15,0) -- (18,3);
    \end{tikzpicture}
$$

This decomposition \eqref{eq:first-stratification} implies a well-known fact (see, e.g., Beck and Robins \cite[Exer.~3.33]{BeckRobins}):
\begin{equation}
\label{eq:Beck-Robins-exercise}
\Ehr_{P*Q}(m)
=\sum_{\substack{(r,r'):\\r+r'=m}}
\Ehr_P(r)\cdot \Ehr_Q(r')
\end{equation}
This fact \eqref{eq:Beck-Robins-exercise} is easily seen to be equivalent to the following simple relation
of Ehrhart series:
\begin{equation}
\label{eq:join-Ehrhart-series-relation}
 \Eseries_{P*Q}(t)=\Eseries_P(t) \cdot \Eseries_Q(t).
\end{equation}

Recalling that $\Eseries_P(t,q)=\Hilb(\HHH_P,t,q)$,
the next result is the $q$-analogue of \eqref{eq:join-Ehrhart-series-relation}.

\begin{thm}
 \label{thm:join-q-Ehrharts}
For lattice polytopes $P,Q$, one has the following relation among the  $\NN^2$-graded Hilbert series for the harmonic algebras $\HHH_P, \HHH_Q, \HHH_{P * Q}$:
    \[
    \Eseries_{P*Q}(t,q) = \frac{1-t}{1-qt} \cdot \Eseries_P(t,q)\cdot \Eseries_Q(t,q).
    \]
\end{thm}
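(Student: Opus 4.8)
The plan is to understand the harmonic algebra $\HHH_{P*Q}$ explicitly using the slice decomposition \eqref{eq:first-stratification} of the dilates $m(P*Q)$, which expresses $\ZZ^{1+n+n'} \cap m(P*Q)$ as a disjoint union over pairs $(r,r')$ with $r+r'=m$ of translates of products $(\ZZ^n \cap rP) \times (\ZZ^{n'} \cap r'Q)$. The first step is to identify the harmonic space $V_{\ZZ^{1+n+n'} \cap m(P*Q)}$ in terms of the harmonic spaces of the pieces. Since the points in distinct slices $x_0 = r$ have distinct values of the first coordinate, one expects the vanishing ideal $\II(\ZZ^{1+n+n'} \cap m(P*Q))$ to factor nicely through the $x_0$-coordinate, and Lemma~\ref{lem:harmonics-across-product} should govern each individual slice $\{r\} \times (\ZZ^n \cap rP) \times (\ZZ^{n'} \cap r'Q)$, contributing $x_0^r \cdot V_{\ZZ^n \cap rP} \otimes_\RR V_{\ZZ^{n'} \cap r'Q}$ (possibly with a degree shift coming from the $x_0^r$ factor). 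The key technical claim will be a graded vector space isomorphism
\[
V_{\ZZ^{1+n+n'} \cap m(P*Q)} \;\cong\; \bigoplus_{\substack{(r,r'):\\ r+r'=m}} x_0^{r} \cdot \bigl( V_{\ZZ^n \cap rP} \otimes_\RR V_{\ZZ^{n'} \cap r'Q} \bigr),
\]
where on the right the factor $x_0^r$ contributes $r$ to the $q$-degree.

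\textbf{Carrying it out.} To prove that isomorphism, I would first compute the associated graded ideal $\gr\,\II(\ZZ^{1+n+n'} \cap m(P*Q))$ inside $\RR[x_0, x_1,\ldots,x_n, x_1',\ldots,x_{n'}']$. For this I would use the analogue of \eqref{eq:Cartesian-product-grs} from Lemma~\ref{lem:harmonics-across-product}: the union over slices lets one write $\II$ of the whole as an intersection of the $\II$'s of the slices (each of which is a Cartesian product after translation), and then one controls $\gr$ of the intersection by dimension-counting against the classical relation \eqref{eq:Beck-Robins-exercise}, namely $\Ehr_{P*Q}(m) = \sum_{r+r'=m} \Ehr_P(r)\Ehr_Q(r')$. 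The subtlety is the role of $x_0$: on the slice $x_0 = r$, the polynomial $\prod_{j=0}^{r-1}(x_0 - j)$ vanishes, with top component $x_0^r$, so the $x_0$-power shows up in harmonics. After pinning down $\gr\,\II$, one takes perp spaces in the divided power algebra $\Div_\RR(y_0, \yy, \yy')$ and reads off the displayed direct sum decomposition, tracking degrees carefully.

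\textbf{Assembling the Hilbert series.} Once the decomposition is established, the bigraded Hilbert series computation is bookkeeping. Using $\Hilb(x_0^r \cdot V_{\ZZ^n \cap rP} \otimes_\RR V_{\ZZ^{n'} \cap r'Q}, q) = q^r \cdot \Ehr_P(r;q)\cdot \Ehr_Q(r';q)$ (the $q^r$ is the degree of $x_0^r$ in the $y$-grading), summing over slices gives
\[
\Ehr_{P*Q}(m;q) = \sum_{\substack{(r,r'):\\ r+r'=m}} q^{r}\cdot \Ehr_P(r;q)\cdot \Ehr_Q(r';q).
\]
Multiplying by $t^m$ and summing over $m \geq 0$ almost factors as $\Eseries_P(tq, q)$-type products, but the asymmetry ($q^r$ attached only to the $P$-side) prevents a clean factorization; instead one recognizes the right-hand side as a Cauchy-type convolution. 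To reconcile it with $\Eseries_P(t,q)\Eseries_Q(t,q) = \sum_m t^m \sum_{r+r'=m}\Ehr_P(r;q)\Ehr_Q(r';q)$, I would symmetrize: the extra factor $q^r$ versus the "missing" factor is accounted for by noting that both $P$ and $Q$ harmonic algebras carry the constant $1$ in degree zero with $q$-weight $0$, while the join introduces exactly one extra lattice point on the segment between the two faces at each dilation level — this is precisely the $\frac{1-t}{1-qt}$ correction. Concretely, I expect the convolution identity to rearrange into $\Eseries_{P*Q}(t,q) = \frac{1-t}{1-qt}\Eseries_P(t,q)\Eseries_Q(t,q)$ after a short generating-function manipulation, matching the $q\to 1$ specialization with \eqref{eq:join-Ehrhart-series-relation}. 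The main obstacle is the first step: rigorously identifying $\gr\,\II(\ZZ^{1+n+n'}\cap m(P*Q))$ and proving the harmonic-space decomposition, since the interaction between the $x_0$-variable and the two blocks of variables is more delicate than in the pure Cartesian-product case of Lemma~\ref{lem:harmonics-across-product}; the dimension count via \eqref{eq:Beck-Robins-exercise} is the tool that forces equality once the easy inclusion of ideals is checked.
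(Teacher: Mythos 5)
Your key technical claim is false, and the error cannot be repaired downstream by generating-function manipulation. You propose the graded isomorphism
\[
V_{\ZZ^{1+n+n'} \cap m(P*Q)} \;\cong\; \bigoplus_{r+r'=m} x_0^{r}\cdot\bigl(V_{\ZZ^n\cap rP}\otimes_\RR V_{\ZZ^{n'}\cap r'Q}\bigr),
\]
which would give $\Ehr_{P*Q}(m;q)=\sum_{r+r'=m}q^{r}\,\Ehr_P(r;q)\,\Ehr_Q(r';q)$, i.e.\ $\Eseries_{P*Q}(t,q)=\Eseries_P(qt,q)\cdot\Eseries_Q(t,q)$. Test it on $P=[0,1]\subset\RR^1$ and $Q$ a single point, $m=1$: the join $P*Q$ is a unimodular triangle, whose three lattice points give $\Ehr_{P*Q}(1;q)=1+2q$, whereas your formula gives $q^0\cdot 1\cdot 1+q^1\cdot(1+q)\cdot 1=1+q+q^2$. (Equivalently, your series would be $\frac{1}{(1-t)(1-qt)(1-q^2t)}$, not the correct $\frac{1}{(1-t)(1-qt)^2}$.) The point is that the orbit-harmonics deformation does not respect the slice decomposition \eqref{eq:first-stratification}: the harmonic space of the join is \emph{not} a direct sum over slices with $q$-weight $r$ attached to the slice coordinate. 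The correct description (after translating $P,Q$ to contain the origin, so that $V^P_0\subseteq V^P_1\subseteq\cdots$ and likewise for $Q$) is
\[
V_{\ZZ^{1+n+n'}\cap m(P*Q)}=\bigoplus_{s=0}^{m}\RR\cdot y_0^{\,m-s}\otimes\Bigl(\sum_{\ell+\ell'=s}V^P_\ell\otimes V^Q_{\ell'}\Bigr),
\]
where the inner sums are \emph{non-direct} sums of nested subspaces and the $y_0$-exponent is $m-s$, not the slice index. Your slice-by-slice picture has the right total dimension (which is why the $q\to 1$ check \eqref{eq:Beck-Robins-exercise} is passed) but the wrong bigrading, so no dimension count can validate it.

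Your route to the ideal is also problematic: writing $\II(\ZZ^{1+n+n'}\cap m(P*Q))$ as an intersection of slice ideals gives no control over $\gr$ of the intersection (this is exactly the difficulty flagged in Section~\ref{subsec:missing-valuation}). The paper instead constructs explicit elements of the vanishing ideal of the form $\hat f=\prod_{r=\ell+1}^{m-(\ell'+1)}(x_0-r)\otimes f\otimes f'$ with $f\in\II(\ZZ^n\cap\ell P)$, $f'\in\II(\ZZ^{n'}\cap\ell'Q)$, whose vanishing on \emph{all} slices uses the nestings $rP\subseteq\ell P$, $(m-r)Q\subseteq\ell'Q$; their top components $x_0^{m-\ell-\ell'}\otimes\tau(f)\otimes\tau(f')$ generate a homogeneous ideal $J_m\subseteq\gr\,\II$, and equality plus identification of the harmonic space is then forced by a dimension count whose evaluation requires a separate combinatorial lemma about Hilbert series of non-direct sums of nested tensor products (Lemma~\ref{lem:nested-spaces-lemma}). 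That nesting/filtration structure, which your proposal omits entirely, is precisely what produces the asymmetric factor $\frac{1-t}{1-qt}$ in the statement.
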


As mentioned in Remark~\ref{rmk:join-pyramid}, if we take $Q$ to be a single point, Theorem~\ref{thm:join-q-Ehrharts} says that the $q$-Ehrhart series of the pyramid $\pyr(P) := P * \{\mathrm{pt}\}$ is $\Eseries_{\pyr(P)}(t,q) = (1-qt) \cdot \Eseries_P(t,q)$.

\begin{proof}
Letting $\Zpoints_m:=\ZZ^{1+n+n'} \cap m(P*Q) \subset \RR^{1 +n+n'}$, we start by describing the ideal
$$
\gr\,\II(\Zpoints_m) \,\, \subset \,\, \RR[x_0] \otimes \RR[x_1,\ldots,x_n] \otimes \RR[x_1,\ldots,x_{n'}].
$$  
The product structure in each term of
\eqref{eq:first-stratification} leads to some natural elements
of $\II(\Zpoints_m)$ and $\gr\,\II(\Zpoints_m)$.  First translate $P, Q$, without loss of generality, so that $\origin_n \in P$ and $\origin_{n'} \in Q$, implying nestings 
\begin{equation}
\label{eq:nestings}
P \subseteq 2P \subseteq 3P \subseteq \cdots \text{ and }
Q \subseteq 2Q \subseteq 3Q \subseteq \cdots.
\end{equation}
Suppose $\ell,\ell' \geq 0$ have $\ell+\ell' \leq m$, and
one is given $f \in \II(\ZZ^n \cap \ell P)$ and $f' \in \II(\ZZ^{n'}\cap \ell' Q)$.  
Then we claim that the nestings \eqref{eq:nestings} imply that the polynomial
\begin{equation}
\hat{f}:= \prod_{r \, = \, \ell+1}^{m-(\ell'+1)} (x_0 - r) \otimes f \otimes f' 
\end{equation}
vanishes on $\Zpoints_\ell$. To see this, note that 
a typical point of $(r,\zz,\zz')$ of $\Zpoints_\ell$ will either 
\begin{itemize}
    \item have $\ell+1 \leq r \leq m-(\ell'+1)$ so $\hat{f}$ vanishes due to one of its factors $x_0-r$, 
 
\item or else have $r \leq \ell$, and then $\hat{f}$ vanishes
due to the factor $f$, since $\zz \in rP \subseteq \ell P$,
\item or else have $m-r \leq \ell'$, and then $\hat{f}$ vanishes 
due to the factor $f'$, since $\zz' \in (m-r)P \subseteq \ell' P$
\end{itemize}
Thus $\hat{f}$ lies in $\II(\Zpoints_m)$, implying that $\gr \, \II(\Zpoints_m)$ contains its top degree homogeneous component
\begin{equation}
\tau(\hat{f})= 
x_0^{m-\ell-\ell'} \otimes \tau(f) \otimes \tau(f').
\end{equation}
Note also that $x_0 (x_0 - 1) \cdots (x_0 - m)$ vanishes on $\Zpoints_m$.  Hence $\gr \, \II(\Zpoints_m)$ contains this ideal:
\begin{equation}
J_m :=
(x_0^{m+1} \otimes 1 \otimes 1) + 
\left( (x_0^{m-\ell-\ell'})  \otimes 
\sum_{\ell + \ell' \, \leq \, m}
\gr \, \II(\ZZ^n \cap \ell P) \otimes
\gr \, \II(\ZZ^{n'} \cap \ell' Q) \right).
\end{equation} 
We wish to show that, in fact, this containment $J_m \subseteq \gr \, \II(\Zpoints_m)$ is an equality.
To this end, we calculate
the perp space $J_m^\perp$ within
$\RR[y_0] \otimes \RR[y_1,\ldots,y_n] \otimes \RR[y_1,\ldots,y_{n'}]$ for the ideal $J_m$.
Abbreviate
\begin{equation}
V^P_\ell := \gr \, \II(\ZZ^n \cap \ell P )^\perp 
\quad \text{and} \quad 
V^Q_{\ell'} := \gr \, \II(\ZZ^{n'} \cap \ell' P)^\perp.
\end{equation}
By Lemma~\ref{lem:harmonic-nesting-lemma} we have
\begin{equation}
\label{eq:nesting-harmonics}
\RR \cdot 1 = V^P_0 \subseteq V^P_1 \subseteq 
\cdots \subseteq V^P_\ell \quad \text{and} \quad
\RR \cdot 1 = V^Q_0 \subseteq V^Q_1 \subseteq 
\cdots \subseteq V^Q_\ell.
\end{equation}
The following space $V_m$ of polynomials is annihilated 
by $J_m$ under the $\odot$-action, so lies in $J_m^\perp$:
\begin{equation}
    \label{eq:J-harmonics}
    V_m :=
\bigoplus_{s \, = \, 0}^m \left(
\sum_{\ell + \ell' \, = \, s}
\RR \cdot y_0^{m-s} \otimes V_\ell^P \otimes V_{\ell'}^Q 
\right).
\end{equation}
By \eqref{eq:two-filtrations-equation-at-q=1} in Lemma~\ref{lem:nested-spaces-lemma} below, $V_m$ has the following dimension:
\begin{align*}
    \dim_\RR V_m = 
    \sum_{s \, = \, 0}^m
    \dim \left(  
        \sum_{\ell+\ell' \, = \, s} V_\ell^P \otimes V_{\ell'}^Q
    \right) 
    &= 
    \sum_{r+r' \, = \, m}
    \dim(V_r^P) \cdot \dim(V_{r'}^Q) 
    \\ 
    &= \sum_{r+r' \, = \, m}
    \Ehr_P(r) \cdot \Ehr_Q(r') =\Ehr_{P*Q}(m) 
    = \# \Zpoints_m.
\end{align*}
Therefor these inequalities
\begin{equation*}
\# \Zpoints_m =  \dim_\RR V_m \leq \dim_\RR J_m^{\perp} =  \dim_\RR S/J_m \leq \dim_\RR S/\gr \, I(\Zpoints_m) = \# \Zpoints_m
\end{equation*}
must all be equalities, implying 
\begin{align}
\gr \, \II(\Zpoints_m) &= J_m, \\
\label{eq:join-harmonic-components-identified}
V_{\Zpoints_m}=\gr\,\II(\Zpoints_m)^\perp&=J_m^\perp=V_m.
\end{align}

This now allows us to compute $\Hilb(V_{\Zpoints_m},q)$, using \eqref{eq:J-harmonics}, as
\begin{equation}
\label{eq:J-harmonic-hilb}
\Hilb(V_{\Zpoints_m},q)
=\Hilb(V_m,q)
=\sum_{s=0}^m q^{m-s}
\Hilb\left(
\sum_{\ell+\ell'=s} V_\ell^P \otimes V_{\ell'}^Q,q
\right).
\end{equation}
On the other hand, direct calculation shows
that the coefficient of $t^m$ in $\frac{1-t}{1-qt} \Eseries_P(t,q) \cdot \Eseries_Q(t,q)$ is
$$
\sum_{s=0}^m q^{m-s}
\sum_{\ell+\ell'=s}
\Hilb( V_\ell^P \otimes V_{\ell'}^Q,q)
-
\sum_{s=0}^{m-1} q^{m-1-s}
\sum_{\ell+\ell'=s}
\Hilb( V_\ell^P \otimes V_{\ell'}^Q,q).
$$
This last expression coincides, by \eqref{eq:two-filtrations-equation} in Lemma~\ref{lem:nested-spaces-lemma} below,
with \eqref{eq:J-harmonic-hilb}.
In other words,
$$
\frac{1-t}{1-qt} \Eseries_P(t,q) \cdot \Eseries_Q(t,q)
=\sum_{m=0}^\infty t^m  \cdot \Hilb(V_{\Zpoints_m},q)
=\Hilb( \HHH_{P*Q},t,q) =\Eseries_{P*Q}(t,q).\qedhere
$$
\end{proof}

The previous proof used the following
technical lemma, on Hilbert series arising from graded tensor products $V \otimes V'$ of spaces $V, V'$ equipped with homogeneous filtrations.

\begin{lemma}
    \label{lem:nested-spaces-lemma}
    Let $V, V'$ be graded vector spaces, each
    with nested sequences of homogeneous subspaces
$$      
0 \subseteq V_1 \subseteq V_2 \subseteq \cdots \subseteq V \text{ and } \quad 0 \subseteq V'_1 \subseteq V'_2 \subseteq \cdots \subseteq V'
$$
Then one has the following Hilbert series identity for each $n$:
    \begin{multline}
    \label{eq:two-filtrations-equation}
        \sum_{k  =  0}^n q^{n-k} \cdot \Hilb\left( \sum_{a+b \, = \, k} V_a \otimes V'_b, q \right) = \\
        \sum_{k  =  0}^n q^{n-k} \sum_{a+b \, = \, k} \Hilb(V_a \otimes V'_b, q) - \sum_{k  =  0}^{n-1} q^{n-k-1} \sum_{a+b \, = \, k} \Hilb(V_a \otimes V'_b, q).
    \end{multline}
    In particular, setting $q=1$, one has
    \begin{equation}
    \label{eq:two-filtrations-equation-at-q=1}
        \sum_{k  =  0}^n \dim \left( \sum_{a+b \, = \, k} V_a \otimes V'_b \right) = \sum_{c+d \, = \, n} \dim(V_c) \cdot \dim(V_d).
    \end{equation}
\end{lemma}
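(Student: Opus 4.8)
\textbf{Proof plan for Lemma~\ref{lem:nested-spaces-lemma}.}
The plan is to reduce the identity \eqref{eq:two-filtrations-equation} to a purely combinatorial statement about the ``staircase'' of subspaces $\sum_{a+b=k} V_a \otimes V_b'$ and then verify it by a telescoping argument. First I would introduce abbreviations: for each $k \geq 0$ set $W_k := \sum_{a+b=k} V_a \otimes V_b' \subseteq V \otimes V'$, and note that the nesting hypotheses $V_1 \subseteq V_2 \subseteq \cdots$ and $V_1' \subseteq V_2' \subseteq \cdots$ force a nesting $W_0 \subseteq W_1 \subseteq W_2 \subseteq \cdots$, since any summand $V_a \otimes V_b'$ with $a+b=k$ is contained in $V_a \otimes V_{b+1}'$ with $(a)+(b+1)=k+1$. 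Because all the subspaces in sight are homogeneous, the Hilbert series $\Hilb(W_k,q)$ are well-defined power series in $\ZZ_{\geq 0}[[q]]$, and the nesting gives $\Hilb(W_k,q) - \Hilb(W_{k-1},q) = \Hilb(W_k/W_{k-1},q)$, a series with nonnegative coefficients.

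The key step is to rewrite both sides of \eqref{eq:two-filtrations-equation} in terms of the increments $\Hilb(W_k/W_{k-1},q)$. On the left-hand side, $\sum_{k=0}^n q^{n-k}\Hilb(W_k,q)$ can be summed by parts: writing $\Hilb(W_k,q) = \sum_{j=0}^k \Hilb(W_j/W_{j-1},q)$ (with the convention $W_{-1}=0$) and interchanging the order of summation yields
\[
\sum_{k=0}^n q^{n-k}\Hilb(W_k,q) = \sum_{j=0}^n \Hilb(W_j/W_{j-1},q)\cdot (q^{n-j}+q^{n-j-1}+\cdots+1) = \sum_{j=0}^n \Hilb(W_j/W_{j-1},q)\,[n-j+1]_q.
\]
For the right-hand side, I would observe that $\sum_{k=0}^n q^{n-k}\sum_{a+b=k}\Hilb(V_a\otimes V_b',q) - \sum_{k=0}^{n-1}q^{n-1-k}\sum_{a+b=k}\Hilb(V_a\otimes V_b',q)$ is itself a telescoping-type expression; grouping the $t$-powers as in a generating-function manipulation, it equals $\sum_{k=0}^n (q^{n-k}-q^{n-k-1})\sum_{a+b=k}\Hilb(V_a\otimes V_b',q) + \sum_{a+b=n}\Hilb(V_a\otimes V_b',q)$ — equivalently the coefficient-extraction identity $[t^n]\big(\tfrac{1-t}{1-qt}\cdot A(t,q)\cdot B(t,q)\big)$ where $A(t,q)=\sum_a \Hilb(V_a,q)t^a$ and $B(t,q)=\sum_b\Hilb(V_b',q)t^b$, since $\Hilb(V_a\otimes V_b',q)=\Hilb(V_a,q)\Hilb(V_b',q)$. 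Thus what must be shown is that $[t^n]\big(\tfrac{1-t}{1-qt}A(t,q)B(t,q)\big) = \sum_{j=0}^n \Hilb(W_j/W_{j-1},q)[n-j+1]_q$, or after multiplying the generating-function form through by $\tfrac{1-qt}{1-t}$, that $\sum_n \Hilb(W_n,q)t^n = \tfrac{1-t}{1-qt}A(t,q)B(t,q)\cdot\tfrac{1}{1-t}\cdot(\text{something})$ — so the cleanest formulation is the generating function identity $\sum_{n\geq 0}\Hilb(W_n,q)t^n \cdot (1-t) = A(t,q)B(t,q) - \text{(correction)}$, which unwinds to the claim that $\Hilb(W_n/W_{n-1},q) = \sum_{a+b=n}\Hilb(V_a,q)\Hilb(V_b',q) - \sum_{a+b=n}\Hilb(\widetilde V_a,q)\Hilb(\widetilde V_b',q)$ where the tilde spaces are the predecessors.

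The main obstacle — and the heart of the argument — is an inclusion–exclusion identity at the level of the subspaces themselves, not merely their dimensions: one needs
\[
\dim\Big(\sum_{a+b=n} V_a\otimes V_b'\Big) = \sum_{a+b=n}\dim(V_a)\dim(V_b') - \sum_{a+b=n-1}\dim(V_a)\dim(V_b'),
\]
whose graded refinement gives the Hilbert-series statement. This is where the filtration structure is genuinely used: the point is that $V_a \otimes V_b' \cap V_{a'}\otimes V_{b'}' = V_{\min(a,a')}\otimes V_{\min(b,b')}'$ inside $V\otimes V'$, which holds because $\{V_a\}$ and $\{V_b'\}$ are \emph{chains} of subspaces (for arbitrary families of subspaces the analogue fails). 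Granting this, a short induction on the number of summands, or equivalently a direct computation using the formula for the dimension of a sum of subspaces forming a distributive lattice, yields the displayed dimension identity; carrying the grading through verbatim gives \eqref{eq:two-filtrations-equation}, and setting $q=1$ collapses $[n-j+1]_q$ to $n-j+1$ and the telescoped right side to $\sum_{c+d=n}\dim(V_c)\dim(V_d)$, which is \eqref{eq:two-filtrations-equation-at-q=1}. I expect the bookkeeping in the generating-function rearrangement to be routine once the chain-intersection fact is isolated; the one place to be careful is tracking the boundary terms (the $W_{-1}=0$ convention and the extra $\sum_{a+b=n}$ term) so that the two sides match exactly rather than up to a shift.
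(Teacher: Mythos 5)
Your plan is correct in substance, and it shares with the paper the one genuinely structural ingredient: because $\{V_a\}$ and $\{V'_b\}$ are \emph{chains} of homogeneous subspaces, one can choose homogeneous bases of $V$ and $V'$ adapted to the filtrations, so that $W_k:=\sum_{a+b=k}V_a\otimes V'_b$ is spanned by basis tensors $u_i\otimes v_j$ with $\mathrm{level}(u_i)+\mathrm{level}(v_j)\le k$, giving $\Hilb(W_k,q)=\sum_{a+b\le k}\Hilb(V_a/V_{a-1},q)\,\Hilb(V'_b/V'_{b-1},q)$. The paper uses exactly this fact implicitly, sets $h_{ab}:=\Hilb(V_a/V_{a-1}\otimes V'_b/V'_{b-1},q)$, and finishes by comparing the coefficient of each $h_{ab}$ on the two sides of the fixed-$n$ identity, finding $[n-(a+b)+1]_q$ both times. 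You instead pass to generating functions in an auxiliary variable $t$ and reduce the whole family of identities to the single statement $\Hilb(W_n,q)=\sum_{a+b=n}\Hilb(V_a\otimes V'_b,q)-\sum_{a+b=n-1}\Hilb(V_a\otimes V'_b,q)$ for all $n$, which is your displayed ``heart'' identity (in graded form) and follows from the adapted-basis computation by a one-line telescoping, since the increments $\Hilb(V_a,q)-\Hilb(V_{a-1},q)$ have generating function $(1-t)\sum_a\Hilb(V_a,q)t^a$. So the two arguments differ only in bookkeeping (coefficient-of-$h_{ab}$ comparison versus a $t$-generating-function reduction); yours is arguably the cleaner way to see where the factor $\frac{1-t}{1-qt}$ in Theorem~\ref{thm:join-q-Ehrharts} comes from.

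Two cautions so the write-up does not go astray. First, the intermediate sentence claiming the identity ``unwinds to'' $\Hilb(W_n/W_{n-1},q)=\sum_{a+b=n}\Hilb(V_a,q)\Hilb(V'_b,q)-\sum_{a+b=n}\Hilb(V_{a-1},q)\Hilb(V'_{b-1},q)$ is false as stated: with $A(t,q)B(t,q)$ as in your notation, the left side has generating function $(1-t)^2AB$ while the right side has $(1-t^2)AB$. Your displayed reduction, which involves $W_n$ itself (not $W_n/W_{n-1}$) and the shift from $n$ to $n-1$ in the index of summation, is the correct one; just prove that and drop the garbled intermediate reformulation. Second, inclusion--exclusion for dimensions of sums of subspaces is not valid for arbitrary families, so if you phrase the heart identity via the intersection formula $(V_a\otimes V'_b)\cap(V_{a'}\otimes V'_{b'})=V_{\min(a,a')}\otimes V'_{\min(b,b')}$ you must also invoke the common adapted homogeneous basis (equivalently, that these subspaces generate a distributive lattice) to justify the count; alternatively, skip inclusion--exclusion entirely and just count adapted basis tensors as above, which yields the heart identity directly.
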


\begin{proof}
    Since the spaces $V_a$ and $V'_b$ nest, the sum $\sum_{a+b \, = \, k} V_a \otimes V'_b$ of subspaces of $V \otimes V'$ is {\it not} a direct sum.
    However, after 
    abbreviating
    $$
    h_{ab}:=\Hilb(V_a/V_{a-1},q) \cdot \Hilb(V'_b/V'_{b-1},q)
    =\Hilb(V_a/V_{a-1} \otimes V'_b/V'_{b-1},q),
    $$
    the left side of the lemma can be rewritten
    \begin{equation}
    \label{eq:basis-lemma-one}
        \sum_{k  =  0}^n q^{n-k} \cdot 
          \sum_{a+b\leq k} h_{ab},
    \end{equation}
whose coefficient of $h_{ab}$ is
$$
\sum_{k=a+b}^n q^{n-k}=1+q+q^2+\cdots+q^{n-(a+b)}
=[\hat{n}]_q 
$$
abbreviating $\hat{n}:=n-(a+b)+1$. Meanwhile, the right side of the lemma can
 be rewritten
    \begin{equation}
    \label{eq:basis-lemma-two}
        \sum_{k  =  0}^n q^{n-k} 
        \sum_{a'+b'  \, = \,  k} \,\,
        \sum_{\substack{a: 0 \leq a \leq a'\\b: 0 \leq b \leq b'}} h_{ab}
        \quad - \quad
        \sum_{k \, = \, 0}^{n-1} q^{n-1-k} 
        \sum_{a'+b' \,  = \,  k} \,\,
        \sum_{\substack{a: 0 \leq a \leq a'\\b: 0 \leq b \leq b'}} h_{ab}.
    \end{equation}
Hence the coefficient of $h_{ab}$ on the right side of the lemma is
\begin{align*}
    &\sum_{\substack{(a',b'):\\ 
    a' \geq a\\b'\geq b\\a'+b'\leq n}} q^{n-(a'+b')}
    \,\, - \,\,
    \sum_{\substack{(a',b'):\\ 
    a' \geq a\\b'\geq b\\a'+b'\leq n-1}} q^{n-1-(a'+b')}\\
    &=\hat{n}q^0 +
    (\hat{n}-1) q^1 +
    (\hat{n}-2)q^2 + 
    \cdots +
    2q^{\hat{n}-2}+q^{\hat{n}-1}\\
    &\qquad -\left(
    (\hat{n}-1)q^0 +
    (\hat{n}-2)q^1 +
    (\hat{n}-3)q^2 + 
    \cdots +
    2q^{\hat{n}-3}+q^{\hat{n}-2}
    \right)\\
     &=q^0+q^1+\cdots+q^{\hat{n}-2}+q^{\hat{n}-1}
    =[\hat{n}]_q \qedhere
\end{align*}
\end{proof}

We are ready to relate the harmonic algebras $\HHH_{P*Q}, \HHH_P,$ and $\HHH_Q$, starting with the following suggestive rewriting of 
Theorem~\ref{thm:join-q-Ehrharts}:
\begin{align}
\label{eq:suggestive-rewriting}
    (1-qt) \cdot \Hilb( \HHH_{P*Q}, t,q) 
%= (1-t) \cdot \Hilb(\HHH_P,t,q) \cdot \Hilb(\HHH_Q,t,q)\\
= (1-t) \cdot \Hilb(\HHH_P \otimes \HHH_Q,t,q).
\end{align}
To interpret this identity, it helps to name
the ``auxiliary" variables in $\HHH_{P*Q}, \HHH_P, \HHH_Q$ as $y_{P*Q}, y_P, y_Q$, 
and to abbreviate the other variable sets
$\yy_n:=(y_1,\ldots,y_n)$,
$\yy_{n'}:=(y_1,\ldots,y_{n'})$.
This means that the rings
$\HHH_{P*Q}, \HHH_P \otimes \HHH_Q$,
are $\RR$-subalgebras of the following ambient polynomial algebras
\begin{align*}
\HHH_{P*Q} &\subset 
\RR[y_{P*Q}] \otimes \RR[y_0]  \otimes \RR[\yy_n]  \otimes \RR[\yy_{n'}]\\
\HHH_P \otimes \HHH_Q &\subset 
 \RR[y_P, \yy_n]  \otimes 
 \RR[y_Q,\yy_{n'}].
%& \cong \RR[y_P] \otimes \RR[y_Q] \otimes \RR[\yy_n] \otimes \RR[\yy_{n'}]
\end{align*}
Their $\NN^2$-gradings have $\deg(y_P)=\deg(y_Q)=\deg(y_{P*Q})=(1,0)$,
and all other variables have $\deg(y_0)=\deg(y_i)=(0,1)$.
Since the ambient polynomial algebras are integral domains,
so are the harmonic algebras inside them.  Hence
one concludes that 
\begin{itemize}
    \item $y_{P*Q} \cdot y_0 \,\,(:=y_{P*Q} \otimes y_0 \otimes 1 \otimes 1)$ is a nonzero divisor inside $\HHH_{P*Q}$, with $\NN^2$-degree $(1,1)$, and
    \item $y_P \otimes 1 - 1 \otimes y_Q$ is a nonzero divisor inside $\HHH_P \otimes \HHH_Q$, with $\NN^2$-degree $(1,0)$.
\end{itemize} 
In a $\NN^2$-graded algebra $A$
with a homogeneous nonzerodivisor $\theta$
of degree $(a,b)$, the exact sequence
$$
0 \rightarrow A(-(a,b)) \overset{\cdot \theta}{\longrightarrow} A \rightarrow A/(\theta) \rightarrow 0
$$
shows that one has the Hilbert series relation
$$
\Hilb(A/(\theta),t,q) = (1-t^a q^b) \cdot \Hilb(A,t,q).
$$
Thus the left and right sides of \eqref{eq:suggestive-rewriting} can reinterpreted as Hilberts series for these quotient rings:
\begin{align*}
(1-qt) \cdot \Hilb( \HHH_{P*Q}, t,q)
&=
\Hilb(\,\, \HHH_{P*Q}/(y_{P*Q} \cdot y_0) \,\, ,t,q), \\
(1-t) \cdot \Hilb(\HHH_P \otimes \HHH_Q,t,q)
&=\Hilb(\,\, (\HHH_P \otimes \HHH_Q) /(y_P \otimes 1 - 1 \otimes y_Q)\,\, ,t,q).
\end{align*}

In this way, the following result is an algebraic strengthening of Theorem~\ref{thm:join-q-Ehrharts}.

\begin{thm}
\label{thm:join-isomorphism}
For any lattice polytopes $P$ and $Q$, one has an $\NN^2$-graded algebra isomorphism
$$
(\HHH_P \otimes \HHH_Q)/(y_P \otimes 1 - 1 \otimes y_Q) 
\quad \cong \quad 
\HHH_{P*Q}/(y_{P*Q} \cdot y_0) 
$$
\end{thm}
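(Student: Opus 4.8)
The plan is to exhibit an explicit surjective $\NN^2$-graded $\RR$-algebra homomorphism from $(\HHH_P\otimes\HHH_Q)/(y_P\otimes 1-1\otimes y_Q)$ onto $\HHH_{P*Q}/(y_{P*Q}\cdot y_0)$, and then invoke the Hilbert-series bookkeeping already carried out just before the statement of Theorem~\ref{thm:join-isomorphism} (which, combined with Theorem~\ref{thm:join-q-Ehrharts}, shows these two quotient algebras have the same $\NN^2$-graded Hilbert series, with finite-dimensional components in each bidegree) to conclude that any such surjection is an isomorphism. The starting point is the explicit description of the harmonic components of the join obtained inside the proof of Theorem~\ref{thm:join-q-Ehrharts}: $\gr\,\II(\Zpoints_m)=J_m$ and
\[
V_{\Zpoints_m}=V_m=\bigoplus_{s=0}^m\ \sum_{\ell+\ell'=s}\RR\cdot y_0^{m-s}\otimes V^P_\ell\otimes V^Q_{\ell'},
\]
so that $\HHH_{P*Q}=\bigoplus_{m\ge 0}\RR\cdot y_{P*Q}^m\otimes V_m$ and, in particular, the $y_0$-degree-zero part of $(\HHH_{P*Q})_m$ is $\RR\cdot y_{P*Q}^m\otimes\big(\sum_{\ell+\ell'=m}V^P_\ell\otimes V^Q_{\ell'}\big)$.

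First I would build the map. Take the ambient polynomial-ring homomorphism
\[
\RR[y_P,\yy_n]\otimes_\RR\RR[y_Q,\yy_{n'}]\ \longrightarrow\ \RR[y_{P*Q},y_0,\yy_n,\yy_{n'}]
\]
sending $y_P,y_Q\mapsto y_{P*Q}$ and fixing $\yy_n,\yy_{n'}$ (nothing maps to $y_0$). Using Theorem~\ref{thm:minkowski-closure} (Minkowski closure) together with the nestings $V^P_\ell\subseteq V^P_{\ell+1}$, $V^Q_{\ell'}\subseteq V^Q_{\ell'+1}$ of Lemma~\ref{lem:harmonic-nesting-lemma}, one checks that this restricts to an $\RR$-algebra homomorphism $\Psi\colon\HHH_P\otimes\HHH_Q\to\HHH_{P*Q}$ carrying $y_P^\ell v\otimes y_Q^{\ell'}v'$, for $v\in V^P_\ell$ and $v'\in V^Q_{\ell'}$, to $y_{P*Q}^{\ell+\ell'}\cdot v\cdot v'$; the one thing to verify here is that $v\cdot v'$ lands in the $y_0$-degree-zero part of $V_{\ell+\ell'}$, which is exactly the displayed formula with $s=\ell+\ell'$. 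Composing $\Psi$ with the quotient $\HHH_{P*Q}\twoheadrightarrow\HHH_{P*Q}/(y_{P*Q}\cdot y_0)$ and noting that $y_P\otimes 1$ and $1\otimes y_Q$ both map to $y_{P*Q}$, the element $y_P\otimes 1-1\otimes y_Q$ lies in the kernel, so $\Psi$ descends to an $\NN^2$-graded algebra map $\bar\Psi\colon(\HHH_P\otimes\HHH_Q)/(y_P\otimes 1-1\otimes y_Q)\to\HHH_{P*Q}/(y_{P*Q}\cdot y_0)$.

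Next I would prove surjectivity degree by degree. The key observation is that modulo $(y_{P*Q}\cdot y_0)$ every element of $(\HHH_{P*Q})_m$ is represented by its $y_0$-degree-zero part: a typical summand $y_{P*Q}^m\cdot y_0^{k}\otimes w$ with $k\ge 1$ and $w\in\sum_{\ell+\ell'=m-k}V^P_\ell\otimes V^Q_{\ell'}$ equals $(y_{P*Q}\cdot y_0)\cdot\big(y_{P*Q}^{m-1}\cdot y_0^{k-1}\otimes w\big)$, and the second factor again lies in $\HHH_{P*Q}$ by the displayed formula applied to $V_{m-1}$. Hence $(\HHH_{P*Q}/(y_{P*Q}\cdot y_0))_m$ is spanned by the images of $y_{P*Q}^m\cdot v\cdot v'$ over all $\ell+\ell'=m$, $v\in V^P_\ell$, $v'\in V^Q_{\ell'}$ --- and these are precisely the values of $\bar\Psi$. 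So $\bar\Psi$ is surjective, and the Hilbert-series comparison mentioned above forces it to be an isomorphism of $\NN^2$-graded $\RR$-algebras.

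The main obstacle is purely structural bookkeeping with the $y_0$-grading: one cannot treat $\HHH_{P*Q}$ as a black box but must re-use the precise filtered description of $V_{\Zpoints_m}$ from the proof of Theorem~\ref{thm:join-q-Ehrharts}, both to confirm that $\Psi$ genuinely maps into the subalgebra $\HHH_{P*Q}$ (rather than merely into the ambient polynomial ring) and to pin down which part of $V_m$ survives modulo $(y_{P*Q}\cdot y_0)$. Once that description is in hand, the well-definedness and multiplicativity of $\Psi$ (from Minkowski closure plus nesting), the vanishing of the relation, and the final Hilbert-series argument are all routine.
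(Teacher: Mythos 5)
Your proposal is correct and follows essentially the same route as the paper's own proof: the same algebra map collapsing $y_P,y_Q\mapsto y_{P*Q}$ (with nothing hitting $y_0$), the same kernel observation for $y_P\otimes 1-1\otimes y_Q$, the same identification of the image with the $y_0$-degree-zero part of each $V_{\Zpoints_m}$ so that everything else dies modulo $(y_{P*Q}\cdot y_0)$, and the same Hilbert-series/nonzerodivisor comparison to upgrade the surjection to an isomorphism. Your explicit factorization $y_{P*Q}^m y_0^k\otimes w=(y_{P*Q}y_0)\cdot(y_{P*Q}^{m-1}y_0^{k-1}\otimes w)$ just spells out the paper's claim that the complement of the image lies in the ideal $(y_{P*Q}\,y_0)$ of $\HHH_{P*Q}$.
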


\begin{proof}
In light of the preceding discussion on Hilbert series, it suffices by dimension-counting to exhibit an $\NN^2$-graded 
$\RR$-algebra surjection
from the left side to the right side in the theorem.

Consider the $\RR$-linear map $\varphi$
defined via
$$
\begin{array}{rcl}
 \RR[y_P, \yy_n]  \otimes \RR[y_Q,\yy_{n'}]
 & \longrightarrow & 
\RR[y_{P*Q}] \otimes \RR[y_0]  \otimes \RR[\yy_n]  \otimes \RR[\yy_{n'}]\\
y_P^\ell \cdot g(\yy_n) \otimes 
y_Q^{\ell'} \cdot g'(\yy_{n'})
&\longmapsto&
y_{P*Q}^{\ell+\ell'} \otimes 1 \otimes g(\yy_n) \otimes g'(\yy_{n'}).
\end{array}
$$
One can readily check that 
$\varphi$ 
\begin{itemize}
\item
is actually a map of $\NN^2$-graded $\RR$-algebras, 
\item 
that it restricts to a map $\HHH_P \otimes \HHH_Q \rightarrow \HHH_{P*Q}$, using
    \eqref{eq:J-harmonics}, \eqref{eq:join-harmonic-components-identified}, and
\item
    that $y_P \otimes 1 - 1 \otimes y_Q$ lies in its kernel. 
\end{itemize}
Consequently, $\varphi$ descends to a map on the quotient
\begin{equation}
    \label{eq:join-isomorphism-descend}
    \varphi: 
    (\HHH_P \otimes \HHH_Q)/
    (y_P \otimes 1 - 1 \otimes y_Q)
    \longrightarrow
    \HHH_{P*Q}.
\end{equation}
This map is not surjective.  However, after post-composing it with the canonical projection map
$\pi: \HHH_{P*Q} \twoheadrightarrow
    \HHH_{P*Q}/(y_{P*Q} \,y_0)$,
then we claim that 
\begin{equation}
    \label{eq:join-isomorphism-ultimate}
    \pi \circ \varphi:
    (\HHH_P \otimes \HHH_Q) / 
    (y_P \otimes 1 - 1 \otimes y_Q)
    \longrightarrow
    \HHH_{P*Q}/(y_{P*Q}\, y_0)
\end{equation}
actually {\it is} the surjection that we seek.
To see this, note that the image of $\varphi$ 
within $\HHH_{P*Q}$ is
$$
    \bigoplus_{m = 0}^\infty \,\,
    \sum_{\ell + \ell' \, = \, m} 
    \RR \cdot y_{P*Q}^m \cdot y_0^0 \otimes 
    V_\ell^P \otimes V_{\ell'}^Q.
$$
On the other hand,  \eqref{eq:J-harmonics} shows the remaining part of
$\HHH_{P*Q}$ not lying in this image has
the form
$$
\bigoplus_{m=1}^\infty \,\,
\bigoplus_{s  =  0}^{m-1} \left(
\sum_{\ell + \ell' \, = \, s}
\RR \cdot y_{P*Q}^m \cdot y_0^{m-s} \otimes V_\ell^P \otimes V_{\ell'}^Q \right),
$$
which is contained in the ideal $(y_{P*Q} \, y_0)$ and
therefore vanishes in the
quotient $\HHH_{P*Q}/(y_{P*Q} \, y_0)$.
\end{proof}

\bibliographystyle{abbrv}
\bibliography{bibliography}
\end{document}